\newcommand{\hh}{{\hspace{.3mm}}}
\newcommand{\pdot}{{\boldsymbol{\cdot}}}
\newcommand{\IIo}{\mathring{{\bf\rm I\hspace{-.2mm} I}}{\hspace{.2mm}}}
\newcommand{\bm}[1]{\mbox{\boldmath $ #1 $}}
\newtheorem{theorem}{Theorem}[section]
\newtheorem{lemma}[theorem]{Lemma}
\newtheorem{proposition}[theorem]{Proposition}
\newtheorem{corollary}[theorem]{Corollary}
\theoremstyle{definition}
\newtheorem{definition}[theorem]{Definition}
\theoremstyle{remark}
\newtheorem{remark}[theorem]{Remark}
\newtheorem{problem}[theorem]{Problem}
\newcommand{\be}{\begin{equation}}
\newcommand{\ee}{\end{equation}}
\newcommand{\g}{\go }
\newcommand{\om}{\omega}
\newcommand{\ba}{\begin{array}}
\newcommand{\ea}{\end{array}}
\newcommand{\beq}{\begin{eqnarray}}
\newcommand{\eeq}{\end{eqnarray}}
\newtheorem{lm}{lemma}
\newtheorem{thee}{theorem}
\newtheorem{proo}{proposition}
\newtheorem{co}{corollary}
\newtheorem{rem}{remark}
\newtheorem{deff}{definition}
\newcommand{\bd}{\begin{deff}}
\newcommand{\ed}{\end{deff}}
\newcommand{\bl}{\begin{lm}}
\newcommand{\el}{\end{lm}}
\newcommand{\bp}{\begin{proo}}
\newcommand{\ep}{\end{proo}}
\newcommand{\bt}{\begin{thee}}
\newcommand{\et}{\end{thee}}
\newcommand{\bc}{\begin{co}}
\newcommand{\ec}{\end{co}}
\newcommand{\brm}{\begin{rem}}
\newcommand{\erm}{\end{rem}}
\def\Cal{\mathcal}
\newcommand{\cL}{{\Cal L}}
\newcommand{\newc}{\newcommand}
\renewcommand{\exp}{\operatorname{exp}}
\let\ccdot\cdot
\def\cdot{\hbox to 2.5pt{\hss$\ccdot$\hss}}
\newc{\aR}{\mbox{\boldmath{$ R$}}}
\newc{\aS}{\mbox{\boldmath{$ S$}}}
\newc{\aT}{\mbox{\boldmath{$ T$}}}
\newc{\aW}{\mbox{\boldmath{$ W$}}}
\newc{\aD}{\mbox{\boldmath{$ D$}}\hspace{-.2mm}}
\newc{\aK}{\mbox{\boldmath{$ K$}}}
\newc{\aL}{\mbox{\boldmath{$ L$}}}
\newcommand{\ce}{{\Cal E}}
\newcommand{\Rho}{P}
\newcommand{\End}{\operatorname{End}}
\newcommand{\nn}[1]{(\ref{#1})}
\newcommand{\bg}{\mbox{\boldmath{$ g$}}}
\newc{\obstrn}[2]{B^{#1}_{#2}}
\newcommand{\rpl}                         
{\mbox{$
\begin{picture}(12.7,8)(-.5,-1)
\put(0,0.2){$+$}
\put(4.2,2.8){\oval(8,8)[r]}
\end{picture}$}}
\newcommand{\lpl}                         
{\mbox{$
\begin{picture}(12.7,8)(-.5,-1)
\put(2,0.2){$+$}
\put(6.2,2.8){\oval(8,8)[l]}
\end{picture}$}}
\newc{\tensor}[1]{#1}
\newc{\Mvariable}[1]{\mbox{#1}}
\newc{\down}[1]{{}_{#1}}
\newc{\up}[1]{{}^{#1}}
\newcommand{\II}{{\rm  I\hspace{-.2mm}I}}
\newc{\JulyStrut}{\rule{0mm}{6mm}}
\newc{\midtenPan}{\mbox{\sf S}}
\newc{\midten}{\mbox{\sf T}}
\newc{\midtenEi}{\mbox{\sf U}}
\newc{\ATen}{\mbox{\sf E}}
\newc{\BTen}{\mbox{\sf F}}
\newc{\CTen}{\mbox{\sf G}}
\newcommand{\bj}{\bar\jmath}
\def\sideremark#1{\ifvmode\leavevmode\fi\vadjust{\vbox to0pt{\vss
 \hbox to 0pt{\hskip\hsize\hskip1em
 \vbox{\hsize3cm\tiny\raggedright\pretolerance10000
 \noindent #1\hfill}\hss}\vbox to8pt{\vfil}\vss}}}%
\numberwithin{equation}{section}
\begin{document}

\renewcommand{\today}{}
\title{Conformally compact and higher  
conformal 
Yang--Mills equations 
}
\author{A. Rod Gover${}^\heartsuit$, Emanuele Latini${}^\clubsuit$, Andrew Waldron${}^\spadesuit$ \& Yongbing Zhang${}^\diamondsuit$}

\address{${}^\heartsuit$Department of Mathematics\\
  The University of Auckland\\
  Private Bag 92019\\
  Auckland 1\\
  New Zealand} \email{gover@math.auckland.ac.nz}
  
  \address{${}^{\spadesuit}$
  	Center for Quantum Mathematics and Physics (QMAP) and\\
  Department of Mathematics\\
  University of California\\
  Davis, CA95616, USA} \email{wally@math.ucdavis.edu}
  
  \address{${}^\clubsuit$
  Dipartimento di Matematica, Universit\`a di Bologna, Piazza di Porta S. Donato 5,
 and  INFN, Sezione di Bologna, Via Irnerio 46, I-40126 Bologna,  Italy}
 \email{emanuele.latini@UniBo.it}

  \address{${}^\diamondsuit$
School of Mathematical Sciences and Wu Wen-Tsun Key Laboratory of Mathematics \\
University of Science and Technology of China\\ Hefei, Anhui, 230026, P. R. of China.}
\email{ybzhang@amss.ac.cn}

\vspace{10pt}

\vspace{10pt}

\renewcommand{\arraystretch}{1}

\begin{abstract}
On
conformally compact manifolds
we study Yang--Mills equations,  their
boundary conditions, formal asymptotics, and  Dirichlet-to-Neumann maps.
We find that smooth solutions with ``magnetic'' Dirichlet boundary data  
are obstructed by a conformally invariant, higher order boundary current. 
We study the asymptotics of the interior Yang--Mills energy functional and show that the  obstructing current is the variation of the conformally invariant coefficient of the first  log term in this expansion which is a higher derivative conformally invariant analog of the Yang--Mills energy.
The invariant energy is the anomaly for the renormalized interior  Yang--Mills functional
and its variation gives higher conformal Yang--Mills equations.
Global solutions to the magnetic boundary problem determine higher order ``electric'' Neumann data. This yields the  Dirichlet-to-Neumann map.
We also construct conformally invariant, higher transverse  derivative boundary operators. Acting on interior connections, they 
give obstructions to solving the Yang--Mills boundary problem,
determine the asymptotics of formal solutions,  and
 yield conformally invariant tensors capturing the (non-local) electric Neumann data. We also characterize a renormalized Yang--Mills action functional that encodes global features analogously to the renormalized volume for Poincar\'e--Einstein structures.


%

  \vspace{2cm}
\noindent
{\sf \tiny Keywords: Yang--Mills Theory, Conformal Geometry, Conformally Compact, Poincar\'e--Einstein, AdS/CFT, Holography, Renormalization, Boundary Operators, Dirichlet-to-Neumann,  Connection-Coupled Conformal Invariants}
\end{abstract}

\maketitle

\pagestyle{myheadings} \markboth{Gover, Latini, Waldron \& Zhang}{Yang-Mills on conformally compact manifolds}

\newpage

\tableofcontents

\section{Introduction}

\newcommand{\go}{{\mathring {g}}}
\newcommand{\cc}{\boldsymbol{c}}
\newcommand{\DD}{{\sf D}}
\newcommand{\ext}{{\rm d}}

\newc{\LagDen}{\cL_A}
\newc{\hatg}{\hat{\g}}
\newc{\gsup}{\mbox{\textsl{\tiny g}}}
\newc{\hatgsup}{\hat{\mbox{\textsl{\tiny g}}}}
\newc{\Action}{{S}}

Yang--Mills equations play a critical {\it r\^ole} in particle physics and the study of four-manifolds
\cite{YM,Donaldson,DonaldsonKron}. Let $\nabla^A$ be a gauge
connection with curvature $F_{ab}$  on a 
Riemannian or pseudo-Riemannian $n$-manifold $(M,g)$.  The Yang--Mills action functional is 

\newcommand{\Trace}{\operatorname{Trace}}
\begin{equation}\label{act1}
\Action[A]:=-\frac14 \, \int_M \ext {\rm Vol}(g)\Trace (g^{ab}g^{cd}F_{ac} F_{bd})\, ,
\end{equation}
where  the curvature $F$ is a two-form taking values in endomorphisms of a vector bundle ${\mathcal V}M$ over $M$; our detailed conventions for tensors {\it etcetera} are given at the end  of this section.
The corresponding 
Euler--Lagrange equations, with respect to  compactly supported variations,
are the (source-free) {\em Yang-Mills equations}
\begin{equation}\label{ymeq}
j[A]:=\delta^A F=0\, .
\end{equation}
Here $\delta^A $ is the formal adjoint of the connection-twisted
exterior derivative.

We consider two main problems. 
The first is to find higher, conformally invariant analogs of the energy functional~\nn{act1}. The second is to study solutions to Equation~\eqref{ymeq} on  conformally compact manifolds. 
We work with Riemannian
signature metrics~$g$, though many results adapt easily to other
cases. 
The action (\ref{act1}) is conformally
invariant when $n=4$, meaning that  upon replacing the metric~$g$ with a conformally related metric~$\hat{g}=e^{2 \om} g$ for some $\om \in C^\infty M$,
 the action $\Action[A]$ is left  unchanged. 
The well-known, dimension four,
conformal covariance of the
 Yang--Mills equations follows.
 The first question is
whether there are natural replacements of~(\ref{act1}) and~(\ref{ymeq}) that are conformally invariant in other dimensions. 
In fact for dimension $n=6$ a positive answer, and an
application, is provided in~\cite{GoPeSl}, but 
at this stage it is not clear how to apply those ideas to
 other dimensions. 
We solve this problem
for all even dimensions; see Theorem~\ref{blabla}.
This relies on  holographic methods applied to 
our asymptotic solution to the second main problem, namely that of solving  the Yang-Mills equations~(\ref{ymeq}) on conformally compact manifolds of any dimension $d=n+1\geq 3$; see Theorem~\ref{recurse}.
Part of the importance of these results is that they apply to any linear connection on any vector bundle, and thus generate large families of invariants. For example, 
as illustrated in six dimensions in~\cite{{GoPeSl}},
the functional gradients of the higher Yang--Mills energies 
discussed below are generalizing analogs of the 
   Fefferman--Graham~\cite{FG-ast,thebookFG} obstruction tensor, yielding such an invariant for any connection.
 
In four dimensions self or anti-self dual connections are Yang--Mills~\cite{BPST,Donaldson}. Hence there is additional  interesting structure when either the bulk or boundary dimension is four. Since we predominantly study general  behaviour for the Yang--Mills systems in higher dimensions, we do not treat this aspect.

\smallskip
  A {\it conformally compact manifold} is a compact $d$-manifold $M$ with
  boundary $\partial M =: \Sigma$ whose interior ${M}_+=M\backslash \partial M$ is equipped
  with a metric $\g$ such that $g=r^2\g$ extends smoothly as a metric
  to the boundary, where $r$ is any boundary defining function
  (meaning $\Sigma =r^{-1}(0)$ and $\ext r$ is nowhere zero on
  $\Sigma$). In this case $({M}_+,\go)$ is complete. It is said to be
  {\it asymptotically hyperbolic}  if $|\ext r|^2_{g}=1 $ at all points of
  $\Sigma$, 
  {\it singular Yamabe} if the scalar curvature of $\go$ is $-d(d-1)$ on $M_+$, 
  and {\it Poincar\'e-Einstein} if  $\go$ is Einstein
  (again with scalar curvature $-d(d-1)$). 
  When the trace-free Schouten tensor  of $\go$
  vanishes to order
 $\mathcal{O}(r^{d-3})$, it is termed {\it asymptotically Poincar\'e--Einstein}.
  These
  structures have been studied intensively since the pioneering work
  of Fefferman and Graham \cite{FG-ast}; important aspects include
quantum mechanical
  scattering on Poincar\'e--Einstein manifolds~\cite{GZ}
  and related spectral analysis~\cite{Mazzeo,MM}, the
  conjectured AdS/CFT correspondence of physics \cite{Maldacena, Witten, Henningson}, as
  well the treatment of conformal manifold and submanifold invariants
  \cite{GrWi,Will2,Will1, Case,Juhl}. Broadly, the second main problem is to
  study the Yang--Mills action and equations in this
  context. Specifically the first step is to set up boundary problems
  so that (\ref{ymeq}) holds on the interior~$(M_+,\g)$  with suitable asymptotics to the boundary. 
  Ellipticity of this system has been studied in~\cite{deLima}.
  Existence of solutions given by  small deformations of trivial 
  solutions has been established in~\cite{Usula}; this generalizes the 
  analogous result by Graham and Lee for deformations of Poincar\'e--Einstein metrics on the ball~\cite{GLee} and confirms a conjecture due to Witten~\cite{Witten}.
  In this article we 
  solve these Yang--Mills boundary problems formally in order to compute the asymptotics of the
  connection, its curvature and corresponding (regulated)
  action~(\ref{act1}) on $M_+$. In the process we recover key
  conformal invariants of connections that, in particular, answer the
  first question, but also yield significantly more, including a gauge
  field Dirichlet-to-Neumann map, and boundary operators providing natural analogs of the conformal fundamental forms developed recently  in the context of the Poincar\'e--Einstein problem~\cite{Blitz}.

\medskip

To handle the Yang--Mills equations on a conformally compact manifold,
it is propitious to recast the problem using conformal geometry of the manifold $M$
equipped with the conformal class of
metrics $\cc:=[g]=[\Omega^2 g]$, 
where $0<\Omega\in C^\infty M$.
Of particular import are  conformal densities and density-valued tensors; these
are explained in detail in Section~\ref{CG}. A weight $w$
scalar-valued conformal density~$\varphi$ is a section of a certain
line bundle $\ce M[w]$. The former  may be viewed as an
equivalence
class of metric-function pairs $\varphi=[g;f]=[\Omega^2 g;\Omega^w f]$.  (We  often  use the
notation $\varphi \stackrel g = f$ when  labeling $\varphi$ by
a particular equivalence class representative, and even recycle the density's moniker $\varphi$ for this purpose when context allows.)  A {\it defining
  density} $\sigma\in \Gamma(\ce M[1])$ is a weight one
density~$\sigma:=[g;r]$ where $r $ is a defining function for
$\Sigma=\partial M$. The {\it conformal metric} is the weight two,
symmetric rank~2 tensor-valued conformal density given tautologically
by~${\bm g}_{ab}=[g;g_{ab}]$. A conformally compact structure can now be
labeled by the data~$(M,\cc,\sigma)$, where $\sigma$ is a defining
density for $\Sigma$.  The conformally compact metric on~$M_+$ is then
$$
\go_{ab}=\sigma^{-2} {\bm g}_{ab }\, .
$$
The Yang--Mills equations~\nn{ymeq} on  $(M_+,\go)$
can  be  re-expressed invariantly as 
$$
0=\go^{ab}\nabla^{\go }_a F_{bc}^{\phantom{g}}= \sigma  j_c \, ,
$$
where the {\it  conformally compact Yang--Mills current} $j$ is the weight $-1$ covector given by 
$$
{ j_b}[A]:\stackrel g{=} r \nabla^a F_{ab}-(d-4) F_{nb}\, ,\qquad n :\stackrel g{=}\ext r\, .
$$ 
Importantly, if the curvature $F$ extends smoothly to $\Sigma$, it follows that so too does the conformally compact Yang--Mills current $j$.
See 
Section~\ref{CCYME} for a more detailed discussion of the  vector bundle endomorphism and conformal density-valued
conformally compact Yang--Mills current  $j$, and of why it is optimal for our study.


In Section~\ref{Apollosrevenge} we show, via a characteristic exponent analysis,  that  there are two classes of boundary problems for the {\it conformally compact Yang--Mills equation} defined on all of~$M$ by 
\begin{equation}\label{CCYM}
j[A]=0\, .
\end{equation}
 In particular when the curvature behaves according to  
\begin {equation}F^A=\sigma^\alpha \big(G + {\mathcal O}(\sigma)\big)\, ,\end{equation}
for some $G\in\Gamma(\wedge^2 T^*M\otimes \End{\mathcal V}M[-\alpha])$ (here ${\mathcal O}(\sigma)$
denotes $\sigma$ times any smooth section
of $\wedge^2 T^*M\otimes \End{\mathcal V}M[-\alpha-1]$ and  ${\mathcal V}M[w]$ denotes the product bundle ${\mathcal V}M\otimes \ce M[w]$ for any vector bundle ${\mathcal V}M$), 
the two cases are
\begin{itemize}
\item{\it Magnetic:} $\alpha=0\, $ and $G_{\hat n b}\stackrel\Sigma=0$,
\item{\it Electric:} $\alpha=d-4\, $ and $\hat n_{[a} G_{bc] }\stackrel\Sigma=0$.
\end{itemize}
In the above $\hat n\in \Gamma(T^*M[1])|_\Sigma$ denotes the unit conormal to $\Sigma$ .
By analogy to Lorentzian electromagnetism, we have dubbed the first of the above two cases ``magnetic'', because in that setting the normal component of curvature   to a spacelike hypersurface is an electric field, while the magnetic field is  the component of curvature that is non-vanishing when skewed with the unit normal.

A connection  $\nabla^A$ on $M$, smooth on~$M_+$ and sufficiently differentiable for $j[A]$ to be defined, 
 and such that
$$
j[A]=0 \, ,$$ 
is termed a {\it conformally compact Yang--Mills connection}.
A vector bundle ${\mathcal V}M$ canonically induces a vector bundle on the boundary $\Sigma$ by restriction. This will be denoted ${\mathcal V}\Sigma$ throughout this article. 
Also, the tangent bundle to $\Sigma$ can be canonically identified with a subbundle of $TM|_\Sigma$.
Hence any 
connection induces  a smooth boundary connection  $\bar \nabla^{\bar A}$ on $\Sigma$ by pullback, {\it i.e.} for any $ v\in \Gamma(T\Sigma)$
$$
 \bar \nabla_{ v}^{{\bar A}}\stackrel\Sigma=
\nabla^A_{v}\, .
$$
A fundamental question is whether a conformally compact Yang--Mills connection can be determined from the data of a boundary connection $\bar \nabla^{\bar A}$. Therefore our
 first boundary problem is stated and solved as follows.
\begin{problem}
\label{yeswehavethem}
Let $(M,\cc,\sigma)$ be a conformally compact structure for which  a connection~$\bar \nabla^{{\bar A}}$ along $\Sigma$ is given. Find a smooth connection $\nabla^A$ on $M$ such that 
\begin{equation}\label{bc}\nabla^A_v\stackrel \Sigma  = \bar \nabla_v^{{\bar A}}
\end{equation}
for any $v\in \Gamma(T\Sigma)$, and 
$$
 j[A]={\mathcal O}(\sigma^\ell)\, ,
$$
for $\ell\in {\mathbb Z}_{\geq 0}$ as high as possible.
\hfill$\blacksquare$
\end{problem}
\begin{theorem}\label{recurse}
Let $d\geq 3$ and $(M,\cc,\sigma)$ be a conformally compact structure. Then when $d\geq 4$ there exists a solution of Problem~\ref{yeswehavethem} to order $\ell=d-4$. When  $d=3$ there exists an order $\ell=\infty$ solution.
\end{theorem}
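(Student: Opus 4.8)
The plan is to construct the connection $\nabla^A$ order by order in the defining density $\sigma$, starting from the given boundary connection $\bar\nabla^{\bar A}$ and solving the equation $j[A]={\mathcal O}(\sigma^\ell)$ recursively. First I would fix a conformal scale and a compatible boundary defining function $r$, inducing the asymptotically hyperbolic metric $\go=\sigma^{-2}\bm g$, and write $\nabla^A = \nabla^{A_0} + a$, where $\nabla^{A_0}$ is any smooth extension of $\bar\nabla^{\bar A}$ (for instance, parallel transport off $\Sigma$ along the normal flow of $r$) and $a\in\Gamma(T^*M\otimes\End{\mathcal V}M)$ is a $\End{\mathcal V}M$-valued one-form. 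The boundary condition~\eqref{bc} forces the tangential part of $a$ to vanish on $\Sigma$, so the free data in the construction is carried by the higher transverse Taylor coefficients of $a$. I would expand $a = \sum_{k\geq 1}\sigma^k a^{(k)}$ (with $a^{(k)}$ taking values in $T^*M\otimes\End{\mathcal V}M[-k]$, restricted appropriately to $\Sigma$) and substitute into the conformally compact Yang--Mills current $j[A]\stackrel g{=}r\nabla^bF_{bc}-(d-4)F_{nc}$.

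The key step is the \emph{indicial analysis}: at each order $k$, the equation $j[A]={\mathcal O}(\sigma^k)$ determines $a^{(k)}$ in terms of lower-order data through a linear algebraic operator acting on $a^{(k)}$, whose ``indicial'' eigenvalue is a polynomial in $k$ and $d$. Concretely, schematically $j[A]$ at order $\sigma^{k-1}$ contains a term of the form $P_d(k)\,a^{(k)}$ plus a ``source'' built polynomially from $\bar A$, its boundary curvature, the second fundamental form and the ambient curvature data, together with $a^{(1)},\dots,a^{(k-1)}$ and their boundary derivatives. Solving for $a^{(k)}$ is possible precisely when the indicial factor $P_d(k)$ is invertible; I expect $P_d(k)$ to have a zero at $k=d-4$ (equivalently, the magnetic and electric characteristic exponents $\alpha=0$ and $\alpha=d-4$ from the exponent analysis in Section~\ref{Apollosrevenge} collide at that order), which is exactly why the recursion proceeds freely up to and including order $\ell=d-4$ but no further in general. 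For $d=3$ the obstruction order $d-4=-1$ is negative, so the indicial factor never vanishes for $k\geq 1$; the recursion then continues to all orders, and a Borel-type resummation (or the fact that one can arrange everything formally, as only a formal solution is claimed) yields an $\ell=\infty$ solution. I would also need to check that the tangential component of the equation does not impose an additional constraint that obstructs the purely transverse recursion; this is handled by the gauge freedom, i.e.\ one can use the residual gauge transformations trivial to the appropriate order on $\Sigma$ to absorb the tangential components, so that $j[A]=0$ to the stated order is genuinely solvable.

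The main obstacle I anticipate is \emph{bookkeeping the weights and the interplay between the conformal density formalism and the Taylor expansion in $\sigma$}: one must verify that the operator extracting the coefficient of $\sigma^{k-1}$ in $j[A]$ really is, modulo lower order terms, multiplication by a universal polynomial $P_d(k)$ independent of the connection, and identify that polynomial's roots explicitly. This requires carefully commuting $\nabla^{\go}$ past powers of $\sigma$ using the hyperbolic structure ($|\ext\sigma|^2_{\bm g}\stackrel\Sigma{=}1$ and the transformation of the Levi-Civita connection under the conformal rescaling $\go=\sigma^{-2}\bm g$), and tracking how the $(d-4)F_{nc}$ term in $j$ shifts the indicial root relative to the naive Laplace-type operator $\nabla^b F_{bc}$. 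A secondary subtlety is ensuring that the inductively constructed $a^{(k)}$ are smooth (not merely formal power series coefficients) up to order $d-4$, which follows because the recursion is finite in that range. Once the polynomial and its root structure are pinned down, the existence claims for both $d\geq 4$ (order $\ell=d-4$) and $d=3$ ($\ell=\infty$) follow immediately by induction on $k$.
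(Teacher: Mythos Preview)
Your overall strategy---an order-by-order recursion in $\sigma$ governed by an indicial factor---is exactly the paper's approach, and your identification of the obstruction with the electric characteristic exponent is correct in spirit. There is, however, a genuine gap in how you handle the ``extra constraint''.

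When you add a correction $\sigma^\ell a$, the change in the current is
\[
j_b[A+\sigma^\ell a]-j_b[A]=\sigma^{\ell-1}\,\ell(\ell-d+3)\big(|n|^2 a_b - n_b a_n\big)+{\mathcal O}(\sigma^\ell),
\]
and the right-hand side is \emph{purely tangential} (its contraction with $\hat n^b$ vanishes). Thus at each step you can only kill the tangential part of the remainder $k$; you must separately know that $\hat n^b k_b|_\Sigma=0$. This is \emph{not} a consequence of gauge freedom. The paper obtains it from the Noether/Bianchi identity $\nabla^a\nabla^b F_{ab}=0$: combining $\sigma\nabla^a j_a=(d-3)j_n$ with the inductive hypothesis $j=\sigma^\ell k$ yields a $2\times 2$ linear system whose determinant is $\ell-(d-3)$, forcing $\hat n^a k_a|_\Sigma=0$ whenever $\ell\neq d-3$. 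Gauge freedom enters only in the uniqueness discussion (the undetermined normal component $a_n$ can be gauged away), not in existence. So the sentence ``this is handled by the gauge freedom'' points to the wrong mechanism; without the Bianchi argument the recursion does not close.

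A minor indexing point: the indicial factor $\ell(\ell-d+3)$ has its nontrivial root at $\ell=d-3$, not $d-4$. This is consistent with the electric exponent $\alpha=d-4$ for the \emph{curvature}, since adding $\sigma^{d-3}a$ to the connection perturbs $F$ at order $\sigma^{d-4}$. The recursion therefore runs through $\ell=1,\dots,d-4$ and produces $j={\mathcal O}(\sigma^{d-4})$, as you claim; for $d=3$ the factor is $\ell^2$, never zero, and the recursion continues to all orders (no Borel summation is needed, since only a formal solution is asserted).
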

\noindent
The proof of this theorem is given in Section~\ref{Apollosrevenge}. 
We call  any connection~$\nabla^A$ 
on a dimension $d\geq 4$ conformally compact structure
that obeys 
\begin{equation}\label{obst}
j[A]=\sigma^{d-4} k\, ,
\end{equation} 
for some smooth $k\in \Gamma(T^*M\otimes\End{\mathcal V}M[3-d])$,
an {\it asymptotically conformally compact  Yang--Mills connection}, or for brevity an {\it asymptotically Yang--Mills connection}. The so-defined boundary connection $\bar \nabla^{\bar A}$  is called the  {\it boundary condition}.
Theorem~\ref{gaugeeq} gives a uniqueness result for solutions to Problem~\ref{yeswehavethem}
up to gauge equivalence.

 The order $\ell=d-4$ is critical, in the sense that there is an obstruction to higher order smooth asymptotic solutions.  This depends on data local to the boundary.  
Corollary~\ref{forgotten-soap} of the uniqueness result in Theorem~\ref{gaugeeq} 
gives a canonical map
\begin{equation}\label{obcu}
(M,\cc,\sigma,{\bar A})
\mapsto \bar k
 \in
\Gamma(T^*\Sigma\otimes\End{\mathcal V}\Sigma [3-d]) \, ,
\end{equation} whose image we dub the {\it obstruction current}.  This already gives a conformally invariant higher
Yang--Mills equation for the boundary connection $\bar \nabla^{\bar
  A}$. For the case of the Maxwell system, {\it i.e.} abelian Yang--Mills, Equation~\nn{ymeq}
simplifies to $\mathring{\delta} F=0$ for $F\in
\Omega^2 M_+$ where~$\mathring{\delta}$ is the divergence operator of the conformally
compact metric. In that case  it is
known~\cite{BG,GLW} that for~$d$ odd,~$\bar k$ has leading structure
$\bar \Delta^{\frac{d-5}{2}} \, \bar \delta \bar F$. 
This leading behavior persists for general Yang--Mills systems.
Here bars are
used to denote boundary objects, so that~$\bar \Delta$ is the boundary
form Laplacian, $\bar \delta$ the boundary divergence, and $\bar F\in
\Omega^2 \Sigma$.  When the obstruction current~$\bar k$ is non-vanishing,
Problem~\ref{yeswehavethem} can be modified to include logarithms of
the defining density, see Problem~\ref{lets_get_loggy} whose solution
is given in Proposition~\ref{logsol}.
 
While Corollary~\ref{forgotten-soap}
shows how to obtain higher Yang--Mills equations {\it holographically}, by which we mean 
solving a bulk problem on $M_+$ to find information along $\Sigma$, we would like finer information about $\bar k$. Before discussing that, we note that a second ``electric'' boundary problem appears exactly at the order of the obstruction  to the magnetic one. This problem is stated as follows.
\begin{problem}\label{problemono}
Let $(M,\cc,\sigma)$ be a conformally compact structure and let $0>\alpha\in {\mathbb R}$. 
Find a smooth connection $\nabla^A$ on $M$ such that 
$$
\sigma^\alpha F^A\in \Gamma(\wedge^2 M\otimes\End{\mathcal V}M[\alpha])
$$
and
$$
 j[A]={\mathcal O}(\sigma^\ell)\, ,
$$
for $\ell\in {\mathbb Z}_{\geq 0}$ as high as possible.
\hfill$\blacksquare$
\end{problem}
\noindent
Lemmas~\ref{handofgod} and~\ref{fingerofgod} show that neither the boundary data nor the power $\alpha$ appearing in the above problem are arbitrary. Instead we must have $-\alpha=d-4$ and $\sigma^\alpha F_{ab}|_\Sigma=\hat n_a E_b -\hat n_b E_a$. Moreover  the {\it electric field} $E\in  \Gamma(T^* \Sigma\otimes\End{\mathcal V}\Sigma[3-d])$ must obey an analog of the (vacuum) ``Gau\ss\ law''.
The following theorem establishes the existence of an  all order asymptotic solution to Problem~\ref{problemono} when these conditions hold.
\begin{theorem}\label{electrictheorem}
Let $(M,\cc,\sigma)$ be a conformally compact structure with $d\geq 5$. 
Also, given a smooth connection $\bar \nabla^{\bar A}$ on  $\Sigma$, 
suppose that there exists 
$E\in  \Gamma(T^* \Sigma\otimes\End{\mathcal V}\Sigma[3-d])$ satisfying the Gau\ss\ law 
$$
\bar \nabla^{{\bar A}}_a E^a=0 \, .
$$
Then if
  $\nabla^{A_0}$  is a smooth connection on $M$ with $ \nabla_v^{A_0}\stackrel\Sigma=\bar \nabla_v^{\bar  A}$ for  any  $v\in \Gamma(T\Sigma)$,
  and 
$$
F^{A_0}=\sigma^{d-4} G
$$
for some
$
G\in 
 \Gamma(\wedge^2 M\otimes\End{\mathcal V}M[4-d])
$
where $G|_\Sigma
:=\hat n_{a} E_{b}-\hat n_b E_a 
$,
 there exists 
$a\in\Gamma(
T^*M\otimes\End{\mathcal V}M[2-d])$
such that 
$
\nabla^A=
\nabla^{A_0}+\sigma^{d-2} a
$
obeys
$$j[A]={\mathcal O}(\sigma^\infty)\, .
$$

\end{theorem}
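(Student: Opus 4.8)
The plan is to solve for the correction $a$ order-by-order in powers of $\sigma$ by a recursion analogous to that used in Theorem~\ref{recurse}, but now starting from the ``electric'' threshold rather than the magnetic one. First I would linearize the current: writing $\nabla^A=\nabla^{A_0}+\sigma^{d-2}a$, expand $j[A]$ around $\nabla^{A_0}$. Since $j$ is quadratic in $F$ and $F^{A_0}=\sigma^{d-4}G={\mathcal O}(\sigma^{d-4})$ while the perturbation $\sigma^{d-2}a$ contributes $F^A-F^{A_0}=\extd^{A_0}(\sigma^{d-2}a)+{\mathcal O}(\sigma^{2(d-2)})$, the nonlinear terms are suppressed: the leading effect of $a$ on $j[A]$ is linear and appears at order strictly higher than $\sigma^{d-4}$, with the genuinely new contributions entering at order $\sigma^{d-2}$ or lower (relative to the normalization of $j$, which has weight $-1$). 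The key point is that the hypothesis $\bar\nabla^{\bar A}_aE^a=0$ is exactly what makes the obstruction at the first nontrivial order vanish, so that the recursion can start.

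The core of the argument is the \emph{indicial analysis} for the linearized operator. Acting on a perturbation $\sigma^{m}b$ with $b\in\Gamma(T^*M\otimes\End{\mathcal V}M[\,\cdot\,])$ and $b|_\Sigma\neq 0$, one computes that the linearization of $j$ produces $\sigma^{m-1}$ times a universal scalar indicial factor $P(m)$ times $b|_\Sigma$ (plus terms that feed into lower-order data and tangential-derivative terms that do not affect the leading coefficient). This indicial polynomial $P(m)$ is the same one whose roots $\{0,\,d-4\}$ (appropriately shifted) govern the magnetic and electric characteristic exponents discussed in Section~\ref{Apollosrevenge}; the crucial observation is that once we start the perturbation at weight $\sigma^{d-2}a$, all subsequent orders $m=d-2,d-1,d,\dots$ give $P(m)\neq 0$, because $d-2$ already lies strictly above both indicial roots. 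Hence at each step the equation $P(m)\,a^{(k)}|_\Sigma=(\text{data determined by lower orders})$ can be solved uniquely for the next Taylor coefficient of $a$, with no obstruction and no logarithms. Iterating and invoking Borel summation to assemble a genuine smooth section $a$ with the prescribed asymptotic expansion then gives $j[A]={\mathcal O}(\sigma^\infty)$.

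Two bookkeeping points need care. First, one must check that the initial datum is consistent: the term in $j[A_0]$ of order $\sigma^{d-4}$ (i.e.\ the leading coefficient of $j$ evaluated on $\nabla^{A_0}$) is, by Lemmas~\ref{handofgod} and~\ref{fingerofgod}, proportional to $\bar\nabla^{\bar A}_aE^a$ together with tangential curvature terms that reorganize into it; the Gau\ss\ law kills precisely this, so the first correction one actually needs to make is at the order handled by $\sigma^{d-2}a$, matching the claimed form of the ansatz. Second, one must track the conformal weights: $a$ has weight $2-d$ so that $\sigma^{d-2}a$ has weight $0$ and $\nabla^A$ is an honest connection on $M$; this is consistent with $F^{A_0}$ having weight $4-d$ and $G|_\Sigma=\hat n_{[a}E_{b]}$ with $E$ of weight $3-d$.

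The main obstacle I expect is not the recursion itself — that is routine once the indicial polynomial is in hand — but rather \emph{verifying non-degeneracy of the indicial operator at every order $m\geq d-2$ simultaneously}, i.e.\ ruling out a ``resonance'' further up the expansion where $P(m)$ vanishes again. In the abelian/Maxwell case this is controlled because the relevant operator is (conjugate to) a Laplace-type operator whose indicial roots are a finite explicit set; for a general linear connection one must show the same indicial factor survives, unperturbed by the curvature $F^{A_0}$ which is itself only ${\mathcal O}(\sigma^{d-4})$ and therefore too small to shift the indicial roots. Establishing that the curvature coupling does not alter the leading symbol of the linearized current — so that the indicial polynomial is the connection-independent one and its only roots are the already-excluded $0$ and $d-4$ — is the crux, and is where the hypothesis $d\geq 5$ is used (to keep $d-2>d-4$ strictly, with room for the recursion).
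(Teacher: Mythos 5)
Your approach is in essence the one the paper takes: regard the problem as a continuation of the recursion of Theorem~\ref{recurse}, now seeded at the electric order, check that the Gau\ss\ law removes the obstruction at the first step, and then iterate using the fact that the indicial coefficient $\ell(\ell-d+3)$ is nonzero for all $\ell\geq d-2$. So the skeleton is right. But you have misidentified where the danger lies, and the point you pass over is the one that actually requires an argument.

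The indicial map of Equation~\nn{diffo} sends $a$ to $\ell(\ell-d+3)\bigl(n^2 a_a - n_a a_n\bigr)$, which is \emph{manifestly tangential} — contracting with $\hat n^a$ gives zero identically, for every $\ell$. So the linearized operator is never invertible; it is never a bijection onto one-forms. The worry is therefore not a ``resonance'' ($P(m)=0$ is trivially excluded once $m\geq d-2$, and ``keeping $d-2>d-4$'' is vacuously true and is not where $d\geq5$ enters) but rather a \emph{compatibility condition}: at each order the residual $k^{(\ell)}|_\Sigma$ must itself be tangential, or the step cannot be solved at all. The Gau\ss\ law handles exactly one order, $\ell=d-3$, where the algebraic argument degenerates; for all $\ell\neq d-3$ this tangentiality is a consequence of the Bianchi/Noether identity $\nabla^a\nabla^b F_{ab}=0$ — this is precisely the ``claim'' proved at the start of the paper's proof of Theorem~\ref{recurse} (the $2\times 2$ linear system following Equation~\nn{nameme}, invertible iff $\ell\neq d-3$). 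Your proposal asserts solvability at each step without producing this fact, and without it the recursion is not known to close. In short: the gap is not about the scalar indicial factor but about showing the right-hand side lies in the (proper) image of the always-degenerate indicial operator, and the Noether identity is the missing ingredient.

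A secondary imprecision: by Lemma~\ref{handofgod} the $\sigma^{d-4}$ term in $j[A_0]$ already vanishes automatically (because $-\alpha=d-4$ is exactly the root of $(\alpha+d-4)$), and the first genuinely nontrivial term is the $\sigma^{d-3}$ one, $j[A_0]=\sigma^{d-3}k^{(1)}$; the Gau\ss\ law kills the normal component of $k^{(1)}|_\Sigma$, not a $\sigma^{d-4}$ coefficient. This matters because it is exactly the degenerate order $\ell=d-3$ where the Noether argument fails, which is \emph{why} the Gau\ss\ law must be a hypothesis rather than a conclusion.
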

Theorem~\ref{all-orders-magnetic} describes how 
 magnetic and electric solutions can be combined  to make all order asymptotic solutions in the case that the obstruction current vanishes. 
  \begin{theorem}\label{all-orders-magnetic}
 Let $\nabla^{A_0}$ be an
asymptotically Yang--Mills connection
 on a conformally compact manifold  $(M,\cc,\sigma)$. Suppose that the boundary condition~$\bar \nabla^{{\bar A}}$   is 
 such that the corresponding obstruction current $\bar k=0$, then $$
 j[A_0]=\sigma^{d-3} k^{(1)}\, ,
 $$
 for some $k^{(1)}\in \Gamma(T^* M\otimes\End{\mathcal V}M[2-d])$.

Now let $d\geq 4$, and suppose that there is a section $E\in  \Gamma(T^* \Sigma\otimes\End{\mathcal V}\Sigma[3-d])$ satisfying the Gau\ss\ law
\begin{equation}
\bar \nabla^{{\bar A}}_a E^a\stackrel\Sigma= \frac 1{d-3}\, \hat n^a k_a^{(1)} \, .
\end{equation}
Then, given the above data, there exists 
a smooth
asymptotically Yang--Mills connection~$\nabla^A$ on $M$ with boundary condition
$ \bar \nabla^{{\bar A}}$
such that 
$$
 j[A]={\mathcal O}(\sigma^\infty)\, .
$$
 \end{theorem}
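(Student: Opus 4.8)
The plan is to run the two asymptotic constructions --- magnetic and electric --- in sequence, using the first to clear obstructions through order $\sigma^{d-4}$ and the second to absorb the residual current from order $\sigma^{d-2}$ onwards, with the Gau\ss-law hypothesis being exactly what glues the transition order $\sigma^{d-3}$. First I would invoke Theorem~\ref{recurse} to produce an asymptotically Yang--Mills connection $\nabla^{A_0}$ with the given boundary condition $\bar\nabla^{\bar A}$, so that $j[A_0]=\sigma^{d-4}k$ with $\bar k$ the obstruction current. The hypothesis $\bar k=0$ means, by the defining property of the obstruction current in Corollary~\ref{forgotten-soap} and the uniqueness/gauge result of Theorem~\ref{gaugeeq}, that $k$ vanishes on $\Sigma$; hence $k=\sigma\, k^{(1)}$ for a smooth $k^{(1)}\in\Gamma(T^*M\otimes\End{\mathcal V}M[2-d])$, giving $j[A_0]=\sigma^{d-3}k^{(1)}$. (One should check here that this is the \emph{full} content of $\bar k=0$: that no further condition is hidden, i.e. that the leading coefficient really is the only obstruction at order $d-4$ --- this is precisely what the characteristic-exponent analysis of Section~\ref{Apollosrevenge} guarantees.)

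Next I would identify the obstruction to continuing past order $\sigma^{d-3}$. Writing a correction $\nabla^A=\nabla^{A_0}+\sigma^{d-3}b+\sigma^{d-2}a+\cdots$, the linearization of $j$ around $\nabla^{A_0}$ --- the connection-twisted form Laplacian, whose indicial behavior was computed in the proof of Theorem~\ref{recurse} --- governs which components of $b$ can be solved for. The characteristic exponent at order $d-3$ is resonant only in the \emph{normal} component: the tangential part of the obstruction can be removed by choosing $b$ appropriately, but the normal component $\hat n^a k^{(1)}_a$ along $\Sigma$ cannot, and its obstruction to removal is governed by a boundary divergence, which is why it matches $\bar\nabla^{\bar A}_aE^a$ up to the factor $\frac1{d-3}$. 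The Gau\ss-law hypothesis then says precisely that, after adding the leading electric piece $\sigma^{d-4}G$ with $G|_\Sigma=\hat n_aE_b-\hat n_bE_a$ --- which by Theorem~\ref{electrictheorem} (for $d\geq 5$; the $d=4$ case is degenerate and handled directly) carries its own all-order tail --- the combined current has vanishing normal component at order $\sigma^{d-3}$, so the obstruction is killed and the recursion can proceed.

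From order $\sigma^{d-2}$ onward the indicial operator is invertible on all components, so one solves recursively order by order for the coefficients of a formal power series $\sum_{j\geq d-2}\sigma^{j}a_{(j)}$, and Borel-sums (or appeals to the standard asymptotic-summation lemma used elsewhere in the paper) to obtain a genuine smooth $a\in\Gamma(T^*M\otimes\End{\mathcal V}M[2-d])$ with $j[\nabla^{A_0}+\sigma^{d-4}G+\sigma^{d-2}a]=\mathcal O(\sigma^\infty)$; since $\sigma^{d-4}G$ and $\sigma^{d-2}a$ both vanish on $\Sigma$ to at least first order (for $d\geq5$) the boundary condition $\bar\nabla^{\bar A}$ is preserved, and the resulting connection is manifestly asymptotically Yang--Mills. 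The main obstacle --- and the step deserving the most care --- is the matching at order $\sigma^{d-3}$: one must verify that the only obnoxious component is the normal one, that its obstruction is exactly the boundary divergence $\bar\nabla^{\bar A}_aE^a-\tfrac1{d-3}\hat n^ak^{(1)}_a$ (including getting the constant right, which requires tracking the indicial polynomial $\alpha(\alpha-d+\cdots)$ carefully), and that the tangential freedom in $b$ together with the electric tail does not reintroduce an obstruction at order $\sigma^{d-2}$. Everything above that order is routine recursion; everything at or below order $d-4$ is already packaged in Theorems~\ref{recurse} and~\ref{electrictheorem}.
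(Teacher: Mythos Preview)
Your central insight is right and matches the paper: the Gau\ss-law hypothesis is precisely what kills the normal component $\hat n^ak^{(1)}_a$ at the resonant order $\sigma^{d-3}$, after which the Bianchi argument in the proof of Theorem~\ref{recurse} (which fails only at $\ell=d-3$) again forces the normal component to vanish at every later order, so the recursion runs to $\mathcal O(\sigma^\infty)$. But the route through Theorem~\ref{electrictheorem} and superposition does not work as written. That theorem requires the \emph{source-free} Gau\ss\ law $\bar\nabla_aE^a=0$ and a seed connection with $F=\sigma^{d-4}G$ (hence flat boundary connection), neither of which holds here; and since $j$ is nonlinear in $A$, adding an electric asymptotic solution to the magnetic one does not yield one. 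Also, $\sigma^{d-4}G$ is a two-form and cannot be added to a connection; the connection-level correction is $\sigma^{d-3}L$ with $L|_\Sigma=E$ tangential.

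The paper avoids superposition and computes directly. Pushing the difference formula~\eqref{diffo} one order further (its $\sigma^{\ell-1}$ contribution vanishes at $\ell=d-3$), one finds
\[
j_b[A_0+\sigma^{d-3}L]=\sigma^{d-3}\Bigl(k^{(1)}_b+n^a(\nabla_aL_b-\nabla_bL_a)+(d-3)\nabla^a(n_aL_b-n_bL_a)\Bigr)+\mathcal O(\sigma^{d-2}),
\]
and contracting with $n^b$ gives normal part $\hat n^bk^{(1)}_b-(d-3)\bar\nabla_aE^a$ along $\Sigma$, which the hypothesis sets to zero. The resulting current $\sigma^{d-3}j^{(1)}$ has $\hat n^aj^{(1)}_a|_\Sigma=0$, so the recursion of Theorem~\ref{recurse} continues from $\ell=d-2$ via~\eqref{solve}. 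This also corrects your division of labour: the $\sigma^{d-3}$ correction $L$ is chosen to cancel the \emph{normal} component, while the tangential part of $j^{(1)}$ is removed at the next step $\ell=d-2$, not by $b$ at order $\sigma^{d-3}$.
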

 \noindent
 The proofs of Theorems~\ref{electrictheorem} and~\ref{all-orders-magnetic}  are given in Section~\ref{EP}.
 
 \smallskip
 
 The  formal solution of Theorem~\ref{all-orders-magnetic} admits  arbitrary  combinations of magnetic boundary data and electric data (albeit subject to the boundary Gau\ss\ law and vanishing obstruction current).
 This freedom will  be lost when searching for global, rather than  asymptotic solutions. Instead there is  a notion of a Dirichlet-to-Neumann map for the conformally compact Yang--Mills system encoding the relationship between magnetic and electric boundary data for glabal solutions. Dirichlet-to-Neumann maps are defined for a model Maxwell system in Section~\ref{models} and for  the Yang--Mills problem in Section~\ref{connexp}.

\smallskip

It is interesting to find conformally invariant energy functionals for the 
 conformally invariant higher Yang--Mills equations given in Corollary~\ref{forgotten-soap}. For that we apply holographic methods to the 
 {\it regulated Yang--Mills action functional}
  $$
S^\varepsilon[A]  := -\frac14\int_{M_\varepsilon} 
\ext {\rm Vol}(\go)\,
\operatorname{Trace}(\go^{ab}\go^{cd}F_{ac}F_{bd})\, .
$$
Here $M_\varepsilon\subseteq M_+$ is a smooth one-parameter family of truncated manifolds such that $M_0=M_+$ (see Section~\ref{renorm}). The above expression is in general singular in the limit $\varepsilon \to 0$ because  the  conformally compact metric $\go$  itself is singular along  the boundary $\Sigma$. However, its   small $\varepsilon$ asymptotics can be computed and are given as follows.
\begin{theorem}\label{Laurent}
Let $d \geq 6$, $(M,\cc,\sigma)$ be  a conformally compact structure, and $\nabla^A$ a connection for a vector bundle over~$M$. Then the regulated Yang--Mills action functional has a Laurent series plus log term expansion 
$$
S^\varepsilon[A]  = 
\frac{v_{d-5}}{(d-5)\varepsilon^{d-5}}+
\frac{v_{d-6}}{(d-6)\varepsilon^{d-6}}+
\cdots + \frac{v_1}{\varepsilon} + {\rm En}[A]\hh \log\frac1\varepsilon  + S^{\rm ren}[A]
+ {\mathcal O}(\varepsilon) \, ,
$$
where the coefficients 
are given by the local integrals
$$
v_\ell = \frac{(\ell+3)!}{(d-5-\ell)!(d-2)!} \int_\Sigma {\ext {\rm Vol}(\bm {\bar g})}{\sqrt{I^2}\, }\,  \Big(\frac1{I^2} I.D\Big)^{d-5-\ell}
\Big(\frac{-\operatorname{Tr} F^2}{4I^2 \tau^\ell}\Big)\, ,\:\: 0\leq\! \ell\!\leq \!d-5\, .
$$
Moreover, the energy ${\rm En}[A]$ is independent of the choice of regulator $0<\tau\in\Gamma(\ce M[1])$
and so is an invariant of the structure.
\end{theorem}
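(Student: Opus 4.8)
The plan is to recast the regulated action as a bulk integral of a conformal density carrying an explicit power of the defining density $\sigma$, reduce it to a one--dimensional integral in the cutoff parameter by slicing, read off the Laurent--plus--log structure term by term, and finally repackage the boundary Taylor coefficients that appear using the scale tractor $I$ and the Thomas $D$--operator. Writing $\go_{ab}=\sigma^{-2}\bm g_{ab}$ gives $\go^{ab}\go^{cd}=\sigma^{4}\bm g^{ab}\bm g^{cd}$ and $\ext{\rm Vol}(\go)=\sigma^{-d}\ext{\rm Vol}(\bm g)$, so that
\[
S^\varepsilon[A]=\int_{M_\varepsilon}\sigma^{4-d}\,\Big(\tfrac{-1}{4}\operatorname{Tr}F^2\Big)\,\ext{\rm Vol}(\bm g)\,,\qquad F^2:=\bm g^{ab}\bm g^{cd}F_{ac}F_{bd}\,,
\]
and the integrand equals $\sigma^{4-d}$ times a $d$--form which is smooth up to $\Sigma$, since $\nabla^A$ and the conformal structure are smooth there. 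Taking $s:=\sigma/\tau$ as a genuine boundary defining function, so that $M_\varepsilon=\{s>\varepsilon\}$, and writing that smooth $d$--form near $\Sigma$ as $\ext s\wedge\beta$, the $s$--integration yields $S^\varepsilon[A]=\int_\varepsilon^{s_0}t^{4-d}g(t)\,\ext t$ with $g(t):=\int_{\{s=t\}}\beta$ smooth down to $t=0$ (here $s_0:=\sup s$); its Taylor coefficients $g(t)=\sum_{m\geq 0}a_m[A]\,t^m$ are integrals over $\Sigma$ of universal local quantities built from the $s$--normal jets of $\operatorname{Tr}F^2$, together with $\tau$ and the conformal structure.

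Integrating term by term (splitting off a fixed neighbourhood of the interior, which contributes only to $S^{\rm ren}[A]$, and using a finite Taylor expansion of $g$ with controlled remainder) produces exactly the stated expansion: the summand $a_m\,t^{4-d+m}$ contributes a term $\propto\varepsilon^{5-d+m}$, which is a pole when $0\leq m<d-5$; when $m=d-5$ it contributes $a_{d-5}[A]\log\tfrac1\varepsilon$; and when $m>d-5$ it contributes an $\varepsilon$--independent part of $S^{\rm ren}[A]$ together with an $\mathcal O(\varepsilon)$ term. Matching powers of $\varepsilon$ gives $v_\ell=a_{d-5-\ell}[A]$ and ${\rm En}[A]=a_{d-5}[A]$, the numerical normalizations being those of the elementary one--dimensional integrals. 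Note that these coefficients are, as written, highly $\tau$--dependent, both through the $\tau^{4-d}$ sitting inside $\beta$ and through the appearance of $\tau$ in $s=\sigma/\tau$ and hence in the $s$--normal derivatives.

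It remains to express the $a_m[A]$ conformally invariantly, and this is where the tractor calculus enters. The operator $\tfrac1{I^2}\,I.D$ depends only on the conformally compact data $(\cc,\sigma)$, not on the regulator $\tau$, and it is the conformally invariant ``weighted unit conormal derivative'': unwinding the Thomas $D$--operator on a weight $w$ density and using $I_A=D_A\sigma$ (so that $I$ restricts along $\Sigma$ to a multiple of the unit conormal), one checks that on $\Sigma$ this operator reproduces an $s$--normal derivative up to a weight--dependent multiplicative factor and tangential/curvature terms which drop out on restriction. An induction on the number $m=d-5-\ell$ of applications then shows that $\sqrt{I^2}\,\big(\tfrac1{I^2}\,I.D\big)^{m}$, restricted to $\Sigma$ and integrated against $\ext{\rm Vol}(\bm{\bar g})$, computes $a_m[A]$ up to precisely the factor $\tfrac{(\ell+3)!}{(d-5-\ell)!\,(d-2)!}$: the $\tfrac1{(d-5-\ell)!}$ is the Taylor normalization for $m$ bare normal derivatives, while $\tfrac{(\ell+3)!}{(d-2)!}$ cancels the product of weight--dependent factors accrued over the $m$ successively lowered weights. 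Because $\tfrac1{I^2}\,I.D$ is $\tau$--free, rewriting $a_m[A]$ this way strips its $\tau$--dependence down to a single weight--balancing factor: taking fewer than the maximal number of derivatives leaves a weight--$\ell$ deficit, and the only available density to absorb it is $\tau$, which is the origin of the $\tau^{-\ell}$ in the integrand. In particular the $\ell=0$ case --- the formula for $v_0={\rm En}[A]$ --- is free of $\tau$.

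Hence ${\rm En}[A]$ is an invariant of $(M,\cc,\sigma,\nabla^A)$. Conceptually this had to be so: the difference of the regulated actions for two regulators $\tau$ and $\hat\tau=e^{\Upsilon}\tau$ is the integral of $\sigma^{4-d}(\tfrac{-1}{4}\operatorname{Tr}F^2)\ext{\rm Vol}(\bm g)$ over the thin shell between $\{\sigma=\varepsilon\tau\}$ and $\{\sigma=\varepsilon\hat\tau\}$; rescaling the slicing variable by $\varepsilon$ shows that its $\varepsilon$--expansion is a genuine Laurent polynomial plus a finite part, with no $\log\tfrac1\varepsilon$ term, so ${\rm En}[A]$ is regulator independent. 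The main obstacle is the inductive step of the previous paragraph: pinning down the exact combinatorial constant relating $\big(\tfrac1{I^2}\,I.D\big)^{m}$ to the $s$--normal Taylor coefficients. This requires careful tracking of the weight--dependent coefficients in the Thomas $D$--operator and of how the $I^2$--normalizations compose as the weight is lowered at each step --- precisely the bookkeeping that generates the factorials $(\ell+3)!$, $(d-5-\ell)!$ and $(d-2)!$ --- and it is there, rather than in the elementary slicing argument, that essentially all the work lies.
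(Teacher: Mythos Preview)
Your slicing argument correctly produces the Laurent--plus--log structure and correctly identifies that the entire difficulty lies in matching the coefficients $a_m[A]$ with the $(I^{-2}I.D)^m$ formula. However, the paper handles this crucial step by a genuinely different mechanism, and your proposed direct induction is left as a promise rather than executed. The paper writes the regulated action using a Heaviside cutoff $\theta(\sigma/\tau-\varepsilon)$, differentiates in $\varepsilon$ to produce $\delta(\sigma-\tau\varepsilon)$, and then Taylor expands to obtain bulk integrals against $\delta^{(k)}(\sigma)$. Two imported facts from \cite{Vol} then do all the combinatorics at once: the distributional identity
\[
\Big(\tfrac1{I^2}\,I.D\Big)^{\ell}\delta(\sigma)=(d-\ell-1)\cdots(d-3)(d-2)\,\delta^{(\ell)}(\sigma)
\]
and formal self-adjointness of $I.D$, which moves the operators off $\delta(\sigma)$ and onto $\operatorname{Tr}F^2/\tau^{\ell}$. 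The factorials in the stated formula are exactly the falling product above together with the $1/(d-5-\ell)!$ from the Taylor expansion of the delta.

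The payoff of the paper's route is that the ``tangential terms drop out'' step you invoke is absorbed entirely into formal self-adjointness, and the weight--dependent constants are read off from one identity rather than tracked through an $m$--fold composition. Your claim that on $\Sigma$ the operator $I^{-2}I.D$ ``reproduces an $s$--normal derivative up to \ldots\ terms which drop out on restriction'' is only literally true for a single application (where the $\sigma\Delta$ piece vanishes at $\sigma=0$); for iterates the inner Laplacian contributes off $\Sigma$ and survives restriction, so the cancellation genuinely requires the boundary integration, i.e.\ self-adjointness. Your regulator--independence argument for ${\rm En}[A]$ via the thin--shell comparison is valid and is essentially the same observation the paper makes (that the $\ell=0$ coefficient carries no $\tau$), just phrased differently.
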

\noindent
This result is proved in  Section~\ref{renorm}.
In the above 
$F^2=\bg^{ab}\bg^{cd} F_{ac}F_{bd}$.
Also, the operator $I.D: \Gamma({\mathcal V}M[w])\to \Gamma({\mathcal V}M[w-1])$, is  the 
{\it (connection twisted) Laplace--Robin operator} (see~\cite{Goal,GW}) which gives 
 a canonical degenerate extension of a combination of the  interior Laplace operator $\Delta^{\go}$ and scalar curvature $Sc^{g^o}$. Indeed, the smooth function denoted by $I^2\in C^\infty M$ is a canonical extension of $-Sc^{g^o}/\big(d(d-1)\big)$ to $M$; again
see Section~\ref{renorm}. Note  that $\bar {\bm g}$ is the conformal metric on $\Sigma$ induced by ${\bm g}$, and $\ext {\rm Vol}(\bar {\bm g})$ the corresponding (density-valued) volume element. The notion of regulators  given in terms of non-vanishing conformal densities $\tau$ used here, 
as well as how they control both the choice of truncation~$M_\varepsilon$ and a corresponding metric representative along $\Sigma$,
is  described in Section~\ref{renorm}.

The boundary energy ${\rm En}[A]$ and renormalized Yang--Mills action functional~$S^{\rm ren}[A]
$ given, respectively, by the coefficient of the logarithm and $\varepsilon$-independent term in the asymptotics of the regulated Yang--Mills action functional are of particular interest. The latter may be viewed as the Yang--Mills analog of the renormalized volume of a Poincar\'e--Einstein manifold. Its dependence 
on the regulator is captured by the following theorem.
\begin{theorem}\label{wehaveenergy}
The renormalized action $S^{\rm ren}[A]$ of Theorem~\ref{Laurent} depends on the choice of regulator $\tau$ according to
\begin{equation}\label{Srentr}
S^{\rm ren}[A;e^\omega \tau]-S^{\rm ren}[A;\tau]=\frac{3!}{(d-5)!(d-2)!}\int_\Sigma \!
\ext {\rm Vol}(\bm {\bar g}){\sqrt{I^2}}\, 
\Big(\frac1{I^2} I.D\Big)^{d-5}\Big(\frac{
-\operatorname{Tr}F^2\hh \omega
}{4I^2}\Big)\, ,
\end{equation}
where $\omega$ is any element of $C^\infty M$. 
\end{theorem}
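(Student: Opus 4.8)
The plan rests on the observation that the conformally compact metric $\go=\sigma^{-2}\bm g$, and hence the Yang--Mills Lagrangian $d$-form $H:=-\tfrac14\,\ext {\rm Vol}(\go)\operatorname{Tr}(\go^{ab}\go^{cd}F_{ac}F_{bd})$ on $M_+$, does not depend on the regulator; only the one-parameter family of truncations $M_\varepsilon$ feels the replacement $\tau\mapsto e^\omega\tau$. In the normalization of Section~\ref{renorm}, writing $u:=\sigma/\tau\in C^\infty M$ for the associated weight--zero boundary level function, one has $M_\varepsilon^\tau=\{u\ge\varepsilon\}$ and $M_\varepsilon^{e^\omega\tau}=\{e^{-\omega}u\ge\varepsilon\}$, so the two truncations differ by the collar shell $\mathcal S_\varepsilon:=\{\varepsilon\le u< e^{\omega}\varepsilon\}$ (exchange the roles of the two regulators where $\omega<0$). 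Hence
$$
S^\varepsilon[A;e^\omega\tau]-S^\varepsilon[A;\tau]=\pm\int_{\mathcal S_\varepsilon}H\, .
$$
First I would compute the small-$\varepsilon$ asymptotics of $\int_{\mathcal S_\varepsilon}H$ and compare the $\varepsilon$-independent terms of the two expansions supplied by Theorem~\ref{Laurent}. Since the pole coefficients $v_\ell$ are permitted to be regulator dependent (they carry explicit factors $\tau^\ell$) while ${\rm En}[A]$ is not, this comparison identifies $S^{\rm ren}[A;e^\omega\tau]-S^{\rm ren}[A;\tau]$ with the $\varepsilon^0$ part of $\pm\int_{\mathcal S_\varepsilon}H$, and forces that shell integral to carry no $\log\tfrac1\varepsilon$ term, consistent with the stated $\tau$-invariance of ${\rm En}[A]$.

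To pin down that $\varepsilon^0$ coefficient I would re-run the holographic Laplace--Robin apparatus already used to establish Theorem~\ref{Laurent}. There the asymptotics of an interior integral of $H$ are generated by iterating the operator $I.D$ on the weight $(-4)$ density $-\tfrac14\operatorname{Tr}F^2$: descending to a boundary integral of a weight $(1-d)$ density requires exactly $d-5$ applications of $\tfrac1{I^2}I.D$, which is why the operator $\big(\tfrac1{I^2}I.D\big)^{d-5}$ and the combinatorial factor $\tfrac{3!}{(d-5)!(d-2)!}$ --- precisely the $\ell=0$ line of the $v_\ell$ formula --- govern the extreme non-pole, non-log end of the expansion. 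The one new feature of the shell is that its logarithmic width in the variable $u$ is $\log e^{\omega}=\omega$; I expect that tracking this $\omega$ through the same recursion deposits it inside the integrand, converting $-\tfrac14\operatorname{Tr}F^2$ into $-\tfrac14\operatorname{Tr}F^2\,\omega$ and thereby reproducing exactly the right-hand side of~\eqref{Srentr}. Equivalently, and more cleanly for a rigorous write-up, one differentiates along the path $\tau_t=e^{t\omega}\tau$: then $\tfrac{d}{dt}S^\varepsilon[A;\tau_t]$ is the integral of $H$ over the moving cutoff hypersurface $\{u=e^{t\omega}\varepsilon\}$ weighted by its normal displacement, whose $\varepsilon\to0$ finite part is given by the same Laplace--Robin formula, turns out to be independent of $t$, and integrates over $t\in[0,1]$ to the claimed answer. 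A useful sanity check is that~\eqref{Srentr} is then just ${\rm En}[A]$ with $\operatorname{Tr}F^2$ replaced by $\operatorname{Tr}F^2\,\omega$, the familiar shape of a renormalization anomaly.

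The main obstacle is precisely this finite-part evaluation. The cutoff hypersurface $\{u=e^{t\omega}\varepsilon\}$ is not a level set of $u$, because $\omega$ is a bulk function rather than a constant, so a priori the subleading terms of the collar (Fefferman--Graham-type) expansion of $H$ could contaminate the $\varepsilon^0$ coefficient; showing that they do not --- so that the answer depends only on the jet data explicitly displayed in~\eqref{Srentr} --- and fixing the combinatorial constant is exactly what the Laplace--Robin recursion of Theorem~\ref{Laurent} is designed to control, but it has to be carried out with the extra insertion of $\omega$. The remaining steps --- identifying the shell, the sign and orientation bookkeeping, and lining up the two Laurent expansions --- are routine once Theorem~\ref{Laurent} is in hand.
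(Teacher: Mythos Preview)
Your proposal is correct, and the second route you sketch---differentiating along the path $\tau_t=e^{t\omega}\tau$, extracting the $\varepsilon^0$ coefficient via the same Laplace--Robin machinery as in Theorem~\ref{Laurent}, observing $t$-independence, and integrating over $t\in[0,1]$---is exactly the paper's proof. The paper implements the $t$-derivative by writing $S^\varepsilon[A;e^{t\omega}\tau]$ as a Heaviside-weighted integral over the extended manifold $\tilde M$, so that $\tfrac{d}{dt}$ converts $\theta$ to a $\delta$-function supported on the moving cutoff; Taylor expanding that $\delta$ in $\varepsilon$ and reading off the $\varepsilon^0$ coefficient via the identity $(\tfrac1{I^2}I.D)^\ell\delta(\sigma)=(d-\ell-1)\cdots(d-2)\,\delta^{(\ell)}(\sigma)$ gives precisely the $\omega$-inserted boundary integral, which is manifestly $t$-independent.

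Your stated ``main obstacle''---that the cutoff hypersurface is not a level set of $u=\sigma/\tau$ because $\omega$ varies---dissolves in this distributional formulation: the $\varepsilon^0$ coefficient of the Laurent series comes from the $\delta^{(d-5)}(\sigma)$ term in the Taylor expansion, and all $t$-dependence sits in higher- or lower-order $\varepsilon$ terms that do not contribute to $S^{\rm ren}$. So the step you flagged as the hard one is in fact routine once the Heaviside representation is in place; there is no contamination from subleading collar terms to worry about.
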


For even dimensional Poincar\'e--Einstein structures, a stronger result for the renormalized action is available. 
\begin{theorem}\label{evenvarySren}
Let $(M,\cc,\sigma)$ be Poincar\'e--Einstein with $d\in\{4,6,8,\ldots\}$.
Then the renomalized Yang--Mills action is conformally invariant, {\it i.e.}
$$
S^{\rm ren}[A;e^\omega \tau]-S^{\rm ren}[A;\tau]=0\, ,
$$
where $\omega$ is any element of $C^\infty M$. 
\end{theorem}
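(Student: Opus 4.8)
The plan is to start from the general regulator-dependence formula~\eqref{Srentr} of Theorem~\ref{wehaveenergy} and show that, in the Poincaré--Einstein case with $d$ even, the boundary integrand on the right-hand side vanishes identically for every $\omega\in C^\infty M$. The key structural fact is that for a Poincaré--Einstein structure the canonical extension $I$ of the scale is \emph{parallel} for the tractor connection (equivalently, $\sigma$ is an almost Einstein scale), so that $I^2$ is constant --- indeed $I^2=1$ in the normalization used here --- and the Laplace--Robin operator $I.D$ acquires especially clean algebraic properties. First I would rewrite the integrand of~\eqref{Srentr} using $I^2=1$, reducing it to $\tfrac{3!}{(d-5)!(d-2)!}\int_\Sigma \ext{\rm Vol}(\bm{\bar g})\,(I.D)^{d-5}\big(-\tfrac14\operatorname{Tr}F^2\,\omega\big)$.

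Next I would exploit the factorization/commutation properties of $I.D$ when $I$ is parallel. On a Poincaré--Einstein background the operator $I.D$ acting on densities of weight $w$ is, up to normalization, the relevant conformally invariant operator, and iterated powers $(I.D)^{k}$ from weight $w$ to weight $w-k$ are governed by the standard product formula familiar from the GJMS/Laplace--Robin story; in particular $(I.D)^{d-5}$ landing on the correct weight gives a \emph{divergence} along $M$ of a density-valued one-form, so that its restriction and integration over $\Sigma$ can be analyzed via the interior divergence structure. The route I favor: recognize that $-\tfrac14\operatorname{Tr}F^2$ carries weight $-4$, and the factor $\tau^{-\ell}$ in Theorem~\ref{Laurent} is absent here (we are at the log coefficient, $\ell$ effectively $0$), so $(I.D)^{d-5}$ maps weight $-4$ to weight $-4-(d-5)=-(d-1)$; that is exactly the weight of a top-form integrand twisted against the weight-$1$ volume density of $\Sigma$, confirming conformal invariance of the \emph{bulk} combination. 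The step that makes $\omega$ drop out entirely in the even Poincaré--Einstein case is that the variation formula~\eqref{Srentr} must then be an integral over $\Sigma$ of a total boundary divergence built from $\bar F$ and its boundary covariant derivatives: since $d$ is even, $d-5$ is odd, and the leading term $\bar\Delta^{(d-5)/2}$ does not appear with integer power in the way that would produce a genuine anomaly --- more precisely, the holographic coefficient structure forces the $\omega$-linear term to be $\bar\nabla_a$ of something, hence to integrate to zero on the closed boundary $\Sigma$.

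Concretely the argument I would write has three steps: (i) specialize~\eqref{Srentr} using $I$ parallel, $I^2\equiv 1$, turning the right-hand side into a pure power of $I.D$ applied to $\operatorname{Tr}F^2\,\omega$ integrated over $\Sigma$; (ii) use the Poincaré--Einstein identity for $I.D$ acting near the boundary --- namely that on such backgrounds $I.D$ restricted appropriately equals (a multiple of) a boundary operator plus an interior divergence term, iterated $(d-5)$ times --- together with the fact that $d-5$ is odd when $d$ is even, to show the $\omega$-dependent part of the integrand is the boundary divergence $\bar\nabla_a V^a$ for an explicit $V$ polynomial in $\bar F,\omega$ and their boundary derivatives; (iii) integrate by parts on the closed manifold $\Sigma$ to conclude the difference vanishes. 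I would contrast this with the odd-$d$ situation, where $d-5$ is even and the middle power $\bar\Delta^{(d-5)/2}$ produces the genuine conformal anomaly recorded implicitly in Theorem~\ref{wehaveenergy}; the parity of $d$ is exactly what distinguishes the two cases, mirroring the classical even/odd dichotomy for renormalized volume.

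The main obstacle I anticipate is step (ii): showing rigorously that $(I.D)^{d-5}$ applied to the weight $-4$ density $\operatorname{Tr}F^2\,\omega$ has boundary integral equal to a total divergence on $\Sigma$ in the Poincaré--Einstein, even-$d$ case. This requires a careful bookkeeping of how the Laplace--Robin operator interacts with the normal-coordinate expansion of $\go$ near $\Sigma$ (the Fefferman--Graham expansion, which for Poincaré--Einstein has no odd terms below the critical order and the crucial log term absent for even $d$), and tracking which terms are boundary-exact versus which could contribute a residue. I expect this to follow from the same holographic machinery already developed in Sections~\ref{renorm} and~\ref{Apollosrevenge} --- in particular the commutator identities for $I.D$ against powers of $\sigma$ and the structure of the $v_\ell$ coefficients in Theorem~\ref{Laurent} --- so the work is in assembling those pieces rather than in a genuinely new estimate, but it is the step where a sign or parity error would silently break the claim.
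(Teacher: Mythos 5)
Your approach --- starting from the variation formula~\eqref{Srentr} and trying to show that the integrand reduces to a boundary divergence --- is genuinely different from the paper's proof, and unfortunately the key step does not hold. The paper's argument (see the discussion at the beginning of Section~\ref{connexp}) transposes Graham's renormalized-volume argument: change the boundary metric representative $\bar h\mapsto \bar\Omega^2\bar h$; use that the new Fefferman--Graham canonical coordinate is $\tilde r=\Omega(x,r)\,r$ where $\Omega(x,r)$ has an \emph{even} expansion in $r$; and observe that because $h(x,r)$, $A(x,r)$ and $\Omega(x,r)$ are all even in $r$ to sufficiently high order, the $\varepsilon^0$ coefficient of the regulated action cannot change. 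Nothing in this is a ``boundary divergence'' argument.

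The genuine gap is your step~(ii). The claim that $(I.D)^{d-5}\bigl(\operatorname{Tr}F^2\,\omega\bigr)\big|_\Sigma$ is a total boundary divergence is not established, and it is not the operative mechanism. Take $d=6$ in FG gauge, where $I^2\equiv1$, $\rho\stackrel\Sigma=0$, $H=0$ so that $I.D f|_\Sigma=(d+2w-2)\nabla_{\hat n}f|_\Sigma$. Then the right-hand side of~\eqref{Srentr} collapses to a nonzero multiple of
\[
\int_\Sigma\ext\mathrm{Vol}(\bm{\bar g})\,\partial_r\bigl(\operatorname{Tr}F^2\,\omega\bigr)\big|_{r=0}
=\int_\Sigma\ext\mathrm{Vol}(\bm{\bar g})\,\operatorname{Tr}\bar F^2\cdot\partial_r\omega\big|_{r=0}\, ,
\]
the second equality using the evenness property $\partial_r\operatorname{Tr}F^2|_{r=0}=0$. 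There is no divergence structure in this expression. It vanishes because the $\omega$ corresponding to a change of FG boundary representative is itself an even function of $r$, so $\partial_r\omega|_\Sigma=0$; in other words the vanishing is a pointwise parity statement (an odd-order normal derivative of an even function is zero at $r=0$), not a Stokes-type identity. This is consistent with Theorem~\ref{blabla}, which shows that the energy \emph{integrand} $Q[\bm g,A]$ itself vanishes pointwise in the even Poincar\'e--Einstein case --- already for constant $\omega$ the mechanism is pointwise vanishing, not boundary-exactness, so boundary-exactness cannot be the right general explanation.

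Your observation that $I$ is parallel (so $I^2\equiv1$) on a Poincar\'e--Einstein background is correct and cleans up the formula, but by itself produces no cancellation. What you identify as the ``main obstacle'' --- the FG-coordinate bookkeeping of which Taylor orders of $h$, $A$, and $\omega$ are even --- is in fact the entire content of the paper's proof; without it, the parity heuristic ``$d-5$ odd $\Rightarrow$ divergence'' cannot be made to work. The fix is to abandon the divergence claim and instead carry out the evenness argument of Section~\ref{connexp} (mirroring Graham's renormalized-volume proof), as the paper indicates.
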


When $\omega$ is a non-zero constant, the right hand side of Equation~\nn{Srentr} gives 
a holographic formula for a conformally invariant
higher Yang--Mills energy functional 
along the boundary of any conformally compact structure with dimension $d\geq 5$,
\begin{equation}\label{HolFor}
{\rm En}[A] = 
\frac{3!}{(d-5)!(d-2)!} 
\int_\Sigma {\ext {\rm Vol}(\bm {\bar g})}{\sqrt{I^2} }\,  
\Big(\frac1{I^2} I.D\Big)^{d-5}
\Big(\frac{-\operatorname{Tr}F^2}
{4I^2}\Big)\, .
\end{equation}

When $A$ is a solution to the conformally compact Yang--Mills equations, the above gives an energy formula depending only on the boundary connection. 
For Poincar\'e--Einstein structures, this can be easily computed on low even dimensional boundaries and vanishes whenever the boundary dimension is odd.
 \begin{theorem}\label{blabla}
Let   $(M^d,\cc,\sigma)$  be conformally compact with $d\geq 5$ and $\nabla^A$ an asymptotically Yang--Mills connection with boundary condition $\bar \nabla^{\bar A}$. 
 Then the energy functional $\overline{\operatorname{En}}[\bar A]:=\operatorname{En}[ A]$  given by Equation~\nn{HolFor} is completely determined by $\bar A$. Moreover, 
 $$
\overline{ \operatorname{En}}[\bar A]=\int_\Sigma {\ext {\rm Vol}(\bm {\bar g})} Q[\bm g,A]\, ,
 $$
 where $$Q[\bm g,A]
 :=-\tfrac14\tfrac{3!}{(d-5)!(d-2)!} \, 
 I.D^{d-5}
\operatorname{Tr} F^2
 \in \Gamma(\ce \Sigma[1-d])\, .$$
 When $(M^d,\cc,\sigma)$ is Poincar\'e--Einstein, then 
 $$
 Q[\bm g,A]=
 \left\{
 \begin{array}{cl}
 -\tfrac14 \operatorname{Tr} \bar F^2
 \, , & d=5\, ,\\[2mm]
-\tfrac1{32} \bar \Delta \operatorname{Tr} \bar F^2
-\tfrac18 \operatorname{Tr} \big( 
{\bj}^2
-\bar J 
{\bar F}^2
+2{\bar F}^{ab}
\bar \nabla_{[a} {\bj}_{b]}
-4{\bar F}^{ab}\bar P_{a}{}^c \bar F_{bc}
\big)\, ,
& d=7\, .\\[2mm]
\end{array}\right.
$$
Moreover,  $\overline{\operatorname{En}}[\bar A]=0$ for all $d\in\{6,8,10,\ldots\}$.
\end{theorem}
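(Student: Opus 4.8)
By Theorem~\ref{Laurent} the boundary energy $\operatorname{En}[A]$ is a regulator-independent invariant of the pair $\big((M,\cc,\sigma),\nabla^A\big)$, so the content of the statement is fourfold: (i) for an \emph{asymptotically} Yang--Mills connection $\operatorname{En}[A]$ is determined by $\bar A$ alone, and equals $\int_\Sigma\ext {\rm Vol}(\bm{\bar g})\,Q[\bm g,A]$; (ii) and (iii), the stated closed forms of $Q$ for Poincar\'e--Einstein structures when $d=5$ and $d=7$; and (iv), vanishing of $\overline{\operatorname{En}}[\bar A]$ for even $d$.

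For (i), start from the holographic formula~\eqref{HolFor} and use the hypersurface behaviour of the connection-twisted Laplace--Robin operator $I.D$ recorded in~\cite{Goal,GW}: although second order in the interior, the second-order part of $I.D$ carries an explicit factor of the defining density $\sigma$, so along $\Sigma$ the operator $I.D$ acts as a weighted conformal normal derivative, and $I.D$ together with multiplication by $\sigma$ satisfies the $\mathfrak{sl}(2)$-type relations of that reference. Iterating, the restriction to $\Sigma$ of the integrand of~\eqref{HolFor} is a universal local expression in the jet of the connection transverse to $\Sigma$ of order at most $d-4$, and is identified with $Q[\bm g,A]=-\tfrac14\tfrac{3!}{(d-5)!(d-2)!}\,I.D^{d-5}\operatorname{Tr}F^2\big|_\Sigma\in\Gamma(\ce\Sigma[1-d])$; for Poincar\'e--Einstein structures, where $I$ is parallel and $I^2=1$, the two expressions agree pointwise on $\Sigma$, and in general after integration over $\Sigma$ using the same identities. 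Since $\nabla^A$ is asymptotically Yang--Mills with boundary condition $\bar\nabla^{\bar A}$, Theorem~\ref{recurse} together with the gauge-uniqueness result (Theorem~\ref{gaugeeq} and Corollary~\ref{forgotten-soap}) shows that precisely this transverse jet is fixed by $\bar A$ up to a gauge transformation; as $\operatorname{Tr}F^2$ is gauge invariant, $\overline{\operatorname{En}}[\bar A]:=\operatorname{En}[A]$ depends only on $\bar A$ and the conformally compact structure, which proves the first two displayed claims.

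For (ii) and (iii), normalize $\sigma$ in the Poincar\'e--Einstein case so that $\go$ is Einstein; then $I$ is parallel, $I^2=1$, and $I.D$ reduces to the connection Laplacian of $\go$ up to curvature and weight corrections. When $d=5$ one has $I.D^{0}=\operatorname{id}$ and $\tfrac{3!}{0!\,3!}=1$, so $Q=-\tfrac14\operatorname{Tr}F^2\big|_\Sigma$; the magnetic boundary condition $F_{\hat n b}\big|_\Sigma=0$ makes $F\big|_\Sigma$ tangential, hence $\operatorname{Tr}F^2\big|_\Sigma=\operatorname{Tr}\bar F^2$, the stated $d=5$ formula. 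When $d=7$, with $\tfrac{3!}{2!\,5!}=\tfrac1{40}$, one computes $Q=-\tfrac1{160}\,I.D^{2}\operatorname{Tr}F^2\big|_\Sigma$: feed in the Fefferman--Graham expansion of the Einstein metric $\go$ to the order needed to evaluate $I.D^{2}$ along $\Sigma$ (producing the boundary Schouten data $\bar P_{ab}$, $\bar J$) together with the subleading Taylor coefficients of $\nabla^A$ along $\Sigma$ from the recursion of Theorem~\ref{recurse} (producing the boundary Yang--Mills current $\bj$ and its antisymmetrized derivative), then collect and reorganize using the Bianchi identity and the Gau\ss--Codazzi relations along $\Sigma$ to reach the claimed Schouten-tensor form. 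I expect this $d=7$ evaluation to be the main obstacle: it requires a careful account of which parts of the connection jet are boundary-determined, the explicit connection-twisted form of $I.D$ on low-weight densities over a Poincar\'e--Einstein background, and a nontrivial rewriting of the resulting sixth-order boundary invariant.

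For (iv), let $d\in\{6,8,10,\dots\}$ and $(M,\cc,\sigma)$ be Poincar\'e--Einstein. Theorem~\ref{evenvarySren} states that the left-hand side of~\eqref{Srentr} in Theorem~\ref{wehaveenergy} vanishes for every $\omega\in C^\infty M$, hence so does the right-hand side; taking $\omega$ a nonzero constant and using $\mathbb{R}$-linearity of $I.D$ to draw the constant out of the iterated Laplace--Robin operator, that right-hand side equals $\omega\,\operatorname{En}[A]$ by~\eqref{HolFor}. Therefore $\operatorname{En}[A]=0$, so $\overline{\operatorname{En}}[\bar A]=0$ for all $d\in\{6,8,10,\dots\}$, completing the proof.
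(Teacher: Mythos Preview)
Your argument for parts (i)--(iii) follows the paper's approach closely. The paper invokes Lemma~\ref{sl2} (the $\mathfrak{sl}(2)$ relation for $I.D$) to pin down precisely that the integrand of~\eqref{HolFor} depends on $A$ only modulo ${\mathcal O}(\sigma^{d-4})$, which you gesture at but do not name; otherwise the logic is the same. Your honest flagging of the $d=7$ computation as the main obstacle is accurate---the paper carries this out explicitly using Lemmas~\ref{thelemma}, \ref{69}, \ref{lap2lap}, \ref{gradF}, and~\ref{d666}, and the reorganization into the Schouten form is indeed nontrivial.

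For part (iv) you take a genuinely different route. The paper proves $\overline{\operatorname{En}}[\bar A]=0$ for $d=6$ by direct computation showing $Q[\bm g,A]=0$ pointwise (using Equation~\nn{nablanF}), and for $d\in\{8,10,\ldots\}$ by the evenness of the canonical connection expansion $A^{\rm ev}$ in the Fefferman--Graham coordinate (Section~\ref{connexp}): since the integrand of the regulated action is even in $r$ to sufficiently high order, no $r^{-1}$ term appears and the $\log\varepsilon$ coefficient vanishes. Your route instead combines Theorems~\ref{wehaveenergy} and~\ref{evenvarySren} with constant $\omega$ to conclude $\omega\,\operatorname{En}[A]=0$ uniformly for all even $d\geq 6$. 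This is clean and correct, but note that the paper's proof of Theorem~\ref{evenvarySren} itself rests on the same evenness argument in Section~\ref{connexp}, so you are not bypassing that machinery---only repackaging it. What your route buys is uniformity (no separate $d=6$ case); what the paper's route buys is the stronger pointwise statement $Q=0$ at $d=6$, which also serves as a consistency check before the harder $d=7$ computation.
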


The functional gradient of this energy gives the obstruction current.
\begin{theorem}
\label{fruit=vary}
Let $(M,\cc,\sigma)$ be conformally compact and $\nabla^A$ an asymptotically Yang--Mills connection 
with  boundary condition $\bar \nabla^{{\bar A}}$.
Moreover let~$\bar A_t$ be a one parameter family of  boundary connections such that $\bar A_t-{\bar A}$ has compact support (on $\Sigma$)
and $\bar A^0={\bar A}$.
Then, the functional gradient of the energy 
$\ext{\rm  En}\big[A[\bar A_t]\big]/\ext t\big|_{t=0}$, 
for any one parameter family of asymptotically Yang--Mills connections $A[\bar A_t]$ with boundary conditions $\bar \nabla^{\bar A_t}$,
 is a non-zero multiple of the 
obstruction current $\bar k$.
\end{theorem}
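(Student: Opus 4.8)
\emph{The plan} is to exploit that $\operatorname{En}[A]$ is, up to a universal constant, the coefficient of $\log\frac1\varepsilon$ in the regulated Yang--Mills action $S^\varepsilon[A]$ of Theorem~\ref{Laurent}, and that for an asymptotically Yang--Mills connection this coefficient is a functional of $\bar A$ alone (Theorem~\ref{blabla}). (We take $d\ge 5$, where $\operatorname{En}$ is defined; for $d\le 4$ both $\bar k$ and $\operatorname{En}$ vanish identically and the statement is vacuous.) Differentiating along the given family $A_t:=A[\bar A_t]$ and applying the first variation of the Yang--Mills action produces a bulk term governed by the interior Yang--Mills current together with a boundary term on the cutoff hypersurface $\partial M_\varepsilon$. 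The crucial point is that for an asymptotically Yang--Mills connection the interior current $\go^{ab}\nabla^{\go}_a F_{bc}$ equals $\sigma j_c$, which is $\sigma^{d-3}k_c$ by \eqref{obst}; so the logarithmic divergence of the bulk term directly exhibits the pairing of the obstruction tensor $k$ against the variation of the connection restricted to $\Sigma$.

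\emph{First variation.} Set $a:=\tfrac{d}{dt}A_t|_{t=0}$, a smooth $\End{\mathcal V}M$-valued one-form on $M$ whose pullback to $\Sigma$ is the compactly supported $\dot{\bar A}:=\tfrac{d}{dt}\bar A_t|_{t=0}$. Then
\[
\tfrac{d}{dt}S^\varepsilon[A_t]\big|_{t=0}=-\int_{M_\varepsilon}\ext {\rm Vol}(\go)\,\operatorname{Tr}\!\big(\go^{ac}\go^{bd}F_{ab}\,\nabla^A_c a_d\big)\,,
\]
and integrating by parts (Stokes for $\go$ over $M_\varepsilon$, boundary $\partial M_\varepsilon$) rewrites this as $\int_{M_\varepsilon}\ext {\rm Vol}(\go)\operatorname{Tr}(\tilde j^{\,d}a_d)$ minus a boundary integral over $\partial M_\varepsilon$ of $\hat\nu_a\operatorname{Tr}(F^{ab}a_b)$ against the induced volume, where $\tilde j_c:=\go^{ab}\nabla^{\go,A}_a F_{bc}$ is the weight-$0$ interior current and $\hat\nu$ the unit $\go$-conormal. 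For $A=A_0$ one has $\tilde j=\sigma j=\sigma^{d-3}k$ with $k$ smooth. Choosing a scale $g$ with $\sigma\stackrel{g}{=}r$, $g=dr^2+h_r$ near $\Sigma$ and $M_\varepsilon=\{r>\varepsilon\}$ there (the answer being regulator-independent), one has $\ext {\rm Vol}(\go)=r^{-d}\,dr\wedge\ext {\rm Vol}(h_r)$ and $\tilde j^{\,d}=r^{d-1}\hat k^{\,d}$, so the bulk term becomes $\int_{\{r>\varepsilon\}}r^{-1}\operatorname{Tr}(\hat k^{\,d}a_d)\,dr\wedge\ext {\rm Vol}(h_r)$, whose $\log\frac1\varepsilon$ coefficient is $\int_\Sigma\operatorname{Tr}(\hat k^{\,d}|_\Sigma\,a_d|_\Sigma)\,\ext {\rm Vol}(h_0)$.

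\emph{The two side contributions.} First, the $\partial M_\varepsilon$ boundary integral carries no logarithm: since $A$, hence $F$ and $a$, are smooth up to $\Sigma$, $\hat\nu_aF^{ab}a_b=r^{3}\cdot(\text{smooth})$ while the induced volume of $\{r=\varepsilon\}$ is $r^{-(d-1)}\cdot(\text{smooth})$, so this term is $\varepsilon^{4-d}$ times a function smooth down to $\varepsilon=0$ --- a genuine Laurent series, contributing only to the polar part and to $S^{\rm ren}$. Second, the normal component of $k$ vanishes on $\Sigma$: the covariant conservation $\nabla^{\go,A}_a\tilde j^{\,a}=\delta^A\delta^A F=0$ (a Bianchi/Noether identity) forces the leading, order $r^{d-2}$, coefficient of $\nabla^{\go,A}_a\tilde j^{\,a}$ in the scale $g$, namely $\langle\ext r,k\rangle_g|_\Sigma$, to vanish --- equivalently, gauge invariance of $\operatorname{En}$ precludes any dependence on the gauge-dependent normal part of $a|_\Sigma$. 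Hence $\hat k^{\,d}|_\Sigma$ is tangential and represents the (boundary-raised) obstruction current $\bar k$, giving $\operatorname{Tr}(\hat k^{\,d}|_\Sigma a_d|_\Sigma)=\operatorname{Tr}(\bar k^{\,a}\dot{\bar A}_a)$. Therefore the $\log\frac1\varepsilon$ coefficient of $\tfrac{d}{dt}S^\varepsilon[A_t]|_{t=0}$ is a nonzero universal multiple of $\int_\Sigma\operatorname{Tr}(\bar k^{\,a}\dot{\bar A}_a)\,\ext {\rm Vol}(\bm{\bar g})$.

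\emph{Conclusion and the delicate step.} By Theorem~\ref{Laurent} the $\varepsilon$-expansion of $S^\varepsilon[A_t]$ is polyhomogeneous, with coefficients given by local boundary integrals (hence smooth in $t$) and an $\mathcal O(\varepsilon)$ remainder uniform for $t$ in a compact interval; thus $\tfrac{d}{dt}$ commutes with the expansion and the $\log\frac1\varepsilon$ coefficient of $\tfrac{d}{dt}S^\varepsilon[A_t]|_{t=0}$ equals $\tfrac{d}{dt}\operatorname{En}[A_t]|_{t=0}$. Combining with the previous step gives $\tfrac{d}{dt}\operatorname{En}[A_t]|_{t=0}=c_0\int_\Sigma\operatorname{Tr}(\bar k^{\,a}\dot{\bar A}_a)\,\ext {\rm Vol}(\bm{\bar g})$ with $c_0\neq 0$ depending only on $d$; in particular the gradient is independent of the chosen extension $A[\bar A_t]$ and is the nonzero multiple $c_0\bar k$ of the obstruction current, as claimed. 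The one genuinely delicate point is this commutation of $\tfrac{d}{dt}$ with the asymptotic expansion, i.e.\ the uniformity in $t$ of the remainder, which follows from the explicit local form of the expansion coefficients in the proof of Theorem~\ref{Laurent}; alternatively it can be avoided by varying the holographic formula~\eqref{HolFor} directly and repeatedly using the integration-by-parts identities for the Laplace--Robin operator $I.D$, at the cost of a longer computation that reconstructs the lower-order terms of $\bar k$ by hand.
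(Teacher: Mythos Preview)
Your proof is correct and follows essentially the same strategy as the paper: vary $S^\varepsilon$, integrate by parts, and identify the $\log\tfrac1\varepsilon$ coefficient as the pairing of $\bar k$ with $\dot{\bar A}$, showing the remaining surface contribution is pure Laurent. The only cosmetic difference is that the paper packages the computation using the Heaviside/delta-function representation $S^\varepsilon=\int_{\tilde M}\theta(\sigma/\tau-\varepsilon)\,(\cdots)$ on an extended manifold (so the boundary term appears as a $\delta$-function contribution), whereas you apply Stokes directly on $M_\varepsilon$; your explicit flagging of the $\tfrac{d}{dt}$--expansion commutation and the Noether argument for $\hat n^a k_a|_\Sigma=0$ are both sound and match what the paper uses implicitly.
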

\noindent

\smallskip

In the case that $(M,\cc ,\sigma)$ is Poincar\'e--Einstein it is not difficult, at least for low-lying values of $d$,  to 
express the holographic formula for the energy in terms of quantities intrinsic to~$\Sigma$, see Theorem~\ref{blabla}; this recovers the energy of~\cite{GoPeSl} when $d=7$.
We also construct 
a recursion to compute the obstruction current; this yields the following result.
\begin{theorem}\label{obsts}
Let  $(M^d,\cc,\sigma)$ be  asymptotically Poincar\'e--Einstein and $\nabla^A$ an asymptotically Yang--Mills connection. 
 Then the obstruction current $\bar k \in\Gamma(T^*\Sigma\otimes\End{\mathcal V}\Sigma[3-d])$ is given by
  $$
\bar k_b=
 \left\{
 \begin{array}{cl}
 \bj_b \, , & d=5\, ,\\[3mm]
  0 \, , & d=6\, ,\\[2mm]
 \tfrac12\bar \nabla^a
\Big(
 \bar \nabla_{[a} {\bj}_{b]} 
 - 4\bar P_{[a}{}^{c} \bar F_{b]c}
-\bar J \bar F_{ab}
\Big)
+\frac14 [\bj^a,\bar F_{ab}] 
\, ,& d=7\, ,\\[3mm]
0\, , & d=8,10,12,\ldots \, .
\end{array}\right.
$$
\end{theorem}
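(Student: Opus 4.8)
The plan is to establish the formula for $\bar k$ by running the asymptotic recursion of Theorem~\ref{recurse} all the way to the obstruction order $\ell=d-4$, specialized to the asymptotically Poincar\'e--Einstein setting, and then reading off the leading boundary term. Concretely, I would first fix a representative metric $\go$ adapted to $\sigma$ (so that near $\Sigma$ one has a geodesic normal form $\go = r^{-2}(\ext r^2 + h_r)$ with $h_r$ the one-parameter family of boundary metrics), and write the conformally compact Yang--Mills current in the Fefferman--Graham-type coordinates used in Section~\ref{Apollosrevenge}. The curvature is expanded $F = F^{(0)} + r F^{(1)} + r^2 F^{(2)} + \cdots$ with the tangential data at $r=0$ fixed by the boundary condition $\bar\nabla^{\bar A}$; the recursion of Theorem~\ref{recurse} solves $j[A]={\mathcal O}(\sigma^\ell)$ order by order, each step being a linear algebraic equation for the next coefficient whose ``source'' is built from lower-order data, the second fundamental form, and the ambient curvature. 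The asymptotically Poincar\'e--Einstein hypothesis enters by killing the trace-free Schouten contributions to order ${\mathcal O}(r^{d-3})$, so that in low $d$ the only surviving pieces of the source at the critical order are the ones written in the statement: the boundary Yang--Mills current $\bar\jmath := \bar\nabla^a \bar F_{ab}$ (denoted $\bj$ in the paper), its skew derivative $\bar\nabla_{[a}\bar\jmath_{b]}$, the Schouten-contracted curvature $\bar P_a{}^c \bar F_{bc}$, the trace $\bar J \bar F_{ab}$, and the quadratic term $[\bar\jmath^a,\bar F_{ab}]$ coming from the nonlinearity of $F$ in $A$.

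The dimension-by-dimension structure should then emerge as follows. For $d=5$, the obstruction sits at order $\ell=1$, so only one recursion step past the boundary connection is needed; the source is $\bar\jmath_b$ up to normalization, matching the known Maxwell leading behavior $\bar\Delta^{\frac{d-5}{2}}\bar\delta\bar F = \bar\delta\bar F$ and its Yang--Mills generalization noted after Corollary~\ref{forgotten-soap}. For $d=7$, one pushes the recursion three steps: here a second application of a boundary Laplace-type operator appears (consistent with the leading term $\bar\Delta\,\bar\delta\bar F$), together with curvature-corrected commutator terms, yielding the displayed combination $\tfrac12\bar\nabla^a(\bar\nabla_{[a}\bar\jmath_{b]} - 4\bar P_{[a}{}^c\bar F_{b]c} - \bar J\bar F_{ab}) + \tfrac14[\bar\jmath^a,\bar F_{ab}]$. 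For the even boundary dimensions $d=6,8,10,\dots$ the vanishing $\bar k=0$ is essentially a parity statement: the obstruction current is a nonzero multiple of the functional gradient of $\overline{\operatorname{En}}[\bar A]$ by Theorem~\ref{fruit=vary}, and Theorem~\ref{blabla} already asserts $\overline{\operatorname{En}}[\bar A]=0$ identically for all even $d\geq 6$; since the variation of the zero functional is zero, $\bar k=0$. Alternatively one argues directly from the recursion that the critical-order source is a total variation that vanishes on Poincar\'e--Einstein backgrounds in those dimensions, but invoking Theorem~\ref{fruit=vary} is the cleaner route and I would present it that way, restricting the explicit recursion computation to the odd cases $d=5,7$.

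For the odd cases the key technical inputs are: (i) the explicit form of the indicial/characteristic-exponent operator from Section~\ref{Apollosrevenge}, which tells us that at order $\ell=d-4$ the operator acting on the would-be next coefficient degenerates, so the source must lie in its image --- and the obstruction $\bar k$ is precisely the projection of the source onto the cokernel; (ii) the commutation identities relating $I.D$ (the Laplace--Robin operator) acting in the bulk to boundary operators $\bar\Delta$, $\bar\delta$, $\bar\nabla$, which are needed to translate the holographic formula~\nn{HolFor} or the bulk recursion output into intrinsic boundary language; and (iii) the Poincar\'e--Einstein simplification of the bulk Schouten tensor and its normal derivatives in terms of $\bar P$, $\bar J$ and the boundary metric. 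I would organize the $d=7$ computation by expanding $I.D^{2}\operatorname{Tr}F^2$ (or equivalently running two extra recursion steps), commuting $I.D$ past $F$ using the twisted Laplace--Robin identities, discarding terms that are ${\mathcal O}(r)$ or that vanish by the asymptotically Einstein condition, and collecting the boundary coefficients.

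The hard part will be the $d=7$ bookkeeping: keeping careful track of the normalization constants through the recursion, correctly handling the noncommutativity of $\bar\nabla^{\bar A}$ (which is what produces the commutator term $\tfrac14[\bar\jmath^a,\bar F_{ab}]$ and curvature corrections that would be absent in the Maxwell case), and verifying that all pieces not appearing in the stated formula genuinely cancel on the asymptotically Poincar\'e--Einstein background. A useful consistency check throughout is that the final $\bar k$ must be conformally invariant of the asserted weight $3-d$ and must reduce, upon abelianization and upon specializing to Poincar\'e--Einstein, to the known Maxwell obstruction of~\cite{BG,GLW}; any term surviving in the Yang--Mills case beyond the Maxwell answer must be built from commutators and hence vanish in the abelian limit, which sharply constrains the computation and helps catch errors.
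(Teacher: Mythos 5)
Your proposal is essentially the paper's approach: the paper proves the even-dimensional vanishing exactly as you propose, by combining $\overline{\operatorname{En}}[\bar A]=0$ from Theorem~\ref{blabla} with the variational identification of $\bar k$ in Theorem~\ref{fruit=vary}; and for $d=5,7$ the paper likewise uses the variational route to get the tensorial form of $\bar k$, then explicitly points out that the overall normalizations are not fixed by that argument and must be pinned down by the recursion --- which is precisely the bookkeeping concern you flag. The paper then supplies exactly that recursion (Proposition~\ref{predict} and the $d=5,6,7$ lemmas following it), so your plan to run the recursion to order $\ell=d-4$ in Fefferman--Graham coordinates, using the asymptotically Poincar\'e--Einstein hypothesis to simplify the Schouten contributions and keeping the commutator term from the nonlinearity, reproduces the paper's alternative computation in all essentials. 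One small slip of emphasis: you speak of ``expanding $I.D^{2}\operatorname{Tr}F^2$'' and ``running the recursion'' as equivalent, but the former computes the energy density (as in the proof of Theorem~\ref{blabla}) while the latter computes $\bar k$ directly; they agree only after invoking Theorem~\ref{fruit=vary} and fixing the constant of proportionality, which is the step the paper's Lemma~\ref{d=5O} and its $d=6,7$ analogues supply. Your abelianization and conformal-weight consistency checks are a sensible supplement but not part of the paper's argument.
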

\noindent
Here $\bj_b:=\bar \nabla^a \bar F_{ab}$ is the {\it boundary Yang--Mills current}.
The proofs of Theorems~\ref{Laurent} through~\ref{obsts} are given in Section~\ref{renorm}. 
The vanishing results for higher even dimensions $d$ in 
Theorems~\ref{blabla} and~\ref{obsts}
 rely on 
an evenness property of the asymptotics of conformally compact Yang--Mills connections when expressed in a canonical coordinate system. This is discussed in Sections~\ref{canexp} and~\ref{connexp}.

\medskip

Linking the above results we also consider the problem of transverse boundary operators and invariants. 
For  scalar  Laplacian eigenvector boundary problems on a 
conformally compact  manifold
it is useful to have conformally invariant boundary operators that probe transverse jets of purported solutions. The first example of such is the conformal Robin operator of~\cite{Cherrier,Escobar}
$$
\nabla_{\hat n}-w H^g
$$
acting on weight $w$ densities, where $H^g$ is the boundary mean curvature for the choice of metric $g\in\cc$ used to write the formula. See~\cite{Case,GPt,Blitz} for higher order examples and constructions, as well as Section~\ref{no} for explicit formul\ae\ on Poincar\'e--Einstein structures.
These ideas extend to the
Poincar\'e--Einstein filling problem where there are invariants, termed conformal fundamental forms~\cite{Blitz}, that probe transverse jets of the singular Yamabe metric on a conformally compact manifold, and whose vanishing is a necessary condition for this metric to be Poincar\'e--Einstein. The first non-trivial conformal fundamental form is the trace-free second fundamental form~\cite{LeBrun,Goal}, see~\cite{Blitz} for  higher order constructions.

It is natural to investigate Yang--Mills analogs of these operators and invariants.
By explicit computation we show that
there indeed exist connection-coupled and non-linear analogs of these operators, at least for transverse orders one through four
on Poincar\'e--Einstein backgrounds.
These operators act on bulk connections and are evaluated along the boundary. The first example 
is the normal component of the curvature tensor~$
F_{\hat n b}|_\Sigma\, ,
$
which probes the first transverse jet of the corresponding connection. The second example is
the conformally invariant, transverse order two, one-form valued, endomorphism field
$$(d-5)\big[\hat n^a  (\nabla_{\hat n} F_{ab})+2H^g F_{\hat n b}\big]\big|_\Sigma
-{\bj}_b\, .$$
The third and fourth examples are considerably more involved and are presented in Section~\ref{YMNO}. The displayed operator already exhibits many of the main features. In dimensions $d\geq6$, it vanishes when evaluated on solutions to Problem~\ref{yeswehavethem}, {\it i.e.} asymptotcally Yang--Mills solutions.
Hence it can be used to extract the second order coefficient $\frac1{2!(d-5)} \bj$  in the
canonical coordinate
 expansion of the connection. In the critical case $d=5$, the above operator returns the obstruction current of 
Theorem~\ref{obsts}. Moreover, if the latter vanishes, and one also imposes flatness of the boundary curvature~$\bar F$, the operator in square brackets above gives a conformally invariant expression for the non-local Neumann data determined by global solutions when such exist. 
This is the Yang--Mills analog of a phenomenon observed for the Poincar\'e--Einstein filling problem in~\cite{Blitzedagain}.

\smallskip

The boundary operators of~\cite{GPt,Case} can be viewed as a conformal encoding of the boundary data for higher order conformally invariant problems. These include the kernel of the Paneitz operator. Similarly, conformal fundamental forms encode boundary data for the Bach equation on conformally compact four manifolds as well as the Fefferman--Graham obstruction-flat equations in higher even dimensions.
The Yang--Mills boundary operators presented in Section~\ref{YMNO}
 play  the same {\it r\^ole} for 
 the higher  Yang--Mills 
equations extremizing the invariant energies presented in Theorem~\ref{wehaveenergy}.


\subsection{Riemannian and Vector Bundle Conventions}


Throughout  $M$ will denote a smooth, for simplicity oriented,  $d$-manifold.
 We denote the boundary~$\partial M$ by~$\Sigma$.
 In general smooth will mean $C^\infty$ and where not explicitly stated, all structures such as sections of bundles will assumed to be smooth, although in some cases we will deliberately point out the smoothness properties of given sections for emphasis.
 
The exterior derivative will be denoted $\ext$ and
tensors on  $M$ will most often be handled using an abstract index notation~\cite{Penrose} so that, for example, $T^{abc}_{de}$  denotes a section of the bundle $(\otimes^3 TM) \otimes (\otimes^2 T^*M)$. 
A metric tensor $g\in \Gamma(\odot^2 T^*M)$  is then denoted $g_{ab}$ and its inverse by $g^{ab}$, so that $g^{ac}g_{cb}=\delta_b^a$ is the identity endomorphism on $\Gamma(TM)$. Indices are raised and lowered in the standard way, so for example $v_a := g_{ab} v^b\in \Gamma(T^*M)$ when $v^b\in \Gamma(TM)$. We also employ  shorthand notations such as
$v^2:=g_{ab} v^a v^b$, $u.v=u_a v^a$ and
more generally
 $X^2:=X_{abc\cdots} X^{abc\cdots}$.
 When $v$ is a vector and $\mu$ a covector, the notations $X_{avc}$ denote $v^b X_{abc}$ and $X_{a\mu c}:=\mu_bg^{bd}X_{adc}$. 
Also, square brackets denote antisymmetrization (with unit weight) over a group of indices, so for example
$
X_{[ab}Y_{c]}:=\frac16(X_{ab}Y_c +
X_{bc}Y_a +
X_{ca}Y_b -
X_{ba}Y_c -
X_{ac}Y_b -
X_{cb}Y_a)
$.
Similarly round brackets denote symmetrization, so $X^{(a}Y^{b)}:=\frac12(X^a Y^b + X^b Y^a)$.
The trace-free part of a tensor with respect to a metric is denoted by a $\circ$ notation, so 
$
X_{(a}Y_{b)_\circ}:=\frac12(X_a Y_b+ X_b Y_a)-\frac1d g_{ab} X.Y\in \Gamma(\odot_\circ T^*M) $. The  volume element defined by a metric $g$ is denoted
$\ext {\rm Vol}(g)$.

\smallskip
The Levi--Civita connection of a metric $g$ on $M$ is denoted $\nabla$ (or $\nabla^g$ when disambiguation requires). The corresponding Riemann curvature tensor $R$ is defined by the commutator of connections acting on a vector $v\in \Gamma(TM)$, in the abstract index notation this reads
$$
[\nabla_a , \nabla_b] v^c := R_{ab}{}^c{}_d v^d\, .
$$
The Ricci tensor $Ric_{ab}:=-R_{ac}{}^c{}_b$ and is related to the Schouten tensor $P$ and its trace~$J:=P_a{}^a$ by
\begin{equation}
\label{Schouten}
Ric=:(d-2) P + g J\, .
\end{equation}
The trace-free Weyl tensor $W$ is then defined by the identity
\begin{equation}
\label{Weyl}
R_{abcd}=:W_{abcd}+g_{ac} P_{bd}
-g_{bc} P_{ad}
+g_{bd} P_{ac}
-g_{ad} P_{bc}\, ,
\end{equation}
and the
Cotton tensor
\begin{equation}
\label{Cotton}
C_{abc}:= \nabla_a P_{bc}-\nabla_b P_{ac}\, .
\end{equation}
Note also the useful identity
\begin{equation}\label{divWeyl}
\nabla^a W_{abcd}
=(d-3) C_{cdb}\, .
\end{equation}
We sometimes employ the divergence operator on differential forms, which we call $\delta$; also $\iota$ is used for interior multiplication. The rough Laplacian $\Delta:=g^{ab} \nabla_a \nabla_b$ and acts on tensors of any type. 

Given a vector bundle ${\mathcal V}M$ over $M$, we denote a composition $X\circ Y$ of endomorphisms by juxtaposition $XY$,
and denote the trace of such by $\Trace$ or simply $\operatorname{Tr}$. Note that, by convention, the zeroth power of an operator or endomorphism is taken to be the identity $\operatorname{ Id}$.
We recycle the symbol $\nabla$ for a connection $A$ on ${\mathcal V}M$, and write $\nabla^A$ when disambiguation is necessary. We also use $\nabla$ or $\nabla^A$ when this  connection is Levi-Civita coupled. The curvature $F^A$ of $\nabla^A$ is the endomorphism-valued two-form, defined by a commutator  of connections acting on sections of ${\mathcal V}M$
$$
[\nabla^A_a,\nabla^A_b]=F^A_{ab}\, .
$$
Note, for example, if $v^a\in \Gamma(TM\otimes{\mathcal VM})$, then
$$
[\nabla_a,\nabla_b] v^c = (\nabla_a\nabla_b -\nabla_b\nabla_a) v^c = R_{ab}{}^c{}_d v^d + F_{ab} v^c\, .
$$
Also, we employ a ${\mathcal O}$ notation where $X={\mathcal O}(f^k)$ means that the tensor field $X=f^k Y$ for some other smooth tensor $Y$. The notation  ${\mathcal O}(f^\infty)$ says there exists $Y$ such that $X=f^k Y$ for any positive integer of value of $k$.

\section{Conformal Geometry}
\label{CG}

A conformal manifold $(M,\cc)$ is the data of a smooth manifold  $M$ equipped with a class of conformally related metrics $\cc$, such that $g,g'\in \cc \Rightarrow g'=\Omega^2 g$ for some $0<\Omega\in C^\infty M$.
A key notion for a conformal manifold is that of a conformal density.
A conformal class of metrics determines a ray  
subbundle ${\mathcal G}M$ of~$\odot^2T^*M$ consisting of
 conformally related metrics in $\odot^2T^*_pM$  at each point $p$ in $M$.
The bundle ${\mathcal G}M$ is naturally an ${\mathbb R}^+$-principal bundle, and  the bundle~$\ce M[w]$ of {\it weight $w\in {\mathbb C}$ conformal densities} is then the associated vector bundle $$\ce M[w]:={\mathcal G}\times_{{\mathbb R}_+} {\mathbb R}\, ,$$ where~$t\in {\mathbb R}_+$ acts on $x\in {\mathbb R}$ according to $x\mapsto t^{-\frac w2} x$. Sections  of $\ce M[w]$ are thus equivalence classes $[g;f]=[\Omega^2 g;\Omega^w f]$ where $\cc =[g]$ and $f\in C^\infty M$. 
Each density bundle~$\ce M[w]$ is oriented
and, for $M$ oriented,  is isomorphic to the $-(w/d)$-th power of the top exterior power of the cotangent bundle.
 The Levi--Civita connection thus acts on  $\ce M[w]$.
For any vector bundle~$B M$ over $M$, we denote  the bundle $BM\otimes \ce M[w]$ by~$BM[w]$.

Densities afford a useful description of distinguished metrics, {\it viz.}
a strictly positive section $\tau_g\in \Gamma(\ce M[1])$ determines a metric $g\in  \cc$, or equivalently a section of ${\mathcal G}M$, characterized by ~$\tau_g =[g;1]$. Thus the product
$$
\bm g := \tau_g^2 g
$$
defines a tautological section of ${\mathcal G}M[2]$, termed the {\it conformal metric}.
Alternatively, the data  $0<\tau \in \Gamma(\ce M[1])$ determines a metric $g\in \cc$ via $g= \tau^{-2} {\bm g}$. The data $0<\tau\in \Gamma(\ce M[1])$ is called a {\it true scale} and, along similar lines, the choice of a metric~$g\in \cc$ can be referred to as a {\it choice of scale}. Often, it will be convenient to perform 
(Riemannian geometry) 
computations for a choice of $g\in \cc$. For that,  given a section $B\in \Gamma(BM[w])$, we obtain a section $B/\tau_g^{w}\in \Gamma(BM)$, for which we recycle the notation $B$ when the choice of $g$ used has been made clear (often by the notation $\stackrel g=$). This maneuver is often termed ``trivializing density bundles''. Note that the density-valued volume element defined by a conformal metric $\bm g$ will be denoted $\ext {\rm Vol}(\bm g)$. 

\subsection{Conformally Compact Manifolds} \label{setup}

%
%

It is advantageous to describe conformally compact structures in a way that brings the conformal geometry of their interior to the fore,
as encapsulated by the following definition.

\begin{definition}
A {\em conformally compactified manifold} is a conformal $d$-manifold $(M,\cc)$ with boundary $\Sigma:=\partial M$ and  
 a non-negative section $\sigma\in \Gamma(\ce M[1])$  that {\em defines} $\Sigma$, meaning $$
\partial M = \sigma^{-1}(0)
\:\:\mbox{ 
and }\:\:
\nabla \sigma (P)\neq 0 \, ,\:\forall P\in \Sigma\,  ,
$$
where $\nabla$ is the Levi--Civita connection.
\hfill$\blacksquare$
\end{definition}
\noindent
Throughout, we focus on the case where $\Sigma=\partial M$ is a hypersurface.
The interior $M_+:=M\backslash\Sigma$ of a conformally compactified manifold is evidently equipped 
with a distinguished metric $\go \in \cc|_{M_+}$ given by
$$
\go_{ab} :=\frac{\bg_{ab}}{\sigma^2}\, .
$$
The boundary $(\Sigma,\cc_\Sigma)$ equipped with the conformal class of metrics induced by $\cc$ is termed the {\it conformal infinity} of the {\it singular metric} $\go$. The density $\sigma$ is often termed a {\it  defining density}. We often use either  a sub or superscript $\Sigma$ for quantities defined along the hypersurface $\Sigma$. Also, a bar notation will  be employed for the special case of quantities that can be defined intrinsically along $\Sigma$, meaning that their metric dependence is only through the induced metric 
$\bar g$ of $\Sigma$.

\medskip
The following lemma leads to a useful alternative characterization of a conformally compact structure.
\begin{lemma}\label{II}
Let $(M,\cc,\sigma)$ be a conformally compact structure with hypersurface boundary $\Sigma=\partial M$. Then the smooth function defined, for any $g\in \cc$, by
$$
I^2:=
|\ext \sigma|_{g}^2 - \frac {2\sigma}d (\Delta^{g} \sigma + J^{g} \sigma) 
$$
is conformally invariant, and in the interior $M_+$ equals $-Sc^{\go}/\big(d(d-1)\big)$. 
 \end{lemma}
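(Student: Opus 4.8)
The plan is to verify the two assertions of Lemma~\ref{II} separately: first the conformal invariance of $I^2$, and then its interior identification with $-Sc^{\go}/\big(d(d-1)\big)$. For the invariance, I would fix $g\in\cc$ and a conformal rescaling $\hat g=e^{2\omega}g$, and note that as a density $\sigma$ is unchanged while its representative function transforms by $\sigma\stackrel{\hat g}{=}e^{\omega}\sigma$ (using the weight-one transformation rule from Section~\ref{CG}). Then it is a routine but bookkeeping-heavy computation using the standard conformal transformation laws for the Levi--Civita connection, the Laplacian on weight-one densities, and the Schouten scalar $J\mapsto e^{-2\omega}\big(J-\Delta\omega-\tfrac{d-2}{2}|\ext\omega|^2\big)$ to check that the combination $|\ext\sigma|_g^2-\tfrac{2\sigma}{d}(\Delta^g\sigma+J^g\sigma)$ is a weight-zero conformal invariant. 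This is most cleanly done by recognizing that $I^2$ is (up to normalization) the squared length $\tfrac{1}{d^2}I_AI^A$ of the \emph{scale tractor} $I_A=\tfrac1d\DD_A\sigma$ associated to $\sigma$, which is manifestly conformally invariant because the Thomas $\DD$-operator and the tractor metric are; invoking this makes the first claim essentially immediate and is the approach I would favor in the write-up, deferring the brute-force check to a remark.

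Second, for the interior identification, I would work in the scale $g=\go=\sigma^{-2}\bg$ on $M_+$, in which $\sigma\stackrel{\go}{=}1$, so $\ext\sigma=0$ and $\Delta^{\go}\sigma=0$, leaving $I^2\stackrel{\go}{=}-\tfrac{2}{d}J^{\go}$. It then remains to recall that a conformally compact metric whose conformal class carries a defining density with $I^2\equiv\text{const}$ has, in that const-value scale, constant $J$; more precisely $J^{\go}=-\tfrac{d}{2}\,I^2$, and since $Sc^{\go}=2(d-1)J^{\go}$ by \eqref{Schouten} (tracing gives $Sc=2(d-1)J$), we obtain $Sc^{\go}=-d(d-1)I^2$, i.e. $I^2=-Sc^{\go}/\big(d(d-1)\big)$ as claimed. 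The only genuinely substantive point is the relation $J^{\go}=-\tfrac d2 I^2$ on $M_+$, which follows directly from the displayed formula once $\sigma\stackrel{\go}{=}1$ is substituted, so in fact both halves reduce to unwinding definitions once the tractor-length interpretation is in hand.

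The main obstacle is purely organizational rather than deep: one must be careful that the formula for $I^2$ is written in an arbitrary scale $g$, not the singular scale $\go$, so the conformal-invariance verification cannot be shortcut by evaluating in $\go$ (which is only defined on $M_+$, not across $\Sigma$). The cleanest route is therefore to establish invariance abstractly via the scale tractor $I_A$ and its squared tractor-norm, and then separately evaluate in the scale $\go$ on $M_+$ to read off the scalar-curvature identity. I would also note in passing that this $I^2$ is exactly the quantity appearing in Theorem~\ref{Laurent}, so it is worth recording the tractor formula $d^2 I^2 = I_AI^A$ explicitly for later use.
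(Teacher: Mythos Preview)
Your proposal is correct and matches the paper's own treatment, which simply declares the result elementary and defers to~\cite{Goal}; both the direct conformal-transformation check and the scale-tractor interpretation you outline are standard ways to see this, and your interior evaluation in the scale $\go$ (where $\sigma\stackrel{\go}{=}1$) is exactly the right computation. One small normalization slip: with the usual convention $I_A=\tfrac{1}{d}D_A\sigma$ the tractor norm already gives $I_AI^A=I^2$, not $d^2I^2$, so drop the $d^2$ in your closing remark.
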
 

\begin{proof}
The proof is elementary, see for example~\cite{Goal}.
\end{proof}

Paying homage to its tractor calculus origins~\cite{BEG,GN}, we have denoted the above smooth extension of $-Sc^{\go}/\big(d(d-1)\big)$ to $M$  
by~$I^2$.
 Indeed,~$I^2$  is non-vanishing along the zero locus $\partial M$ of~$\sigma$
for   conformally compact structures with hypersurface boundary and is therefore positive in a neighborhood thereof. In fact, without loss of generality, we may assume this holds everywhere in $M$.
We will also need a 
 related 
  operator
 \begin{equation}\label{ID}
 I.D : \Gamma({\mathcal V}M[w])\ni v \stackrel g \mapsto 
 \big[ (d+2w-2) (\nabla^g_n + w \rho) - \sigma (\Delta^g + wJ^g)\big] v \in {\mathcal V}M[w-1]\, .
 \end{equation}
The above is defined for some $g\in \cc$. Also
 $n:=\ext \sigma$ and $\rho := -\frac 1d (\Delta^g  + J^g) \sigma$. 
 Its conformal invariance is a standard result in tractor calculus relying on a ``strong invariance property'' of the so-called Thomas $D$-operator, see for example~\cite{Goal,GW,Goann}. An explicit verification is also not difficult to carry out.
 Note that in the interior~$M_+$, 
 \begin{equation}\label{intID}
 I.D\stackrel {\go}=-\Delta^{\go}-\frac{2J^{\go}}d w(d+w-1)\, .
 \end{equation}
 Along  the boundary, the operator 
  \begin{equation}\label{Robin}
 \delta^{(1)}\hh  v :\stackrel g= ( \nabla_{\hat n} v -w H^g v)\big|_\Sigma\in{\mathcal V}\Sigma[w-1]
 \end{equation}
is a conformally invariant, first order, Robin-type operator generalizing a result of~\cite{Cherrier}.
Because along $\Sigma$ we have that $I.D\stackrel\Sigma = (d+2w-2) \delta^{(1)}$ and by virtue of Equation~\nn{intID},
we term the $I.D$ operator the  {\it (Yang--Mills twisted) Laplace--Robin operator}. The following property   this operator underlies the  ${\mathfrak sl}(2)$ solution generating algebra employed in~\cite{GW} to study comformally compact boundary problems.
 \begin{lemma}\label{sl2}
 Let $(M,\cc,\sigma)$ be a conformally compact structure and $\nu \in \Gamma({\mathcal V}M[w])$. Then
 $$
 I.D ( \sigma \nu) - \sigma I.D \nu = I^2 (d+2w) \nu\, .
 $$
 \end{lemma}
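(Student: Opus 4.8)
The plan is to prove Lemma~\ref{sl2} by a direct computation in a chosen scale $g\in\cc$, using the explicit formula~\nn{ID} for $I.D$ and tracking the commutator with multiplication by $\sigma$. Since the claimed identity is conformally invariant (both sides transform the same way under change of scale, $I.D$ and $I^2$ being conformally invariant and $\sigma\in\Gamma(\ce M[1])$), it suffices to verify it for one representative $g$, and it is cleanest to keep $g$ arbitrary and let the algebra do the work. First I would write, for $\nu\in\Gamma({\mathcal V}M[w])$, that $\sigma\nu\in\Gamma({\mathcal V}M[w+1])$, so that
\begin{equation*}
I.D(\sigma\nu)\stackrel g=\big[(d+2w)(\nabla_n+(w+1)\rho)-\sigma(\Delta+(w+1)J)\big](\sigma\nu)\, ,
\end{equation*}
while
\begin{equation*}
\sigma\, I.D\,\nu\stackrel g=\sigma\big[(d+2w-2)(\nabla_n+w\rho)-\sigma(\Delta+wJ)\big]\nu\, .
\end{equation*}

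Next I would expand the derivative terms using the Leibniz rule: $\nabla_n(\sigma\nu)=(\nabla_n\sigma)\nu+\sigma\nabla_n\nu=|\ext\sigma|_g^2\,\nu+\sigma\nabla_n\nu$, since $n=\ext\sigma$ so $\nabla_n\sigma=g^{ab}\nabla_a\sigma\,\nabla_b\sigma=|\ext\sigma|_g^2$; and $\Delta(\sigma\nu)=(\Delta\sigma)\nu+2\nabla_n\nu+\sigma\Delta\nu$, where the cross term is $2g^{ab}(\nabla_a\sigma)(\nabla_b\nu)=2\nabla_n\nu$. Substituting these in, the terms proportional to $\sigma\nabla_n\nu$, to $\sigma^2\Delta\nu$, and to $\sigma\nu$-with-a-$J$ are designed to reorganize into $\sigma\,I.D\,\nu$ plus a remainder; the remainder should be purely algebraic (zeroth order) in $\nu$. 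Concretely, collecting the $\nu$ coefficient I expect
\begin{equation*}
I.D(\sigma\nu)-\sigma\, I.D\,\nu=\Big[(d+2w)|\ext\sigma|_g^2+(d+2w)(w+1)\sigma\rho-\big((d+2w-2)+\ldots\big)\sigma\rho\,w-\ldots\Big]\nu\, ,
\end{equation*}
and the task is to show the bracket collapses to $I^2(d+2w)$. Here I would substitute $\rho=-\tfrac1d(\Delta\sigma+J\sigma)$, so that $\sigma\rho=-\tfrac\sigma d(\Delta\sigma+J\sigma)$, and recall from Lemma~\ref{II} that $I^2=|\ext\sigma|_g^2-\tfrac{2\sigma}d(\Delta\sigma+J\sigma)=|\ext\sigma|_g^2+2\sigma\rho$.

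The main obstacle — really the only place care is needed — is the bookkeeping of the coefficients: several terms of the form $\sigma\rho\,\nu$ and $\sigma^2 J\nu$ arise with different numerical prefactors from the two pieces, and one must verify they combine to exactly $(d+2w)(|\ext\sigma|_g^2+2\sigma\rho)=(d+2w)I^2$ with no residual $\sigma^2 J$ or $\sigma(\Delta\sigma)$ term surviving. I anticipate that the coefficient of $|\ext\sigma|_g^2$ is immediately $(d+2w)$ from the single cross-term in $\nabla_n(\sigma\nu)$, and that the $\sigma\rho$ terms — one contribution $(d+2w)(w+1)\sigma\rho$ from the $\rho$-term of $I.D(\sigma\nu)$, minus $(d+2w-2)w\sigma\rho$ from $\sigma\,I.D\,\nu$, plus the contribution $-(d+2w)\,2\sigma\rho$... wait, more carefully, the $-\sigma(\Delta+(w+1)J)(\sigma\nu)$ term yields $-\sigma(\Delta\sigma)\nu-2\sigma\nabla_n\nu-\sigma^2\Delta\nu-(w+1)\sigma^2 J\nu$, and $-\sigma(\Delta\sigma)\nu=d\sigma\rho\,\nu+\sigma^2 J\nu$ — combine to give $2(d+2w)\sigma\rho\,\nu$ after using $J$-term cancellations. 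Once the coefficient of $|\ext\sigma|_g^2$ and of $\sigma\rho$ both equal $(d+2w)$, the identity follows from $I^2=|\ext\sigma|_g^2+2\sigma\rho$. I would present this as a short explicit computation, remarking that it is the defining relation of the $\mathfrak{sl}(2)$ algebra with $\sigma$, $I.D$, and the weight operator as generators.
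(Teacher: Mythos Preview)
Your approach is correct and is essentially the same as the paper's (which defers to~\cite[Lemma 3.1]{GW}, where precisely this direct scale computation is carried out): expand $I.D(\sigma\nu)$ and $\sigma\,I.D\,\nu$ using~\nn{ID}, apply Leibniz to $\nabla_n(\sigma\nu)$ and $\Delta(\sigma\nu)$, and collect the zeroth-order terms using $\rho=-\tfrac1d(\Delta\sigma+J\sigma)$ to recognize $(d+2w)(|n|^2+2\sigma\rho)=(d+2w)I^2$. Your exposition wobbles in the middle (the ``wait, more carefully'' paragraph), but the computation you outline is exactly right; in a final write-up just present the coefficient bookkeeping cleanly rather than narrating the false starts.
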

 
 \begin{proof}
 The proof follows, {\it mutatis mutandis}, that of~\cite[Lemma 3.1]{GW}. 
 \end{proof}
 
 \subsection{Conformal Variations and Basic Invariants}\label{LCVs}

We will need to study the conformal invariance of  tensors built from various combinations of hypersurface invariants coupled to some connection. 
The  trace-free part $\IIo_{ab}:=\II_{(ab)_\circ}$  of the second fundamental form 
$$\II_{ab}:=\nabla^\top_a \hat n_b^{\rm ext}\big|_\Sigma
$$
defines a section of $\odot_\circ T^*\Sigma[1]$.
Here $\hat n^{\rm ext}$ is any smooth extension of the unit conormal of determined by the boundary hypersurface embedding $\Sigma\hookrightarrow (M,g)$ to~$M$, for some $g\in \cc$. The tangential derivative $\nabla^\top$ is the operator defined along $\Sigma$ by 
$$\nabla^\top:\stackrel\Sigma{=} \nabla -\hat  n \nabla_{\hat n}\, ,$$
and the symbol $\top$ applied to any tensor along $\Sigma$ indicates orthogonal projection by
$\delta^a_b - \hat n^a \hat n_b$.
Note that the trace-free second fundamental form and unit conormal are both invariants of the boundary conformal embedding $\Sigma\hookrightarrow(M,\cc)$, while the mean curvature trace $$H:=\frac1{d-1} \II_a^a$$ of the second fundamental form is not conformally invariant and  thus only gives an invariant of the Riemannian embedding $\Sigma\hookrightarrow(M,g)$.

\medskip

To study when a Riemannian invariant is also 
conformally invariant, 
 it will suffice to consider its linearized conformal variation. Thus
we define an operator~$\delta$ on tensor-valued functionals $T[g]$ of a metric~$g$, by
\begin{equation}\label{delta}
\delta T[g] := \frac{\ext T[e^{2t\varpi} g]}{\ext t}\Big|_{t=0}\, ,
\end{equation}
where $\varpi\in C^\infty M$. 
Thus conformal variations of the Levi-Civita connection, acting on a vector $v\in \Gamma(TM)$ 
and a covector $\mu\in \Gamma(T^*M)$,
obey
\begin{eqnarray}
\nabla^{\Omega^2 g}_a v^b
-
\nabla_a^g v^b\hh\; &=&\:\:\:\,
\Upsilon_a v^b +\delta^a_b v_\Upsilon - \Upsilon^b g_{ac} v^c \;\hh\hh\hh=\;\delta (\nabla_a^g v^b)\, ,
\nonumber\\[-2mm]
\label{deltanablas}
\\[-2mm]
\nabla^{\Omega^2 g}_a \omega_b - \nabla^g_a \omega_b^{\phantom a}
&=&
-\Upsilon_a \mu_b - \Upsilon_b \mu_a + g_{ab} \Upsilon^c \mu_c
\;=\;\delta(\nabla_a^g \mu_b)
\, .\nonumber
\end{eqnarray}
where $\Upsilon :=\ext \varpi$.

An important result of~\cite{Branson} is that $T[g]$ takes values in  conformally  densities of weight~$w$ if and only if its {\it linearized conformal variation} $\delta T[g]$ obeys
$$
\delta T[g]=w\hh \varpi T[g]\, .
$$
The argument of~\cite{Branson} can be applied {\it mutatis mutandis} to  invariants $T[g]$ of the  boundary conformal embedding $\Sigma\hookrightarrow(M,\cc)$, for which we must require 
$$
\delta  T[g]\stackrel\Sigma=w \hh \bar\varpi T[g]\, ,
$$ where $\bar \varpi = \varpi|_\Sigma$.
In the above $T[g]$ is any tensor defined along $\Sigma$ from the ambient metric~$g$. In particular $T$ may take values in the normal bundle and tensor powers thereof.
For example, if $s$ is any defining function for $\Sigma$, then $\hat n=\ext s/|\ext s|_g|_\Sigma$~and
$$
\delta \hat n = \delta\Big(\frac{\ext s}{|\ext s|_g}\Big|_\Sigma\Big)
=
\frac{\ext }{\ext t}
\Big(\frac{\ext s}{|\ext s|_{e^{2\varpi}g}}\Big|_\Sigma\Big)
\Big|_{t=0}
=\frac{\ext }{\ext t}(e^{t\bar\varpi} \hat n)|_{t=0}=\bar\varpi \hat n\, .$$
Hence
$$
\delta \bar g = \delta(g|_\Sigma- \hat n \otimes \hat n) = 2\bar \varpi\bar g\, .
$$
The above two displays establish that $\hat n\in \Gamma(T^*\Sigma[1])$, $\bar g \in \Gamma(\otimes^2T^*\Sigma[2])$, and that both $\hat n$ and $\bar g$ are hypersurface invariants. The induced metric~$\bar g$ is an intrinsic invariant of $\Sigma$. 

\medskip

By way of notational convenience, since we only deal with tensors of a definite conformal weight, when acting on a tensor $T[g]$ along $\Sigma$,
we define
$$\delta_{w} := \delta - w \bar \varpi\, ,$$
so that, for example, $\delta_{1}\hat n= 0 = \delta_{2}\bar g$, and
\begin{equation}\label{varyH}
\delta_{-1} H = 
 \Upsilon_{\hat n}\, .
 \end{equation}
  In general for any scalar $X$,
$$
\delta_{k} \bar \nabla_a X = \bar \nabla_a \delta_{k} X + k X \bar \Upsilon_a\, .
$$
 It follows that
 \begin{equation}\label{varyH1}\delta_{-1} \bar \nabla_a H = \bar \nabla_a \Upsilon_{\hat n}-H\bar \Upsilon_a 
 \Rightarrow \delta_{-3}\bar
 \Delta H
 =\bar \Delta \Upsilon_{\hat n}+\big((d-5)\bar\nabla_{\bar \Upsilon}
- (\bar \nabla^a \bar \Upsilon_a)
 \big) H
 \, ,
\end{equation}
where  $\bar \Upsilon := \Upsilon^\top|_\Sigma\in \Gamma(T^*\Sigma)$. 

The well-known conformal transformation law for  the Schouten tensor
\begin{equation}\label{shouting}
P^{\Omega^2 g}_{ab}-P^g_{ab} = -\nabla^g_a \Upsilon_b + \Upsilon_a\Upsilon_b -\frac12 g_{ab}|\Upsilon|_g^2\, .
\end{equation}
implies another set of useful variations
\begin{eqnarray}\label{Pnnvary}
\delta_{-2}P_{\hat n\hat n}\: \:&\stackrel\Sigma=&
-\hat n^a  (\nabla_{\hat n} \Upsilon_a)\, ,
\\
\label{gradPnnvary}
\qquad
\delta_{-2} \bar \nabla_a P_{\hat n\hat n}&\stackrel\Sigma=&
-2 \bar \Upsilon_a P_{\hat n\hat n}- \bar \nabla_a\big(
\hat n^b  (\nabla_{\hat n} \Upsilon_b)\big)\, ,
\\
\label{varyJdot}
\delta_{-3} \nabla_{\hat n} J \:\:&\stackrel\Sigma=& 
-\hat n^a \hat n^b \hat n^c\nabla_a \nabla_b \Upsilon_c
-\bar \Delta \Upsilon_{\hat n}
-H\big ((d-1)  \hat n^a \hat n^b \nabla_a  \Upsilon_b
-2\bar \nabla^a \bar \Upsilon_a\big)\, ,
\\\nonumber
&&
+\big((d-3) P_{\hat n\hat n} - \bar J+\tfrac32(d-1) H^2\big) \Upsilon_{\hat n}
-(d-3) \bar \nabla_{\bar \Upsilon}H\, .
\end{eqnarray}
The Schouten tensor variation also implies that
$$
\delta C_{abc} =- W_{abc\Upsilon}\, .
$$

\medskip

In addition we shall need 
 transformations of connection-coupled quantities.
The transformation of the boundary Yang--Mills current is
\begin{equation}\label{prairie_dog1}
\delta_{-2}\bj_a = (d-5) \bar F_{\bar\Upsilon a}\, . 
\end{equation}
An interesting  result is the following.
\begin{proposition}\label{walnut}
Let  $\bar \nabla$ be a connection on ${\mathcal V}\Sigma^{d-1}$. Then there is  a conformally  invariant operator
$$
\boxast:\Gamma(T^*\Sigma[2-\tfrac{d-1}2]\otimes \End{\mathcal V}\Sigma)
\to 
\Gamma(T^*\Sigma[-\tfrac{d-1}2]\otimes \End{\mathcal V}\Sigma)\, ,
$$
given by
$$
\boxast V_b
:
\stackrel g=
\bar\Delta V_b
-\tfrac4{d-1}\bar\nabla_b \bar \nabla^a V_a
-2\bar P^a{}_b  V_a
-\tfrac{d-3}2\bar J V_b \, .
$$
\end{proposition}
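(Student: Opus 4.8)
The plan is to verify directly that the linearized conformal variation of $\boxast V_b$ obeys the Branson-type criterion $\delta_{w}(\boxast V_b)\stackrel\Sigma=0$ with $w=-\tfrac{d-1}2$, given that $V_b$ transforms as a weight $2-\tfrac{d-1}2$ one-form-valued endomorphism field, i.e. $\delta_{2-(d-1)/2}V_b=0$. Since $\boxast$ is a polynomial in the Levi-Civita-coupled boundary connection $\bar\nabla$, the boundary Schouten tensor $\bar P_{ab}$, and its trace $\bar J$, the variation is computed term by term using the standard formulae collected in Section~\ref{LCVs}: the connection variations~\nn{deltanablas} adapted to $\Sigma$ (with $\bar\Upsilon$ in place of $\Upsilon$), the Schouten variation~\nn{shouting} restricted to $\Sigma$ giving $\delta_{-2}\bar P_{ab}=-\bar\nabla_a\bar\Upsilon_b+\bar\Upsilon_a\bar\Upsilon_b-\tfrac12\bar g_{ab}|\bar\Upsilon|^2$ and hence $\delta_{0}\bar J = -\bar\nabla^a\bar\Upsilon_a-\tfrac{d-3}2|\bar\Upsilon|^2$, together with the fact that the endomorphism part of $V_b$ is inert under conformal rescaling (the connection $\bar\nabla^{\bar A}$ on ${\mathcal V}\Sigma$ does not change, only its Levi-Civita coupling does).

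First I would record the weight bookkeeping: with $w_V := 2-\tfrac{d-1}2$, each term $\bar\Delta V_b$, $\bar\nabla_b\bar\nabla^aV_a$, $\bar P^a{}_bV_a$, $\bar J V_b$ lies in weight $w_V-2=-\tfrac{d-1}2$, so the target assignment is consistent and we must check the inhomogeneous ($\bar\Upsilon$-dependent, $V$-linear) part of $\delta$ cancels. Next I would compute $\delta(\bar\Delta V_b)$: writing $\bar\Delta=\bar g^{cd}\bar\nabla_c\bar\nabla_d$ and using that $\delta\bar g^{cd}=-2\bar\varpi\bar g^{cd}$ and the connection variation twice, one gets a collection of terms of the schematic form $\bar\Upsilon\cdot\bar\nabla V$, $(\bar\nabla\bar\Upsilon)\cdot V$, plus the contribution $2w_V\bar\varpi\,\bar\Delta V_b$ that is absorbed into $\delta_{w}$. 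I would do the analogous computation for $\delta(\bar\nabla_b\bar\nabla^aV_a)$, being careful that $\bar\nabla^aV_a=\bar g^{ac}\bar\nabla_cV_a$ already carries weight $w_V-2$ so its variation picks up the divergence term $\bar\nabla^a\bar\Upsilon_a$ with coefficient fixed by $w_V-2$; the outer $\bar\nabla_b$ then contributes one more connection-variation term. Finally $\delta_{-2}\bar P^a{}_b$ and $\delta_{0}\bar J$ from~\nn{shouting} feed the last two terms. Collecting everything, the coefficients $-\tfrac4{d-1}$, $-2$, $-\tfrac{d-3}2$ are precisely the values that force all $\bar\Upsilon$-dependent terms — those linear in $\bar\nabla\bar\Upsilon$ contracted with $V$, those quadratic in $\bar\Upsilon$, and those involving $\bar\Upsilon\cdot\bar\nabla V$ — to cancel pairwise. (One also uses a Ricci-type identity to commute $\bar\nabla$'s where the endomorphism curvature $\bar F$ and boundary Riemann curvature appear; these curvature terms cancel among themselves because the operator is manifestly natural, or can be absorbed using $\bar R_{abcd}$ in terms of $\bar P$ and the boundary Weyl tensor which does not vary.)

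The main obstacle I expect is purely organizational rather than conceptual: keeping track of the $\bar\Upsilon\cdot\bar\nabla V$ cross-terms, since $\bar\Delta V_b$ and $\tfrac4{d-1}\bar\nabla_b\bar\nabla^aV_a$ each produce several, with index contractions that look different but coincide after symmetrization, and the cancellation of the $\bar\nabla_b(\bar\nabla^a\bar\Upsilon_a)$-type term against the variation of $-\tfrac{d-3}2\bar J V_b$ hinges on getting the numerical coefficient $\tfrac4{d-1}$ exactly right. A clean way to organize this is to first establish the identity in a chosen scale at a point where $\bar\Upsilon=0$ but $\bar\nabla\bar\Upsilon\neq0$ (killing quadratic terms), fix all first-order-in-$\bar\Upsilon$ coefficients, and then separately check the $|\bar\Upsilon|^2$ terms; alternatively, one can verify it follows from the strong invariance of a tractor operator acting on the relevant weighted bundle over $\Sigma$, as alluded to for $I.D$. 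I would present the direct variational check as the main argument and remark that it is the boundary analog of the known conformally invariant operators on one-forms of this weight.
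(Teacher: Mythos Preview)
Your approach is sound and would establish the result, but it takes a different route from the paper. The paper's proof is a two-line appeal to the literature: the uncoupled operator (Levi--Civita only, acting on ordinary weighted one-forms) is already known to be conformally invariant from Branson~\cite{Branson0} and Deser--Nepomechie~\cite{DN}, and the coupling to the vector-bundle connection~$\bar\nabla^{\bar A}$ cannot spoil this because the Yang--Mills connection is conformally inert, so the conformal variation of each $\bar\nabla$ is governed solely by the Levi--Civita Christoffels and hence identical to the uncoupled case. Your direct linearized-variation check rediscovers this by hand; it is more self-contained and makes explicit why the particular coefficients $-\tfrac{4}{d-1},\ -2,\ -\tfrac{d-3}{2}$ are forced, whereas the paper's argument trades that transparency for brevity. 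Two small cleanups: first, since you invoke the Branson criterion (linearized variation suffices), there are no genuine quadratic-in-$\bar\Upsilon$ terms to track---those only arise in the full finite transformation, so your ``separately check the $|\bar\Upsilon|^2$ terms'' step is unnecessary. Second, no Ricci-type commutation is actually needed: the variation of $\bar\Delta V_b$ and of $\bar\nabla_b\bar\nabla^a V_a$ each produces a fixed list of independent tensor structures ($\bar\Upsilon^a\bar\nabla_a V_b$, $\bar\Upsilon_b\bar\nabla^a V_a$, $\bar\Upsilon^a\bar\nabla_b V_a$, $(\bar\nabla^a\bar\Upsilon_a)V_b$, $(\bar\nabla_b\bar\Upsilon^a)V_a$, etc.), and cancellation is pure linear algebra in those coefficients---neither $\bar F$ nor the boundary Riemann tensor enters.
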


\begin{proof}
This is a connection coupling of an operator discovered by  Branson, Deser and Nepomechie~\cite{Branson0,DN}. As it is only quadratic order in the connection,  the additional coupling does not spoil its invariance.
\end{proof}

A useful fact closely linked to the above is that
\begin{align*}
\boxast_{7}\hh  V_c 
&\stackrel g{:=} 
\bar\Delta V_c
-\frac23\bar\nabla_c \bar \nabla^a V_a
-
\tfrac{
2(d-4)}3\bar P^a{}_c  V_a
-2\bar J V_c\, ,\nonumber\\[-5mm]
\\
\boxast_{9}\hh  V_c 
&\stackrel g{:=} 
\bar\Delta V_c
-\frac12\bar\nabla_c \bar \nabla^a V_a
-
\tfrac{
d-5}2\bar P^a{}_c  V_a
-3\bar J V_c\, ,\nonumber
\end{align*}
have  conformal variations
\begin{align}
\delta_{-3}\big(
\boxast_{7} V_c \big)
&=(d-7)\big( \bar \nabla_{\bar \Upsilon} V_c
-\tfrac 23 \bar \Upsilon^a \bar \nabla_c  V_a\big)\, ,\nonumber\\[-5mm]
\label{varybox7}\\
\delta_{-4}\big(
\boxast_{9} V_c \big)
&=(d-9)\big( \bar \nabla_{\bar \Upsilon} V_c
-\tfrac 12 \bar \Upsilon^a \bar \nabla_c  V_a\big)
\, ,\nonumber
\end{align}
when $\delta_{-1} V$ and $\delta_{-2}V$ are respectively set to zero. Note that $\boxast= \boxast_7$ when $d=7$ and $\boxast= \boxast_9$ when $d=9$.
Also note that, given a choice of metric,  we will use  the operator~$\boxast$ defined by the second display in Proposition~\ref{walnut} at weights where it is not invariant.
\noindent

\section{The Conformally Compact Yang--Mills Equation}\label{CCYME}

Let $(M,\cc,\sigma)$ be a conformally compact structure and $\nabla^A$ a gauge connection  
on a vector bundle ${\mathcal V}M$. 
Then the (source-free) Yang--Mills equation on the interior~$M_+$ is given by the divergence-free condition
\begin{equation}\label{ccYM}
\go^{ab}\nabla^{\go }_a F_{bc}^{\phantom{\go}} = 0 \, .
\end{equation}
Recall $\nabla^{\go }_a$ is the Levi-Civita connection of the conformally compact metric $\go$ twisted by the connection $A$ while $F$ is the $\End{\mathcal V}M$-valued curvature two-form of the latter.

Remembering that on $M_+$ the conformally compact metric  $\go=\bg/ \sigma^2$,
by re-expressing the Levi--Civita connection of $\go$ in terms of that of $\bg$, we may rewrite the left hand side of Equation~\nn{ccYM}  as $\sigma\bg^{ab}\big( \sigma\nabla_a F_{bc}-(d-4) (\nabla_a \sigma) F_{bc}\big)=\sigma j_c$.
Here $ j_c\in \Gamma(T^*M\otimes \End{\mathcal V}M[-1])$ is termed the {\it  conformally compact Yang--Mills current}, and is defined by
\begin{equation}\label{YMC}
{ j_c}[A]:\stackrel g{=} \sigma \nabla^a F_{ac}-(d-4) F_{nc}\, ,\qquad n :\stackrel g{=}\ext \sigma\, .
\end{equation}
Critically, because the connection~$A$ is defined on all of~$M$, we have that $j_c$ is well-defined on~$M$,  
rather than just its interior~$M_+$,  and is a functional of connections $\nabla^A$. Therefore the equation
\begin{equation}\label{YM}
{ j}_c[A]=0\, 
\end{equation}
naturally extends the  Yang--Mills Equation~\nn{ccYM} to  the boundary.
We  term this the {\it conformally compact Yang--Mills equation}. Clearly any solution to Equation~\nn{YM} also solves~\nn{ccYM}.

\medskip
The following result will be important when studying
solutions to the conformally compact Yang--Mills equations by using distinguished coordinate systems---see Section~\ref{connexp}.

\begin{lemma}
Let $(M,\cc,\sigma)$ be a conformally compact structure with  a smooth connection~$\nabla^A$. Then (given $g\in \cc$)
\begin{equation}\label{BI}
\sigma \nabla^a j_{a}[A]\stackrel g=(d-3)\, j_n[A]\, . 
\end{equation}
\end{lemma}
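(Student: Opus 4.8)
The plan is to compute $\nabla^a j_a$ directly from the definition \nn{YMC} of the conformally compact Yang--Mills current and identify the terms. Working in a fixed scale $g\in\cc$, so that $n=\ext\sigma$, we have
$$
j_a \stackrel g= \sigma\,\nabla^b F_{ba} - (d-4)\,F_{na}\, .
$$
Taking a divergence and contracting,
$$
\nabla^a j_a \stackrel g= (\nabla^a\sigma)\,\nabla^b F_{ba} + \sigma\,\nabla^a\nabla^b F_{ba} - (d-4)\,\nabla^a F_{na}\, .
$$
The first main input is that $\nabla^a\nabla^b F_{ba}$ can be rewritten using the second Bianchi identity together with the Ricci/Weyl commutator: $\nabla^{[a}\nabla^{b]}F_{ba}$ produces curvature terms quadratic in $F$ and linear in $F$ (the twisted curvature $F_{ab}$ itself and the Riemann tensor acting on the two-form), while the symmetric part $\nabla^{(a}\nabla^{b)}F_{ba}$ vanishes because $F$ is antisymmetric. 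The second main input is that $\nabla^a F_{na}$ must be expanded: $\nabla^a F_{na} = (\nabla^a n^b)F_{ba} + n^b\nabla^a F_{ba}$, and here $\nabla_a n_b = \nabla_a\nabla_b\sigma$ is symmetric, so contracted against the antisymmetric $F^{ab}$ it drops out, leaving $\nabla^a F_{na} = n^b\nabla^a F_{ba} = \nabla^b F_{bn} + (\text{curvature})$. A cleaner route: rewrite $n^b\nabla^a F_{ba}$ and compare directly with $\sigma^{-1}(j_n + (d-4)F_{nn})$ — but $F_{nn}=0$ by antisymmetry, so $n^b\nabla^b F_{b\cdot}$-type contractions reassemble into $j_n$.

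The cleanest organizing step is to observe that all the genuinely quadratic-in-$F$ contributions and all the Riemann-curvature contributions must cancel, since both sides of \nn{BI} are manifestly first order in $F$ and linear in it. So I would first argue on structural grounds that such terms must cancel (or simply carry them along and watch them disappear), and then focus on the linear-derivative terms: $(\nabla^a\sigma)\nabla^b F_{ba}$ on one hand, and on the other the rewriting of $\sigma\nabla^a\nabla^b F_{ba}$ via Bianchi as $-\tfrac{1}{2}\sigma\nabla^a\nabla^b(F_{ab})$-type manipulations eventually yielding a term proportional to $\nabla_n(\nabla^b F_{b\cdot})$ or to $\nabla^a F_{na}$. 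Collecting: the $\sigma\nabla^a\nabla^b F_{ba}$ term, after moving one derivative past $\sigma$ where needed, contributes a piece $n^a\nabla^b F_{ba}$ (times some coefficient) plus $\sigma$ times something; the $(\nabla^a\sigma)\nabla^b F_{ba}=n^a\nabla^b F_{ba}$ term contributes directly; and the $-(d-4)\nabla^a F_{na}$ term contributes $-(d-4)\,n^a\nabla^b F_{ba}$ up to symmetric-tensor-kills-antisymmetric cancellations. Assembling the coefficients of $n^a\nabla^b F_{ba}=j_n/\sigma + (d-4)F_{nn}/\sigma = j_n/\sigma$ should give exactly $(d-3)$, and every remaining term should carry an explicit factor of $\sigma$ — but we want equality with no leftover $\sigma(\cdots)$, so in fact the leftover interior terms must themselves vanish, which is precisely the content of the interior Yang--Mills Bianchi-type identity $\nabla^a(\go^{bc}\nabla_b F_{ca})\equiv 0$ (the statement that the Yang--Mills current is automatically divergence-free with respect to $\go$).

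This last observation suggests the slickest proof: use that the \emph{interior} current $\go^{ab}\nabla^{\go}_a F_{bc}$ is identically divergence-free in the $\go$ metric — indeed $\nabla_{\go}^c(\go^{ab}\nabla^{\go}_aF_{bc})=\tfrac12 \go^{ab}\go^{cd}[F_{ac},F_{bd}]=0$ by antisymmetry and the Jacobi identity — then translate this conformally. We established earlier that $\go^{ab}\nabla^{\go}_a F_{bc}=\sigma j_c$ (as a weight $-1$ density equation), and taking the $\go$-divergence of $\sigma j_c$ and re-expressing $\nabla^{\go}$ in terms of $\nabla^g$ (exactly the conformal rescaling computation already used to derive \nn{YMC}) will produce $\sigma^{\text{(power)}}$ times a combination of $\sigma\nabla^a j_a$ and $j_n$; setting it to zero yields \nn{BI}. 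The main obstacle is bookkeeping: getting the conformal weight factors and the coefficient $(d-3)$ right when converting $\nabla^{\go}$-divergences of density-valued one-forms into $\nabla^g$-expressions, since the relevant density has weight $-1$ and the Christoffel-symbol differences $\go^{-1}\nabla\go$ involve $\nabla\log\sigma$. I would handle this by the same rescaling identity $\nabla^{\go}_a T_b = \nabla^g_a T_b - \Upsilon_a T_b - \Upsilon_b T_a + g_{ab}\Upsilon^c T_c$ with $\Upsilon_a=-\nabla_a\log\sigma=-n_a/\sigma$, applied once to $T_b=\sigma j_b$, carefully tracking the weight and the trace term, and then contracting with $\go^{ab}=\sigma^2 g^{ab}$. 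The arithmetic of the $d$-dependent coefficients is routine once the setup is fixed.
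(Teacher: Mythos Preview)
Your first approach is essentially the paper's proof, but you make it much harder than necessary. The paper simply invokes the Noether identity $\nabla^a\nabla^b F_{ab}=0$ as a single fact (your symmetric/antisymmetric decomposition is one way to see this, and the curvature and $[F,F]$ terms it produces vanish \emph{individually}---they do not need to cancel against anything else). With that in hand the paper computes in one line
\[
\nabla^b j_b \;=\; n^b\nabla^a F_{ab} \;-\; (d-4)\nabla^b F_{nb} \;=\; -(d-3)\nabla^b F_{nb}\,,
\]
using $\nabla^b F_{nb} = n^a\nabla^b F_{ab} = -\,n^b\nabla^a F_{ab}$ (the extra term $(\nabla^b n^a)F_{ab}$ vanishes because $\nabla_a n_b=\nabla_a\nabla_b\sigma$ is symmetric). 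Multiplying by $\sigma$ and using the definition of $j$ again,
\[
\sigma\nabla^a j_a \;=\; (d-3)\,\sigma\, n^b\nabla^a F_{ab} \;=\; (d-3)\big(j_n + (d-4)F_{nn}\big) \;=\; (d-3)\,j_n\,.
\]
That is the entire argument. Your concern about tracking quadratic-in-$F$ and Riemann contributions, and your structural argument that they ``must cancel'', is unnecessary: they all sit inside the single term $\sigma\nabla^a\nabla^b F_{ab}$, which is identically zero.

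Your second route---pulling back the $\go$-divergence-freeness of the interior Yang--Mills current and conformally rescaling---is a legitimate alternative and conceptually pleasing, but it requires careful weight-tracking for the Christoffel differences and is strictly more bookkeeping than the three-line direct computation above. The paper does not take this route.
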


\begin{proof}
Throughout this proof we compute in some scale $g\in\cc$.
The result of the lemma is a direct consequence of the Bianchi/Noether identity 
$$
\nabla^a \nabla^b F_{ab}=0\, .
$$
In detail, observe that using this and Equation~\nn{YMC},
$$
\nabla^b j_b[A]= n^b \nabla^a F_{ab}
-(d-4) \nabla^b F_{nb}=-(d-3)\nabla^b F_{nb} \, .
$$ 
The last equation holds because $\nabla_{[a} n_{b]}=0$. 
Multiplying the above by $\sigma$ and using again the definition of $j_a[A]$ we have
$$
\sigma \nabla^a j_a[A]=(d-3) \sigma n^b \nabla^a F_{ab} =(d-3)n^a(j_a[A]+(d-4) F_{na})\, .
$$
Clearly $F_{nn}=0$.
\end{proof}

\subsection{Maxwell System on Hyperbolic Space}\label{Maxwell}

An instructive model problem is that of Maxwell's equations on the Poincar\'e ball~${\mathbb B}^d$. 
In this situation a Fourier analysis allows a detailed description of solutions. Such a study was originally carried out in early work on the Anti de Sitter--conformal field theory correspondence in the physics literature~\cite{Gubser}. 
Here we give a more careful mathematical account focussing on the Dirichlet-to-Neumann map. Our starting point is the 
  upper-half space  model ${\mathbb H}^d$ for the Poincar\'e ball. In particular we study solutions which make (global) sense  on the one-point compactification of ${\mathbb H}:=\{ (y,\vec x)\hh|\hh  y> 0, \vec x\in {\mathbb R}^{d-1}\}$. 
The (singular) metric in this model is given by 
$$
\go=\frac{\ext{}y^2+\ext{}\vec x^{\hh 2}}{y^2}=:\frac g{y^2}\, .
$$
In the above formula, and in what follows, we  use standard vector calculus notations for vector-valued quantities on ${\mathbb R}^{d-1}$.
Also, $g$ is the upper half plane Euclidean  metric.
Away from the  $y\to \infty$ ``north pole'' of the Poincar\'e ball,  the defining density $\sigma$ for this conformally compact geometry is given by $\sigma=[g;y]$.

\medskip
In its potential form, the Maxwell system is
$$
\mathring{\delta} \ext{} A = 0\, ,
$$
where $A\in \Gamma(T^*{\mathbb H})$ and $\mathring{\delta}$ is the divergence operator of  the singular metric $\go$;  this operator extends smoothly to the boundary. In terms of the compactified metric $g$, the above display becomes
\begin{equation}\label{y-ified}
y^{d-3} \delta \big(y^{4-d} \ext A\big)=0\, ,
\end{equation}
where $\delta$ is now the divergence operator of $g$.
Indeed, in the interior ${\mathbb H}$, the left hand side is $y\delta \ext A -(d-4) \iota_{\frac\partial{\partial y}} \ext A$ which extends to  the boundary $\partial {\mathbb H}$. Calling $F:=\ext A$, we have
$$
y\delta F -(d-4) \iota_{\frac\partial{\partial y}} F=0\, .
$$
The above is precisely Equation~\nn{YM}.
Note that the vector field $\frac\partial{\partial y}$ is defined everywhere on the Poincar\'e ball 
as well as  all its boundary points except the north pole.

Since we are considering potentials, we must deal with gauge degeneracies. In particular,
given 
any solution $A$ to the Maxwell system, then $A+ \ext \alpha$, where $\alpha$ is any smooth function, is also a solution. In particular let us suppose that a solution $A$ obeys the Dirichlet condition 
$$
A|_\Sigma=\bar A \in \Gamma\big(T^*{\mathbb H}\big|_{\partial {\mathbb H}}\big)\, .
$$ 
Henceforth, we shall study solutions whose boundary data is compactly supported away from the north pole
and view  $(y,\vec x)$ as coordinates on  a patch of the Poincar\'e ball that excludes this point.  
Then, given a solution $A$  in this coordinate patch, and setting
$$
\alpha=-y \beta\, ,
$$
where the function $\beta\stackrel {\partial {\mathbb H}}  = \iota_{\frac\partial{\partial y}} A 
$, it follows that
\begin{equation}\label{bbc}
\iota_{\frac\partial{\partial y}} (A+\ext{}\alpha)\stackrel {\partial {\mathbb H}}  =  0\, .
\end{equation}
In view of our compact support assumption, the above condition can be extended to hold  along the boundary  $\partial {\mathbb B}^d$ of the Poincar\'e ball.
Recalling that the orthogonal projection of~$T^*{\mathbb H}\big|_{\partial {\mathbb H}}$ with respect to the normal vector~$\frac{\partial}{\partial y}\big|_{\partial {\mathbb H}}$ is isomorphic to $T^*\partial {\mathbb H}$,   we set ourselves the problem of solving
$$
\left\{
\begin{array}{c}
\mathring{\delta} \ext{} A = 0\, ,\\[1mm]
A|_\Sigma = \bar A \in \Gamma(T^*\partial {\mathbb H})\, ,
\end{array}
\right.
$$
where the compactly supported boundary one-form $\bar A$ constitutes the Dirichlet data.

\smallskip

To solve this problem, it is useful to impose a ``temporal'' 
type gauge condition, meaning given a solution $A$, we search for $\alpha\in C^\infty {\mathbb H}$ such that the condition 
$$
\iota_{\frac\partial{\partial y}} (A+\ext{}\alpha)  =  0
$$
holds globally (rather than just along the boundary).
This condition  makes sense for any chart excluding the north pole.
We have already established in Equation~\nn{bbc}
that $\iota_{\frac\partial{\partial y}} (A+\ext{}\alpha)$
vanishes on all of ${\partial {\mathbb H}}$, and thus also at the north pole,
so the above requirement can be made globally well-defined.
Noting that $\iota_{\frac\partial{\partial y}}\ext{}\alpha  =
\frac{\partial\alpha}{\partial y}$, we see that this can be solved by setting $\alpha = -\int^y \iota_{\frac\partial{\partial y}} A$, so long as $\iota_{\frac\partial{\partial y}} A$ is $L^1$ integrable in $y$.
Thus we henceforth study potentials
\begin{equation}\label{Coulomb}
A=\vec A(y,\vec x)\cdot \ext{}\vec x \, ,
\end{equation}
where our boundary Dirichlet data is the one-form $\bar A=\vec A(0,\vec x)\cdot \ext{}\vec x$.  
Let us now assume that this is compactly supported and expressed as an inverse Fourier transform ${\mathcal F}^{-1}$ over ${\mathbb R}^{d-1}$ 
$$
\vec A(0,\vec x)={\mathcal F}^{-1}\big[\vec a\big(0,\vec k\big)\big]\, ,
$$
so that  
$$\Delta^{\!{\mathbb R}^{d-1}}{\mathcal F}^{-1}\big[\vec a\big(0,\vec k\big)\big]={\mathcal F}^{-1}\big[-\vec k^2\hh\vec a\big(0,\vec k\big)\big]\: \mbox{ and }\:
\vec \nabla {\mathcal F}^{-1}\big[\vec a\big(0,\vec k\big)\big]={\mathcal F}^{-1}\big[ i\vec k\otimes \vec a\big(0,\vec k\big)\big]\, .
$$
Then we look for a solution to the Maxwell problem of the form 
$$
\vec A(y,\vec x)={\mathcal F}^{-1}\big[\vec a\big(y,\vec k\hh \big)\big]\, .
$$
Note that solutions to the Maxwell system without performing a Fourier transform are studied in~\cite{Witten}. Indeed the inverse Fourier transform of the below Bessel-type solutions can be performed explicitly, see for example~\cite{Deser}.
Decomposing Equation~\nn{y-ified} into its parts normal and parallel to $\frac\partial{\partial y}$, gives a pair of equations for the Fourier coefficients~$\vec a(y,\vec k)$ 
\begin{align*}
0&=\ y \Big(\frac{\ext{}^2 \vec a}{\ext{}y^2}-\vec k^{\hh 2}\hh \vec a+ \vec k \hh\hh \vec k.\vec a\Big) -(d-4)\hh \frac{\ext{} \vec a}{\ext{}y}\, ,\\
0&=\ y\frac{\ext{}(\vec k. \vec a)}{\ext{}y}\, .
\end{align*}
The second equation implies that $i\vec k.\vec a = \varphi$ where $\varphi$ is some $y$-independent function of~$\vec k$.
However, note there is still the freedom to make further gauge choices, by searching for~$\beta$ independent of $y$ such that
$$
\vec \nabla.(\vec A + \vec \nabla \beta)=0\, .
$$
For that we must consider $\Delta^{{\mathbb R}^{d-1}} \beta
=-{\mathcal F}^{-1}[\varphi]$. Solving this Poisson equation, 
we may
proceed assuming that $\vec k.\vec a=0$. In other words, the solution~$\vec A$ is transverse, meaning~$\vec \nabla . \vec A=0$. It then remains to solve
\begin{equation}\label{bessellike}
0= y \Big(\frac{\ext{}^2 \vec a}{\ext{}y^2}-\vec k^{\hh 2}\hh \vec a \Big) -(d-4)\hh \frac{\ext{} \vec a}{\ext{}y}\, .
\end{equation}
The above-displayed ordinary differential equation is essentially Bessel's equation, so the results given below follow from classical special function theory (see, for example~\cite{Watson}).

The cases $d$ even and $d$ odd  exhibit differing behaviors.
We start with the former simpler case. For that we wish to solve  Equation~\nn{bessellike} for smooth $\vec a(y,\vec k\hh)$ as function of~$y\geq 0$, and subject either to a 
 Dirichlet or Neumann boundary condition:
 $$
 \vec a(0,\vec k)=\vec b_{\vec k}\, \qquad\mbox{ or }\qquad \frac1{(d-3)!} \, \frac{\ext{}^{n-2} \vec a}{\ext{}y^{d-3}}\Big|_{y=0}=\vec e_{\vec k}\, .
 $$
 These  boundary behaviors follow  from a standard characteristic exponent analysis by performed by rewriting  Equation~\nn{bessellike} as $\big(X(X-d+3)-k^2 y^2\big)\vec a=0$, where $X=y\frac{\ext{}}{\ext{}y}$ is the Euler vector field. 
The formal asymptotic solutions about $y=0$ are uniquely determined for each of the above boundary behaviors upon assuming that their expansions are, respectively, even or odd functions of $y$. The first few terms are 
$$
\vec a_{\rm D}(y)\stackrel{y\to 0}\sim \left(1 +\tfrac{1}{d-1} \tfrac{y^2\hh \overline \Delta}{2}  + \tfrac{1}{2!(d-1)(d-3)}\left(\tfrac{y^2\hh \overline \Delta}{2}\right)^{\! 2}
 + \tfrac{1}{3!(d-1)(d-3)(d-5)} \left(\tfrac{y^2\hh \overline \Delta}{2}\right)^{\! 3}  +\cdots\right) \vec b_{\vec k} $$
and
$$
\vec a_{\rm N}(y)\stackrel{y\to 0}\sim
y^{d-3} 
\left(1 -\tfrac{1}{d-1} \tfrac{y^2\hh \overline \Delta}{2}  + \tfrac{1}{2!(d-1)(d+1)}\left(\tfrac{y^2\hh \overline \Delta}{2}\right)^{\!2}\!
 - \tfrac{1}{3!(d-1)(d+1)(d+3)} \left(\tfrac{y^2\hh\overline \Delta}{2}\right)^{\!3}  +\cdots\right) \vec e_{\vec k}\, ,
$$
where we have used the suggestive notations $k:=|\vec k|$ and $ \overline \Delta :=-k^2$. (The above expressions are examples of solution generating operators 
of the type studied  in wider generality in~\cite{GW}.)
Both the above expansions can be encoded by generating functions: 
$$f_{\rm D}(y):= P\big((ky)^2\big)\cosh (ky)-ky\,  Q\big((ky)^2\big)\sinh (ky)$$
and
$$
 f_{\rm  N}(y):=\frac{(-1)^{\frac{n-3}2}(n-2)!! (n-4)!!}{k^{n-2}}\Big[P\big((ky)^2\big)\sinh (ky)-ky\, Q\big((ky)^2\big)\cosh (ky)\big)\Big]\, .$$
In the above $P$ and $Q$ are polynomials of degree $\lfloor (n-2)/4 \rfloor$ and $\lfloor (n-4)/4 \rfloor$, respectively, of the form
$$
P(z) = 1 + \tfrac{1}{2!}\tfrac{n-5}{n-4} \, z + \cdots\, ,\qquad
Q(z) =  1 + \tfrac{1}{3!}\tfrac{n-7}{n-4} \, z + \cdots\, .
$$
The higher order terms are unimportant in the current context, but generating expressions for these exist as well. For example, when $d=9$, we have a general asymptotic solution given by
\begin{multline*}
\vec a(y,\vec k)\stackrel{y\to 0}\sim\Big[(1+\frac13 (ky)^2)\cosh (ky)-(ky)\sinh (ky)\Big]\vec b_{\vec k}\\+
\frac{45}{k^5}\Big[\big((1+\frac13 (ky)^2)\sinh (ky)-ky\cosh (ky)\big)\Big]\vec e_{\vec k}\, .
\end{multline*}
The above is an asymptotic solution around $y\to 0$ for {\it any} choice of initial datum $\vec e_{\vec k}$ and $\vec b_{\vec k}$. However, observe that neither the coefficient function of $\vec b$ nor $\vec e$ is well-defined in the limit $y\to \infty$, but for 
the choice $\vec e_{\vec k}=-\frac1{45} k^{5}\hh \vec b_{\vec k}$
the exponentially growing behavior cancels.
In general, 
because the underlying problem is linear, one can search for a global solution~$\vec a_{\rm G}(y,\vec k)$  to the Dirichlet problem that is a smooth function of  $y\in {\mathbb R}^+$, 
 given by
$$
\vec a_{\rm G}(y,\vec k)=f_{\rm D}(y) \hh \vec b_{\vec k} + f_{\rm N}(y)\hh  \vec e_{\vec k}(\vec b_{\vec k})\, ,
$$
where the Neumann data $\vec e_{\vec k}$ is determined in terms of the Dirichlet data $\vec b_{\vec k}$. The  map $\vec b_{\vec k} \mapsto \vec e_{\vec k}\big(\vec b_{\vec k}\big)$ is termed a {\it Dirichlet-to-Neumann} map.
For this particular problem, the Dirichlet-to-Neumann map is given by the linear relation 
$$
\vec e_{\vec k}\big(\vec b_{\vec k}\big)=\frac{(-1)^{\frac{d-2}2}}{(d-3)!! (d-5)!!}{} (-\hh \overline \Delta)^{\frac{d-3}2}\, \vec b_{\vec k}\, ,
$$
which gives the global solution 
$$\vec a_{\rm G}(y) = \big[P\big((ky)^2\big)+ky \, Q\big((ky)^2\big)\big]\exp(-ky)\vec b_{\vec k}\, .$$

Now we consider the more subtle $d$ odd  case. For that we again use that Equation~\nn{bessellike} is essentially Bessel's equation, and thus from classical theory~\cite{Watson} has a pair of solutions
$$
\left\{
\begin{array}{l}
\vec a_{\rm G}(y,\vec k) = \frac{2}{(\frac{d-5}2)!}\big(\frac{ky}2\big)^{\frac {d-3}2} K_{\frac {d-3}2}(ky)\hh  \vec b_{\vec k}
\\[3mm] 
\phantom{f(y) } 
\!\!\! \stackrel{y\to 0}\sim \left(1
 +\tfrac{1}{d-5} \tfrac{y^2\hh \overline \Delta}{2}  + \tfrac{1}{2!(d-5)(d-7)}\left(\tfrac{y^2\hh \overline \Delta}{2}\right)^{\! 2}
+\cdots
+{\mathcal O}(y^{d-3})  +{\mathcal O}(y^{d-3}\log y) \right)\vec b_{\vec k}\, ,
\\[6mm]
\vec a_{\rm U}(y,\vec k) = 
  \big(\frac {d-3}2\big)!\big(\frac{2y}k\big)^{\frac {d-3}2} I_{\frac {d-3}2}(ky) \hh
\vec e_{\vec k} \\[3mm]
\phantom{f(y) } 
\!\!\! 
\stackrel{y\to 0}\sim
y^{d-3} 
\left(1 -\tfrac{1}{d-1} \tfrac{y^2\hh \overline \Delta}{2}  + \tfrac{1}{2!(d-1)(d+1)}\left(\tfrac{y^2\hh \overline \Delta}{2}\right)^{\!2}+\cdots \right) \vec e_{\vec k}\, .
\end{array}
\right.
$$
Any linear combination of the above two solutions gives a polyhomogeneous asymptotic solution around $y\to 0$. However,
only the Bessel function of type $K$ is finite as $y\to \infty$ so just the first of the two  above solutions 
is global. 
The terms of order $y^{d-3}$ and $y^{d-3}\log y$ in the expansion of the global solution $\vec a_{\rm G}$  are known and are given by
$$
\frac{(-1)^{\frac {d-5}2}\, y^{d-3}}{2^{d-4}\big(\tfrac {d-5}2\big)! 
\big(\tfrac {d-3}2 \big) !} \, 
\Big(
\log y
+\log k +\log \frac12 + \gamma
-\frac12\Big[1 + \frac 12 + \frac 13 + \cdots +\frac 1{\hh\frac {d-3}2\hh}\Big]
\Big)\, k^{d-3} \, \vec b_{\vec k}\, .
$$
The coefficient $$
{\sf P}:=-\frac{ \hh \overline \Delta^{\frac {d-3}2}}{2^{d-4}\big(\tfrac {d-5}2\big)! 
\big(\tfrac {d-3}2 \big) !}
$$
appearing next to the $y^{d-3} \log y$ term may, upon Fourier transformation, be viewed as the analog of the conformally invariant Laplacian  powers of Graham, Jennes, Mason, and Sparling (GJMS)~\cite{GJMS} that were understood in a scattering  context by Graham and Zworski~\cite{GZ}. Its appearance as the coefficient of the first log term is universally determined by the intrinsic boundary geometry, even in the case of the fully non-linear system on general conformally compact manifolds. Rearranging the global solution to read
\begin{multline*}
a_{\rm G}(y,\vec k)=
\Bigg(1
 +\tfrac{1}{d-1} \tfrac{y^2\hh \overline \Delta}{2}  + \tfrac{1}{2!(d-1)(d-3)}\left(\tfrac{y^2\hh \overline \Delta}{2}\right)^{\! 2}
+\cdots
+ \tfrac{1}{2^{\frac{d-5}2}\big(\big(\frac{d-5}2\big)!\big)^2}\left(\tfrac{y^{2}\hh 
\overline \Delta}{2}\right)^{\! \frac{d-5}2}\Bigg)\vec b_{\vec k}
\\[1mm]
\hspace{2cm}
y^{d-3}\log y \hh {\sf P}\hh\vec b_{\vec k}
+y^{d-3}\
\vec e_{\vec k }(\vec b_{\vec k})
+{\mathcal O}(y^{d-2}\log y)+{\mathcal O}(y^{d-2})
\, ,
\end{multline*}
the terms multiplying $y^{d-3}$,
$$
\vec e_{\vec k }(\vec b_{\vec k})= \Big(
\log \frac k2  + \gamma
-\frac12\Big[1 + \frac 12 + \frac 13 + \cdots +\frac 1{\hh\frac {d-3}2\hh}\Big]
\Big)\, {\sf P}\, \vec b_{\vec k}\, ,
$$
capture
 the global nature of the solution
 and constitute the Dirichlet-to-Neumann map.
It is interesting to study whether there exist boundary operators that 
extract this non-local data in both dimension parities.

\section{Boundary Problems and Asymptotics}\label{Apollosrevenge}

We now study the boundary asymptotics of  solutions $\nabla^A$ to the conformally compact Yang--Mills Equation~\nn{YM} on a conformally compact manifold $(M,\cc,\sigma)$. The allowed boundary behaviors of formal asymptotic solutions can be determined by a non-linear analog of a characteristic exponent analysis. This  will lead to the magnetic and electric problems discussed earlier.

 To study characteristic exponents, we consider the curvature ansatz
  \begin {equation}\label{ansatz}F^A=\sigma^\alpha \big(G + {\mathcal O}(\sigma)\big)\, ,\end{equation}
 where $0\neq G\in\Gamma(\wedge^2 T^*M\otimes \End{\mathcal V}M[-\alpha]) $ and $\alpha$ is some real number. Note that $\Gamma(BM[w])\ni X={\mathcal O}(\sigma^k)$ means $X=\sigma^k Y$ where $Y\in \Gamma(BM[w-k])$.
 Any solution for a curvature  must obey both a Bianchi identity and the conformally compact Yang--Mills equation
$$
\nabla_{[a} F_{bc]}=0 = \sigma \nabla^a F_{ab}-(d-4)F_{nb}\, ,
$$
so that Ansatz~\nn{ansatz} yields 
$$
\alpha n_{[a} G_{bc]}= {\mathcal O}(\sigma) = (\alpha-d+4) G_{nb}\, .
$$
When $d>4$, this implies two---at least at the level of formal asymptotics---boundary problems:\begin{eqnarray}
\label{magnetic} 
\alpha=0\, &\mbox{ and }&G_{\hat n b}\stackrel\Sigma=0\, ,\\
\label{electric} \alpha=d-4\, 
&\mbox{ and }& \hat n_{[a} G_{bc] }\stackrel\Sigma=0\, .
\end{eqnarray} 
Here Equation~(\ref{magnetic}) 
is the {\it magnetic problem} dealt with in Section~\ref{MP}, while~(\ref{electric}) is the {\it electric problem} of Section~\ref{EP}.

\subsection{Magnetic Problem}
\label{MP}

Let us consider connections on ${\mathcal V}M$ that extend smoothly to the boundary as connections. To that end, here and throughout the following, we assume that the manifold $M$ is equipped with a vector bundle $\pi : {\mathcal V}M\to M$ and so $\pi^{-1}\Sigma={\mathcal V}M|_\Sigma$ defines a vector bundle ${\mathcal V}\Sigma$ over the boundary $\Sigma$. Connections on~${\mathcal V}\Sigma$ are denoted by $\bar A$. 
Observe that when a connection $\nabla^A$ on ${\mathcal V}M$ extends smoothly to the boundary, 
so too does its curvature.
When $\bar \nabla^{\bar A}$ 
has non-vanishing curvature and 
is the boundary condition for a solution,
the boundary problem has 
  the  $\alpha=0$ behavior of  Equation~(\ref{magnetic}) above.
We now prove Theorem~\ref{recurse}
concerning the existence of solutions to the magnetic Problem~\ref{yeswehavethem}.

\begin{proof}[Proof of Theorem~\ref{recurse}]
Let us, as usual, choose some $g\in \cc$  and trivialize density bundles accordingly.
First we will establish  that  if 
\begin{equation}\label{nameme}
 j_a[A]\stackrel g=\sigma \nabla^b F_{ba} -(d-4) F_{na}=\sigma^\ell  k_a\, ,
\end{equation}
for any $\ell\in {\mathbb Z}_{\geq 0}$ when $d=3$, and any $d-3\neq \ell\in {\mathbb Z}_{\geq 0}$ for $d\geq 4$,
 then
$$\hat { n}^a { k}_a \stackrel\Sigma=0\, ,$$
where $\hat n = (n/|n|)|_\Sigma$.

\smallskip

The case $\ell=0$ is obvious from the definition of the Yang--Mills current $j$ in Equation~\nn{nameme}. Now, contracting Equation~\nn{nameme} with  $n$  gives
$$
 \sigma n^a\nabla^b F_{ba}= \sigma^\ell  k_n\, .
$$
Using again Equation~\nn{nameme} gives 
\begin{multline*}
(d-4)\hh\sigma n^a\nabla^b F_{ba}=
-(d-4)\hh\sigma \nabla^a F_{na}\\
={\sigma}\nabla^a\Big(
\sigma^\ell k_a -\sigma \nabla^b F_{ba}
 \Big)
=\ell\sigma^\ell k_n-\sigma n^a \nabla^b F_{ba}  + {\mathcal O}(\sigma^{\ell+1})
\, .
\end{multline*}
The first equality used that $\nabla_{[a}n_{b]}=0$
and the third relied on the Noether identity $\nabla^{a}\nabla^{b} F_{ba}=0$.
We now have the system of equations $$
\begin{pmatrix}\label{system}
 \ell& 3-d\\
1&-1
\end{pmatrix}
\begin{pmatrix}
\sigma^\ell k_n\\
\sigma n^a \nabla^b F_{ba}
\end{pmatrix}
={\mathcal O}(\sigma^{\ell+1})\, .
$$
Thus, whenever $\ell\neq d-3$, we conclude that 
$
\sigma^\ell k_n={\mathcal O}(\sigma^{\ell+1})\, ,
$
so the claim follows save for the special case $d=3$, which already holds from the $\ell=0$ analysis above.

\medskip

For $d\geq 4$ we now recursively construct a connection $A$ that satisfies
$$
j[A]=\sigma^{d-4} k\, ,
$$
for some $k\in \Gamma(T^*M[5-d]\otimes \End{\mathcal V}M)$.
For that, suppose $A_0$ is any connection  that obeys
$$\nabla^{A_0}_v\stackrel \Sigma  = \nabla_v^{{\bar A}}\, ,$$
for all $v\in \Gamma(T\Sigma)$.
Clearly such a connection always exists. Then, by its definition, the Yang--Mills current obeys
$$
j_a[A_0]=-(d-4) F_{na}+{\mathcal O}(\sigma)\, . 
$$
If $d=4$ we set $A=A_0$ and are done, else we study an improved connection
$$
\nabla^{A_1}=\nabla^{A_0} + \sigma a_1\, ,
$$
where the section $a_1$ of
$T^*M\otimes\End{\mathcal V}M$ is to be determined.
For that first note the general curvature identity for connections $\nabla^{A'}$ and $\nabla^A$ which differ by a one-form valued endomorphism $\sigma^\ell a\hh$:
$$
F_{ab}[A']=F_{ab}[A]+2 \nabla^A_{[a}(\sigma^\ell a^{\phantom{A}}_{b]})+\sigma^{2\ell} [a_a,a_b]\, .$$
Thus, using Equation~\nn{nameme} and the above display,
$$
j^a[A_1]=j^a[A_0]-(d-4) (n^2 a^{a}_1-n^a n_b a^b_1) +{\mathcal O}(\sigma)\, .
$$
From the $\ell=0$ version of the claim above we have 
$
j_a[A_0]=k_a
$
with $\hat n^a k_a\stackrel\Sigma=0$ so that~$k\stackrel\Sigma=k^\top$, where for any vector $v\in \Gamma(TM)$ we denote $v^\top:=(v-\hat n\hh  v_{\hat n})|_\Sigma$.
Thus, for $d\neq 4$, 
we may impose
$$
a_1^\top \stackrel\Sigma = \frac1{(d-4)|n|^2_g}\,  j[A_0] \, ,
$$
which achieves $j[A_1]={\mathcal O}(\sigma)$ for $\nabla^{A_1}=\nabla^{A_0}+\sigma a_1$.

Having seeded the recursive solution, we now suppose that 
$A_{\ell-1}$ obeys $$j[A_{\ell-1}]=\sigma^{\ell-1} k\, .$$
Again $k\in \Gamma(T^*M\otimes \End{\mathcal V}M)$ and by our above claim obeys $k\stackrel\Sigma= k^\top$.
Also, a similar computation to above shows that
\begin{equation}\label{diffo}
j_a[A+\sigma^\ell a] = j_a[A]   + \sigma^{\ell-1} \ell (\ell-d+3) (n^2 a_a - n_a a_n)+{\mathcal O}(\sigma^\ell)\, .
\end{equation}
Hence, if $0<\ell\leq d-4$, we can achieve
$j[A_\ell]={\mathcal O}(\sigma^\ell)$ by setting 
$\nabla^{A_\ell}=\nabla^{A_{\ell-1}}+\sigma^{\ell} a_\ell$,
where
\begin{equation}\label{solve}
a_\ell^\top\stackrel\Sigma = \frac1{\ell(d-3-\ell)|n|^2_g} \, j[A_{\ell-1}] \, .
\end{equation}

Finally, for the case $d=3$, the same argument as above holds, except there is no restriction on when  Equation~\nn{diffo} can be solved for the improvement term~$\sigma^\ell a$.

\end{proof}

\medskip
Our next result shows that any non-uniqueness of the asymptotic solution to~\ref{yeswehavethem} is modulo gauge equivalence.

\begin{theorem}\label{gaugeeq}
Let $\nabla^A$ and $\nabla^{A'}$ be asymptotically Yang--Mills connections with boundary data ${\bar A}$. Then there exists an invertible element $U\in \Gamma(\End{\mathcal V}M)$ such that 
$$
U|_\Sigma = \operatorname{Id}\, ,
$$
and
\begin{equation}\label{gaugey}
\nabla^{A'}=U^{-1} \circ \nabla^A \circ U +{\mathcal O}(\sigma^{d-3})\, .
\end{equation}

\end{theorem}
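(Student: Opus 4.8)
The plan is to construct the gauge transformation $U$ recursively, improving the agreement between $\nabla^{A'}$ and a gauge transform of $\nabla^A$ by one power of $\sigma$ at a time, in the same spirit as the proof of Theorem~\ref{recurse}. Fix a scale $g\in\cc$ and trivialize density bundles. Two standard facts will be used throughout: gauge covariance of the conformally compact Yang--Mills current, $j[A^V]=V^{-1}\circ j[A]\circ V$, so that any gauge transform by $V$ with $V|_\Sigma=\operatorname{Id}$ of an asymptotically Yang--Mills connection is again asymptotically Yang--Mills with the same boundary condition; and the leading behaviour of the connection one-form under a gauge transformation $V=\operatorname{Id}+\sigma^\ell u$, namely $A^V-A=\ell\,\sigma^{\ell-1}\,n\,u+\mathcal O(\sigma^\ell)$ with $n:=\ext\sigma$, whose leading term is purely normal.

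First I would set $a:=A'-A$; since the two connections share the boundary condition $\bar\nabla^{\bar A}$, its tangential part vanishes along $\Sigma$, so $a|_\Sigma$ is a normal covector times an endomorphism. One can then solve $n\,u_1|_\Sigma=a|_\Sigma$ and gauge $\nabla^A$ by $\operatorname{Id}+\sigma u_1$ to produce $A^{(1)}$ with $A'-A^{(1)}=\mathcal O(\sigma)$. Next I would iterate: assuming $A^{(\ell-1)}$ is a gauge transform of $A$ by an $\operatorname{Id}+\mathcal O(\sigma)$ factor with $A'-A^{(\ell-1)}=\sigma^{\ell-1}a^{(\ell-1)}$ and $2\le\ell\le d-3$, choose $u_\ell|_\Sigma$ solving $\ell\,n\,u_\ell|_\Sigma=a^{(\ell-1)}|_\Sigma$ and gauge by $\operatorname{Id}+\sigma^\ell u_\ell$ to obtain $A'-A^{(\ell)}=\mathcal O(\sigma^\ell)$. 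Running this up to $\ell=d-3$ gives $\nabla^{A'}=U^{-1}\circ\nabla^A\circ U+\mathcal O(\sigma^{d-3})$ with $U$ the product of all factors; then $U|_\Sigma=\operatorname{Id}$ and $U$ is invertible near $\Sigma$, extendable to a globally invertible section by a cutoff that alters the result only by terms supported off $\Sigma$. For $d=4$ only the base step is needed, and for $d=3$ the statement is vacuous.

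The step I expect to be the real content is showing that at each iteration the tangential part of the discrepancy $a^{(\ell-1)}$ vanishes on $\Sigma$ for $2\le\ell\le d-3$ — this is exactly where being asymptotically Yang--Mills is used. The argument should run as in Theorem~\ref{recurse}: both $j[A']$ and $j[A^{(\ell-1)}]$ are $\mathcal O(\sigma^{d-4})$, so their difference is, while from $A'=A^{(\ell-1)}+\sigma^{\ell-1}a^{(\ell-1)}$ and the current-discrepancy identity~\nn{diffo} this difference equals
\[
(\ell-1)(\ell-d+2)\,\sigma^{\ell-2}\big(n^2\,a^{(\ell-1)}_a-n_a\,a^{(\ell-1)}_n\big)+\mathcal O(\sigma^{\ell-1})\, .
\]
Since $\ell-2<\ell-1$ and $\ell-2\le d-5<d-4$, the $\sigma^{\ell-2}$ term cannot be absorbed into either remainder and must vanish on $\Sigma$, and because $(\ell-1)(\ell-d+2)\ne0$ on the range $2\le\ell\le d-3$ this forces $(a^{(\ell-1)})^\top|_\Sigma=0$, as needed. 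The only ways this could fail are at the degenerate exponents $\ell=1$ — which is why the base step relies on the boundary-data hypothesis rather than the current — and $\ell=d-2$, which the recursion never reaches; the remaining work is routine bookkeeping to check that each gauge step leaves the boundary condition and the $\mathcal O(\sigma^{d-4})$ decay of $j$ intact, which is immediate from gauge covariance of $j$ together with $U=\operatorname{Id}+\mathcal O(\sigma)$.
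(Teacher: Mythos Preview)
Your proof is correct and follows essentially the same strategy as the paper's: an inductive construction of $U$ in which the base step uses the shared boundary data to identify the leading discrepancy as purely normal along $\Sigma$, and each subsequent step uses the asymptotically Yang--Mills condition together with the identity~\nn{diffo} to force the tangential part of the discrepancy to vanish before absorbing the normal part by a gauge transformation $\operatorname{Id}+\sigma^{\text{power}}u$. The only differences are cosmetic---your index $\ell$ is shifted by one relative to the paper's, and you are somewhat more explicit about assembling the final $U$ as a product and extending it to a globally invertible section via a cutoff.
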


\begin{proof}
We proceed by induction and  pick a scale $g$ to trivialize density bundles. Suppose that
$$
\nabla^{A'}=\nabla^{A}+\sigma^\ell a
$$
for some $a\in \Gamma(T^*M\otimes\End{\mathcal V}M)$ and some integer
$\ell > 0$. We want to find $U_\ell$ such that
$$U_\ell\circ\nabla^{A'}= \nabla^A \circ U_\ell +{\mathcal O}(\sigma^{\ell+1})\, .
$$
Because $\nabla^{A}$ and $\nabla^{A'}$ 
share the same boundary data, 
it follows that 
$$
\nabla^{A'}=\nabla^{A} + n \beta_0 + {\mathcal O}(\sigma)\, ,
$$
for some $\beta_0 \in \Gamma(\End{\mathcal V}M)$ and, as usual,  $n:=\ext \sigma$ in the scale $g$.
Thus,
$$
\nabla^{A}\circ (\operatorname{Id} + \sigma \beta_0) =
(\operatorname{Id}+\sigma \beta_0 )\circ  (\nabla^A + n \beta_0) +  {\mathcal O}(\sigma)\, .
$$
Thus choosing $U_0= \operatorname{Id} + \sigma \beta_0$
in a suitable collar neighborhood  of $\Sigma$ establishes the $\ell=1$  base case. 

For $0<\ell\leq d-4$,
 because both $\nabla^A$ and $\nabla^{A'}$ are 
asymptotically Yang--Mills connections, 
it follows from Equation~\nn{diffo}  that
$$\sigma^{\ell-1} \ell(\ell-d+3) (n^2 a-n a_n)={\mathcal O}(\sigma^\ell)$$
so that
$$
\nabla^{A'} =\nabla^{A} + \sigma^\ell n \beta_{\ell}+{\mathcal O}(\sigma^{\ell+1})\, ,
$$
for some $\beta_\ell \in \Gamma(\End{\mathcal V}M)$.
A similar calculation to the base case shows that $$U_\ell=\operatorname{Id} + \frac{\sigma^{\ell+1}\beta_\ell}{\ell+1}\,  $$
gives the required gauge transformation.
\end{proof}

An important corollary of the above theorem concerns the uniqueness of $\bar k:=k|_\Sigma$ as given in Equation~\nn{obst}.  
\begin{corollary}\label{forgotten-soap}
Let $d\geq 5$, $(M,\cc,\sigma)$ be a conformally 
compact structure, and $\bar A$ be a vector bundle  connection on $\Sigma$. 
Then there is a canonical map
$$
(M,\cc,\sigma,{\bar A})
\mapsto \bar k
 \in
\Gamma(T^*\Sigma\otimes\End{\mathcal V}\Sigma[3-d]) \, .
$$
\end{corollary}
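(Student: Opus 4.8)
The plan is to extract $\bar k$ from a solution of Problem~\ref{yeswehavethem} to order $\ell=d-4$, whose existence is guaranteed by Theorem~\ref{recurse}, and then to show, using the uniqueness result of Theorem~\ref{gaugeeq}, that its restriction to $\Sigma$ is independent of all the choices made in the construction. First I would note that the structure $(M,\cc,\sigma)$ together with the boundary connection $\bar A$ determines, via Theorem~\ref{recurse}, at least one asymptotically Yang--Mills connection $\nabla^A$ on $M$, i.e. one satisfying $j[A]=\sigma^{d-4}k$ for some smooth $k\in\Gamma(T^*M\otimes\End{\mathcal V}M[3-d])$. Restricting $k$ to $\Sigma$ and then taking the tangential part, we get a candidate $\bar k:=(k|_\Sigma)^\top\in\Gamma(T^*\Sigma\otimes\End{\mathcal V}\Sigma[3-d])$; note that by the claim established in the proof of Theorem~\ref{recurse} (the $\ell=d-4$ case is excluded there, but the weaker statement $\hat n^a k_a\stackrel\Sigma=0$ is exactly what needs checking separately — see the obstacle below) the normal component would be automatically absent, so $k|_\Sigma$ is already tangential. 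The density weight bookkeeping is immediate: $j$ has weight $-1$, $\sigma^{d-4}$ has weight $d-4$, so $k$ has weight $-1-(d-4)=3-d$, and restriction to $\Sigma$ together with the canonical identification ${\mathcal V}M|_\Sigma={\mathcal V}\Sigma$ and $T^*M|_\Sigma\supseteq T^*\Sigma$ lands $\bar k$ in the claimed bundle.

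The substance is well-definedness. I would take two asymptotically Yang--Mills connections $\nabla^A,\nabla^{A'}$ with the same boundary data $\bar A$, producing $k,k'$ with $j[A]=\sigma^{d-4}k$, $j[A']=\sigma^{d-4}k'$. By Theorem~\ref{gaugeeq} there is an invertible $U\in\Gamma(\End{\mathcal V}M)$ with $U|_\Sigma=\operatorname{Id}$ and $\nabla^{A'}=U^{-1}\nabla^A U+{\mathcal O}(\sigma^{d-3})$. The Yang--Mills current is gauge-covariant: $j[U^{-1}\nabla^A U]=U^{-1}j[A]U$ (this is a formal property of $\delta^A F$, since $F$ and hence $\delta^AF$ transform by conjugation, and the conformally compact current $j$ is built from $F$ and $\nabla^A$ in a gauge-covariant way — this is where I would cite the invariant reformulation of Section~\ref{CCYME}). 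Then, writing $\nabla^{A'}=U^{-1}\nabla^A U+\sigma^{d-3}a$ for smooth $a$, and using the variation formula~\eqref{diffo} with $\ell=d-3$, one has $j_b[A']=j_b[U^{-1}\nabla^A U]+\sigma^{d-4}(d-3)\big((d-3)-d+3\big)(n^2a_b-n_b a_n)+{\mathcal O}(\sigma^{d-3})$; since $(d-3)-d+3=0$ the putative leading correction vanishes, so $j[A']=U^{-1}j[A]U+{\mathcal O}(\sigma^{d-3})=\sigma^{d-4}U^{-1}kU+{\mathcal O}(\sigma^{d-3})$. Comparing with $j[A']=\sigma^{d-4}k'$ and dividing by $\sigma^{d-4}$ gives $k'=U^{-1}kU+{\mathcal O}(\sigma)$, hence $k'|_\Sigma=(U|_\Sigma)^{-1}(k|_\Sigma)(U|_\Sigma)=k|_\Sigma$ because $U|_\Sigma=\operatorname{Id}$. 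Thus $\bar k$ is independent of the chosen interior extension.

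It remains to check independence of the scale $g\in\cc$ used to trivialize density bundles in the construction, and of the collar/coordinate choices; but since everything has been phrased density-valued and the final object $\bar k$ is a genuine section of $T^*\Sigma\otimes\End{\mathcal V}\Sigma[3-d]$, a change of scale $g\mapsto e^{2\omega}g$ simply rescales representatives consistently, and the map $(M,\cc,\sigma,\bar A)\mapsto\bar k$ is canonical. I expect the main obstacle to be two-fold: first, pinning down precisely that the $\ell=d-4$ value of the Yang--Mills current really does restrict to something purely tangential on $\Sigma$ (the proof of Theorem~\ref{recurse} deliberately excludes $\ell=d-3$, not $\ell=d-4$, so at $\ell=d-4$ the $2\times2$ system there is invertible and the argument there does apply, giving $\hat n^ak_a\stackrel\Sigma=0$ — I would double-check this sign/index bookkeeping carefully); and second, verifying cleanly that the $\sigma^{d-3}$-order ambiguity in Theorem~\ref{gaugeeq} genuinely cannot shift $k|_\Sigma$, which is exactly the vanishing of the coefficient $\ell(\ell-d+3)$ at $\ell=d-3$ in~\eqref{diffo}. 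The conformal-weight and gauge-covariance assertions are routine but must be stated; everything else reduces to the two displayed computations above.
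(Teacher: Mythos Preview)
Your proposal is correct and follows essentially the same route as the paper's proof: existence via Theorem~\ref{recurse}, then uniqueness of $\bar k=k|_\Sigma$ via Theorem~\ref{gaugeeq} together with gauge-covariance of $j$ and formula~\eqref{diffo} at $\ell=d-3$. You spell out more detail than the paper does---in particular the tangentiality $\hat n^a k_a\stackrel\Sigma=0$ (which the paper leaves implicit, having established it in the proof of Theorem~\ref{recurse} for $\ell\neq d-3$) and the explicit vanishing of the coefficient $\ell(\ell-d+3)$ at $\ell=d-3$---but the argument is the same.
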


\begin{proof}
According to Theorem~\ref{gaugeeq}, the data $(M,\cc,\sigma,{\bar A})$
determines a conformally compact Yang--Mills asymptotic connection $A$ up to gauge transformations as in Equation~\nn{gaugey}.
From Equation~\nn{diffo} we have that
$$j[U^{-1}\circ A\circ U+{\mathcal O}(\sigma^{d-3})]=j[U^{-1}\circ A\circ U]+{\mathcal O}(\sigma^{d-3})\, .$$
The Yang--Mills current (as constructed in
Equation~\nn{YMC}) is clearly gauge covariant, meaning 
$$
j[U^{-1}\circ A\circ U]=U^{-1}\circ j[A]\circ U \, .
$$
But $j[A]=\sigma^{d-4} k$ and $U|_\Sigma=\operatorname{Id}$, so the result follows.
%
%
%
\end{proof}
When $\bar k\neq 0$, the boundary regularity of solutions to the conformally compact Yang--Mills equation of magnetic type must be relaxed to include log behavior. Details are discussed in Section~\ref{lets_get_loggy}.
The image $\bar k$ of the above map  is the obstruction current, so-called because   it is the obstruction to solving  smoothly to the boundary the  conformally compact Yang--Mills equation.

\medskip

Given an asymptotically Yang--Mills connection $\nabla^A$, the Dirichlet boundary datum
$$
\bar B=F^A\big|_\Sigma\in \Gamma(\wedge^2 \Sigma\otimes \End{\mathcal V}\Sigma)
$$
necessarily obeys
$
\nabla^{{\bar A}}_{[a} \bar B^{\phantom A}_{bc]}=0
$.
Indeed $\bar B$ 
may be viewed
as the boundary values of a non-abelian  ``magnetic field''. 
There exists a formal Neumann analog of this boundary problem with ``electric field'' boundary data; this is discussed in Section~\ref{EP}. First, however, we deal with asymptotics beyond order~$\sigma^{d-4}$.

\subsection{Log Solutions}\label{lets_get_loggy}

Given an asymptotically Yang--Mills connection $\nabla^A$ such that
$$
j[A]= \sigma^{d-4}k\, ,
$$
where $k\neq 0$, it is  possible to look for higher order asymptotic solutions by relaxing the boundary regularity to allow for a polyhomogeneous boundary expansion. Specifically we wish to solve the following problem.
\begin{problem}\label{logprob}
Let $\nabla^A$ be an asymptotically Yang--Mills connection and 
let a true scale $0<\tau\in \Gamma(\ce M[1])$ be given. Find an improved connection  
$$
\nabla^{A'} = \nabla^A  + \sigma^{d-3}\big(K\log(\sigma/\tau)\, 
+L \big)\, ,$$
on $M^+=M\setminus \Sigma$,
such that
$$j[A']={\mathcal O}\big(\sigma^{d-3}\log(\sigma/\tau)\big)+{\mathcal O}\big(\sigma^{d-3}\big)\, .$$
Here the smooth endomorphism-valued one-forms $K,L\in \Gamma(T^*M\otimes \End{\mathcal V}M[3-d])$,
and ${\mathcal O}\big(\sigma^{d-3
}\log(\sigma/\tau)\big)
$ denotes terms of the form $\sigma^{d-3
}\log(\sigma/\tau) G$ for some smooth $G\in \Gamma(\wedge^2M\otimes \End{\mathcal V}M[3-d])$. 
\end{problem}

\begin{remark}
The ratio of the weight one densities
 $\sigma$ and $\tau$ appearing in the above problem is a defining function for the boundary $\Sigma$. Hence the logarithm of $\sigma/\tau$ is well-defined in the interior. Note that there does exist a well-defined notion of a log-density such as $\log \sigma$ (see~\cite{GW}), but this will not be needed here. 
\hfill$\blacklozenge$
\end{remark}

Using the method of proof for the following solution to Problem~\ref{logprob}, it is also possible to solve a more general  problem concerning higher asymptotics involving products of logarithms; but here we content ourselves with the leading log asymptotics.

\begin{proposition}\label{logsol}
Let $d\geq 4$ and  $\nabla^A$ be an asymptotically Yang--Mills connection such that~$j[A]= \sigma^{d-4}k$. Then
Problem~\ref{logprob} is solved by setting
\begin{equation}\label{OK}
K\stackrel \Sigma= \frac1{d-3}\,  \bar k\, ,
\end{equation}
while $L$ is left undetermined.
\end{proposition}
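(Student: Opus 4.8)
The plan is to mimic the recursive construction used in the proof of Theorem~\ref{recurse}, but now allowing the single extra log term at order $\sigma^{d-3}$. First I would pick a scale $g\in\cc$ and trivialize all density bundles, so that $\sigma$ is a bona fide defining function with $n=\ext\sigma$. Writing $\nabla^{A'}=\nabla^A+\sigma^{d-3}\big(K\log(\sigma/\tau)+L\big)$, I want to compute $j[A']$ modulo the allowed error terms $\mathcal O\big(\sigma^{d-3}\log(\sigma/\tau)\big)+\mathcal O(\sigma^{d-3})$. The curvature of $\nabla^{A'}$ differs from $F^A$ by $2\nabla^A_{[a}\big(\sigma^{d-3}(K\log(\sigma/\tau)+L)_{b]}\big)$ plus a quadratic term which is $\mathcal O(\sigma^{2(d-3)})$ and hence negligible. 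Differentiating $\sigma^{d-3}\log(\sigma/\tau)$ produces two kinds of terms: $(d-3)\sigma^{d-4}n\log(\sigma/\tau)$ from hitting the power, and $\sigma^{d-4}n$ (no log) from hitting the logarithm, i.e. $\nabla_a\big(\sigma^{d-3}\log(\sigma/\tau)\big)=\sigma^{d-4}n_a\big((d-3)\log(\sigma/\tau)+1\big)+\cdots$.

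The key computation is therefore a slight refinement of Equation~\nn{diffo}: inserting $\sigma^\ell a$ with $\ell=d-3$ into $j_a$ gives a contribution $\sigma^{\ell-1}\ell(\ell-d+3)(n^2a_a-n_aa_n)$, and for $\ell=d-3$ the factor $\ell(\ell-d+3)$ vanishes. This is exactly why $L$ is unconstrained to this order: the smooth improvement $\sigma^{d-3}L$ contributes only at $\mathcal O(\sigma^{d-3})$, which is inside the allowed error. The log piece $\sigma^{d-3}K\log(\sigma/\tau)$ is the one that matters: because differentiating the log lowers the effective power by one \emph{without} producing a compensating $\log$, its leading contribution to $j_a$ is of order $\sigma^{d-4}$ times something proportional to $K$, with a nonzero numerical coefficient coming precisely from the ``$+1$'' in the derivative of $\sigma^{d-3}\log(\sigma/\tau)$ (the genuine $\log$-enhanced term $\sigma^{d-4}\log(\sigma/\tau)\cdot(d-3)(\cdots)$ is allowed error). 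I would track the coefficient carefully and find that the $\sigma^{d-4}$-order (non-log) part of $j[A']$ is $\sigma^{d-4}\big(k - (d-3)\,(n^2 K^\top)|_\Sigma + \mathcal O(\sigma)\big)$ up to the harmless $\log$ terms, using as in Theorem~\ref{recurse} that $\hat n^a k_a\stackrel\Sigma=0$ so only the tangential part $K^\top$ enters. Setting $K\stackrel\Sigma=\frac1{d-3}\bar k$ (with $|n|^2_g$ absorbed appropriately once one works with $\hat n$ rather than $n$; in the normalization of the statement $|n|_g\stackrel\Sigma=|\ext r|_g$ is folded into the density identification) then cancels the offending $\sigma^{d-4}$ term, leaving $j[A']=\mathcal O\big(\sigma^{d-3}\log(\sigma/\tau)\big)+\mathcal O(\sigma^{d-3})$ as required.

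The main obstacle I anticipate is bookkeeping the precise numerical coefficient and the normal/tangential decomposition, i.e. verifying that only $K^\top|_\Sigma$ is pinned down and that the coefficient of $n^2K^\top$ is exactly $d-3$, matching the ansatz $K\stackrel\Sigma=\frac1{d-3}\bar k$; this parallels the solve step~\nn{solve} in Theorem~\ref{recurse} with $\ell=d-3$, where one must note that although $\ell(d-3-\ell)=0$ kills the smooth improvement, the log derivative supplies a \emph{different}, nonvanishing coefficient. A secondary subtlety is confirming that the quadratic-in-$(K,L)$ curvature correction and all terms produced by $\nabla^A$ acting on the smooth coefficients $K,L$ (as opposed to on the powers of $\sigma$) sit inside $\mathcal O(\sigma^{d-3})$ or $\mathcal O\big(\sigma^{d-3}\log(\sigma/\tau)\big)$, which is immediate by weight counting since $K,L\in\Gamma(T^*M\otimes\End{\mathcal V}M[3-d])$ and each such term carries at least $\sigma^{d-3}$ (possibly times a log). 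Finally, I would remark that $L$ being undetermined is the expected analog of the freedom in the non-log case, and that the interior logarithm $\log(\sigma/\tau)$ is well-defined since $\sigma/\tau$ is a genuine defining function for $\Sigma$, as already noted in the remark preceding the proposition.
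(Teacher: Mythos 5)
Your proposal reproduces the paper's proof: pick a scale, expand the curvature of $\nabla^{A'}=\nabla^A+\sigma^{d-3}(K\log(\sigma/\tau)+L)$ via the Leibniz rule, discard the quadratic commutator as it lies in the allowed error, observe that the smooth correction $\sigma^{d-3}L$ contributes only at order $\sigma^{d-3}$ because the coefficient $\ell(\ell-d+3)$ from Equation~\nn{diffo} vanishes at $\ell=d-3$ (hence $L$ undetermined), and that the $\log$ factor supplies the extra non-log $\sigma^{d-4}$ term which is fixed by $\bar k$. The use of the tangentiality $\hat n^a k_a\stackrel\Sigma=0$ and the weight-counting check for the subleading $\nabla K$, $\nabla L$ and commutator terms are exactly the paper's steps.

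One point to be careful about when you do ``track the coefficient carefully'': collecting the $\sigma^{d-4}$ (non-log) terms in $\sigma\nabla'^a F'_{ab}-(d-4)F'_{nb}$ from the three contributions $2n_{[a}K_{b]}\sigma^{d-4}$, $2(d-3)n_{[a}K_{b]}\sigma^{d-4}\log(\sigma/\tau)$ and $2(d-3)n_{[a}L_{b]}\sigma^{d-4}$ in $F'-F$ gives coefficients $2(2d-7)$ from $\sigma\nabla^a(\pdot)$ and $-2(d-4)$ from $-(d-4)F'_{nb}$, whose sum is $+2(d-3)$, i.e.\ the sign of the $(d-3)K^{\top^{\rm ext}}$ term in $j[A']-j[A]$ comes out positive. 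Your write-up (and, in its final display, the paper) records this as a minus. This is a sign to recheck carefully at the final assembly stage; the plan and all other bookkeeping are correct, and the $L$-freedom and the $\mathcal O(\sigma^{d-3})$ error analysis are exactly as in the paper.
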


\begin{proof}
In the proof of Theorem~\ref{recurse} we established that $k$ as determined by Equation~\nn{obst} obeys
 $$\hat n^a k_a\stackrel\Sigma=0\, .$$
We begin by comparing the curvatures of the connection $\nabla^A$ and that of the improved connection  $\nabla^{A'}$ (we also denote $\nabla:=\nabla^A$ for brevity and choose some scale $g$):
\begin{multline*}
F_{ab}\Big[A+\sigma^{d-3}\big(K\log(\sigma/\tau)\, 
+L \big)\Big]-F_{ab}[A]\\=
2\nabla_{[a}\big(\sigma^{d-3}(K_{b]}\log(\sigma/\tau)\, 
+L_{b]} )\big)
+\sigma^{2d-6}\, \Big[K_{a}\log(\sigma/\tau)+L_{a},
K_{b}\log(\sigma/\tau)+L_{b}\Big]\\
=
2n_{[a}  K_{b]} \sigma^{d-4}
+2 (d-3) n_{[a} K_{b]} \sigma^{d-4} \log(\sigma/\tau)
+2 (d-3) n_{[a} L_{b]} \sigma^{d-4}
\\
+{\mathcal O}\big(\sigma^{d-3
}\log(\sigma/\tau)\big)
+{\mathcal O}(\sigma^{d-3})
\, .
\end{multline*}
Whence (calling $\nabla'=\nabla^{A'}$ and similarly for its curvature) \begin{multline*}
\sigma \nabla'^a F'_{ab}=
\sigma \nabla^a F_{ab}+
2(2d-7)n^a n_{[a}K_{b]} \sigma^{d-4}+
2(d-3)(d-4)n^a n_{[a}L_{b]} \sigma^{d-4}
\\[1mm] 
+2 (d-3)(d-4) n^a n_{[a} K_{b]} \sigma^{d-4} \log(\sigma/\tau)+
{\mathcal O}\big(\sigma^{d-3}\log(\sigma/\tau)\big)
+{\mathcal O}\big(\sigma^{d-3}\big)
\end{multline*}
and
\begin{multline*}
-(d-4) F'_{nb}=
-(d-4) F_{nb}-2(d-4)
n^a\big(n_{[a}  K_{b]} \sigma^{d-4}
+ (d-3) n_{[a} L_{b]} \sigma^{d-4}
\big)\\[1mm]
-2 (d-3)(d-4) n^a n_{[a} K_{b]} \sigma^{d-4} \log(\sigma/\tau)
+{\mathcal O}\big(\sigma^{d-3
}\log(\sigma/\tau)\big)
+{\mathcal O}(\sigma^{d-3})\, .
\end{multline*}
Thus we have
$$
j[A']=
\sigma^{d-4} k -
(d-3)\sigma^{d-4} K^{\top^{\rm ext}}  
+{\mathcal O}\big(\sigma^{d-3
}\log(\sigma/\tau)\big)
+{\mathcal O}(\sigma^{d-3})\, ,
$$
where in the above $K^{\top^{\rm ext}}_a$ denotes $2n^a n_{[a}K_{b]}\stackrel\Sigma=K^\top_b$. Hence Equation~\nn{OK} gives the desired solution.
\end{proof}

When the obstruction current $\bar k$ vanishes, an all order asymptotic solution without logs is available when any  suitable choice of the boundary behavior of $L$ is provided; see Theorem~\ref{all-orders-magnetic}. Indeed, $L$ captures the data of an ``electric field'', as discussed in the next section.

\subsection{Formal Electric Problem}\label{EP}

\noindent
We now consider Problem~\ref{problemono}, for which
the curvature vanishes along $\Sigma$. However, the two-form and density-valued endomorphism
$
\big(\sigma^\alpha F^A\big)\big|_\Sigma
$
may now be viewed as boundary datum. Neither it nor its weight $\alpha$ may be chosen arbitrarily, but rather must obey the stipulations of the following lemmas.

\begin{lemma}\label{handofgod}
Let $\nabla^A$ be a connection on a conformally compact structure $(M,\cc,\sigma)$
such that
$$
j[A]=0
\quad
\mbox{ and }
\quad
F^A=\sigma^{-\alpha}G\, ,$$
where $G\in  \Gamma(\wedge^2 M\otimes\End{\mathcal V}M[\alpha])$ and $0>\alpha\in {\mathbb R}$. Then if $\alpha\neq 4-d$
$$G\big|_\Sigma=0\, ,$$
while when $\alpha=4-d<0$ 
$$
\hat n_{[a} G_{bc]}\stackrel\Sigma=0\, .
$$
\end{lemma}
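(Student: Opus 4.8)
The plan is to run the characteristic exponent computation of Section~\ref{Apollosrevenge} in the opposite direction: instead of postulating $G|_\Sigma\neq0$ and reading off the admissible exponents, I substitute the ansatz $F^A=\sigma^{-\alpha}G$ directly into the Bianchi identity and into $j[A]=0$, and extract the leading boundary behaviour of $G$. Throughout I fix a scale $g\in\cc$, trivialise density bundles, write $n:=\ext\sigma$ and $\beta:=-\alpha>0$, so that $F_{ab}=\sigma^{\beta}G_{ab}$ with $G$ smooth up to $\Sigma$. Each identity below is first derived on the interior $M_+$ and then extended to $\Sigma$ by continuity; this is the only place where the possibly non-integral value of $\beta$ matters, and it is handled simply by clearing the factor $\sigma^{\beta-1}$ (respectively $\sigma^{\beta}$) before letting $\sigma\to0$, rather than attempting a Taylor expansion. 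Recall that $n$ is nonvanishing along $\Sigma$ since $\sigma$ defines the hypersurface $\Sigma$, so $\hat n=n/|n|_g$ is well-defined there.

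The first step is the Bianchi identity $\nabla_{[a}F_{bc]}=0$. Expanding, $\nabla_{[a}F_{bc]}=\beta\sigma^{\beta-1}n_{[a}G_{bc]}+\sigma^{\beta}\nabla_{[a}G_{bc]}$; multiplying by $\sigma^{1-\beta}$ gives $\beta\,n_{[a}G_{bc]}+\sigma\,\nabla_{[a}G_{bc]}=0$ on $M_+$, and letting $\sigma\to0$ yields $\beta\,n_{[a}G_{bc]}\stackrel\Sigma=0$. Since $\beta\neq0$ this gives $\hat n_{[a}G_{bc]}\stackrel\Sigma=0$, which is exactly the conclusion claimed in the case $\alpha=4-d$.

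The second step uses $j[A]=0$. With $j_b[A]=\sigma\nabla^aF_{ab}-(d-4)F_{nb}$ (Equation~\nn{YMC}) and $F_{nb}=n^aF_{ab}=\sigma^{\beta}n^aG_{ab}$, a one-line computation gives $j_b[A]=(\beta-d+4)\sigma^{\beta}n^aG_{ab}+\sigma^{\beta+1}\nabla^aG_{ab}$. Dividing by $\sigma^{\beta}$ and sending $\sigma\to0$ produces $(\beta-d+4)\,n^aG_{ab}\stackrel\Sigma=0$. When $\alpha\neq4-d$, that is $\beta\neq d-4$, the prefactor is nonzero, hence $\hat n^aG_{ab}\stackrel\Sigma=0$, i.e.\ $G_{\hat n b}\stackrel\Sigma=0$.

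The third step combines these two boundary identities to upgrade the conclusion to $G|_\Sigma=0$ when $\alpha\neq4-d$. Contracting $\hat n_{[a}G_{bc]}\stackrel\Sigma=0$ with $\hat n^a$, using $\hat n^a\hat n_a=1$ and the antisymmetry of $G$, gives $G_{bc}\stackrel\Sigma=\hat n_bG_{\hat n c}-\hat n_cG_{\hat n b}$; substituting $G_{\hat n b}\stackrel\Sigma=0$ from the second step yields $G_{bc}\stackrel\Sigma=0$. (Equivalently: $\hat n\wedge G|_\Sigma=0$ forces $G|_\Sigma=\hat n\wedge\omega$ for some one-form $\omega$, and $\iota_{\hat n}G|_\Sigma=0$ then forces $\omega$ proportional to $\hat n$, so $G|_\Sigma=0$.) I do not anticipate a genuine obstacle here; the only care needed is the bookkeeping with the fractional power $\sigma^{\beta}$ and the remark that $|n|_g\neq0$ on $\Sigma$, both noted above.
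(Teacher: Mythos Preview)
Your proof is correct and follows essentially the same approach as the paper: you use the Bianchi identity to obtain $\hat n_{[a}G_{bc]}\stackrel\Sigma=0$ and the conformally compact Yang--Mills equation to obtain $(\beta-d+4)\hat n^aG_{ab}\stackrel\Sigma=0$, then combine these when $\alpha\neq4-d$ to conclude $G|_\Sigma=0$. Your version is in fact a bit more explicit in the final step, spelling out the contraction $G_{bc}\stackrel\Sigma=\hat n_bG_{\hat nc}-\hat n_cG_{\hat nb}$ where the paper simply observes that the two displayed conditions admit no nonzero solution unless $\alpha=4-d$.
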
  
\begin{proof}
For brevity denote $\nabla:=\nabla^A$, $F:=F^A$ and  $G^\Sigma:=G\big|_\Sigma$. Then (away from $\Sigma$) the
 Bianchi identity gives
 $
 0=
 \sigma^{\alpha+1}
\nabla_{[a}F_{bc]}=-\alpha n_{[a} G_{bc]}
+{\mathcal O}(\sigma)
$
and thus, along~$\Sigma$,
$$
\hat n_{[a} G^\Sigma_{bc]}=0\, .
$$
Moreover
\begin{equation}\label{forlater}
j[A]=\sigma^{-\alpha} \big(-\alpha n^a G_{ab}-(d-4) n^a G_{ab}+{\mathcal O}(\sigma)\big)=0\, ,
\end{equation}
so that, along $\Sigma$,
$$
(\alpha+d-4)\, \hat n^a G^\Sigma_{ab}=0\, .
$$
There is no non-vanishing solution to the above two displays for $G^\Sigma$ unless $\alpha=4-d$.
\end{proof}

\noindent
Observe, that the above result matches the behavior in Equation~\nn{electric} given in the characteristic exponent analysis at the beginning of Section~\ref{Apollosrevenge}.

\smallskip
The next lemma shows that the boundary data for Problem~\ref{problemono} is encoded by an ``electric field'' subject to a non-abelian Gau\ss\ law (Equation~\nn{GL}).

 \begin{lemma}\label{fingerofgod}
Let $\nabla^A$ be a connection on a  $d\geq 5$ conformally compact structure $(M,\cc,\sigma)$ with boundary condition $\bar \nabla^{\bar A}$.
If 
$$
j[A]={\mathcal O}(\sigma^\infty)
\quad
\mbox{ and }
\quad
F^A=\sigma^{d-4}G\, ,
$$
for some smooth $G\in  \Gamma(\wedge^2 M\otimes\End{\mathcal V}M[4-d])$, 
then $G^\Sigma:=G\big |_{\Sigma}$ must obey
\begin{equation}\label{nwedgeE}
G^\Sigma_{ab}=\hat n_{a} E_{b}-\hat n_b E_a\, , 
\end{equation}
where $E\in  \Gamma(T^* \Sigma\otimes\End{\mathcal V}\Sigma[3-d])$ is subject to
\begin{equation}\label{GL}
\bar \nabla^{ {\bar A}}_a E^a=0 \, .
\end{equation}
\end{lemma}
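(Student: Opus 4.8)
The plan is to first pin down the algebraic form of $G^\Sigma:=G|_\Sigma$ from the Bianchi identity, then to observe that a connection whose curvature is $\sigma^{d-4}G$ has conformally compact Yang--Mills current exactly $\sigma^{d-3}\nabla^a G_{ab}$, and finally to read off the Gau\ss\ law~\nn{GL} as the normal component of the vanishing of $\nabla^a G_{ab}$ along $\Sigma$. First I would substitute $F^A=\sigma^{d-4}G$ into the Bianchi identity $\nabla_{[a}F^A_{bc]}=0$; dividing by $\sigma^{d-5}$ gives, in any scale $g\in\cc$ with $n\stackrel{g}{=}\ext\sigma$, that $(d-4)\,n_{[a}G_{bc]}+\sigma\,\nabla_{[a}G_{bc]}=0$, hence at $\Sigma$ (where $\sigma=0$ and $d\neq 4$ since $d\geq 5$) that $\hat n_{[a}G^\Sigma_{bc]}=0$ --- this is exactly the conclusion of Lemma~\ref{handofgod} in the case $\alpha=4-d$, which needs only the Bianchi identity. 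Splitting $T^*M|_\Sigma=\langle\hat n\rangle\oplus T^*\Sigma$ and writing $G^\Sigma$ as a purely tangential part plus $\hat n_a E_b-\hat n_b E_a$ with $E_b:=\hat n^a G^\Sigma_{ab}$ (which is tangential, since $\hat n^b E_b=\hat n^a\hat n^b G^\Sigma_{ab}=0$, and of weight $3-d$), the identity $\hat n_{[a}G^\Sigma_{bc]}=0$ forces the tangential part to vanish: contracting it with $\hat n^a$ (equivalently, wedging $\hat n$ with a purely tangential two-form on $\Sigma$ is injective) returns precisely that part. This establishes Equation~\nn{nwedgeE} with $E\in\Gamma(T^*\Sigma\otimes\End{\mathcal V}\Sigma[3-d])$.

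Next I would compute $j[A]$ directly for this curvature. With $F_{ab}=\sigma^{d-4}G_{ab}$ one has $\sigma\nabla^a F_{ab}=(d-4)\sigma^{d-4}n^a G_{ab}+\sigma^{d-3}\nabla^a G_{ab}$ and $(d-4)F_{nb}=(d-4)\sigma^{d-4}n^a G_{ab}$, so the $\sigma^{d-4}$ terms cancel exactly and $j_b[A]\stackrel{g}{=}\sigma^{d-3}\nabla^a G_{ab}$. Since $j[A]={\mathcal O}(\sigma^\infty)$ (here ${\mathcal O}(\sigma^{d-2})$ would already suffice) it follows that $\nabla^a G_{ab}={\mathcal O}(\sigma)$, and in particular $(\nabla^a G_{ab})\big|_\Sigma=0$.

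It then remains to contract $(\nabla^a G_{ab})\big|_\Sigma=0$ with $\hat n^b$ and identify the result. Using $g^{ac}|_\Sigma=\hat n^a\hat n^c+\bar g^{ac}$ together with $\hat n^a\hat n^b G_{ab}\equiv 0$, the ``doubly normal'' piece $\hat n^a\hat n^c\hat n^b\nabla_c G_{ab}$ vanishes identically, and the Gauss--Weingarten relations reduce the remaining piece $\hat n^b\bar g^{ac}\nabla_c G_{ab}$ to $-\bar\nabla^{\bar A}_a E^a$: the extension-dependent terms and all second-fundamental-form contributions drop because $\hat n^b G_{ab}|_\Sigma=-E_a$ has vanishing normal component and because $\bar g^{ac}\hat n_a=0$. (Equivalently, in a scale in which $\sigma$ trivializes to a geodesic defining function $r$, so $g=\ext r^2+h_r$ near $\Sigma$ and $\hat n=\ext r$ there, one computes $\hat n^b\nabla^a G_{ab}|_\Sigma=h^{ij}\nabla_i G_{jr}|_\Sigma=-\bar\nabla^{\bar A}_a E^a$ using $G_{rr}\equiv0$, $G_{jk}|_\Sigma=0$, $G_{jr}|_\Sigma=-E_j$ and that the tangential Christoffel symbols of $g$ restrict to those of $\bar g$.) Hence $\bar\nabla^{\bar A}_a E^a=0$, which is Equation~\nn{GL}. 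As a consistency check, this condition is conformally invariant: the divergence $\bar\nabla^{\bar A}_a E^a$ of a one-form on the $(d-1)$-manifold $\Sigma$ of the critical weight $3-d=2-(d-1)$ is conformally invariant, matching the fact that both sides of Equation~\nn{nwedgeE} carry a well-defined conformal weight.

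The only non-formal step is this last identification, $\hat n^b\nabla^a G_{ab}|_\Sigma=-\bar\nabla^{\bar A}_a E^a$: one must run the hypersurface bookkeeping carefully enough to verify that every second-fundamental-form term generated in passing from $\nabla$, $G$ to $\bar\nabla^{\bar A}$, $E$ lands in the \emph{tangential} component of $\nabla^a G_{ab}|_\Sigma$ --- where it merely fixes the first normal jet of $G$ --- so that precisely the intrinsic divergence of $E$ survives in the normal component. Presenting this in the adapted coordinates above is the most economical route.
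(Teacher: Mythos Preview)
Your proof is correct and follows essentially the same route as the paper: both derive $\hat n_{[a}G^\Sigma_{bc]}=0$ from the Bianchi identity (the paper quotes Lemma~\ref{handofgod}), compute $j_b[A]=\sigma^{d-3}\nabla^a G_{ab}$ exactly, and then extract the Gau\ss\ law by contracting $(\nabla^a G_{ab})|_\Sigma=0$ with~$\hat n^b$. The only cosmetic difference is in that last step: the paper writes $\hat n^b\nabla^a G_{ab}\stackrel{\Sigma}{=}\hat n^b(\nabla^a-\hat n^a\nabla_{\hat n})G_{ab}$ (using, as you do, that $\hat n^a\hat n^b\nabla_{\hat n}G_{ab}=0$ by antisymmetry), then substitutes $G_{ab}\stackrel{\Sigma}{=}\hat n_aE_b-\hat n_bE_a$ under the tangential operator $\nabla^\top$ and uses $\hat n^b\nabla^\top_a\hat n_b\stackrel{\Sigma}{=}0$, $\hat n^aE_a=0$ to arrive at $-\bar\nabla_aE^a$ coordinate-free---whereas you reach the same conclusion via adapted coordinates or Gauss--Weingarten.
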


\begin{proof}
We need to study the $\sigma^{d-4}{\mathcal O}(\sigma)$ terms in $j[A]$ that were  not computed in Equation~\nn{forlater} in the proof of Lemma~\ref{handofgod}. These are
$$
j_b[A]=\sigma^{d-3} \nabla^a G_{ab}\, .
$$
Now, because $\hat n_{[a} G_{bc]}\stackrel\Sigma=0$, it follows  that
$$
G^\Sigma_{ab} =\hat n_{a} E_{b}-\hat n_b E_a \, ,
$$ 
where $E\in  \Gamma(T^* \Sigma\otimes\End{\mathcal V}\Sigma[3-d])$. Note that  
$E_b=G^\Sigma_{\hat n b} $ because $\hat n^a E_a=0$. Using the result of the display before last, dividing by~$\sigma^{d-3}$ in the interior,   then extending to $\Sigma$ and contracting with a unit conormal  yields
$$
0\stackrel\Sigma=\hat n^b \nabla^a G_{ab}\stackrel\Sigma=
\hat n^b (\nabla^a-\hat n^a \nabla_{\hat n}) G_{ab}
\stackrel\Sigma=\hat n^b (\nabla^a-\hat n^a \nabla_{\hat n})(\hat n_{a} E_{b}-\hat n_b E_a)
\stackrel\Sigma=-\bar \nabla_a E^a\, .
$$
Here we used antisymmetry of $G$ to achieve the 
second equality. 
The third equality used that the operator $\nabla^a-\hat n^a \nabla_{\hat n}$ is tangential.
For the fourth we used that
$\hat n^b (\nabla_a-\hat n_a \nabla_{\hat n})\hat n_{b}
\stackrel\Sigma=0\stackrel\Sigma=\hat n^a E_a$. 
\end{proof}

\begin{remark}
It is easy to see that there exist connections $A_0$ satisfying 
$F^{A_0}=\sigma^{d-4} G$  where $G$ obeys Equations~\nn{nwedgeE} and~\nn{GL}. For example,  in the Fefferman--Graham expansion explained in Section~\ref{connexp}, one can take  the connection coefficients to  vanish up until order~$r^{d-3}$, with then the coefficient  $E/(d-3)$.
\hfill$\blacklozenge$
\end{remark}

Given a connection $\nabla^A$, whose curvature obeys $F^A=\sigma^{d-4} G$,
 the  data~$E_b=\hat n^a G^\Sigma_{ab}\in \Gamma( T^* \Sigma\otimes\End{\mathcal V}\Sigma[3-d])$
 may be viewed
as the boundary values of a non-abelian  ``electric field''. Indeed, Lemma~\ref{fingerofgod}
shows that 
 the electric field $E$  of 
solutions to
Problem~\ref{problemono}  
 must obey the Gau\ss\ law~\nn{GL}.  
We are now ready to prove Theorem~\ref{electrictheorem} 
which gives a recursive solution to Problem~\ref{problemono} for connections with  boundary electric data.

\begin{proof}[Proof of Theorem~\ref{electrictheorem}]
The proof mimics that of Theorem~\ref{recurse}.
From Lemmas~\ref{handofgod} and~\ref{fingerofgod} and the boundary conditions in the statement of the theorem,
it follows that 
$$
j[A_0]=\sigma^{d-3} k^{(1)}
$$
for some $k^{(1)}\in \Gamma(T^*M\otimes \End{\mathcal V}M[2-d])$. The power $d-3$ of $\sigma$ in the above display is precisely the value for which 
the system of equations~\nn{system} degenerates 
and thus does not imply that $\hat n^a k_a^{(1)}\stackrel\Sigma=0$. Nonetheless, this condition still holds by virtue of 
   the computation presented in the  last display proof of Lemma~\ref{fingerofgod} combined with the fact that
 the boundary electric field obeys the Gau\ss\ Law~\nn{GL}. Thus the recursion used to 
prove Theorem~\ref{recurse} can be applied at all orders since Equation~\nn{solve} is needed only for values of $\ell\geq d-2$.
\end{proof}

We can use the methods of the above proof to prove the all orders result of Theorem~\ref{all-orders-magnetic}. This gives a smooth to the boundary,  asymptotic solution to the magnetic problem~\ref{yeswehavethem} in the case that 
 the obstruction current $\bar k$ vanishes and additional electric field boundary data is supplied.
  \begin{proof}[Proof of Theorem~\ref{all-orders-magnetic}]
 Because the obstruction current $\bar k=0$ we have that 
  $$
 j[A_0]=\sigma^{d-3} k^{(1)}\, ,
 $$
 for some $k^{(1)}\in \Gamma(T^*M\otimes \End{\mathcal V}M[2-d])$. 
 This order is precisely encountered in the proof of Theorem~\nn{recurse} where the corresponding right hand side of the above display is  {\it not} forced to obey  $\hat n^a k^{(1)}_a\stackrel\Sigma=0$.  
 To achieve a condition on $\hat n^a k_a^{(1)}$, we must partially use the undetemined-ness of the data $L$ (see  Proposition~\ref{logsol} for another instance of this). Thus
we now extend Equation~\nn{diffo}
to include terms of order $\sigma^{d-3}$ 
$$
j_b[A_0+\sigma^{d-3} L]\!=\sigma^{d-3}k^{(1)}_b+\sigma^{d-3} n^a (\nabla_a L_b
-\nabla_b L_a)
+(d-3)\sigma^{d-3} \nabla^a (n_a L_b - n_b L_a)
+{\mathcal O}(\sigma^{d-2})\, .
$$
Here we have denoted by $\nabla$ the connection $\nabla^{A_0}$.
Contracting the above with $n^b$ gives
\begin{equation}\label{ncont}
n^b j_b[A_0+\sigma^{d-3} L]=\sigma^{d-3}n^b k^{(1)}_b+
(d-3) \sigma^{d-3}
n^b \nabla^a (n_a L_b
-n_b L_a)
+{\mathcal O}(\sigma^{d-2})\, ,
\end{equation}
where we used that $n^a n^b (\nabla_a L_b
-\nabla_b L_a)=0$.
Now define
$$
E^{\rm ext}_a=\sqrt{n^2 -\frac{2\sigma}{d}(\Delta^g \sigma + J \sigma)}\,  L_a \stackrel \Sigma= |n|_g L_a\, .
$$
The factor under the square root is a conformally invariant extension of $|n|_g\big|_\Sigma$ to $M$ (indeed it equals $\sqrt{-\frac{Sc^{\go}}{d(d-1)}}\hh$; see Section~\ref{theenergy} for further discussion of this point), so $E^{\rm ext}\in \Gamma(T^*M\otimes \End{\mathcal V}M[3-d])$. Now, since $L$ only appears in the combination $n_a L_b - n_b L_a$
in Equation~\nn{ncont}, 
we may further assume that $L$, and hence $E^{\rm ext}$ are both extensions of sections of  $T^*\Sigma\otimes \End{\mathcal V}\Sigma [3-d]$ so that $\hat n^a E_a^{\rm ext}\big|_\Sigma =0$ where $E = E^{\rm ext}\big|_\Sigma$. Thus
$$
n^b \nabla^a (n_a L_b
-n_b L_a)\stackrel\Sigma=
n^b \nabla^a{}^\top \big(\hat n_a E_b^{\rm ext} - \hat n_b E_a^{\rm ext}\
\big)
\stackrel\Sigma=- |n| \nabla^a{}^\top E_a^{\rm ext}
\stackrel\Sigma=- |n| \bar \nabla_aE^a\, .
$$
Using that the electric field $E$ obeys the Gau\ss\ law~\nn{GL} we now have that
 $$
 j[A_0+\sigma^{d-3}L]=\sigma^{d-3} j^{(1)}\, ,
 $$
 for some $j^{(1)}\in \Gamma(T^*M\otimes \End{\mathcal V}M[2-d])$ such that
$\hat n^a j^{(1)}_a\stackrel\Sigma=0$.
The remainder of the proof follows that of Theorem~\ref{recurse}, {\it mutatis mutandis}.
\end{proof}
 
 \begin{remark}
In the case that the obstruction current $\bar k$ vanishes (a special case of which includes the purely electric solution to Problem~\ref{EP}), we have established the existence of a smooth all order  asymptotic solution. This depends on the data of a boundary connection $\bar \nabla$ and a boundary electric field $ E\in  \Gamma(T^* \Sigma\otimes\End{\mathcal V}\Sigma[3-d])$ whose divergence is constrained by Equation~\nn{GL}. However, we do not expect the data $(\bar \nabla,E)$ to be independent in the case of a global solution, but rather---viewing these respectively as   Dirichlet and  Neumann data---they ought be related by a (non-linear) Dirichlet-to-Neumann map; see Section~\ref{FGrescale}. 
 \hfill$\blacklozenge$
\end{remark}


\section{Poincar\'e--Einstein Manifolds}\label{PEM}

A key problem handled by our methods is the study of the Yang--Mills equations on Poincar\'e--Einstein manifolds. 
Here we gather  technical results for Poincar\'e--Einstein structures and their
asymptotically Yang--Mills connections. 
Recall that a conformally compact structure $(M,\cc, \sigma)$ is called {\it Poincar\'e--Einstein} when 
the singular metric, defined on  $M_+=M\backslash\Sigma$ by
$$
\go=\sigma^{-2}{\bm g}\, ,
$$
 has vanishing trace-free Schouten tensor
$$
P^{\go}_{(ab)_\circ}=0\, .
$$
The moniker {\it asymptotically Poincar\'e--Einstein} is used when
\begin{equation}\label{Moniker}
P^{\go}_{(ab)_\circ}={\mathcal O}(\sigma^{d-3})\, .
\end{equation}
It is  useful to rewrite the above condition in a 
way that extends to the boundary~$\Sigma$.
A standard computation on $M^+$ shows that
 $$P^{\go}_{(ab)_\circ}
\stackrel g=
\sigma^{-1}\big(\nabla_{(a} \nabla_{b)_\circ}  + P_{(ab)_\circ}\big)\hh \sigma  \, .
$$
Multiplying this by $\sigma$  we have  that $(M,\cc,\sigma)$ is asymptotically Poincar\'e--Einstein when
\begin{equation}\label{gradn}
\nabla_a n_b + \sigma P_{ab} + g_{ab} \rho = {\mathcal O}(\sigma^{d-2})\, . 
\end{equation}
Here $n:=\ext \sigma$ and 
$\rho:=-\frac1d (\Delta + J)\sigma$. Importantly the left hand side of the above display extends smoothly to the boundary $\Sigma$. 

\smallskip

The scalar curvature of $\go$ can be also expressed in terms of $g$ and $n$, indeed
\begin{equation}\label{icecream}
Sc^{\go}\stackrel g=-d(d-1)\big(|n|_g^2 + 2 \rho \sigma\big)\, .
\end{equation}
This shows that the scalar curvature of the singular metric 
 extends smoothly to the boundary.
The above display implies that the Poincar\'e--Einstein metric $\go$ has {\it negative} constant scalar curvature; by convention we choose the normalization $Sc^{\go}=-d(d-1)$.

\smallskip

The conformal boundary embedding $\Sigma\hookrightarrow(M,\cc)$ for a Poincar\'e--Einstein structure must be umbilic~\cite{Goal,LeBrun}, meaning that the trace-free part $\IIo_{ab}$  of the second fundamental form 
vanishes.
The following lemma gives identities for the boundary embedding of a Poincar\'e--Einstein structure, 
many of which follow directly from the umbilic property.
Note that bars are used to indicate boundary quantities so, for example, $\bar P_{ab}$ is the boundary Schouten tensor for the Levi-Civita connection $\bar \nabla$ of the induced boundary metric $\bar g_{ab}\stackrel\Sigma= g_{ab}-\hat n_a \hat n_b$. Also, we use the same abstract indices to denote 
boundary tensors, relying on the Gau\ss\ isomorphism between $T\Sigma$ and $(TM|_\Sigma)^\perp$ where $\perp$ denotes the  bundle projection orthogonal to the unit conormal $\hat n$.

\begin{lemma}\label{thelemma}
Let $d\geq 3$ and $(M,\cc,\sigma)$ be an asymptotically  Poincar\'e--Einstein structure. Also, let $g\in \cc$ and 
 $\bar v^{\rm ext}$ be any smooth extension of $\bar v\in \Gamma(T^*\Sigma)$ to $\Gamma(T^*M)$. Then
\begin{equation}
\label{top2bar}
\nabla^\top_a \bar v^{\rm ext}_b \stackrel\Sigma= \bar \nabla_a \bar v_b -H\hat n_b \bar v_a\, . 
\end{equation}
Moreover the following identities hold.
\begin{eqnarray}
 \label{Wn}
 W_{\hat n bcd}&\stackrel\Sigma=&0\, ,
 \\
 \label{J2Jbar}
 J\: &\stackrel\Sigma=&{\bar J}+P_{\hat n \hat n}-\frac{d-1}2 H^2\, ,\\
 \label{Fialkow}
 P^\top_{ab}\, &\stackrel\Sigma =& \bar P_{ab} -\frac 12 \bar g_{ab} H^2\, ,
 \\
 \label{mainardi}
\bar \nabla_a H &\stackrel\Sigma=& -P_{\hat n a}^\top\, ,
\\
\label{Mainardi}
\bar \nabla_{[a} P^\top_{b]\hat n}&\stackrel\Sigma=&0\, ,
\\
\label{zanky}
\big( \hat n^a \nabla_{\hat n} P_{ab}\big)^\top
 &\stackrel{\Sigma}=& \bar \nabla_b P_{\hat n\hat n} +2  H \bar \nabla_b H\, ,
 \\
 \hat n^a \hat n^b \nabla_{\hat n} P_{ab}\:\:
&\stackrel\Sigma=&
\nabla_{\hat n } J 
-(d-1)HP_{\hat n\hat n}
+(\bar \Delta+\bar J) H
-\tfrac{d}{2} H^3
 \label{baloo}
\, .
\end{eqnarray}
In dimensions $d\geq 4$,
\begin{equation}
\label{W2W}
W_{abcd}\stackrel\Sigma=
W^\top_{abcd}
\stackrel \Sigma= \bar W_{abcd}\stackrel{d=4}=0\, .
\end{equation}
When $d\geq 5$, we also have
\begin{equation}\label{cotton}
C_{abc}\stackrel\Sigma=C_{abc}^\top\stackrel\Sigma=
\bar C_{abc} \, .
\end{equation}
\end{lemma}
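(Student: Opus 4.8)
The plan is to peel off the purely tangential (Gauss--Codazzi type) identities first and to treat the two transverse identities \eqref{zanky}, \eqref{baloo} last. Almost all of the input is standard conformal hypersurface geometry (cf.\ \cite{Goal,LeBrun}); the only structural fact special to the asymptotically Poincar\'e--Einstein hypothesis is \eqref{gradn}. First I would record its immediate consequences: restricting \eqref{gradn} to $\Sigma$ (where $\sigma=0$) gives $\nabla_a n_b \stackrel\Sigma= -\rho\hh g_{ab}$, which is pure trace, so the embedding is umbilic, $\IIo_{ab}=0$ and $\II_{ab}\stackrel\Sigma= H\bar g_{ab}$ with $H$ a fixed multiple of $\rho/|n|_g$; contracting \eqref{gradn} with $n^a$ and using $\nabla_{[a}n_{b]}=0$ gives $\nabla^\top_b |n|_g^2\stackrel\Sigma=0$ and hence $\nabla_{\hat n}\hat n\stackrel\Sigma=0$; and applying $\nabla^a$ to \eqref{gradn} together with the contracted Bianchi identity $\nabla^aP_{ab}=\nabla_bJ$ pins down $\nabla_a\rho$ along $\Sigma$ in terms of $P_{\hat n a}$ (equivalently, the scale tractor of $(M,\cc,\sigma)$ is parallel to order ${\mathcal O}(\sigma^{d-3})$).

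Granting these, \eqref{top2bar} is just the Gauss formula for a one-form: split $\bar v^{\rm ext}$ into $\top$ and conormal parts, differentiate tangentially, and use $\nabla^\top_a\hat n^{\rm ext}_b\big|_\Sigma=\II_{ab}=H\bar g_{ab}$ and the characterisation of $\bar\nabla$ as the $\top$-projection of $\nabla$. Identity \eqref{Wn} is the standard vanishing of the conormal component of the Weyl tensor on an umbilic conformal hypersurface: feed the Codazzi relation $R_{\hat n bcd}\big|_\Sigma = 2\hh\bar\nabla_{[c}\II_{d]b}= 2\hh\bar g_{b[d}\bar\nabla_{c]}H$, and the Ricci identity controlling $R_{\hat n b\hat n d}\big|_\Sigma$ obtained by one more derivative of \eqref{gradn}, into the Weyl decomposition \eqref{Weyl}; tracelessness of $W$ forces the $P$-terms to collapse, and the residue vanishes by the scale-tractor fact. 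Feeding \eqref{Wn} back into the Gauss equation and taking one and two $\bar g$-traces, using $\Ric=(d-2)P+gJ$ and its boundary analogue, yields \eqref{Fialkow} and \eqref{J2Jbar}; contracting the Codazzi equation and again comparing with $\Ric=(d-2)P+gJ$ gives $P^\top_{\hat n a}=-\bar\nabla_aH$, i.e.\ \eqref{mainardi}, whereupon \eqref{Mainardi} is just $\bar\nabla_{[a}\bar\nabla_{b]}H=0$ rewritten. Finally \eqref{W2W} and \eqref{cotton} follow from conformal invariance of the $(3,1)$-Weyl and Cotton tensors: by \eqref{Wn} the restriction $W^g_{abcd}\big|_\Sigma$ has no conormal legs, hence is a $(3,1)$-tensor on $(\Sigma,\bar g)$ with Weyl symmetries, and the Gauss equation identifies its trace-free part with $\bar W_{abcd}$, which vanishes when $\dim\Sigma=3$, i.e.\ $d=4$; for \eqref{cotton} one repeats the argument for $C$, using \eqref{divWeyl} (for $d\geq5$ this recovers $C$ from $W$), \eqref{Wn}, and \eqref{Mainardi}.

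The real work is the transverse pair \eqref{zanky}, \eqref{baloo}, since these involve $\nabla_{\hat n}P$ along $\Sigma$, which is not intrinsic boundary data. Here the plan is to differentiate \eqref{gradn} twice: the first derivative expresses $\sigma\hh\nabla_cP_{ab}$ through $\nabla_c\nabla_an_b$, $n_cP_{ab}$ and $(\nabla_c\rho)g_{ab}$ modulo ${\mathcal O}(\sigma^{d-2})$, so a second derivative followed by a contraction with $\hat n$ isolates $\nabla_{\hat n}P_{ab}\big|_\Sigma$ in terms of the third covariant derivative $\hat n^c\hat n^d\hat n^e\nabla_e\nabla_d\nabla_cn_b\big|_\Sigma$, which unwinds via commutators into $\nabla\rho$, $P$, the Riemann tensor and $\nabla\Ric$ on $\Sigma$. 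Substituting \eqref{Weyl}, \eqref{Fialkow}, \eqref{J2Jbar}, \eqref{mainardi} and the contracted Bianchi identity then rewrites the $\top$-projection as $\bar\nabla_bP_{\hat n\hat n}+2H\bar\nabla_bH$, giving \eqref{zanky}; for \eqref{baloo} one instead takes the $\hat n^a\hat n^b$ contraction, converts it to $\nabla_{\hat n}J$ plus lower-order terms using $\nabla^aP_{ab}=\nabla_bJ$, $\nabla_{\hat n}\hat n\stackrel\Sigma=0$ and \eqref{J2Jbar}, and cleans up the boundary curvature via \eqref{Fialkow} and the Gauss equation to produce $(\bar\Delta+\bar J)H-\tfrac d2H^3-(d-1)HP_{\hat n\hat n}$. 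I expect this last step --- not any conceptual difficulty, but keeping track of the $(d-1),(d-2)$ weights and the proliferating $H^2,\hh H^3,\hh\bar JH,\hh\bar\Delta H$ terms while commuting three derivatives past the curvature --- to be the only genuine obstacle; everything preceding it is a short specialisation of known conformal-submanifold formul\ae.
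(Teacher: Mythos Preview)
Your plan for \eqref{top2bar} through \eqref{Mainardi}, and for \eqref{W2W} and \eqref{cotton}, is essentially what the paper does: umbilicity from \eqref{gradn}, then Gauss--Codazzi--Mainardi specialised to $\IIo=0$. The paper is slightly more direct for \eqref{Wn} (one derivative of \eqref{gradn}, skew, take the trace-free part) and for \eqref{cotton} (a short calculation $\bar\nabla_{[a}\bar P_{b]c}\stackrel\Sigma=(\nabla_{[a}P_{b]c})^\top$ using \eqref{Fialkow} and \eqref{top2bar}), but your outlines would land in the same place.

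Where you and the paper genuinely diverge is \eqref{zanky} and \eqref{baloo}. Your plan is to differentiate \eqref{gradn} twice and extract $\nabla_{\hat n}P_{ab}\big|_\Sigma$ from the resulting third covariant derivative $\hat n^c\hat n^d\hat n^e\nabla_e\nabla_d\nabla_c n_b$. That works in principle, but it is exactly the bookkeeping nightmare you anticipate, and it also costs you a dimension: two derivatives of \eqref{gradn} leave an ${\mathcal O}(\sigma^{d-4})$ remainder, so you need $d\geq5$ to restrict cleanly to $\Sigma$, whereas the lemma is stated for $d\geq3$. The paper sidesteps all of this with a one-line trick: write $\hat n^a\hat n^c\stackrel\Sigma= g^{ac}-\bar g^{ac}$ and use the contracted Bianchi identity $\nabla^aP_{ab}=\nabla_bJ$, so that
\[
\hat n^a\nabla_{\hat n}P_{ab}\;=\;\hat n^a\hat n^c\nabla_cP_{ab}\;\stackrel\Sigma=\;\nabla_bJ-\nabla^\top_aP^a{}_b\,,
\]
which is already purely tangential data. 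Feeding in \eqref{J2Jbar}, \eqref{Fialkow}, \eqref{mainardi} and the boundary contracted Bianchi $\bar\nabla^a\bar P_{ab}=\bar\nabla_b\bar J$ then gives \eqref{zanky} in three lines, and the same device with $\hat n^a\hat n^b=g^{ab}-\bar g^{ab}$ applied to $\hat n^a\hat n^b\nabla_{\hat n}P_{ab}$ gives \eqref{baloo}. No third derivatives of $n$, no commutator unwinding, no dimension loss. Your route would get there, but the paper's is both shorter and sharper.
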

\begin{proof}
These are all specializations of known hypersurface embedding results to Poincar\'e--Einstein structures.
By umbilicity of the  conformal boundary embedding~\cite{Goal,LeBrun}  we have that
$$
\II_{ab} = \bar g_{ab} H\, .
$$
Recalling that the second fundamental form relates hypersurface and ambient derivatives according to $\nabla^\top_a \bar v^{\rm ext}_b \stackrel\Sigma= \bar \nabla_a \bar v_b -\hat n_b\II_a^c \bar v_c$, the above display implies 
Equation~\nn{top2bar}.

\smallskip
Taking a gradient of Equation~\nn{gradn}
and then skewing over a pair of indices gives
$$
R_{abc\hat n}+2\hat n_{[a} P_{b]c}+2g_{c[b}\nabla_{a]}\rho\stackrel\Sigma=0\, .
$$
The (boundary) trace-free part of the above recovers Equation~\nn{Wn}.

Equations~(\ref{J2Jbar}-\ref{Mainardi}) are standard hypersurface identities following directly from the equations of Gau\ss, Codazzi, Mainardi and Ricci, see for example~\cite{Will1}, and the vanishing of~$\IIo$ for Poincar\'e--Einstein embeddings.
Note also that Equation~\nn{Mainardi} is a direct consequence of Equation~\nn{mainardi}.

The key to establishing Equation~\nn{zanky} is to first rewrite $\hat n^a \hat n^c \stackrel\Sigma= g^{ac} - \bar g^{ac}$ and then apply $\nabla^a P_{ab}=\nabla_b J$. The details are as follows:
 \begin{eqnarray*}
 \hat n^a \nabla_{\hat n} P_{ab}
 &\stackrel{\Sigma,\top}=&\bar \nabla_b \bar J + \bar \nabla_b P_{\hat n\hat n} -(d-1) H \bar \nabla_b H
 -\nabla^\top_a P^a{}_b\\
 &\stackrel{\Sigma,\top}=&\bar \nabla_b \bar J + \bar \nabla_b P_{\hat n\hat n} -(d-1) H \bar \nabla_b H
 -\bar\nabla^a \big(\bar P_{ab}-\tfrac12 \bar g_{ab} H^2\big)+d H\bar \nabla_b H\\
 &\stackrel{\Sigma,\top}=&
 \bar \nabla_b P_{\hat n\hat n} +2  H \bar \nabla_b H\, \, .
 \end{eqnarray*}
In the above $\stackrel\top =$ denotes projection of the index $b$  to directions tangential to $\Sigma$.
A similar computation gives Equation~\nn{baloo}:
\begin{eqnarray*}
\hat n^a \hat n^b \nabla_{\hat n} P_{ab}&\stackrel\Sigma=&
\hat n^a (g^{bc}-\bar g^{bc}) \nabla_c P_{ab}
\stackrel\Sigma=\nabla_{\hat n } J - 
\hat n^a
\nabla^\top_b P_a{}^b\\
&\stackrel\Sigma=&
\nabla_{\hat n } J - 
\hat n_a \nabla^\top_b 
(
\bar P^{ab} -\tfrac12 \bar g^{ab} H^2
- \hat  n^a \bar \nabla^b H 
- \hat  n^b \bar \nabla^a H
+\hat n^a \hat n^b P_{\hat n\hat n}) \\
&\stackrel\Sigma=&
\nabla_{\hat n } J 
+H\bar J
-\tfrac{d}{2} H^3
+\bar \Delta H
-(d-1)HP_{\hat n\hat n}
\, .
\end{eqnarray*}

Note  that, in dimensions $d\geq 4$, the tensor $(d-3)(P^\top_{ab}-\bar P_{ab} +H\IIo_{ab} -\frac12 \bar g_{ab} H^2)$ gives a  (weight zero) conformal hypersurface invariant~\cite{Fialkow} termed the Fialkow tensor in~\cite{Stafford,Vyatkin}. Thus Equation~\nn{Fialkow} shows that the Fialkow tensor vanishes for asymptotically  Poincar\'e--Einstein structures. The Fialkow tensor and trace-free second fundamental forms are low-lying examples of the conformal fundamental forms of~\cite{Blitz} which measure obstructions to given conformal embeddings admitting Poincar\'e--Einstein metrics.
The tensors $W^\top_{abcd}$ and $\bar W_{abcd}$ agree when  both the Fialkow tensor and trace-free second fundamental form vanish;  this gives 
Equation~\nn{W2W}.

To establish the second equality in Identity~\nn{cotton}. we compute
$$
\bar \nabla_{[a} \bar P_{b]c}
\stackrel\Sigma=\bar \nabla_{[a} P_{b]c}^\top - \bar g_{c[b}H\bar \nabla_{a]} H=\big(
\nabla^\top_{[a} P_{b]c}^\top
+H \hat n_{[b} P_{a]c}^\top
- \bar g_{c[b}H\bar \nabla_{a]} H\big)^\top
=(\nabla_{[a} P_{b]c})^\top\, .
$$  
The first equality above used Equation~\nn{Fialkow} while the second relied on Equation~\nn{top2bar} and that the operation $\top$ is a projector. 
The last equality employs a combination of Equations~\nn{mainardi} and the fact that for asymptotically  Poincar\'e--Einstein structures we have
\begin{equation}\label{nablan2H}
\nabla_a^\top \hat n_b^{\rm ext} \stackrel\Sigma= \bar g_{ab} H\, ,
\end{equation}
for any smooth extension of $\hat n$ to $M$.
The result then follows from the definition of the Cotton tensor.

It remains to prove the equalities~\nn{cotton}. For that we note the result (see~\cite{Goal}, for example) 
$$
n^a W_{abcd}= \sigma C_{cdb}+ {\mathcal O}(\sigma^{d-3})\, .
$$
This gives an alternative derivation of Equation~\nn{Wn}
 because  the Weyl tensor vanishes identically in $d=3$ and the right hand side is zero along $\Sigma$ when
   $d\geq 4$. Hooking $n^b$ into the above display and then taking a normal derivative gives  $n^b C_{cbd}|_\Sigma=0$ (when $d\geq 5)$. Remembering that the Cotton tensor obeys $C_{[abc]}=0=C_{(ab)c}$ then gives the result.
\end{proof}

\medskip
For many of our computations, the extension of the unit conormal $\hat n^{\rm ext}$ (the adornment ${\rm ext}$ is written as a subscript when the metric is used to produce the corresponding normal vector $\hat n^a_{\rm ext}$) given by $n=\nabla \sigma$, where $(M,\cc,\sigma)$ is an asymptotically Poincar\'e--Einstein structure, 
plays an important {\it r\^ole}; the following result  gives pertinent identities for this. 

 \begin{lemma}\label{69}
Let   $(M^d,\cc,\sigma)$ be an asymptotically Poincar\'e--Einstein structure.
Then  the extension $n_a = \nabla_a \sigma$ of the unit conormal $\hat n_a$ obeys the following identities:
 \begin{equation}\label{I2} n^2 = \nabla_n \sigma=1-2 \rho \sigma\stackrel\Sigma=1\, ,\end{equation}
 where 
 $
 \rho:=-\frac1d (\Delta \sigma + J\sigma)
 $. 

\bigskip 
 When $d\geq 3$
  \begin{equation}\label{nablan}
 \nabla_a n_b\stackrel\Sigma= g_{ab} H\, ,
 \end{equation}
 \begin{equation}\label{nablann}
 \nabla_n n_a\stackrel\Sigma= H \hat n_a\,.
 \end{equation}
\smallskip

 When $d\geq 4$
\begin{equation}\label{nablanrho}
\nabla_{\hat n}\rho\stackrel\Sigma=P_{\hat n\hat n}\, ,
\end{equation}
\begin{equation}\label{nablannn}
\nabla_n^2 n_a\stackrel\Sigma=
\big(H^2-2P_{\hat n\hat n})\hat n_a
-P_{\hat n a}^\top \, .
\end{equation}
\begin{equation}\label{PP}
\nabla_{\hat n}\nabla_a n_b\stackrel\Sigma=-P_{ab}-g_{ab}P_{\hat n\hat n}\, ,
\end{equation}
\begin{equation}\label{PPP}
\nabla_a \nabla_b n_c\stackrel\Sigma = 
-\hat n_a P_{bc} -g_{bc} (P_{\hat n a}^\top +\hat n_a P_{\hat n\hat n})\, .
\end{equation}
\smallskip

When $d\geq 5$,
\begin{equation}\label{nablannrho}
\nabla_n^2 \rho
\stackrel\Sigma = 
-\bar\nabla^a P_{\hat n a}^\top
-(d-2) HP_{\hat n\hat n }
+(\nabla_{\hat n} +H)J\, ,
\end{equation}
\begin{equation}\label{P}
\nabla_n^3 n_a
\stackrel\Sigma=
\Big(H^3
+(3d-10)HP_{\hat n\hat n}
+3\bar\nabla^a P_{\hat n a}^\top
-3(\nabla_{\hat n} +H)J
\Big)\hat n_a
-2\bar \nabla_a P_{\hat n\hat n} -H P^\top_{\hat n a}\, .
\end{equation}
\smallskip

 \end{lemma}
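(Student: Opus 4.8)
The plan is to fix a metric $g\in\cc$, take $n_a:=\nabla_a\sigma$, and grind out each identity by differentiating the asymptotically Poincar\'e--Einstein condition~\nn{gradn}, namely $\nabla_a n_b+\sigma P_{ab}+g_{ab}\rho={\mathcal O}(\sigma^{d-2})$, the appropriate number of times and then restricting to $\Sigma$ where $\sigma=0$. The starting points are already available: $\rho=-\tfrac1d(\Delta+J)\sigma$ by definition, and $n^2=\nabla_n\sigma=1-2\rho\sigma$ follows by contracting~\nn{gradn} with $n^a$ (the $P$-term gives $\sigma n^aP_{ab}n^b$ but more directly $n^a\nabla_a n_b=\tfrac12\nabla_b(n^2)$ together with~\nn{icecream}); restricting gives $n^2\stackrel\Sigma=1$, so $n|_\Sigma=\hat n$, establishing~\nn{I2}. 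Equations~\nn{nablan}, \nn{nablann} are then the $\sigma\to0$ restriction of~\nn{gradn} itself (using $\rho|_\Sigma=-H$, which one reads off by tracing~\nn{gradn} at the boundary: $0\stackrel\Sigma=\nabla^a n_a+d\rho=(d-1)H\cdot\mathrm{sign}+d\rho$, fixing $\rho\stackrel\Sigma=-H$ after matching the trace of $g_{ab}H$; care with the mean-curvature convention $H=\tfrac1{d-1}\II_a{}^a$ is the only subtlety).

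For the higher identities the scheme is uniform: apply $\nabla_c$ to~\nn{gradn}, obtaining
$$\nabla_c\nabla_a n_b+n_cP_{ab}+\sigma\nabla_cP_{ab}+g_{ab}\nabla_c\rho={\mathcal O}(\sigma^{d-2})\,,$$
valid to this order once $d\geq4$ (so that the ${\mathcal O}(\sigma^{d-2})$ error survives one derivative). Restricting and contracting with various combinations of $\hat n$ and the tangential projector yields~\nn{PP}, \nn{PPP}, \nn{nablanrho}, and~\nn{nablannn}: for instance contract the displayed equation with $\hat n^c$ to get $\nabla_{\hat n}\nabla_a n_b\stackrel\Sigma=-P_{ab}-g_{ab}P_{\hat n\hat n}$ after using $\nabla_{\hat n}\rho\stackrel\Sigma=P_{\hat n\hat n}$ (which itself comes from the $\hat n^a g_{bc}$-free part, i.e. contracting the derivative identity appropriately, or equivalently differentiating $n^2=1-2\rho\sigma$ twice). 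For~\nn{nablannn} one writes $\nabla_n^2 n_a=\nabla_n(n^b\nabla_b n_a)=(\nabla_n n^b)\nabla_b n_a+n^b n^c\nabla_c\nabla_b n_a$ and feeds in~\nn{nablann} and~\nn{PPP}. The $d\geq5$ identities~\nn{nablannrho} and~\nn{P} are obtained the same way, applying two derivatives to~\nn{gradn} (legitimate once $d\geq5$), restricting, and systematically substituting the already-proven lower identities together with the Lemma~\ref{thelemma} relations—especially $\nabla^aP_{ab}=\nabla_bJ$, $\bar\nabla_aH\stackrel\Sigma=-P^\top_{\hat n a}$ (Eq.~\nn{mainardi}), and $\hat n^a\hat n^b\nabla_{\hat n}P_{ab}$ from Eq.~\nn{baloo}—to rewrite everything in terms of boundary data $H$, $P_{\hat n\hat n}$, $P^\top_{\hat n a}$, $J$ and $\bar\nabla$.

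The only real bookkeeping hazard—and the step I expect to absorb most of the effort—is~\nn{P} and~\nn{nablannrho}: there $\nabla_n^3 n_a$ and $\nabla_n^2\rho$ expand into many terms, several of which are third derivatives of $\sigma$ that must be converted, via commutators of $\nabla$ (which produce Riemann curvature, hence $W$ and $P$ via~\nn{Weyl}) and via the chain $n^a\nabla_a(\cdot)$, into the curvature quantities appearing on the right-hand side; one must also be scrupulous that at each stage the ${\mathcal O}(\sigma^{d-2})$ remainder in~\nn{gradn} is differentiated no more times than $d-2$, which is exactly why~\nn{P}/\nn{nablannrho} require $d\geq5$, \nn{PP}/\nn{PPP}/\nn{nablanrho}/\nn{nablannn} require $d\geq4$, and \nn{nablan}/\nn{nablann} hold for $d\geq3$. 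Each identity's proof is then a short, if fiddly, contraction-and-substitute argument, so I would present~\nn{I2}--\nn{nablann} in full and indicate the common differentiation-and-restriction mechanism for the rest, spelling out only the curvature-commutator bookkeeping needed for~\nn{P}.
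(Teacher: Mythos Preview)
Your approach is essentially that of the paper: differentiate the asymptotically Poincar\'e--Einstein relation~\nn{gradn} the requisite number of times, restrict to $\Sigma$, and recursively substitute the lower-order identities together with $n^2=1-2\rho\sigma+{\mathcal O}(\sigma^{d-1})$. One caveat: your trace argument for $\rho|_\Sigma=-H$ is not quite right as written---tracing~\nn{gradn} alone is tautological, since $\nabla^a n_a=\Delta\sigma\stackrel\Sigma=-d\rho$ by the very definition of $\rho$, so the traced equation collapses to $0=0$. The paper repairs this by computing $\Delta\sigma|_\Sigma$ independently via the tangential/normal split $\nabla^a n_a\stackrel\Sigma=\nabla^\top_a\hat n^a_{\rm ext}+\tfrac12\nabla_{\hat n}(n^2)\stackrel\Sigma=(d-1)H-\rho$ (the last step using $n^2=1-2\rho\sigma$), and then equating with $-d\rho$ to obtain $\rho\stackrel\Sigma=-H$; once this step is corrected your scheme goes through.
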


\begin{proof}
Because $(M,\cc,\sigma)$ is asymptotically Poincar\'e--Einstein, it follows that the scalar curvature $Sc^\go$  equals $-d(d-1)$ along $\Sigma$. Hence the first identity in Equation~\nn{I2} 
follows directly from Equation~\nn{icecream}.

To establish Equation~\nn{nablan}, we note that 
Equation~\nn{gradn} gives 
$
\nabla_a n_b \stackrel\Sigma= -g_{ab} \rho$, so we need to show that This that
 $\rho \stackrel\Sigma= -H$.
 This was  first proved in~\cite{Goal}, the argument is as follows:
\begin{multline*}
-\rho \stackrel\Sigma=\frac1d  \Delta \sigma  \stackrel\Sigma= 
 \frac1d \big(\nabla_a^\top n^a+ \hat n_a \nabla_{\hat n} n^a\big)
  \stackrel\Sigma= 
 \frac 1d\big( \nabla_a^\top \hat n^{a}_{\rm ext}+\frac1{2}  \nabla_{\hat n} n^2\big)
\\
\stackrel\Sigma= 
\frac{d-1}d H
+\frac1{2d} \nabla_{\hat n}\big(1-2\rho\sigma+{\mathcal O}(\sigma^2)\big)
\stackrel\Sigma=
 \frac{d-1}d H
-\frac\rho d
  \, .
\end{multline*}
In the above display, the first equality used the definition of $\rho$
while the second relied on the identity $\nabla_a n^a=\Delta \sigma$. The third used 
that $n$ is an extension of $\hat n$. The fourth equality relied on Equation~\nn{nablan2H} and that $n^2 + 2\rho \sigma = 1 + {\mathcal O}(\sigma^{d-1})$. The latter fact can be established by applying the  identity $\nabla^a P_{ab} = \nabla_b J$ to Equation~\nn{Moniker}. (In fact, as explained in~\cite{Will1}, it is possible to find an  improved defining density $\sigma$ that achieves ${\mathcal O}(\sigma^d)$, but this is unimportant here.)  The final equality used that~$\nabla_{\hat n} \sigma  \stackrel\Sigma=1$.

\smallskip

 Equation~\nn{nablann} follows directly from Equation~\nn{nablan}. To establish Equation~\nn{nablanrho} we compute as follows:
\begin{multline*}
\nabla_{\hat n}\rho \stackrel\Sigma=
-\frac1d \nabla_{\hat n} \big(\nabla.n + J \sigma)
\stackrel\Sigma=
-\frac Jd -\frac1d n^a (\nabla_b \nabla_a n^b +R_{ab}{}^b {}_n)
\\
\stackrel\Sigma=
-\frac Jd 
+\frac1d Ric_{\hat n\hat n}-\frac1d n^a \Delta n_a
\stackrel\Sigma=\frac{d-2}d P_{\hat n\hat n}
-\frac1d \Delta n^2 +\frac1d  \big[\Delta, n^a\big]n_a
\\
\stackrel\Sigma=
\frac{d-2}d P_{\hat n\hat n}
+\frac1d \Delta \big(\rho\sigma + {\mathcal O}(\sigma^{d-1})\big)+H^2
\stackrel\Sigma=
\frac{d-2}d P_{\hat n\hat n}+ \frac{2}d \nabla_{\hat n}\rho\, .
\end{multline*} 
The first equality uses the definition of $\rho$, the second relies on $\nabla_{\hat n} \sigma\stackrel\Sigma=1$ while the third uses that the one-form $n$ is exact.
The fourth is the definition of the commutator of operators while the fifth equality follows upon noting that $[\Delta, n_a] n^a=(\nabla_b n_a)\nabla^b n^a +\frac 12 \Delta n^2$ and then applying Equation~\nn{nablan}. The sixth equality uses $\Delta (\rho \sigma)\stackrel\Sigma = 2 \nabla_{\hat n} \rho+d H^2$ and that $\Delta {\mathcal O}(\sigma^{d-1})= {\mathcal O}(\sigma^{d-3})$. The result then follows after some trivial algebra.

\smallskip
To establish Equation~\nn{nablannn} we compute along similar lines:
\begin{equation*}
\nabla_n^2 n_a =
\nabla_n (\frac 12 \nabla_a n^2)
\stackrel\Sigma=\!
-\nabla_n \nabla_a (\rho\sigma)
\stackrel\Sigma=\!
-\nabla_a \rho
-\nabla_n (n_a \rho)
\stackrel\Sigma=\!
\bar \nabla_a H
-\hat n_a P_{\hat n\hat n}
+\hat n_a H^2 -\hat n_a P_{\hat n\hat n}\, .
\end{equation*}
The result follows upon application of 
Equation~\nn{mainardi}. 

Equations~\nn{PP} and~\nn{PPP} are direct consequences of Equation~\nn{gradn}.
Equation~\nn{nablannrho} may be obtained from Lemma 3.10 of~\cite{Will2} upon specialization to asymptotically Poincar\'e structures, while for Equation~\nn{P} we calculate as below, again using the same methods as above:
\begin{multline*}
\nabla_n^3 n_a \stackrel\Sigma=
-\nabla_n^2 \nabla_a\big(\rho\sigma+{\mathcal O}(\sigma^{d-1})\big)
\stackrel\Sigma=
-\nabla_n \nabla_a \rho
-\nabla_n((1-2\rho\sigma) \nabla_a \rho)
-\nabla_n^2(n_a \rho)
\\
\stackrel\Sigma=-2\nabla_n \nabla_a \rho
-2H\nabla_a \rho
-\hat n_a H \nabla_n \rho
-\nabla_n((\nabla_n n_a) \rho)
-\hat n_a \nabla_n^2\rho
\\
\stackrel\Sigma=
-2\nabla_a \nabla_n\rho
-2\hat n_a H \nabla_n \rho
+H\nabla_n^2 n_a 
-\hat n_a \nabla_n^2\rho
\\
\stackrel\Sigma=
-2(\bar \nabla_a+\hat n_a H) P_{\hat n\hat n}
-3 \hat n_a \nabla_n^2 \rho
+H\nabla_n^2 n_a\, .
\end{multline*}
The result then follows upon employing Equations~\nn{nablannn} and~\nn{nablannrho}.

\end{proof}

It is also useful to relate the bulk and boundary Laplace operators.
\begin{lemma}\label{lap2lap}
Let   $(M,\cc,\sigma)$ be an asymptotically Poincar\'e--Einstein structure and $g\in \cc$.
Also let $f\in C^\infty M$ such that $f|_\Sigma= \bar f\in C^\infty \Sigma$. Then
$$
\Delta f|_\Sigma = \bar \Delta \bar f + \hat n^a \hat n^b \nabla_a \nabla_b f|_\Sigma 
+(d-1) H \nabla_{\hat n} f\, .
$$
\end{lemma}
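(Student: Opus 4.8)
The plan is to split the bulk Laplacian along $\Sigma$ using the orthogonal decomposition $g^{ab}\stackrel\Sigma=\bar g^{ab}+\hat n^a\hat n^b$ of the inverse metric, which gives
$$
\Delta f|_\Sigma=\bar g^{ab}\nabla_a\nabla_b f|_\Sigma+\hat n^a\hat n^b\nabla_a\nabla_b f|_\Sigma\, .
$$
The second term already appears in the claimed identity, so the task reduces to showing that the purely tangential contraction equals $\bar\Delta\bar f+(d-1)H\nabla_{\hat n}f$.

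First I would observe that, since $\bar g^{ab}\hat n_a=0$, contracting the Hessian $\nabla_a\nabla_b f$ with $\bar g^{ab}$ only sees tangential data: writing the gradient one-form $\mu_b:=\nabla_b f$, along $\Sigma$ one has $\mu_b\stackrel\Sigma=\bar\nabla_b\bar f+\hat n_b\,\nabla_{\hat n}f$. Choosing any smooth extension $\mu^{(1)}$ of $\bar\nabla\bar f\in\Gamma(T^*\Sigma)$, any smooth extension $w$ of $\nabla_{\hat n}f$, and any smooth extension $\hat n^{\rm ext}$ of the unit conormal, the difference $\mu_b-\mu^{(1)}_b-\hat n^{\rm ext}_b w$ vanishes on $\Sigma$ and hence equals $s\,\nu_b$ for a defining function $s$ of $\Sigma$ and smooth $\nu$. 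Applying $\bar g^{ab}\nabla_a$ to $s\,\nu_b$ produces $\bar g^{ab}(\nabla_a s)\nu_b+s\,\bar g^{ab}\nabla_a\nu_b$, which vanishes on $\Sigma$ because $\bar g^{ab}\nabla_a s\stackrel\Sigma=0$ and $s|_\Sigma=0$; so the extension-dependent remainder contributes nothing, and $\bar g^{ab}\nabla_a\nabla_b f|_\Sigma=\bar g^{ab}\nabla^\top_a\mu^{(1)}_b|_\Sigma+\bar g^{ab}\nabla^\top_a(\hat n^{\rm ext}_b w)|_\Sigma$ (using $\bar g^{ab}\nabla_a=\bar g^{ab}\nabla^\top_a$).

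Next I would evaluate the two surviving pieces using the umbilicity of the boundary embedding of an (asymptotically) Poincar\'e--Einstein structure. For the first, Equation~\nn{top2bar} gives $\nabla^\top_a\mu^{(1)}_b|_\Sigma=\bar\nabla_a\bar\nabla_b\bar f-H\hat n_b\bar\nabla_a\bar f$, and contracting with $\bar g^{ab}$ kills the second term (again $\bar g^{ab}\hat n_b=0$), leaving $\bar\Delta\bar f$. For the second, the Leibniz rule gives $\bar g^{ab}\nabla^\top_a(\hat n^{\rm ext}_b w)=w\,\bar g^{ab}\nabla^\top_a\hat n^{\rm ext}_b+\bar g^{ab}\hat n^{\rm ext}_b\nabla^\top_a w$; the last term vanishes on $\Sigma$, while Equation~\nn{nablan2H} yields $\bar g^{ab}\nabla^\top_a\hat n^{\rm ext}_b\stackrel\Sigma=\bar g^{ab}\bar g_{ab}H=(d-1)H$, and since $w|_\Sigma=\nabla_{\hat n}f$ this piece is $(d-1)H\nabla_{\hat n}f$. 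Combining the three contributions gives the stated formula.

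The argument is essentially bookkeeping; the only point demanding a little care is verifying that the tangential contraction depends on $f$ only through $\bar f$ and $\nabla_{\hat n}f$, which is exactly what the $s\,\nu$ remainder computation secures. I expect no genuine obstacle: the Poincar\'e--Einstein hypothesis enters solely through the umbilicity identities~\nn{top2bar} and~\nn{nablan2H} (equivalently, for a general hypersurface one would invoke the corresponding second-fundamental-form formulae, whose trace again furnishes the $(d-1)H$ coefficient).
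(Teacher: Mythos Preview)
Your argument is correct; the decomposition $g^{ab}\stackrel\Sigma=\bar g^{ab}+\hat n^a\hat n^b$ together with the hypersurface identities~\nn{top2bar} and~\nn{nablan2H} does exactly what is needed, and your remark that the Poincar\'e--Einstein hypothesis is inessential (the general second fundamental form version goes through verbatim, since only its trace $(d-1)H$ appears) is also right. The paper does not actually supply a proof here but simply cites \cite[Lemma~A.2]{Will2}, where the computation is carried out in the general hypersurface setting you allude to; your write-up is a faithful expansion of that standard argument.
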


\begin{proof}
The proof can be found (in a more general context) in~\cite[Lemma A.2]{Will2}.
\end{proof}

\subsection{Canonical Expansions}\label{canexp}

The boundary asymptotics
of  a quantity $Q$ on a conformally compact structure~$(M,\cc,\sigma)$ are described by computing 
$Q_{(k)}$ such that
$$
Q=Q_{(k)}+\sigma^{k+1} R=Q_{(k)}+{\mathcal O}(\sigma^{k+1})\, ,
$$
where $R$ is smooth on $M$ and $k={\mathbb Z}_{\geq 0}$. No coordinate choice has been made in the above expression.
Upon picking 
coordinates $(x,r)$ for a collar neighborhood of the boundary such that $r$ itself gives a defining function thereof, one can then study a formal asymptotic series $$Q\stackrel{r \to 0} \sim Q^{(0)}+Q^{(1)} r \, +  Q^{(2)} r^2 +\cdots\, .$$ 
When $Q$ is a function,  one may ask that  the coefficients~$Q^{(k)}$ are $r$ independent. For a canonical
coordinate $r$,  the coefficients~$Q^{(k)}$ are of independent interest. To extract them 
from $Q_{(k)}$, one picks the scale $g\in \cc$ 
such that  $\sigma = [g;r]$ and then computes~$\frac1{k!} \frac{\partial^k Q_{(k)}}{\partial r^k}\Big|_{r=0}$ in that scale. There  exist canonical, conformally invariant boundary operators that perform this computation in  rather general settings~\cite{GPt,Blitz}.

\medskip
 
In the case $Q$ is the singular metric $g^o$, 
   given a choice of boundary metric representative~$\bar h\in \cc_{\Sigma}$, there always exists a canonical Graham--Lee normal form~\cite{GLee}  on $M_+$ such that
\begin{equation}\label{normal}
\go=r^{-2}\big(\ext r^2+h(x,r)\big)\, ,
\end{equation}
in a collar neighborhood of the boundary $\Sigma$, where
$\iota_{\frac\partial{\partial r}} h = 0$
and $h(x,0)=\bar h$.
The coordinate $r$ is canonical as it measures the normal geodesic distance to the boundary with respect to the compactified metric $g\in \cc$ on $M$ given by
\begin{equation}\label{block}
g=\ext r^2+h(x,r)\, .
\end{equation}
Moreover, the embedding $\Sigma \hookrightarrow(M,g)$ is minimal. Here and above $x$ denotes 
some choice of coordinates along $\Sigma$
which also give coordinates along hypersurfaces of  constant $r$ by pulling back the exponential map. 
The boundary $\Sigma$ is the hypersurface at $r=0$.
In~\cite{FG-ast} Fefferman and Graham established that asymptotic solutions to the Einstein condition for $\go$ exist such that
\begin{equation}\label{FG}
\left\{
\begin{array}{cc}
\textstyle
h(x,r)\stackrel{r \to 0}{\sim}
h^{\rm ev}
+
\sum_{\ell=d-1}
h^{(\ell)}(x) r^{\ell}  
\, ,
& d {\rm \ even}\, ,\\[3mm]
h(x,r)\stackrel{r \to 0}{\sim}\
h^{\rm ev}
+ \beta(x)r^{d-1}\log r +  h^{(d-1)}(x)r^{d-1}
 \, ,
& d {\rm \ odd}\, ,
\end{array}
\right.
\end{equation}
where
$$
h^{\rm ev}:=
\sum_{\ell=0}^{\big\lfloor\!\frac{d-2}{2}\!\big\rfloor}
h^{(2\ell)}(x) r^{2\ell} . 
$$
In the above $h^{(0)}$ gives the choice of boundary metric in $\cc_{\Sigma}$.  The absence of a term linear in $r$ further implies that this embedding is totally geodesic. When $d$ is even, the Dirichlet data $h^{(0)}$ uniquely determines
the coefficients~$h^{(1)}, \ldots , h^{(d-2)}$, while higher coefficients are uniquely determined upon specifying suitable Neumann data $h^{(d-1)}$. Evenness of the expansion in $r$ thus breaks at this order. Of course a global solution for $\go$, when such is available, determines the quantity $h^{(d-1)}$. When $d$ is odd, 
the Dirichlet data~$h^{(0)}$ uniquely determines
the coefficients~$h^{(1)}, \ldots , h^{(d-2)}$. The coefficient $\beta(x)$ of the log term  gives the  so-called Fefferman--Graham obstruction tensor~\cite{FG-ast,thebookFG}, and only depends on the conformal class of boundary metrics~$\cc_\Sigma$, while the  Neumann data (undetermined by the asymptotic expansion) is again given by suitable $h^{(d-1)}$; see~\cite{GraOb}.

Higher terms in the expansion for odd $d$ and non-vanishing Fefferman--Graham obstruction tensor are of polylogarithmic type involving higher powers  of $\log r$. The expansions above solve the Einstein condition asymptotically, in the sense
$$
\left\{
\begin{array}{cc}
\textstyle
\mathring P^{\go}(x,r)
={\mathcal O}(r^\infty)
\, ,
& d {\rm \ even}\, ,\\[3mm]
\mathring P^{\go}(x,r)={\mathcal O}(r^{d-2})+ \,  {\mathcal O}(\log r,r^{d-2})\, ,
& d {\rm \ odd}\, ,
\end{array}
\right.
$$
where $\mathring P^{\go}$ is the trace-free Schouten tensor of $\go$ and the notation ${\mathcal O}(\log r,r^{d-2})$ denotes a polyhomogeneous expansion in $r$ and $\log r$ where all terms are at least order $r^{d-2}$.

There  exist conformal fundamental forms~\cite{Blitz} that measure the failure of a conformally compact structure to admit asymptotic solutions to the Einstein condition. These both capture the coefficients $h^{{(k)}}$ determined by the 
boundary Dirichlet data, as well as the leading Neumann data~\cite{Blitzedagain} (assuming special conditions on the boundary conformal class when $d$ is odd).

\subsection{Scalar Poincar\'e--Einstein Boundary Operators}\label{no}

In Section~\ref{models} we construct the Yang--Mills boundary operators discussed in the introduction. Here we construct scalar boundary operators that exhibit salient properties of their Yang--Mills counterparts.
Firstly, the conformally invariant
Robin operator~\cite{Cherrier}, defined for any 
conformally compact structure and given for some choice of scale by
$$
\delta^{(1)}:\stackrel\Sigma{=}\nabla_{\hat n}-w H\, ,
$$
 maps bulk densities  $\Gamma(\ce M[w])$ to boundary densities $\Gamma(\ce \Sigma[w-1])$. 
The above operator is a special case of Equation~\nn{Robin}. It takes one normal derivative, so is said to have transverse order one. In general a smooth operator $\delta$ mapping $\Gamma(\ce M[w])$ to $\Gamma(\ce \Sigma[w'])$ is said to have {\it transverse order $k\in {\mathbb Z}_{\geq 0}$} if $\delta\circ \sigma^k \neq 0=\delta \circ\sigma^{k+1}$, where $\sigma$ is a defining density for the boundary here viewed as a multiplication operator mapping  $\Gamma(\ce M[w-1])$ to $\Gamma(\ce M[w])$.
The general theory of~\cite{GPt} (see also~\cite{BG0}) leads in particular to  transverse order two and three scalar boundary operators:
\begin{proposition}\label{wearenotnormal}
Let $(M_+,\go)$ be a dimension $d$ Poincar\'e--Einstein structure with conformal infinity~$\Sigma$. Then the operators  defined for any $g\in \cc$ by
 $$
\check\delta^{(2)}
:\stackrel\Sigma=
 (d + 2 w - 3)\Big[ \hat n^a \hat n^b\nabla_a \nabla_b
  -2 (w - \tfrac12) H \nabla_{\hat n}
  + w   P_{\hat n\hat n}
   +  w (w - \tfrac 12)H^2
  \Big]
   - (\bar \Delta+w\bar J)
  $$
 and
 \begin{multline*}
\check\delta^{(3)}:\stackrel\Sigma=
(d+2w-5)\Big[\hat n^a \hat n^b \hat n^c \nabla_a \nabla_b \nabla_c
-3(w - 1)H \hat n^a \hat n^b  \nabla_a \nabla_b 
\\
\qquad
+ (3 w - 1) P_{\hat n\hat n} \nabla_{\hat n}
+ \tfrac 32 (2 w - 1) (w - 1) H^2\nabla_{\hat n}\hspace{.3cm}
\\
\qquad\qquad\qquad\qquad\qquad
 + w \big( (\nabla_{\hat n}J) 
  +    (\bar \Delta  H)
  - (d + 3 w - 4) P_{\hat n\hat n}  H
    -\tfrac 12  (2 w^2  + d - 3 w)H^3 
  +\bar J H  \big)
  \\
 -(\bar \nabla^a H) \bar \nabla_a
\Big]\hspace{4.7cm}
\\[1mm]
\quad
-3\big[\bar \Delta +(w-1) \bar J\hh \big]\delta^{(1)}\, ,
\hspace{9.5cm}
\end{multline*}
respectively map $\Gamma(\ce M[w])$ to $\Gamma(\ce \Sigma[w-2])$ and $\Gamma(\ce \Sigma[w-3])$.
\smallskip

When $d=3$, the operator
$$
\delta^{(2_3)}:\stackrel\Sigma=\hat n^a \hat n^b\nabla_a \nabla_b
  + H \nabla_{\hat n}
  $$
  maps $
  C^\infty_{\ker \bar \Delta} M\to   \Gamma(\ce \Sigma[-2])
$ where $
  C^\infty_{\ker \bar \Delta} M:=\big\{f\in C^\infty M\, \big|\,  \bar \Delta (f|_\Sigma)=0\big\}$.

 \end{proposition}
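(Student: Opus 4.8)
The plan is to establish conformal invariance of each operator by the linearized variational criterion recalled after \eqref{delta}: an operator $\delta$ taking $\Gamma(\ce M[w])$ to a boundary density bundle of weight $w'$ is invariant precisely when $\delta_{w'}(\delta f) = 0$ whenever $\delta_w f = 0$ (i.e.\ when one varies a genuine weight-$w$ density). So the strategy for $\check\delta^{(2)}$, $\check\delta^{(3)}$, and $\delta^{(2_3)}$ is the same: compute the conformal variation of each term in the formula, using the connection variations~\eqref{deltanablas}, the Schouten variation~\eqref{shouting}, the boundary identities $\delta_1 \hat n = 0$, $\delta_2 \bar g = 0$, $\delta_{-1}H = \Upsilon_{\hat n}$ from~\eqref{varyH}, and the Poincar\'e--Einstein hypersurface identities of Lemmas~\ref{thelemma} and~\ref{69} (in particular \eqref{nablan}, \eqref{PP}, \eqref{nablanrho}, and for $\check\delta^{(3)}$ also \eqref{nablannn}, \eqref{baloo}, \eqref{zanky}, \eqref{varyJdot}). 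Then check that all $\Upsilon$-dependent terms cancel against one another and against the variation of the lower-transverse-order pieces $(\bar\Delta + w\bar J)$, $\delta^{(1)}$ that have been subtracted off.

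Concretely, for $\check\delta^{(2)}$ I would first record that $\delta_w(\nabla_{\hat n})$ acting on a weight-$w$ density produces a term proportional to $\Upsilon_{\hat n}$ (of transverse order zero) plus a tangential piece, that $\delta_{w}(\hat n^a\hat n^b\nabla_a\nabla_b)$ produces both a $\nabla_{\hat n}$ term with coefficient linear in $w$ and $\varpi$ and a purely algebraic $\bar\Delta$-type term, and that $\delta_{-2}P_{\hat n\hat n} = -\hat n^a\nabla_{\hat n}\Upsilon_a$ by~\eqref{Pnnvary}. The coefficients $-2(w-\tfrac12)H$, $wP_{\hat n\hat n}$, $w(w-\tfrac12)H^2$ are then pinned down as exactly the combination making the transverse-order-one ($\nabla_{\hat n}$) part of the variation vanish, while the overall normalization $(d+2w-3)$ and the subtracted tangential operator $-(\bar\Delta + w\bar J)$ are fixed by matching the transverse-order-zero part. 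A clean shortcut: $\check\delta^{(2)}$ is (up to the boundary Laplacian term) precisely the restriction to a hypersurface of the Laplace--Robin/Thomas-$D$ composition, so one can instead derive it from Lemma~\ref{sl2} and~\eqref{ID} applied twice and then restricted, which is how~\cite{GPt} organizes such operators; I would cite that machinery and then verify by a direct variational check at a handful of weights.

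For $\check\delta^{(3)}$ the same method applies but the bookkeeping is heavier: one needs $\delta_{-3}\nabla_{\hat n}J$ from~\eqref{varyJdot}, $\delta_{-3}\bar\Delta H$ from~\eqref{varyH1}, $\delta_{-2}\bar\nabla_a P_{\hat n\hat n}$ from~\eqref{gradPnnvary}, together with the variation of the triple normal derivative $\hat n^a\hat n^b\hat n^c\nabla_a\nabla_b\nabla_c$, which will generate $\nabla_{\hat n}^2$, $\nabla_{\hat n}$, $\bar\Delta\nabla_{\hat n}$, and algebraic terms. One then sorts the resulting variation by transverse order (2, 1, 0) and checks cancellation order by order, the last of these absorbing the variation of $-3[\bar\Delta + (w-1)\bar J]\delta^{(1)}$ (whose variation in turn uses $\delta_{-1}\delta^{(1)}v$, itself controlled by the already-established invariance of $\delta^{(1)}$). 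The $d=3$ operator $\delta^{(2_3)}$ is the easiest: with $d=3$ and $w$ unconstrained the ``bad'' weight-dependent coefficient $(d+2w-3) = 2w$ does not help, so one instead restricts the domain to $f$ with $\bar\Delta(f|_\Sigma)=0$; then the tangential term $-(\bar\Delta + w\bar J)f|_\Sigma = -w\bar J f|_\Sigma$ has a controlled variation, and one checks directly that $\delta_{-2}(\hat n^a\hat n^b\nabla_a\nabla_b f + H\nabla_{\hat n}f)\stackrel\Sigma=0$ on this subspace using $\delta_{-1}H = \Upsilon_{\hat n}$ and $\delta(\nabla_{\hat n}f)$, the stray terms being tangential derivatives of $f|_\Sigma$ that either vanish or recombine into $\bar\Delta(f|_\Sigma)$ (which is zero) after using that $\varpi$ restricted to $\Sigma$ commutes appropriately.

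The main obstacle is the sheer length and error-proneness of the $\check\delta^{(3)}$ variation: there are many terms of transverse order one and zero, and the Poincar\'e--Einstein identities must be applied in exactly the right places (e.g.\ replacing $\nabla_{\hat n}\nabla_a n_b$, $\nabla_n^2\rho$, and the mixed normal--tangential Hessians of $P$). The conceptual content is routine, but getting every coefficient to cancel requires care; I would organize the computation by first writing the general (non-Poincar\'e--Einstein) variation, then imposing umbilicity $\IIo = 0$ and the identities of Lemmas~\ref{thelemma} and~\ref{69}, and finally cross-checking against the known transverse-order-three scalar operator of~\cite{GPt,BG0} on Poincar\'e--Einstein backgrounds, from which $\check\delta^{(3)}$ should be recoverable up to normalization.
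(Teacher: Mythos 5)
For $\check\delta^{(2)}$ and $\check\delta^{(3)}$ your route (linearized conformal variation, checking cancellation order by order, with the shortcut of citing the boundary operator machinery of~\cite{GPt}) is consistent with the paper, which in fact simply cites~\cite{GPt} for these two operators modulo lower-order invariant terms. That part of your plan is sound, if heavier on bookkeeping than the paper's one-line appeal to prior work.

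There is, however, a genuine gap in your treatment of the $d=3$ operator $\delta^{(2_3)}$. You assert that "the stray terms \dots either vanish or recombine into $\bar\Delta(f|_\Sigma)$"; they do neither. Carrying out the variation explicitly: with $w=0$ one has $\delta(\nabla_a\nabla_b f)=-\Upsilon_a\nabla_bf-\Upsilon_b\nabla_af+g_{ab}\Upsilon^c\nabla_cf$, and since $\hat n^a\hat n^b$ has weight $-2$ and $\delta_{-1}H=\Upsilon_{\hat n}$, one finds
$$\delta_{-2}\big(\hat n^a\hat n^b\nabla_a\nabla_b f+H\nabla_{\hat n}f\big)\big|_\Sigma
= -\Upsilon_{\hat n}\nabla_{\hat n}f+\Upsilon^c\nabla_cf\big|_\Sigma
= \bar\Upsilon^a\bar\nabla_a\bar f\, ,$$
after splitting $\Upsilon^c=\bar\Upsilon^c+\Upsilon_{\hat n}\hat n^c$ along $\Sigma$. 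The residual term $\bar\Upsilon^a\bar\nabla_a\bar f$ is a first-order tangential derivative of $\bar f$ contracted against $\bar\Upsilon$; it is not proportional to $\bar\Delta\bar f$ and does not cancel against anything else in the variation. The reason the operator is nonetheless well defined on $C^\infty_{\ker\bar\Delta}M$ is global, not algebraic: the paper's conformally compact manifolds have compact boundary $\Sigma$, and when $d=3$ the boundary is a closed surface, so a function with $\bar\Delta\bar f=0$ is locally constant and hence $\bar\nabla\bar f=0$. That is what kills the stray term. Your proposed mechanism (recombination into $\bar\Delta\bar f$) would not terminate the computation, and without invoking compactness of $\Sigma$ the direct-variation argument for $\delta^{(2_3)}$ simply does not close.
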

 
 \begin{proof}
 The operators $\check \delta^{(2)}$ and $\check \delta^{(3)}$ are both given,  modulo lower order conformally invariant terms and in a more general context than here, in~\cite{GPt}.
 The operator $\delta^{(2_3)}$ is new. It is a simple exercise to verify its claimed conformal variation property when acting on functions whose restriction to the boundary is harmonic.
 \end{proof}

\begin{remark}\label{3}
When $w$ is $\frac{3-d}2$ and $\frac{5-d}2$, respectively, the operators 
$$
\check \delta^{(2)}=-\bar \square
\:\mbox{ and }\:
\check \delta^{(3)}=
-3\hh \bar\square \circ \delta^{(1)}\, ,
$$
where $\bar \square := \bar \Delta + \frac{3-d}2 \bar J$
is the conformally invariant boundary Yamabe operator mapping $\Gamma({\mathcal E}\Sigma[\frac{3-d}2])\to \Gamma({\mathcal E}\Sigma[\frac{1-d}2])$. 
Their respective  transverse orders are then $0$ and $1$ rather than $2$ and $3 $ in those cases.
\hfill$\blacklozenge$
\end{remark}

The operators $\check\delta^{(1)}:=(d+2w-2)\delta^{(1)}$, $\check \delta^{(2)}$ and $\check \delta^{(3)}$ control the behaviour of (generalized) Laplacian eigenfunctions and harmonics. For the latter case consider the following harmonic problem for Poincar\'e--Einstein manifolds.
\begin{problem}\label{DNPE}
Let $(M_+,g^o)$ be a dimension-$d$ Poincar\'e--Einstein manifold and   $B\in C^\infty \Sigma$ be given. Find $f$, continuous on $M$ and such that $f\in C^\infty M_+$, which obeys
$$\Delta^{g^o} f = 0 = f|_\Sigma -B\, .$$
\hfill$\blacksquare$
\end{problem}
\noindent
It is well known~\cite{Mazzeo,MM,GZ} that Problem~\ref{DNPE}
has a unique solution for $f\in C^\infty M_+$ for $d=3,5,\ldots$ and for $f\in C^\infty M$ for $d=2,4,6,\ldots$. For $d$ even there is a 
canonical {\it Dirichlet-to-Neumann} map
$$
{\mathscr N}: C^\infty \Sigma \to \Gamma({\mathcal E}\Sigma[1-d])\, ,
$$
whose input is the Dirchlet data $B$ in Problem~\ref{DNPE}.
The output is  
defined by expanding the unique solution $f$ in the canonical Fefferman--Graham coordinate $r$ and extracting the coefficient $\frac1{(d-1)!} \frac{\ext^{d-1} f}{\ext r^{d-1}}\big|_{r=0}$
of the odd order $r^{d-1}$ term. 
Evenness of the asymptotic expansion  
of the solution $f$ to order $r^{d-2}$
can be used to show that this then
defines a weight $1-d$ boundary density~\cite{GZ}.


The situation for odd dimensions $d$ is more subtle. There, the unique, smooth global solution for $f$ on $M_+$ has a polyhomogeneous asymptotic series expansion~\cite{GZ}
\begin{multline}\label{pollywantacracker}
f(x,r) = B(x) + r^2 f_2(x) + \cdots  + r^{d-3} f_{d-3}(x)
\\
+\tfrac{(-1)^d[(d-2))!!]^2}{(d-1)!(d-2)!}
r^{d-1} \log r
\, {\sf P}_{d-1}B(x)
+ {\mathscr N}_{\bar h} \big(B(x)\big) r^{d-1} +
{\mathcal O}(r^d)+
\log r\, {\mathcal O}(r^d)\, . 
\end{multline}
Here $\sf P_{d-1}$ denotes a GJMS operator, namely a conformally invariant Laplacian power of the form $\bar \Delta^{^{\lfloor\frac{d-1}2\rfloor}}+{\rm LOTs}$ where ``${\rm LOTs}$'' denotes the lower derivative order terms required for conformal invariance~\cite{GJMS}. 
In the above expansion, the Dirichlet-to-Neumann map $${\mathscr N}_{\bar h}: C^\infty \Sigma \to C^\infty \Sigma\, ,$$ defined by the coefficient of the $r^{d-1}$ term, depends on the choice of boundary metric representative $\bar h \in \cc_\Sigma$~\cite{GZ}.
The boundary operators of Proposition~\ref{wearenotnormal} can be used to extract the image of the Dirichlet-to-Neumann map:

\begin{proposition}
Let $f$ a solution to Problem~\ref{DNPE}
be given for a dimension $d$ Poincar\'e--Einstein manifold with Dirichlet data $B\in C^\infty \Sigma$. Then, when
\begin{enumerate}[(i)]
\item\label{one} $d\geq 3$, $\check\delta^{(1)} f = 0$,
\item \label{two}$d\geq 4$, $\check\delta^{(1)} f =\check\delta^{(2)} f = 0$,
\item \label{three}$d\geq 5$, $\check\delta^{(1)} f =\check\delta^{(2)} f =\check\delta^{(3)} f = 0$.
\end{enumerate}
\medskip

Moreover, when $d=2,4$ respectively, 
$$
\delta^{(1)} f={\mathscr N}(B)\:\mbox{ and }\:
-\tfrac1{3!}\check \delta^{(3)} f={\mathscr N}(B)\, .
$$
Also, when $d=3$ and $\check \delta^{(2)}f=\bar \Delta B= 0$, then $f\in C^\infty M$. 
In this case the Dirichlet-to-Neumann map ${\mathscr N}_{\bar h}: C^\infty \Sigma \cap \ker \bar \Delta\to   \Gamma(\ce \Sigma[-2])$ and is given by the operator $\frac1{2!} \delta^{(2_3)}$.
\end{proposition}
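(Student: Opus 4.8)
The plan is to reduce every operator to the canonical Graham--Lee normal form \cite{GLee} and then read the answers off the Graham--Zworski asymptotics~\nn{pollywantacracker} of the harmonic extension. First I would fix a boundary representative $\bar h\in\cc_\Sigma$ and pass to the normal form, with compactified metric $g=\ext r^2+h(x,r)$, so that $r$ is the $g$-geodesic distance to $\Sigma$, the embedding $\Sigma\hookrightarrow(M,g)$ is totally geodesic, and in this scale $H=0$, $\bar\nabla_aH=0$, $\IIo=0$, $\hat n=\ext r$ and $\nabla_{\hat n}\hat n=0$. Since $\delta^{(1)},\check\delta^{(2)},\check\delta^{(3)}$ and $\delta^{(2_3)}$ are conformally invariant (Proposition~\ref{wearenotnormal}), they may be evaluated in this scale; using $H=0$ and the Poincar\'e--Einstein hypersurface identities of Lemmas~\ref{69} and~\ref{thelemma}, each covariant string $\hat n^{a_1}\!\cdots\hat n^{a_k}\nabla_{a_1}\!\cdots\nabla_{a_k}f|_\Sigma$ collapses to the plain transverse derivative $\partial_r^kf|_{r=0}$, while all remaining terms carry a factor $\bar\nabla_aH=0$ or $\nabla_{\hat n}f|_\Sigma=\partial_rf|_{r=0}$. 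Writing $f\stackrel{r\to0}\sim\sum_k f_k(x)r^k$ and taking the density weight to be $w=0$, one then gets $\delta^{(1)}f=f_1$, $\delta^{(2_3)}f|_\Sigma=\partial_r^2f|_{r=0}=2f_2$, and, modulo terms that carry a factor $\bar\nabla_aH=0$ or $\delta^{(1)}f=f_1$,
$$
\check\delta^{(1)}f=(d-2)f_1,\qquad\check\delta^{(2)}f=2(d-3)f_2-\bar\Delta B,\qquad\check\delta^{(3)}f=6(d-5)f_3 .
$$
(A scale-free variant of this step uses Lemma~\ref{II} and~\nn{intID}: since $J^{\go}=-d/2$ one has $I.Df=-\Delta^{\go}f=0$ on $M_+$ at weight $0$, and $I.D|_\Sigma=(d-2)\delta^{(1)}$, which is a quick route to $\check\delta^{(1)}f=0$ for $d$ even; the $\mathfrak{sl}(2)$ relation of Lemma~\ref{sl2} organizes the higher iterates, but the normal-form computation is the cleaner route for $\check\delta^{(2)},\check\delta^{(3)}$.)

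Next I would insert the expansion of $f$. Under $\go=r^{-2}g$ the equation $\Delta^{\go}f=0$ becomes $r\Delta^gf=(d-2)\partial_rf$, and since $h(x,r)$ is even to leading order (so $\partial_r\log\det h=O(r)$), matching the $r^0$, $r^1$, $r^2$ coefficients yields $(d-2)f_1=0$, $2(d-3)f_2=\bar\Delta_{\bar h}B$, and $3(d-4)f_3=0$. This is the start of the Graham--Zworski recursion~\cite{GZ}: for $d\neq2$ the expansion of $f$ has vanishing odd coefficients up to order $r^{d-2}$, the coefficient of $r^{d-1}$ is the Neumann datum ${\mathscr N}(B)=f_{d-1}$, and for $d$ odd a term $\propto r^{d-1}\log r\,{\sf P}_{d-1}B$ also enters there; for $d=2$ the indicial roots $0$ and $d-1=1$ are non-resonant and $f_1={\mathscr N}(B)$. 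Substituting into the reduced forms proves the list at once: $f_1=0$ gives $\check\delta^{(1)}f=0$ for $d\geq3$; $2(d-3)f_2=\bar\Delta B$ gives $\check\delta^{(2)}f=0$ for $d\geq4$; and $\check\delta^{(3)}f=6(d-5)f_3=0$ for $d\geq5$ (trivially at $d=5$, and because $f_3=0$ when $d\geq6$, as $3\leq d-2$). For $d=2$, $\check\delta^{(1)}=(d-2)\delta^{(1)}\equiv0$ while $\delta^{(1)}f=f_1={\mathscr N}(B)$; for $d=4$, $\check\delta^{(3)}f=6(d-5)f_3=-3!\,f_3$ and ${\mathscr N}(B)=\tfrac1{(d-1)!}\partial_r^{d-1}f|_{r=0}=f_3$, so $-\tfrac1{3!}\check\delta^{(3)}f={\mathscr N}(B)$.

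For $d=3$ the prefactor $d+2w-3$ vanishes at $w=0$, so the reduced form degenerates to $\check\delta^{(2)}f=-\bar\Delta B$; hence the condition $\check\delta^{(2)}f=\bar\Delta B=0$ is a single one, ${\sf P}_2B=\bar\Delta_{\bar h}B=0$ (on the surface $\Sigma$ the relevant GJMS operator is the plain Laplacian). When it holds the leading log obstruction in~\nn{pollywantacracker} vanishes, and since the higher polylog obstructions are generated by it they vanish as well, so there is a genuine power-series formal solution; by uniqueness of the continuous solution to Problem~\ref{DNPE} this equals $f$, whence $f\in C^\infty M\subset C^\infty_{\ker\bar\Delta}M$ and $\delta^{(2_3)}f$ is defined. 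Finally ${\mathscr N}_{\bar h}(B)$ is by construction the coefficient of $r^{d-1}=r^2$, i.e.\ $f_2$, while $\tfrac1{2!}\delta^{(2_3)}f=f_2$ by the first paragraph; the $\bar h$-dependence enters only through the trivialization of the weight $1-d=-2$ density.

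The hard part will be the first paragraph: showing that on a Poincar\'e--Einstein background, in the normal-form scale, the conformally invariant operators $\check\delta^{(2)},\check\delta^{(3)},\delta^{(2_3)}$ really do collapse to the clean transverse-derivative expressions above, with every $H$-, $\bar\nabla H$-, $P_{\hat n\hat n}$- and curvature-dressing either killed by $H=0$ or by $\partial_rf|_{r=0}=f_1=0$. This is a direct but lengthy computation resting on Lemmas~\ref{69}--\ref{thelemma} (and on Lemma~\ref{lap2lap} to relate $\Delta$ and $\bar\Delta$). The remaining ingredient, softer, is the $d=3$ smoothness assertion, which uses from~\cite{GZ} that vanishing of the first log obstruction removes all of them; the rest is the routine indicial-root and Taylor-coefficient matching sketched above.
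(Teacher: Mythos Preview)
Your argument is correct, but it differs from the paper's in how Points (i)--(iii) are established. The paper does \emph{not} pass to the Graham--Lee normal form for these vanishing statements; instead it works in an arbitrary scale $g\in\cc$ and iterates $\nabla_n$ on the identity
$$
\sigma^{-1}\Delta^{\go}f=\sigma\Delta f-(d-2)\nabla_n f\stackrel\Sigma=-(d-2)\delta^{(1)}f\, ,
$$
using Lemma~\ref{lap2lap} to trade $\Delta$ for $\bar\Delta+\hat n^a\hat n^b\nabla_a\nabla_b+(d-1)H\nabla_{\hat n}$, and then develops the resulting terms with the hypersurface identities of Section~\ref{PEM}. This yields, for instance, the scale-free relation $0=-\check\delta^{(2)}f+H\check\delta^{(1)}f$ and, after one further $\nabla_n$, Equation~\nn{35} for $\check\delta^{(3)}f$. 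Only for the Dirichlet-to-Neumann identifications does the paper then specialise to the normal form where $H=0$.

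Your route---fix the normal form first, observe that every covariant string collapses to $\partial_r^k$ because $\Gamma^\mu_{rr}=0$, and then read the coefficients from the Graham--Zworski expansion---is shorter and avoids the detailed commutator computations for $\nabla_n\Delta f$ and $\nabla_n^3 f$ that the paper carries out. What the paper's approach buys is a scale-independent derivation of (i)--(iii) that makes the structural identity $\check\delta^{(k)}f=0$ visible as a direct consequence of the PDE rather than of a particular coordinate expansion; this is closer in spirit to the Laplace--Robin/$\mathfrak{sl}(2)$ mechanism you allude to parenthetically. Your ``hard part'' paragraph correctly identifies the only nontrivial verification in your approach, and your treatment of the $d=3$ smoothness (via vanishing of ${\sf P}_2B$ killing the log tower, citing~\cite{GZ}) matches the paper's.
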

\begin{proof}
First we establish that the operators $\check \delta^{(1)}$, $\check \delta^{(2)}$ and $\check \delta^{(3)}$ annihilate solutions in the claimed dimensions. An elegant proof approach relates these operators to 
powers of the Laplace--Robin operator restricted to $\Sigma$, and then uses that the harmonic condition on  functions is equivalent to lying in the kernel of that operator. Else, one may use that acting on $k\in C^\infty M_+$,
the singular metric  Laplacian obeys
$$
\sigma^{-1}\Delta^{g^o}k=
\sigma\Delta k
-(d-2)\nabla_n k\stackrel\Sigma=-(d-2)\delta^{(1)}k\, .
$$
The last equality used that $n=\ext \sigma\stackrel\Sigma=\hat n$. Taking $k$ to be a solution $f$, this establishes Point~{\it (\ref{one})}, and in also that $\delta^{(1)} f=\nabla_{\hat n}f|_\Sigma = 0$ 
for $d\geq 3$.

Acting with $\nabla_n$ on $\sigma^{-1} \Delta^{g^o}f$, applying Lemma~\ref{lap2lap}, and using that Equation~\nn{pollywantacracker} implies $f\in C^{d-2} M$ for odd $d\geq 5$, gives
\begin{multline}\label{doit}
0=
\sigma \nabla_n \Delta f + |n|^2 \Delta f - (d-2) \nabla_n^2 f
\\ \stackrel\Sigma = 
\big(\bar \Delta -(d-3)
\hat n^a \hat n^b \nabla_a \nabla_b +
 H\nabla_{\hat n}\big) f 
 = 
 -\check \delta^{(2)} f 
 + H\check \delta^{(1)} f\, .
\end{multline}
Hence, when $d\geq 4$, we have $\check\delta^{(1)} f =\check\delta^{(2)} f = 0$, as required for Point~{\it (\ref{two})}.

To prove Point~{\it(\ref{three})}, 
we may use $\check \delta^{(1)}f=0=\check\delta^{(2)}f$ so that
$$
\nabla_{\hat n} f|_\Sigma=0
\mbox{ and }
\hat n^a \hat n^b \nabla_a \nabla_b f|_\Sigma
=\tfrac1{d-3}\bar \Delta f\, .
$$
Hence
\begin{equation}\label{35}
\check\delta^{(3)}f=(d-5)
\Big[ 
\hat n^a \hat n^b \hat n^c \nabla_a\nabla_b \nabla_c f+\tfrac{3}{d-3} H\bar \Delta f-(\bar \nabla^a H)\bar\nabla_a f
\Big]
\, .
\end{equation}
To show that this vanishes for $d>5$ we apply $\nabla_n$ once again to Equation~\nn{doit}, giving
$$
0\stackrel\Sigma=
2 \nabla_n \Delta f+
2H\Delta f - (d-2) \nabla_n^3 f\, .
$$
Here we used $\nabla_n|n|^2\stackrel\Sigma = 2H$.
Now  we develop each term above separately and find that
\begin{align*}
\nabla_n \Delta f
&\stackrel\Sigma
=\hat n^a (\bar g^{bc} + \hat n^b \hat n^c)\nabla_a \nabla_b \nabla_c f
\\[1mm]
&=\hat n^a  \hat n^b \hat n^c\nabla_a \nabla_b \nabla_c f
+\bar g^{bc}R_{\hat n b c}{}^d \nabla_d f
+\hat n^a \bar g^{bc} \nabla^\top_b \nabla_c \nabla_a f
\\[1mm]
&=\hat n^a  \hat n^b \hat n^c\nabla_a \nabla_b \nabla_c f
-Ric_{\hat n d} \bar \nabla^d f-
H \nabla^\top_a \nabla^a f +
\nabla_b^\top (\hat n^a \nabla^b \nabla_a f)
\\[1mm]
&=\hat n^a  \hat n^b \hat n^c\nabla_a \nabla_b \nabla_c f
-(d-2)P_{\hat n a}\bar \nabla^a f
-H \bar \Delta f 
-\bar \nabla_a (H \bar \nabla^a f)
+ \nabla^\top_b
(\hat n^a \hat n^b  \nabla_{\hat n} \nabla_a f)
\\[1mm]
&=\hat n^a  \hat n^b \hat n^c\nabla_a \nabla_b \nabla_c f
-\tfrac{d-5}{d-3}H\bar \Delta f
+(d-3)(\bar \nabla _a H)\bar \nabla^a f\, ,\\[2mm]
\Delta f\: \:&\stackrel\Sigma = \tfrac{d-2}{d-3} \bar \Delta f\, ,
\end{align*}
and
\begin{align*}
\nabla_n^3 f 
&\stackrel\Sigma=
\hat n^b \nabla_{\hat n} \nabla_b \nabla_n f
+ H \nabla_{\hat n} \nabla_n f
\\
&
=
\hat n^b \nabla_{\hat n}  \nabla_n\nabla_b f
+\hat n^b\nabla_{\hat n}\big((\nabla_b n^c )\nabla_c f\big)
+ H \hat n^a \nabla_{\hat n} \nabla_af
\hspace{4.45cm}
\\
&
=\hat n^a  \hat n^b \hat n^c\nabla_a \nabla_b \nabla_c f+
\tfrac{3}{d-3}H  \bar \Delta f
+  (\bar \nabla_a H)\bar \nabla^a f 
\, .
\end{align*}
For the second equality we used Equation~\nn{lap2lap} and the last line above relied on Equation~\nn{nablan}.
Hence Equation~\nn{doit} now implies
$$
-(d-4)\Big[\hat n^a \hat n^b \hat n^c\nabla_a \nabla_b \nabla_c f+\tfrac{3}{d-3} H  \bar \Delta f
-(\bar \nabla_a H)\bar \nabla^a f 
\Big]\stackrel\Sigma=0\, .
$$

Having established Points {\it (\ref{one}-\ref{three})}, we turn to the Dirichlet--Neumann maps. For that we need to study the operators $\check \delta^{(k)}$ ($k=1,2,3$)
in the Fefferman--Graham coordinate system. Note that the embedding $\Sigma\hookrightarrow (M,g)$ for the compactified metric $g$ of Equation~\nn{FG} has zero mean curvature. Also, acting on scalars, the operator $\nabla_n = \frac{\partial}{\partial r}$.
That $
\delta^{(1)} f={\mathscr N}(B)
$ when $d=2$ now follows immediately. 
When $d=4$, we  now have
$$
\check \delta^{(3)}f \stackrel g=
-\hat n^a \hat n^b \hat n^c \nabla_a \nabla_b \nabla_c f\, .
$$
Using our above computation for  $\nabla_n^3=\frac{\partial^3}{\partial r^3}$ in terms of $\hat n^a \hat n^b \hat n^c \nabla_a \nabla_b \nabla_c f$, it readily follows that
$-\tfrac1{3!}\check \delta^{(3)} f={\mathscr N}(B)$.

The smoothness statement for the $d=3$  case was already established in~\cite{GZ}, so we must now consider the operator $\delta^{(2_3)}$. The same considerations show that this operator now equals $\frac{\partial^2}{\partial r^2}$ and therefore extracts $2!$ times the coefficient ${\mathscr N}_{\bar h}$ of the $r^2$ term in the expansion~\nn{pollywantacracker}.
\end{proof}

\begin{remark}
When $d$ is odd, solutions $f$ to Problem~\ref{DNPE} may not be smooth on $M$. For the $d=3$ case,   the obstruction to  smoothness of solutions is  $\check \delta^{(2)} f^{\rm ext}=-\bar \Delta B=-{\sf P}_2 B$, where $f^{\rm ext}$ is {\it any} smooth extension of the boundary Dirichlet data $B$ to $M$.
Also, the vanishing results for the operators $\check \delta^{(k)}$ on solutions can be used to extract  
the coefficients in the expansion~\nn{pollywantacracker}, in particular, in odd dimensions $d\geq 5$ we have 
$$
f(x,r)= B(x) +\tfrac1{2!(d-3)} r^2 \bar \Delta B(x) + {\mathcal O}(r^4)+
 {\mathcal O}(r^{d-1}\log r)\, .
$$

When $d=5$, the invariant operator $\check \delta^{(3)}
=-3\hh \bar\square \circ \delta^{(1)}$ does annihilate solutions to 
Problem~\ref{DNPE}, however it only has transverse order one. Instead, Equation~\nn{35} 
suggests  the operator
$$\delta^{(3_5)}:=\hat n^a \hat n^b \hat n^c \nabla_a\nabla_b \nabla_c f+\tfrac{3}{2} H\bar \Delta f-(\bar \nabla^a H)\bar\nabla_a f\, .$$ 
In fact $\delta^{(3_5)}$ defines a
conformally invariant operator
acting on $C^\infty M \cap\ker \delta^{(1)}$ that also annihilates solutions.
\hfill$\blacklozenge$
\end{remark}

\noindent
Non-linear analogs of the scalar boundary operators for  the Poincar\'e--Einstein filling problem were given in~\cite{Blitzedagain}.
Non-linear 
 Yang--Mills analogs are given in Section~\nn{models}.

\subsection{Poincar\'e--Einstein Asymptotically Yang--Mills Connections}\label{PEYM}
We  now provide some technical results concerning asymptotically Yang--Mills connections.


\begin{lemma}\label{gradF}
Let $(M^d,\cc)$ be an  asymptotically Poincar\'e--Einstein structure with $d\geq 6$, equipped with an  asymptotically Yang--Mills connection $\nabla$. 
Then, for any $g\in \cc$, 
\begin{equation}\label{first}
\nabla_{\hat n} F_{nb}\stackrel\Sigma=\frac{1}{d-5}\, 
\bj_b
\, .
\end{equation}
Also
\begin{equation}
\label{nablanF}
\nabla_{\hat n} F_{ab}\stackrel\Sigma=\frac{2}{d-5}\,  \hat n_{[a}{\bj}_{b]}-2H \bar F_{ab}
\end{equation}
and
\begin{equation}\label{when}
\nabla^a F_{ab} \stackrel\Sigma= \frac{d-4}{d-5}
\hh \hh\hh \bj_b\, .
\end{equation}

\end{lemma}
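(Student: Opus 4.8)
The plan is to read off all three identities from the defining formula $j_b[A]\stackrel g{=}\sigma\nabla^aF_{ab}-(d-4)F_{nb}$ of~\nn{YMC}, using the hypothesis $j[A]=\sigma^{d-4}k$, the asymptotically Poincar\'e--Einstein identities of Lemmas~\ref{thelemma} and~\ref{69}, and the Bianchi identity $\nabla_{[a}F_{bc]}=0$. Since $d\geq 6$ we have $j_b[A]={\mathcal O}(\sigma^2)$, so $j_b[A]\stackrel\Sigma=0$ and $\nabla_nj_b[A]\stackrel\Sigma=0$, where $n:=\nabla\sigma$. The first of these together with $\sigma\stackrel\Sigma=0$ forces $F_{nb}\stackrel\Sigma=0$, and since $n\stackrel\Sigma=\hat n$ this reads $\hat n^aF_{ab}\stackrel\Sigma=0$; hence $F|_\Sigma$ is tangential and coincides with the curvature $\bar F$ of the pullback connection $\bar\nabla^{\bar A}$. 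Applying $\nabla_n$ to~\nn{YMC}, restricting to $\Sigma$, and using $\nabla_n\sigma\stackrel\Sigma=n^2\stackrel\Sigma=1$ (Equation~\nn{I2}), $\sigma\stackrel\Sigma=0$ and $\nabla_nj_b\stackrel\Sigma=0$ gives $\nabla^aF_{ab}\stackrel\Sigma=(d-4)\nabla_nF_{nb}$; and since $\nabla_nn^a\stackrel\Sigma=H\hat n^a$ (Equation~\nn{nablann}) and $\hat n^aF_{ab}\stackrel\Sigma=0$, the right side equals $(d-4)\,\hat n^a\nabla_{\hat n}F_{ab}$ (which also records $\nabla_{\hat n}F_{nb}\stackrel\Sigma=\hat n^a\nabla_{\hat n}F_{ab}$).

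Next I would compute the tangential divergence of $F$ along $\Sigma$. Splitting $g^{ac}\stackrel\Sigma=\bar g^{ac}+\hat n^a\hat n^c$ gives $\nabla^aF_{ab}\stackrel\Sigma=\bar g^{ac}\nabla^\top_cF_{ab}+\hat n^a\nabla_{\hat n}F_{ab}$. For the tangential piece, the two-index form of~\nn{top2bar} (valid because the Poincar\'e--Einstein embedding is umbilic and $F|_\Sigma=\bar F$ is tangential) reads $\nabla^\top_cF_{ab}\stackrel\Sigma=\bar\nabla_c\bar F_{ab}-H\hat n_a\bar F_{cb}-H\hat n_b\bar F_{ac}$; contracting with $\bar g^{ac}$ kills the first $\hat n$-term by $\bar g^{ac}\hat n_a=0$ and the second by antisymmetry of $\bar F$, leaving $\bj_b$. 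Thus $\nabla^aF_{ab}\stackrel\Sigma=\bj_b+\hat n^a\nabla_{\hat n}F_{ab}$, and comparing with the first paragraph yields $(d-5)\,\hat n^a\nabla_{\hat n}F_{ab}\stackrel\Sigma=\bj_b$, which is Equation~\nn{first}; substituting this back gives Equation~\nn{when}.

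For Equation~\nn{nablanF} I would reassemble the antisymmetric tensor $\nabla_{\hat n}F_{ab}$ from its components. The normal--normal part vanishes by antisymmetry of $F$, the normal--tangential part equals $\tfrac1{d-5}\bj_b$ by the previous step, so it remains to find the tangential--tangential part $(\nabla_{\hat n}F_{ab})^\top$. For this I would contract the Bianchi identity with $\hat n^a$, project $b$ and $c$ tangentially, and use $\hat n^aF_{ab}\stackrel\Sigma=0$ along with $\nabla^\top_a\hat n^{\rm ext}_b\stackrel\Sigma=\bar g_{ab}H$ (Equation~\nn{nablan2H}); the two resulting $H\bar F$-contributions add up to $(\nabla_{\hat n}F_{ab})^\top\stackrel\Sigma=-2H\bar F_{ab}$. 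Reassembling then gives $\nabla_{\hat n}F_{ab}\stackrel\Sigma=\tfrac2{d-5}\hat n_{[a}\bj_{b]}-2H\bar F_{ab}$, which is Equation~\nn{nablanF}.

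The one delicate point is the book-keeping in the Gauss-type manipulations above: $F$ is a two-form on all of $M$ whose contraction with $\hat n$ vanishes only along $\Sigma$, so one must verify that the off-boundary normal components of $F$ contribute nothing to the tangential derivatives that appear---this holds because a tangential derivative of any quantity vanishing on $\Sigma$ vanishes on $\Sigma$, after which the umbilic hypersurface calculus of Lemmas~\ref{thelemma} and~\ref{69} applies verbatim. The remaining manipulations are short contractions.
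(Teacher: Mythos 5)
Your proof is correct and follows essentially the same route as the paper's: acting $\nabla_n$ on the defining formula~\nn{YMC}, decomposing $\nabla^a F_{ab}$ along $\Sigma$ via $g^{ac}\stackrel\Sigma=\bar g^{ac}+\hat n^a\hat n^c$ together with the tensor analog of~\nn{top2bar}, and invoking the Bianchi identity for~\nn{nablanF}. The only difference is cosmetic reordering — you extract~\nn{when} as a byproduct of establishing~\nn{first}, while the paper obtains~\nn{when} afterward from~\nn{nablanF} — and your reassembly of $\nabla_{\hat n}F_{ab}$ from its normal/tangential pieces is equivalent to the paper's direct evaluation of $n^c\nabla_{[a}F_{b]c}$.
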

\begin{proof}
Because $\nabla$ is asymptotically Yang--Mills,  we have $(d-4) F_{nb}= \sigma \nabla^a F_{ab} + {\mathcal O}(\sigma^{d-4})$, so acting on this relation with $\nabla_{n}$  we obtain 
$$
(d-4) \nabla_{ n} F_{nb}=\nabla_{ n}(\sigma \nabla^a F_{ab})
+{\mathcal O}(\sigma^{d-5})
\stackrel\Sigma=
\nabla^\top_a \bar F^a{}_b+
\hat n^a \nabla_{\hat n} F_{ab}
=\bj_b+\nabla_{\hat n} F_{nb}\, .
$$
The second equality above used Equation~\nn{I2} and that $F_{ab}\stackrel\Sigma=\bar F_{ab}$. The third equality relied on the tensor analog of Equation~\nn{top2bar}, Equation~\nn{nablann}
as well as $ \bar F_{\hat nb}=0$. This yields Equation~\nn{first}.

To obtain Equation~\nn{nablanF} we employ the Bianchi identity to compute
$$
(d-5) \nabla_n F_{ab}=-2(d-5)n^c \nabla_{[a} F_{b]c}
\stackrel\Sigma=-2(d-5)\nabla_{[a} F_{b]n}+2(d-5)H F_{ba}$$
$$=
2\hat n_{[a}\bj_{b]}
-2(d-5)H \bar F_{ab}\, .
$$
Here we used Equation~\nn{first} as well as $\nabla_a n_b\stackrel\Sigma = H g_{ab}$ and $\nabla_{[a}^\top F_{b]\hat n}^{\phantom \top}\stackrel\Sigma=0$.

Finally, Equation~\nn{when} follows from a simple computation
$$
(d-5)\nabla^a F_{ab}\stackrel\Sigma=(d-5)\nabla^\top_a F^a{}_b + (d-5)\hat n^a \nabla_{\hat n} F_{ab}
\stackrel\Sigma =(d-4)\bj_b\, ,
$$
where the last step used Equation~\nn{nablanF}.
\end{proof}

Identities involving two normal derivatives of the curvature of an asymptotically Yang--Mills connection are more involved and are recorded below.
\begin{lemma}\label{d666}
Let $(M^d,\cc)$ be an  asymptotically Poincar\'e--Einstein structure with $d\geq 7$, equipped with an  asymptotically Yang--Mills connection $\nabla$. 
Then, for any $g\in \cc$,
\begin{equation}\label{d6}
\nabla_{\hat n} \nabla^a F_{ab}
\stackrel\Sigma=
-(d-4)\Big(P_{\hat n}{}^c \bar F_{cb}
+\frac
{2}{d-5}\, H \bj_b
\Big)\, ,
\end{equation}
\begin{multline}\label{d66again}
\hat n^c \nabla_{\hat n}\nabla_c F_{ab}
\stackrel\Sigma=
 \frac2{d-5} 
\hh\bar \nabla_{[a}\bj_{b]}
-2\bar P_{[a}{}^c \bar F_{b]c}
+(2P_{\hat n\hat n}+5H^2)\bar F_{ab}
+2\hat n_{[a}\Big(
P_{\hat n}{}^c\bar F_{b]c}
-\frac{5}{d-5}\hh H
\bj_{b]}
\Big)
\, ,
\end{multline}
and
\begin{equation}\label{d66againagain}
\hat n^a\hat n^b \nabla_{\hat n}\nabla_a F_{bc}
\stackrel\Sigma=
P_{\hat n}{}^a\bar F_{ca}
-\frac{5}{d-5}\hh H
\bj_{c}
\, .
\end{equation}
\end{lemma}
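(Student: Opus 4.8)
The plan is to imitate the strategy of Lemma~\ref{gradF}, but now applied to one more normal derivative of the asymptotic Yang--Mills relation $(d-4)F_{nb}=\sigma\nabla^a F_{ab}+{\mathcal O}(\sigma^{d-4})$, valid for an asymptotically Poincar\'e--Einstein structure of dimension $d\ge 7$. Throughout I work in a fixed scale $g\in\cc$ and trivialize density bundles. The three displayed identities are not independent: once we have Equation~\nn{d66again} for $\hat n^c\nabla_{\hat n}\nabla_c F_{ab}$, the fully contracted version~\nn{d66againagain} follows by hooking $\hat n^a$ into it and using $F_{nn}=0$, $\hat n^aP_{\hat n}{}^c\bar F_{ca}=\tfrac12 P_{\hat n\hat n}$-type reductions, and $\hat n^a\hat n_{[a}X_{b]}=\tfrac12 X_b$ on tangential $X$. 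So the real work is Equations~\nn{d6} and~\nn{d66again}.

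For Equation~\nn{d6}: apply $\nabla_n$ twice to $(d-4)F_{nb}=\sigma\nabla^aF_{ab}+{\mathcal O}(\sigma^{d-4})$. The first application reproduces Equation~\nn{first} (as in Lemma~\ref{gradF}); the second gives $(d-4)\nabla_n^2 F_{nb}\stackrel\Sigma{=}\nabla_n(n^a\nabla^cF_{ac})+\nabla_n(\sigma\nabla_n\nabla^cF_{cb})+{\mathcal O}(\sigma^{d-5})$-type terms. On the other hand one also needs $\nabla_{\hat n}\nabla^aF_{ab}$ directly; writing $\nabla_n(\nabla^aF_{ab})=\nabla_n\nabla^\top_aF^a{}_b+\nabla_n(\hat n^a\nabla_{\hat n}F_{ab})$ and commuting derivatives, the curvature commutator brings in $R_{\hat n a}{}^a{}_c=-\Ric_{\hat n c}=-(d-2)P_{\hat n c}-g_{\hat n c}J$ and the gauge curvature commutator $[\nabla_{\hat n},\nabla^a]F_{ab}=F_{\hat n}{}^aF_{ab}-\ldots$, which is ${\mathcal O}(\sigma^0)$ but contributes only through $\bar F$ terms. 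The extension identities of Lemma~\ref{69} — in particular $\nabla_an_b\stackrel\Sigma{=}g_{ab}H$, $\nabla_{\hat n}\nabla_a n_b\stackrel\Sigma{=}-P_{ab}-g_{ab}P_{\hat n\hat n}$ (Equation~\nn{PP}), $\nabla^2_nn_a\stackrel\Sigma{=}(H^2-2P_{\hat n\hat n})\hat n_a-P^\top_{\hat n a}$ (Equation~\nn{nablannn}) — together with Lemma~\ref{gradF} to eliminate all first-derivative-of-curvature terms in favor of $\bj$ and $\bar F$, should collapse everything to the stated right-hand side. The bookkeeping of the $\frac{2}{d-5}H\bj_b$ coefficient will be the delicate part here.

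For Equation~\nn{d66again}: use the Bianchi identity $\nabla_{[a}F_{bc]}=0$, hooked once with $n^c$ as in the proof of Equation~\nn{nablanF}, and then hit the resulting relation $(d-5)\nabla_n F_{ab}\stackrel\Sigma{=}2\hat n_{[a}\bj_{b]}-2(d-5)H\bar F_{ab}$ with another $\nabla_n$. Expanding $\nabla_n\big(n^c\nabla_{[a}F_{b]c}\big)$ produces $\nabla_n n^c\cdot\nabla_{[a}F_{b]c}$ (use Equation~\nn{nablann}), $n^c\nabla_n\nabla_{[a}F_{b]c}$ — which after commuting to $n^c\nabla_{[a}\nabla_{b]}$-plus-curvature is what we want — and terms where the $\nabla_n$ lands on the already-differentiated $F$, giving $\nabla_{\hat n}F_{b]n}$ handled by Equation~\nn{first} and $\bar\nabla_{[a}(\nabla_{\hat n}F_{b]n})$ giving the $\bar\nabla_{[a}\bj_{b]}$ term. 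The curvature commutators will be $R_{\hat n[a}{}^c{}_{b]}$-type and gauge $F_{\hat n[a}F_{b]c}$-type contributions; the Riemann ones reduce via Equation~\nn{Wn} ($W_{\hat nbcd}\stackrel\Sigma{=}0$) and the Poincar\'e--Einstein Schouten decomposition to the $\bar P_{[a}{}^c\bar F_{b]c}$ and $P_{\hat n\hat n}\bar F_{ab}$, $H^2\bar F_{ab}$ terms, and the gauge ones into the $[\,\bj^a,\bar F\,]$-free combination appearing (no commutator survives because the relevant contraction is $F_{\hat n}{}^a$ on $\bar F_{ab}$, which is tangential–normal and vanishes at leading order against $\hat n$). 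The main obstacle I expect is precisely controlling these two kinds of commutator terms and the proliferation of $\bar\nabla_{[a}\bj_{b]}$ versus $\bar\nabla^a\bar F$ type terms — keeping track of which pieces are tangential, which are normal (the $\hat n_{[a}(\cdots)_{b]}$ part), and matching the numerical coefficients $\tfrac{2}{d-5}$ and $\tfrac{5}{d-5}$ exactly. Once~\nn{d66again} is secured, as noted above~\nn{d66againagain} is a one-line contraction and~\nn{d6} is an independent but parallel computation, so I would present~\nn{d6} first, then~\nn{d66again}, then deduce~\nn{d66againagain}.
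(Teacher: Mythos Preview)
Your overall strategy---combine the asymptotically Yang--Mills relation $(d-4)F_{nb}=\sigma\nabla^aF_{ab}+{\mathcal O}(\sigma^{d-4})$ with Bianchi and the identities of Lemma~\ref{69}---is the right toolkit, and your plan for deducing~\nn{d66againagain} from~\nn{d66again} by contraction is exactly what the paper does. But there is a structural gap in your plan for~\nn{d6}.

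Differentiating the Yang--Mills relation twice with $\nabla_n$ and restricting to $\Sigma$ does not give~\nn{d6} directly: it gives
\[
2\,\nabla_{\hat n}\nabla^aF_{ab}\big|_\Sigma \;=\;(d-4)\,\nabla_n^2F_{nb}\big|_\Sigma-2H\,\nabla^aF_{ab}\big|_\Sigma\,,
\]
and unpacking $\nabla_n^2F_{nb}=\nabla_n^2(n^aF_{ab})$ leaves the term $\hat n^a\nabla_n^2F_{ab}\big|_\Sigma$ still unknown. That quantity is precisely~\nn{d66again} (or~\nn{d66againagain}) contracted, so your route to~\nn{d6} presupposes the second identity. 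Conversely, if you try to prove~\nn{d66again} first via Bianchi and the Yang--Mills relation as you describe, you will generate the term $\hat n_{[a}\nabla_{\hat n}\nabla^cF_{|c|b]}$ (coming from $\nabla_{\hat n}\big(n_{[a}\nabla^cF_{|c|b]}\big)$), which requires~\nn{d6}. So your two ``independent'' computations are in fact a coupled $2\times2$ linear system in the unknowns $\nabla_{\hat n}\nabla^aF_{ab}$ and $\hat n^a\nabla_n^2F_{ab}$. That system is non-degenerate, so your approach can be made to work, but you have not identified this coupling. Your alternative suggestion of writing $\nabla_n(\nabla^aF_{ab})=\nabla_n\nabla^\top_aF^a{}_b+\nabla_n(\hat n^a\nabla_{\hat n}F_{ab})$ is not well posed: $\nabla^\top$ and $\hat n$ are boundary objects and cannot be fed into the bulk operator $\nabla_n$ without choosing extensions, which reintroduces all the terms you are trying to avoid.

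The paper sidesteps this coupling entirely. For~\nn{d6} it commutes $\nabla_{\hat n}$ past $\nabla^a$, then uses Bianchi and the Yang--Mills relation to rewrite $\nabla^a\nabla_nF_{ab}$ so that $\nabla_{\hat n}\nabla^aF_{ab}$ reappears on the right with a \emph{different} coefficient; after also showing $\hat n^a\nabla_{\hat n}\nabla^cF_{ca}\stackrel\Sigma{=}0$, one obtains a self-referential equation with overall factor $(d-5)(d-6)$ multiplying the unknown. Since $d\geq7$, this can be solved for~\nn{d6} without any input from~\nn{d66again}. Only then is~\nn{d66again} proved, using~\nn{d6} as an ingredient. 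If you prefer your route, make the $2\times2$ system explicit and verify its determinant is nonzero; otherwise adopt the paper's self-referential trick for~\nn{d6}.
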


\begin{proof}
To obtain Equation~\nn{d6} we compute as follows
 \begin{align*}
\nabla_{\hat n}\nabla^a F_{ab}\
\stackrel\Sigma=\ &
\nabla^a \nabla_{n} F_{ab}
-Ric_{\hat n}{}^c \bar F_{cb}
+R_{\hat n}{}^a{}_b{}^c \bar F_{ac}
-(\nabla^a n^c)\nabla_c F_{ab}\\
\stackrel\Sigma=\ &
-2 \nabla^a (n^c \nabla_{[a}F_{b]c})
-(d-3)P_{\hat n}{}^c \bar F_{cb}
-H \nabla^a F_{ab}\, .
\end{align*}
The second equality used Equations~\nn{Wn} and~\nn{nablan} as well as $\hat n^a \bar F_{ab}=0$.
Thus, because $n^c \nabla_{[a}F_{b]c}=\nabla_{[a}F_{b]n}
-(\nabla_{[a}n^c) F_{b]c}$ and $(d-4)F_{an}=\sigma\nabla^b F_{ba} +{\mathcal O}(\sigma^{d-4})$, we have in dimensions $d\geq 7$ that 
\begin{align*}
(d-4)(\nabla_{\hat n}+H)\nabla^a F_{ab}
\stackrel\Sigma=\ &
-2 \nabla^a  \nabla_{[a}(\sigma \nabla^c F_{b]c})
\\&
+(d-4)\Big(2(\nabla^a\nabla_{[a}n^c) F_{b]c}
+2H \nabla^a F_{ba}
-(d-3)P_{\hat n}{}^c \bar F_{cb}\Big)\, .
\end{align*}
Thus
\begin{align*}
(d-4)(\nabla_{\hat n}+3H)\nabla^a F_{ab}
\stackrel\Sigma=\ &
-2 \hat n^a  \nabla_{[a} \nabla^c F_{b]c}
-2 \nabla^a  (n_{[a} \nabla^c F_{b]c})
\\&
+(d-4)\Big(-2\big(
\hat n^a P_{[a}{}^c +\delta_{[a}^c P_{\hat n}{}^a\big)
 \bar F_{b]c}
-(d-3)P_{\hat n}{}^c \bar F_{cb}\Big)\\
\stackrel\Sigma=\ &
2\nabla_{\hat n} \nabla^a F_{ab}
+\hat n^a \nabla_b \nabla^c F_{ac}
+\hat n_b \nabla^a \nabla^c F_{ac}
-2 H \delta^a_{[a}
 \nabla^c F_{b]c}
 \\&
 -(d-4)(d-6)P_{\hat n}{}^c \bar F_{cb}
\, .
\end{align*}
Here the first equality relied on Equation~\nn{PPP} while the second employed Equation~\nn{nablan}.
Thence, using  $2\nabla^a \nabla^c F_{ac} = [F^{ac},F_{ac}]=0$, we have
\begin{multline*}
(d-6)\nabla_{\hat n} \nabla^a F_{ab}
+\hh\hat n_b \hat n^a \nabla_{\hat n} \nabla^c F_{ca}
+(2d-11) H \nabla^a F_{ab}
\\
\stackrel\Sigma=\ 
\hat n^a \nabla^\top_b \nabla^c F_{ac}
-(d-4)(d-6)P_{\hat n}{}^c \bar F_{cb}\, .
\end{multline*}
But
$
(d-4)\hh \hat n^b \nabla^a F_{ab}\stackrel\Sigma=
\nabla^a (\sigma \nabla^c F_{ac})
$
so $(d-3) \hat n^b \nabla^a F_{ab}\stackrel\Sigma=0$, whence contracting the above display with $\hat n^b$ we find 
$
\hat n^a \nabla_{\hat n}\nabla^c F_{ca}\stackrel\Sigma=0
$.
Thus  we learn that
\begin{align*}
(d-5)(d-6)\nabla_{\hat n} \nabla^a F_{ab}
&\stackrel\Sigma=\ 
(d-4)\big(- \hat n^a \nabla_b^\top {\bj}_a^{\rm ext}
-(2d-11)H \bj_b
-(d-5)(d-6)P_{\hat n}{}^c \bar F_{cb}\big)
\\
&\stackrel\Sigma=
-(d-4)(d-6)\big(
2H \bj_b
+(d-5)P_{\hat n}{}^c \bar F_{cb}\big)
\, .
\end{align*}
In first equality above we used Equation~\nn{when}. The second used that $\nabla^\top_a \hat n_b^{\rm ext}\stackrel\Sigma=\bar g_{ab} H$. Since $d\geq 7$, the claimed result holds.

\medskip
 
To establish Equation~\nn{d66} we compute that
 \begin{align*}
\nabla_n^2 F_{ab}\
\stackrel\Sigma=\ &
-2\nabla_{\hat n} (n^c \nabla_{[a}F_{b]c})\\
\stackrel\Sigma=\ &
-2\nabla_{\hat n} \big(
\nabla_{[a}  F_{b]n}-
(\nabla_{[a} n^c) F_{b]c}
\big)\\
\stackrel\Sigma=\ &
-\frac2{d-4} \nabla_{\hat n}\nabla_{[a}(\sigma \nabla^c F_{b]c})+
2(\nabla_{\hat n} 
\nabla_{[a} n^c) \bar F_{b]c}
+2H \nabla_{\hat n} F_{ba}\, .
\end{align*}
The first equality above  relies on the Bianchi identity for the curvature $F$ while the third employed the asymptotically Yang--Mills condition in dimensions $d\geq 7$ as well as  Equation~\nn{nablan}.
Hence 
 \begin{align*}
\nabla_n^2 F_{ab}
\stackrel\Sigma=\ &
-\frac2{d-4} \big(\nabla_{[a}\nabla^c F_{b]c}
+\nabla_{\hat n}(n_{[a} \nabla^c F_{b]c})
\big)
\\[1mm]&
-2(P_{[a}{}^c+P_{\hat n\hat n}\delta_{[a}^c ) \bar F_{b]c}
-2H \big(\frac{2}{d-5}\hat n_{[a}\bj_{b]} -2 H\bar F_{ab}
\big)
\\[2mm]
\stackrel\Sigma=\ &
\frac2{d-5} (\nabla^\top_{[a}-2H\hat n_{[a})\bj_{b]}
-\frac2{d-4} \hat n_{[a} (2\nabla_{\hat n}+H)\nabla^c F_{b]c}
\\&
-2\big(P_{[a}{}^c \bar F_{b]c}-
 P_{\hat n\hat n} \bar F_{ab}\big)
 +4 H^2 \bar F_{ab}
\\[2mm]
\stackrel\Sigma=\ &
\frac2{d-5} 
 \bar \nabla_{[a}\bj_{b]}
 -4\hat n_{[a} 
 \Big(P_{\hat n}{}^c \bar F_{|c|b]}
+\frac
{2}{d-5}\, H \bj_{b]}
\Big)
\\&
-2\big(P_{[a}{}^c \bar F_{b]c}-
 P_{\hat n\hat n} \bar F_{ab}\big)
 +4 H^2 \bar F_{ab}\\
 \stackrel\Sigma=\ &
 \frac2{d-5} 
\Big(\bar \nabla_{[a}\!-4\hh H\hat n_{[a}
\Big)\bj_{b]}
-2\big(\bar P_{[a}{}^c \bar F_{b]c}
- \hat n_{[a} P_{\hat n}{}^c \bar F_{b]c}
- P_{\hat n\hat n} \bar F_{ab}\big)
 +3 H^2 \bar F_{ab}\, .
 \end{align*}
 The first equality above used Equations~\nn{PP} and \nn{nablanF}. The second equality used Equation~\nn{when}. The third equality used Equations~\nn{top2bar},~\nn{when} and~\nn{d6}.
 The last equality relied on Equation~\nn{Fialkow}. Rearranging the above we have
\begin{multline}\label{d66}
\nabla_n^2 F_{ab}
\stackrel\Sigma=
 \frac2{d-5} 
\hh\bar \nabla_{[a}\bj_{b]}
-2\bar P_{[a}{}^c \bar F_{b]c}
+(2P_{\hat n\hat n}+3H^2)\bar F_{ab}
+2\hat n_{[a}\Big(
P_{\hat n}{}^c\bar F_{b]c}
-\frac{4}{d-5}\hh H
\bj_{b]}
\Big)
\, .
\end{multline}
An  application of Equations~\nn{nablann} and~\nn{nablanF}
to the left hand side above yields the quoted result. Finally, Equation~\nn{d66againagain} follows directly from Equation~\nn{d66again} upon contraction with a unit normal vector.
 
 \end{proof}

\bigskip

\section{Renormalization, Anomalies and Energies}\label{renorm}

Integrals over conformally compact manifolds 
run the risk of being ill-defined since the metric is singular along the boundary.
Nonetheless, useful information can often be 
extracted from the asymptotics of integrals over families of regions  suitably approaching the boundary: 
Let $(M,\cc,\sigma)$ be conformally compact  with singular metric 
$$
\go =  \frac{\bg}{ \sigma^2}\, ,
$$
on its interior $M_+$. We wish to  construct a one-parameter $\varepsilon>0$ family of  ``regulating manifolds'' $M_\varepsilon$,
for each of which $(M_\varepsilon,\cc,\sigma)$ is a Riemannian structure with non-singular metric $\go$. 
For that, given any true scale $0< \tau\in \Gamma(\ce M[1])$, we  define 
\begin{equation}\label{ME}
M_\varepsilon:=\{p\in M\, | \,  \sigma(p)/ \tau(p) > \varepsilon\}\subset M_+\, . 
\end{equation}
Observe that $0\leq  \sigma/ \tau \in C^\infty M $ and $\Sigma=\partial M$ is the zero locus of the smooth defining function~$\sigma/\tau$.
Moreover $\sigma - \tau\varepsilon$ is a defining density for $\Sigma_\varepsilon = \partial M_\varepsilon$. 
Note that
in the zero $\varepsilon$ limit we recover $M_+$ from $M_\varepsilon$. Also, the restriction $\bar \tau:= \tau|_\Sigma$ to $\Sigma$ of
the density~$\tau$ determines a boundary true scale, and in turn a metric~${\bar g}={\bar{\bm g}}/ \bar \tau\in \cc_\Sigma $, where $\bar{\bg}$ is the conformal metric on $\Sigma$.
We call the data~$ \tau$ a  {\it choice of regulator}.

\medskip
We are primarily interested in the asymptotics of  the Yang--Mills functional on $M$.
Thus we define the {\it regulated Yang--Mills action functional} determined by a true scale~$ \tau$,
\begin{equation}
\label{Sepsilon}
S^\varepsilon[A]  :=-\frac14 \int_{M^\varepsilon} 
\ext {\rm Vol}(\go)\,{\rm Tr}(
\go^{ab}\go^{cd} F_{ac}F_{bd})\, .
\end{equation}

\begin{lemma}
The regulated  Yang--Mills functional is  extremal with respect to
compactly supported variations of $A$ when
the conformally compact  Yang--Mills current  satisfies 
$$
 j_a[A]=0\, .
$$
\end{lemma}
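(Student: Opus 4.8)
The plan is to vary the regulated action $S^\varepsilon[A]$ in Equation~\nn{Sepsilon} with respect to a compactly supported variation $\delta A=a\in\Gamma(T^*M\otimes\End{\mathcal V}M)$ whose support lies in the interior $M_+$, and to show that the resulting first-order term is a multiple of $\int \ext{\rm Vol}(\go)\,\operatorname{Tr}\!\big(a^b\, \go^{ac}\nabla^{\go}_c F_{ab}\big)$. Since the support avoids the boundary, the regulating truncation $M_\varepsilon$ plays no role: for $\varepsilon$ small enough the support is contained in $M_\varepsilon$, so $\tfrac{d}{dt}\big|_{t=0}S^\varepsilon[A+ta]$ is computed exactly as in the classical (non-regulated) Yang--Mills case, and the extremality condition is $\go^{ac}\nabla^{\go}_c F_{ab}=0$, i.e. Equation~\nn{ccYM}. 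By the discussion in Section~\ref{CCYME}, on $M_+$ one has $\go^{ac}\nabla^{\go}_c F_{ab}=\sigma\, j_b[A]$, so this vanishes on the interior precisely when $j[A]=0$ there; and since the support is interior, the boundary behaviour of $j$ is irrelevant to this particular lemma.

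In more detail, the key steps are as follows. First I would recall the standard variation $\delta F_{ac}=\nabla^{\go}_a a_c-\nabla^{\go}_c a_a=2\nabla^{\go}_{[a}a_{c]}$ and $\delta\big(\ext{\rm Vol}(\go)\big)=0$ since the metric is held fixed. Then
$$
\frac{d}{dt}\Big|_{t=0}S^\varepsilon[A+ta]
=-\frac14\int_{M_\varepsilon}\ext{\rm Vol}(\go)\,\operatorname{Tr}\!\big(\go^{ab}\go^{cd}\,2(\delta F_{ac})F_{bd}\big)
=-\int_{M_\varepsilon}\ext{\rm Vol}(\go)\,\operatorname{Tr}\!\big(\go^{ab}\go^{cd}(\nabla^{\go}_a a_c)F_{bd}\big),
$$
using the symmetry $F_{ac}F_{bd}$ under $(a,c)\leftrightarrow(b,d)$ together with the cyclicity of the trace. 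Next I would integrate by parts with respect to $\nabla^{\go}$: because $a$ has compact support strictly inside $M_+\supseteq M_\varepsilon$, there is no boundary contribution (neither from $\Sigma$ nor from $\Sigma_\varepsilon$), and one obtains
$$
\frac{d}{dt}\Big|_{t=0}S^\varepsilon[A+ta]
=\int_{M_\varepsilon}\ext{\rm Vol}(\go)\,\operatorname{Tr}\!\big(a_c\,\go^{ab}\go^{cd}\nabla^{\go}_a F_{bd}\big)
=\int_{M_+}\ext{\rm Vol}(\go)\,\operatorname{Tr}\!\big(a^d\,\go^{ab}\nabla^{\go}_a F_{bd}\big).
$$
Finally, since this must vanish for every such $a$, the fundamental lemma of the calculus of variations gives $\go^{ab}\nabla^{\go}_a F_{bd}=0$ on $M_+$, which by the identity $\go^{ab}\nabla^{\go}_a F_{bd}=\sigma\, j_d[A]$ established in Section~\ref{CCYME} is equivalent to $j[A]=0$ on $M_+$; and as $\sigma>0$ on the interior this is just $j_a[A]=0$ there.

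There is essentially no serious obstacle here: the only mild point to be careful about is that one is varying within $M_\varepsilon$, not $M_+$, but because compactly-supported-in-$M_+$ variations are eventually supported inside $M_\varepsilon$, the truncation and the choice of regulator $\tau$ drop out completely, and the statement reduces to the classical first variation of the Yang--Mills functional for the (smooth, non-singular) metric $\go$ on the interior. The slightly subtle conceptual content---that the genuinely boundary-sensitive object is $j[A]$ on all of $M$ rather than just $\go^{ab}\nabla^{\go}_a F_{bd}$ on $M_+$---is exactly what motivates the \emph{conformally compact} Yang--Mills equation of Section~\ref{CCYME}, but it is not needed to prove this lemma, whose role is simply to record that the regulated functional has the expected interior Euler--Lagrange equations.
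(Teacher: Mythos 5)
Your proof is correct and follows essentially the same route as the paper's: first variation of the regulated functional, integration by parts with no boundary contribution thanks to compact support, then recognizing the Euler--Lagrange equation as $j[A]=0$ via the identity relating $\go^{ab}\nabla^{\go}_a F_{bc}$ to $\sigma j_c$. The only cosmetic difference is that you work directly with the singular metric $\go$, whereas the paper first rewrites the integrand in terms of the conformal metric $\bm g$ and the weight factor $\sigma^{4-d}$ and then verifies $\nabla_a(\sigma^{4-d}F^{ab})=\sigma^{3-d}j^b$; these are the same computation. Your remark that for small enough $\varepsilon$ the support lies inside $M_\varepsilon$, so the truncation and regulator drop out, makes the role of the compact-support hypothesis more explicit than the paper does, and is a welcome clarification.
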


\begin{proof}
First note that
$$
S^\varepsilon[A]  = -\frac14\int_{M^\varepsilon} 
\frac{\ext {\rm Vol}(\bm g)}{ \sigma^{d-4}}\,
\operatorname{Tr}(F^{ab} F_{ab})\, ,
$$
where indices are raised and lowered with the conformal metric $\bm g$. 
Let $A_t$ be a one-parameter family of gauge connections on $M$ such that $A_t=A$ everywhere outside a compactly supported region and such that $A_0=A$. Also define the endomorphism-valued  one-form
$$
\delta A:= \frac{\ext A_t}{\ext t}\Big|_{t=0}\, .
$$
Then
\begin{align*}
\frac{\ext S^\varepsilon[A_t]}{\ext t}
\Big|_{t=0} &=-
\frac12\int_{M^\varepsilon}
\frac{\ext {\rm Vol}(\bm g)}{ \sigma^{d-4}}\,
\operatorname{Tr}(F^{ab} \delta F_{ab})\\
&=
\int_{M^\varepsilon}
\ext {\rm Vol}(\bm g)\,\operatorname{Tr}\big(
\nabla_a ( \sigma^{4-d} F^{ab}) \delta A_b\big)\, .
\end{align*}
Here  the operator $\delta$ denotes
$\frac{\ext }{\ext t}
\hh\hh\pdot\hh\hh
\big|_{t=0} $. Note that $\delta F_{ab}=2\nabla_{[a} \delta A_{b]}$. The compact support assumption was used in the second line. 
The proof is completed by verifying that
\begin{equation}\label{howtogetj}
\nabla_a ( \sigma^{4-d} F^{ab})= \sigma^{3-d} j^b\, .
\end{equation}
\end{proof}


 We now prove  Theorem~\ref{Laurent}  giving the asymptotics of $S^\varepsilon[A]$  of Equation~\nn{Sepsilon}.

\begin{proof}[Proof of Theorem~\ref{Laurent}]
The proof employs the methods of~\cite{Vol}. 
To begin with, the regulated Yang--Mills action functional can be expressed in terms of an integral over $M$ whose integral is weighted by a Heaviside step function
\begin{equation}\label{HeavyReg}
S^\varepsilon[A]  = -\frac14
 \int_{\tilde M} 
\frac{\ext {\rm Vol}(\bm g)}{\sigma^{d-4}}\, \theta\Big(\frac{\sigma}{\tau} - \varepsilon\Big)\,
\operatorname{Tr}F^2\, .
\end{equation}
In the above, $\tilde M$ denotes an extension of $M$ beyond $\partial M= \Sigma$ by any collar neighborhood. This does not change the above integral since its support is on $M^\varepsilon$.
This technical maneuver is necessary for the following distribution-based computations, and is described in  detail in~\cite{Vol}.

Using the distributional identity 
$
\theta'(x)=\delta(x)$, we now study
\begin{align*}
-\varepsilon^{d-4}
\frac{\ext S^\varepsilon[A]}{\ext \varepsilon}&  = -\frac{\varepsilon^{d-4}}4\!\!
 \int_{\tilde M} 
\frac{\ext {\rm Vol}(\bm g)}{\sigma^{d-4}}\, \delta\Big(\frac{\sigma}{\tau} - \varepsilon\Big)
\operatorname{Tr}F^2
\\[2mm]
&=\:\:\;
-\frac14 \int_{\tilde M} 
\frac{\ext {\rm Vol}(\bm g)}{\tau^{d-5}}\, \delta\big({\sigma}-{\tau}  \varepsilon\big)\operatorname{Tr} F^2
\, .
 \end{align*}
To obtain the last equality recall that, for any test function $f$,  under an integral $f(x)\delta(x)=f(0)\delta(x)$, and also $\delta(x/f(x)) = f(x) \delta(x)$ so long as $f(x)>0$. Also note  this implies that a quantity such as~$\delta(\sigma)$ is a distribution-valued density of weight $-1$.

The right hand side of the above display is a regular function of $\varepsilon$ around $\varepsilon=0$, so we expand it as a Taylor series in $\varepsilon$ to obtain
\begin{align*}
 \int_{\tilde M} 
\frac{\ext {\rm Vol}(\bm g)}{\tau^{d-5}}\, \delta\big({\sigma}-{\tau}  \varepsilon\big)\,
\operatorname{Tr}F^2
&=
 \int_{\tilde M} 
\frac{\ext {\rm Vol}(\bm g)}{\tau^{d-5}}\, \delta\big({\sigma}\big)\,
\operatorname{Tr}F^2
\\[1mm]
&
+ \varepsilon\int_{\tilde M} 
\frac{\ext {\rm Vol}(\bm g)}{\tau^{d-6}}\, \delta'\big({\sigma}\big)\,
\operatorname{Tr}F^2+\cdots \\
&+ 
\frac{ \varepsilon^{d-5}}{(d-5)!}\int_{\tilde M} 
\ext {\rm Vol}(\bm g)\, \delta^{(d-5)}\big({\sigma}\big)\,
\operatorname{Tr}F^2
+{\mathcal O}\big(\varepsilon^{d-4}\big)\, .
\end{align*}
The following identity for differentiated delta functions of a defining density is proved in~\cite{Vol},
$$
\Big(\frac1{I^2}\hh I.D\Big)^\ell
\delta(\sigma)= (d-\ell-1)\cdots(d-3)(d-2)\delta^{(\ell)}(\sigma)\, ,\qquad \ell\leq d-1\, .
$$
In the same reference, it is shown that the Laplace--Robin operator 
is formally self-adjoint. 
Moreover, if $ f$ is any weight $-d+1$ density, then 
\begin{equation}
\label{b2b}
\int_{\tilde M} \ext {\rm Vol}(\bm g) \delta(\sigma)\sqrt{I^2} \hh  f=
\int_\Sigma \ext {\rm Vol}(\bm {\bar g})  f\, .
\end{equation}
Orchestrating the above observations, 
it follows that
\begin{multline*}
 \int_{\tilde M} 
\frac{\ext {F^A\rm Vol}(\bm g)}{\tau^{d-5}}\, \delta\big({\sigma}-{\tau}  \varepsilon\big)\,
\operatorname{Tr}F^2=\\
 \int_{\Sigma} 
\frac{\ext {\rm Vol}(\bm {\bar g})}{\sqrt{I^2}\hh \tau^{d-5}}\,
\operatorname{Tr}F^2
+ \frac{\varepsilon}{d-2}\int_{\Sigma} 
\frac{\ext {\rm Vol}(\bm {\bar g})}{\sqrt{I^2}}\, 
I.D\Big(
\frac{\operatorname{Tr}F^2}{I^2 \hh \tau^{d-6}}
\Big)
+\cdots\qquad \qquad\\+ 
\frac{ 3!\, \varepsilon^{d-5}}{(d-5)!(d-2)!}\int_{\Sigma} 
\ext {\rm Vol}(\bm {\bar g}){\sqrt{I^2}}\, \hh
\Big(\frac1{I^2} I.D\Big)^{d-5}\,
\Big(\frac{\operatorname{Tr}F^2}{I^2}\Big)
+{\mathcal O}\big(\varepsilon^{d-4}\big)\, .
\end{multline*}
Notice that the last-displayed integral does not depend on the choice of regulator $\tau$ and hence is an invariant of the underlying structure.  Dividing the above display by $-\varepsilon^{d-4}$ and then integrating over $\varepsilon$ gives the quoted result.
\end{proof}

In quantum field theory vernacular, the coefficients $v_\ell/\ell$ of the $1/\varepsilon^\ell$ poles above are termed {\it divergences}. Theorem~\ref{Laurent} establishes that the divergences of the regulated Yang--Mills functional are boundary integrals over local quantities. They are however dependent on the {\it choice of regulator} ${\tau}$. 
The $\varepsilon$ independent contribution $S^{\rm ren}$ is termed the {\it renormalized action} and is not expressed as a
local boundary integral. Instead it probes global aspects of the system. Theorems~\ref{wehaveenergy} and~\ref{evenvarySren} give its dependence on the choice of regulator. The proofs are as follows:

\begin{proof}[Proof of Theorem~\ref{wehaveenergy}]
We begin by considering the regulated Yang--Mills functional for the choice of regulator $e^{t\omega}\tau$ where $t\in {\mathbb R}$ and $\omega\in C^\infty M$. Using Equation~\nn{HeavyReg}, this is given by
$$
S^\varepsilon[A;e^{t\omega}\tau]  =-\frac14 
 \int_{\tilde M} 
\frac{\ext {\rm Vol}(\bm g)}{\sigma^{d-4}}\, \theta\Big(\frac{\sigma}{\tau} - e^{-t\omega}\varepsilon\Big)\,
\operatorname{Tr}F^2\, .
$$
Differentiating with respect to $t$ and recycling the
computation used to prove  Theorem~\ref{Laurent} gives
$$
\frac{\ext S^\varepsilon[A;e^{t\omega}\tau]}{\ext t} 
=-\frac1{4\hh\varepsilon^{d-5}}\hh
 \int_{\tilde M} 
\frac{\ext {\rm Vol}(\bm g)}{\tau^{d-4}}\, \delta\Big(\frac{\sigma}{\tau} - e^{-t\omega}\varepsilon\Big)\,\omega e^{(d-5)t\omega}
\operatorname{Tr}F^2\, .
$$
The above can be expressed as a Laurent series in $\varepsilon$. We need  to compute the coefficient of $\varepsilon^0$ to study how $S^{\rm ren}[A]$ responds to changes of the regulator.
This is given by 
\begin{multline*}
\frac1{(d-5)!}\hh
 \int_{\tilde M} 
\ext {\rm Vol}(\bm g)\, 
\delta^{(d-5)}\big({\sigma}\big)\,\omega \hh
\operatorname{Tr}F^2
\\
=\frac{3!}{(d-5)!(d-2)!}\int_\Sigma 
\ext {\rm Vol}(\bm {\bar g})\, {\sqrt{I^2}}\, \hh
\Big(\frac1{I^2} I.D\Big)^{d-5}\,\Big(\frac{
\operatorname{Tr}F^2}{I^2}\hh\omega\Big)\, .
\end{multline*}
The result follows upon observing that this is independent of $t$.
\end{proof}


\begin{proof}[Proof of Theorem~\ref{evenvarySren}]
This is an direct consequence of the evenness property of the Feffer\-man--Graham type expansions developed in Section~\ref{connexp}; further details are discussed there.
\end{proof}

\begin{remark}
Theorem~\ref{wehaveenergy} can also be easily extended to  dimension $d=5$ when the expansion in Theorem~\ref{Laurent} has no negative powers of $\varepsilon$ and instead $\log \varepsilon$ is its leading term.
Specializing 
Theorem~\ref{wehaveenergy}
to constant $\omega=\log \Lambda$ (say) one has
$$
S^{\rm ren}[A;\Lambda \tau]-S^{\rm ren}[A;\tau]=\Lambda\,  {\rm En}[A]\, .
$$
Hence, the energy functional ${\rm En}[A]$ is termed an {\it anomaly} since it encodes the anomalous response of the renormalized action to
constant rescalings of the metric $g_{\scalebox{.7}{$\tau$}}:={\bm g}/{\tau^2}\in \cc$ determined by the regulator $\tau$.
\hfill$\blacklozenge$
\end{remark}

Another key property of the energy ${\rm En}[A]$ 
is the relation between its functional gradient and 
the obstruction to smoothly solving the Yang--Mills equations on the conformally compact structure $(M,\cc,\sigma)$ given in Theorem~\ref{fruit=vary}, whose proof is as follows:

\begin{proof}[Proof of Theorem~\ref{fruit=vary}]
Without loss of generality, we may assume that $A[\bar A_t]$ has compact support in the extended manifold $\tilde M$ (as defined in the proof of Theorem~\ref{Laurent}) because~$A[{\bar A_t}]$ is, in any case, determined only asymptotically. Also, since Theorem~\ref{gaugeeq} shows that any differing  asymptotic Yang-Mills connections are related by a gauge transformation (at least to sufficiently high order), the energy $\operatorname{En}[A[\bar A_t]]$ is independent of this difference.

Now, 
from Theorem~\ref{Laurent}, the coefficient of $\log \frac1\varepsilon$ in  $ \ext S^\varepsilon\big[A[\bar A_t]\big]/\ext t\big|_{t=0}$ 
equals the variation of the energy ${\rm  En}\big[A[{\bar A}]\big]$.
Let us denote $\delta A:=\ext A\big[{\bar A}]/\ext t\big|_{t=0}$. Hence, starting from Equation~\nn{HeavyReg}, we compute
\begin{align*}
\ext S^\varepsilon\big[ A[\bar A_t ]\big]/\ext t\big|_{t=0}
&=
-\int_{\tilde M} 
\frac{\ext {\rm Vol}(\bm g)}{\sigma^{d-4}}\, \theta\Big(\frac{\sigma}{\tau} - \varepsilon\Big)\,\operatorname{Tr}(
F^{ab}\, \nabla_a \delta A_b)
\\&
=
 \int_{\tilde M} 
\frac{\ext {\rm Vol}(\bm g)}{\sigma}\, \theta\Big(\frac{\sigma}{\tau} - \varepsilon\Big)\, 
\operatorname{Tr}(k^b \,
\delta A_b)
\\&
+\frac{1}{\varepsilon^{d-4}}
\int_{\tilde M} 
\frac{\ext {\rm Vol}(\bm g)}{\tau^{d-5}}\, \delta\big({\sigma}-{\tau}  \varepsilon\big)\, \operatorname{Tr}\Big(F^{ab}\,\Big(\nabla_a \hh \frac{\sigma}{\tau}\Big)\hh 
\delta A_b\Big)\, ,
\end{align*}
where again indices are raised and lowered with
the conformal metric.
Also $\nabla$ is the connection of $A[{\bar A}]$. In the second line we integrated by parts discarding boundary terms thanks to  the compact support of $A[{\bar A}_t]$, and then used Equations~(\ref{howtogetj},\ref{obst}) 
plus the distributional calculus identities explained earlier.

The second term of the above display is easily seen to be regular in $\varepsilon$ when multiplied by~$\varepsilon^{d-4}$, and therefore cannot produce a term proportional to $\log\varepsilon$. The small~$\varepsilon$ asymptotics of first term on the right hand side of the above display  can be determined
by exactly the same methods  used to establish 
Theorem~\ref{Laurent}. Its $\log\frac1\varepsilon$ coefficient can be computed 
from that of the $-\frac1\varepsilon$ contribution to  the first $\varepsilon$ derivative of this term. This gives
$$
 -\int_{\tilde M} 
{\ext {\rm Vol}(\bm g)}\, \delta\big({\sigma}\big)\, \operatorname{Tr}(k^a \,
\delta A_a)=-\int_\Sigma
\ext {\rm Vol}(\bm {\bar g})\, \frac{\operatorname{Tr}(k_a\, \hh 
\delta A^a_\Sigma)}{\sqrt{I^2}}
$$
as the coefficient of the $\log\frac1\varepsilon$ term. For the last  equality  above we employed
Equation~\nn{b2b} and  the fact that
$$
\delta {\bar A} :=\frac{ \ext {\bar A}}{\ext t}\Big|_{t=0}
=
\frac{\ext A\big[{\bar A}]}{\ext t}\Big|_{t=0,\Sigma}\, .
$$
This used that an asymptotically Yang--Mills connection obeys $A[{\bar A}]={\bar A}^{\rm ext} + {\mathcal O}(\sigma)$, where~${\bar A}^{\rm ext}$ is any smooth extension of ${\bar A}$ to $M$.
The display before last establishes that the obstruction current $\bar k=k|_\Sigma$ is proportional to the functional gradient of the energy ${\rm En}(A[{\bar A}])$.
\end{proof}


\begin{remark}
It is also possible to directly verify that the energy ${\rm En}[A]$ is  extremal with respect to arbitrary compactly supported variations of $A$ whenever the Yang--Mills current ${ j}[A]$ obeys 
$$
{ j}[A]={\mathcal O}(\sigma^{d-3})\, .
$$
To see this, note that from Theorem~\ref{Laurent}, in dimensions $d\geq 5$ the holographic energy formula in Equation~\nn{HolFor}
can  equivalently be written as $${\rm En}[A]=-
\frac{1}{4(d-5)!} 
\int_{\tilde M} {\ext {\rm Vol}(\bm g)}\delta^{(d-5)}(\sigma) 
\operatorname{Tr}F^2
\, .$$
Thus, because $\nabla_a \delta^{(k)}(\sigma)=n_a \delta^{(k+1)}(\sigma)$, it follows that
$$
\frac{\ext{\rm  En}\big[A_t\big]}{\ext t}\Big|_{t=0}=-\int_{\tilde M} {\ext {\rm Vol}(\bm g)}
\operatorname{Tr}\left(\Big[\delta^{(d-4)}(\sigma) \, n_a F^{ab} +\delta^{(d-5)}(\sigma) \hh \nabla_a F^{ab}\Big]\delta A_b\right)\, .
$$
The identity $\delta^{(d-5)}(\sigma)=-\frac1{d-4}\sigma\hh \delta^{(d-4)}(\sigma) $ then gives
$$
\frac{\ext{\rm  En}\big[A_t\big]}{\ext t}\Big|_{t=0}=\frac1{d-4}\int_{\tilde M} {\ext {\rm Vol}(\bm g)}\delta^{(d-4)}(\sigma) \, 
\operatorname{Tr}\big( { j}^a\delta A_a\big)\, .
$$
The claim follows by noting, for any positive integer $\ell$, that ${\sigma}^{\ell+1} \delta^{(\ell)}({\sigma})=0$.
\hfill$\blacklozenge$
\end{remark}

\subsection{The Energy}\label{theenergy}


Recursions that compute both the energy $\operatorname{En}[A[\bar A]]=:\overline{\operatorname{En}}[\bar A]$ and the obstruction 
can be constructed using only knowledge of
the existence of asymptotically Yang--Mills connections $A[\bar A]$ determined to sufficient order by $\bar A$.
The latter property was established in 
  Theorem~\ref{recurse}.
 Here we exhibit such recursions in dimensions $d=5,6,7$. 
There is no in-principle obstacle to performing these computations for any conformally compact structure, but for reasons of both simplicity and maximal relevance we focus on Poincar\'e--Einstein structures. 


\subsubsection{The Energy for $d=5,6,7$}
The holographic formula in Equation~\nn{HolFor} and identities of Section~\nn{PEM} allow us to compute the energy  for low dimensional Poincar\'e--Einstein structures.

\begin{proof}[Proof of Theorem~\ref{blabla}]
Theorems~\ref{recurse} and~\ref{gaugeeq}  establish that a boundary connection $\bar A$ uniquely determines an asymptotically Yang--Mills connection $A[\bar A]$ to order ${\mathcal O}(\sigma^{d-3})$. However it is not difficult, upon employing Lemma~\ref{sl2}, to check that $\Big(\frac1{I^2} I.D\Big)^{d-5}
\Big(\frac{-\operatorname{Tr}F^2}
{4I^2}\Big)$ only depends on the connection $A$ to order ${\mathcal O}(\sigma^{d-4})$. Gauge invariance of the holographic energy formula~\nn{HolFor} shows that ${\operatorname{En}}[A]$ is completely determined by the boundary data~$\bar A$.

In the remainder of the proof we rely heavily on the fact that an asymptotically Einstein structure 
dimension obeys
$\nabla_a n_b + \sigma P_{ab}+\rho g_{ab} = {\mathcal O}(\sigma^{d-2})$ and $I^2 \stackrel g= |n|_g^2 +2 \rho \sigma = 1 +{\mathcal O}(\sigma^{d-1})$.
Consulting Equation~\nn{HolFor}, we see that when $d=5$  
we have
 $$Q[{\bf g},A]= -\tfrac14 \operatorname{Tr} F^2|_\Sigma =  -\tfrac14
 \operatorname{Tr}
 {\bar F}^2\, . $$

For the case $d=6$ the same equation tells us to compute
$$
\frac 14\hh  I.D\operatorname{Tr} (F_{ab} F^{ab})
\stackrel\Sigma = -(\nabla_{\hat n} +4H)\operatorname{Tr}(F_{ab} F^{ab})
=\operatorname{Tr}\big(-2 \bar F^{ab}\nabla_{\hat n} F_{ab} - 4H {\bar F}^2\big)=0\, .
$$ 
In the last step we used Lemma~\ref{nablanF} and $F_{\hat n a}\stackrel\Sigma=0$, as well as the definition of the $I.D$ operator acting on the weight $-4$ density $\operatorname{Tr}(F_{ab} F^{ab})$ along $\Sigma$; see Equation~\nn{Robin}.

\smallskip
It remains to handle the case $d=7$ for which, 
again consulting Equation~\nn{HolFor}, we see that
we must  compute $\frac1{40}I.D^2 \operatorname{Tr} F^2$. 
Notice that  because $\operatorname{Tr}F^2$ is a density of weight~$-4$, from Equation~\nn{ID} we have (remembering $\nabla_n \sigma = |n|^2 \stackrel\Sigma=1$)
\begin{multline*}
I.D^2 \operatorname{Tr}F^2 \stackrel\Sigma=(d-12) (\nabla_n-5\rho) \big[(d-10)(\nabla_n-4\rho)-\sigma (\Delta -4 J)\big]\operatorname{Tr}F^2\\
\stackrel{d=7}=5\Big[\Delta-4J+3(\nabla_n-5\rho)(\nabla_n-4\rho)\Big] \operatorname{Tr}F^2\, .
\end{multline*}
Using $\rho\stackrel\Sigma= -H$ and $\nabla_n\rho\stackrel\Sigma=\Rho_{\hat n\hat n}$ (see Lemma~\ref{69} and its proof) we have
\begin{multline*}
\frac15
I.D^2 \operatorname{Tr}F^2 \stackrel\Sigma=
\Big[\Delta
+3\nabla_n^2
+27 H\nabla_n
 -12 \Rho_{\hat n\hat n}
+60H^2
-4J
\Big] \operatorname{Tr}F^2\\
\stackrel\Sigma=
\Big[\bar\Delta
+4\nabla_n^2
+32 H\nabla_n
 -12 \Rho_{\hat n\hat n}
+60H^2
-4J
\Big] \operatorname{Tr}F^2
\, .
\end{multline*}
In the above we used Lemma~\ref{lap2lap}
and the identity $\hat n^a  \hat n^b\nabla_{a} \nabla_b\stackrel\Sigma=\nabla_n^2  -H \nabla_n$ to relate interior and boundary Laplacians
according to 
$
\Delta f \stackrel\Sigma=\bar  \Delta \bar f +\nabla_n^2 f 
+5 H \nabla_n f
$.

Now, from Equation~\ref{nablanF} and $F\stackrel\Sigma = {\bar F}$, it follows that
$$
F^{ab}\nabla_n F_{ab}\stackrel\Sigma=
-2H{\bar F}^2\, .
$$
Similarly
$$
(\nabla_nF^{ab})\nabla_n F_{ab}
\stackrel\Sigma=
\frac12\hh {\bj}^2+
4H^2 {\bar F}^2\, .
$$
In addition note that
$
(\nabla_n F_{ab})^\top=
-2HF_{ab}^\Sigma
$,
and that (see Lemma~\ref{d6})
\begin{align*}
(\nabla_n^2 F_{ab})^\top
&\stackrel\Sigma=
\bar \nabla_{[a} {\bj}_{b]}
-2\bar P_{[a}{}^c \bar F_{b]c}
+2P_{\hat n\hat n} \bar F_{ab}
+3H^2 \bar F_{ab}
\, .
\end{align*}
Also Lemma~\ref{gradF} tells us that
$
\nabla^a F_{ab}\stackrel\Sigma=
\frac32\hh
\bj_b
$.

Orchestrating the above identities (using the  Ricci identity written in $d=7$ in terms of the Schouten tensor as in   Equation~\nn{Fialkow} and $J=\bar J + P_{\hat n\hat n} -3H^2$ from
 Equation~\nn{J2Jbar}), we have
\begin{align*}
\frac15
I.D^2 \operatorname{Tr}F^2 &\stackrel\Sigma=
\big(\bar \Delta
-16 P_{\hat n\hat n} 
+72H^2  -4\bar J \big)\operatorname{Tr}
{\bar F}^2
+4\nabla_n^2 \operatorname{Tr}F^2
+32 H\nabla_n \operatorname{Tr}F^2
\\
&
\stackrel\Sigma=
\big(\bar \Delta 
  -4\bar J \big)\operatorname{Tr}
{\bar F}^2
+4\operatorname{Tr}{\bj}^2
+8\operatorname{Tr}({\bar F}^{ab}
\bar \nabla_{[a} {\bj}_{b]})
-16\operatorname{Tr}({\bar F}^{ab}\bar P_{a}{}^c \bar F_{bc})\, .
\end{align*}

Finally, vanishing of $ \operatorname{En}[\bar A]$ in dimension $d=6$ follows directly from that of $Q[\bg,A]$. The vanishing property $ \operatorname{En}[\bar A]$ for $d\in\{8,10,\ldots\}$ follows from a simple evenness argument based on a  canonical  expansion of the connection; details are presented in Section~\ref{connexp}. (Note that  the same method can likely be used to establish vanishing of the energy integrand~$ Q[\bg,A]=0$ in those cases too.)
\end{proof}

\begin{remark}
Since  $\partial\Sigma=\emptyset$, upon integrations by parts, the $d=7$ energy integral takes the simple form
$$
\overline{\operatorname{En}}[\bar A]=
\frac18
\int_\Sigma {\ext {\rm Vol}(\bm {\bar g})}
 \operatorname{Tr} \big( 
{\bj}^2
+\bar J 
{\bar F}^2
+4{\bar F}^{ab}\bar P_{a}{}^c \bar F_{bc}
\big)\, .
$$
\hfill$\blacklozenge$
\end{remark}

\medskip

\subsection{The Obstruction}\label{obstruct}

Our  energy computations allow us to 
 employ the variational result  Theorem~\ref{fruit=vary} to efficiently compute the obstruction current~$\bar k$
of Equation~\nn{obcu}.
The for this result was recorded in Theorem~\ref{obsts} which is proved below.

\begin{proof}[Proof of Theorem~\ref{obsts}]
The first three results quoted (save for their overall normalizations) follow from  simple variational calculations: in $d=5$ the boundary energy is the standard Yang--Mills functional, when $d=6$ the energy vanishes. The $d=7$ case follows from a straightforward variational computation that we leave  for the reader. (We give an alternate recursion-based method for computing general obstruction currents and their normalizations by constants below.) The vanishing of $\bar k$ in $d=8,10,12,\ldots$ follows from vanishing of the energy in those dimensions as established in Theorem~\ref{blabla} along with
 the variational property of the obstruction current given in Theorem~\ref{fruit=vary}.
A direct proof based on  evenness properties of  the connection expansion is discussed 
in Section~\ref{connexp}. 
\end{proof}

\begin{remark}
Since the obstruction current is variational, a simple check of the above results is that the  divergence of the obstruction current $\bar \nabla^a \bar k_a$ vanishes identically.
\hfill$\blacklozenge$
\end{remark}

We now develop an algorithm for computing the obstruction current---in principle for  any dimension $d$---without relying on knowledge of the energy $\overline{\operatorname{En}}[\bar A]$. The basic idea is a direct generalization of the recursion developed in~\cite{Will2} for computing the obstruction to solving the singular Yamabe problem of finding a defining density such that the corresponding scalar curvature $Sc^{\go}=-d(d-1)$. To begin with, we observe from Equation~\nn{obst} that calculating the obstruction current $\bar k$ requires computation of   $d-4$ normal derivatives $\nabla_n^{d-4} j$ of the conformally compact Yang--Mills current along $\Sigma$. That requires knowledge of successive normal derivatives of the asymptotically Yang--Mills connection or its curvature. The following recursion is organized order by order in the transverse degree of the jets of the curvature and the dimension; see~\cite{Blitz} for a detailed definition of the transverse degree of tensors on conformally compact structures. 
Note that~$F_{nb}$ and~$\nabla^a F_{ab}$ 
have transverse degrees one and two respectively.
We now give a key technical result for normal derivatives of $j$.
\begin{proposition}\label{predict}
Let  $(M^d,\cc,\sigma)$ be  asymptotically Poincar\'e--Einstein with $d\geq 5$, and~$\nabla^A$ an asymptotically Yang--Mills connection. Also  let $1\leq \ell \leq d-4$. Then the corresponding conformally compact Yang--Mills current $j$ obeys
\begin{equation}\label{innocuous}
\nabla_n^{\ell} j_b\stackrel\Sigma=(\ell-d+4) \nabla_n^{\ell-1} \nabla^a F_{ab}
+{\rm LTOTs}\, ,
\end{equation}
where ${\rm LTOTs}$ denotes terms of lower transverse degree.

\end{proposition}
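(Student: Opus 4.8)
\textbf{Proof strategy for Proposition~\ref{predict}.}
The plan is to work in a fixed scale $g\in\cc$ (taken asymptotically Poincar\'e--Einstein so that the identities of Section~\ref{PEM} apply) and to prove the claim by induction on $\ell$, tracking transverse degrees carefully. The starting point is the definition $j_b[A]\stackrel g=\sigma\nabla^a F_{ab}-(d-4)F_{nb}$ from Equation~\nn{YMC}, so that applying $\nabla_n^\ell$ and using the Leibniz rule gives
\[
\nabla_n^\ell j_b=\ell\,\nabla_n^{\ell-1}\nabla^aF_{ab}
+\sigma\,\nabla_n^\ell\nabla^aF_{ab}
-(d-4)\nabla_n^\ell F_{nb}
+(\text{commutator terms from }[\nabla_n,\sigma]\text{ and }[\nabla_n,\nabla^a]).
\]
Along $\Sigma$ the term $\sigma\,\nabla_n^\ell\nabla^aF_{ab}$ drops out (since $\sigma\stackrel\Sigma=0$), leaving the two pieces $\ell\,\nabla_n^{\ell-1}\nabla^aF_{ab}$ and $-(d-4)\nabla_n^\ell F_{nb}$ plus curvature-commutator corrections. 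The key observation is that $F_{nb}$ has transverse degree one, so $\nabla_n^\ell F_{nb}$ should be re-expressed, via the asymptotically Yang--Mills condition, as $\nabla_n^{\ell-1}\nabla^aF_{ab}$ (transverse degree two) modulo lower transverse order. Concretely, using $(d-4)F_{nb}=\sigma\nabla^aF_{ab}+\mathcal O(\sigma^{d-4})$ and differentiating $\ell\le d-4$ times, one gets $(d-4)\nabla_n^\ell F_{nb}\stackrel\Sigma=\ell\,\nabla_n^{\ell-1}\nabla^aF_{ab}+{\rm LTOTs}$, where the ${\rm LTOTs}$ absorb all the terms where a $\nabla_n$ hits $\sigma$ fewer than required, producing factors of $n^2=|n|_g^2$, $\rho$, Schouten curvature, etc., coupled to fewer-transverse-degree derivatives of $F$ (the identities of Lemma~\ref{69}, e.g. Equations~\nn{nablan}--\nn{P}, and Lemma~\ref{gradF} are exactly what is needed to bound these).

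Substituting this back yields
\[
\nabla_n^\ell j_b\stackrel\Sigma=\ell\,\nabla_n^{\ell-1}\nabla^aF_{ab}-\ell\,\nabla_n^{\ell-1}\nabla^aF_{ab}+\cdots
\]
which would naively cancel — so the real content is that the \emph{leading} term surviving after the cancellation of the two degree-two contributions is governed by the commutator $[\nabla_n^\ell,\sigma]\nabla^aF_{ab}$ and $[\nabla_n^\ell,\nabla^a]F_{nb}$-type terms. Here I would use Lemma~\ref{sl2} (the $\mathfrak{sl}(2)$ relation $I.D(\sigma\nu)-\sigma I.D\nu=I^2(d+2w)\nu$) or, more elementarily, the fact that $[\nabla_n,\sigma]=n^2=1+\mathcal O(\sigma^{d-1})$ along $\Sigma$ to show that $\nabla_n^\ell(\sigma X)\stackrel\Sigma=\ell\,\nabla_n^{\ell-1}X+{\rm LTOTs}$ for any $X$ whose relevant transverse degree is controlled. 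The cleanest route is probably to iterate the identity $\nabla_n(\sigma Y)=\sigma\nabla_n Y+n^2 Y$ and count: after $\ell$ applications the purely-$\sigma$-stripped term $\nabla_n^{\ell-1}\nabla^aF_{ab}$ appears with combinatorial coefficient, and the precise coefficient $(\ell-d+4)$ in the statement is forced by matching the $\sigma^{d-4}$ truncation in the asymptotically Yang--Mills condition against the order at which the naive cancellation fails. I would double-check the coefficient against the already-established identity~\nn{BI}, $\sigma\nabla^aj_a\stackrel g=(d-3)j_n$, which gives a scalar consistency check after contracting~\nn{innocuous} with $\hat n^b$.

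\textbf{Main obstacle.} The genuine difficulty is \emph{bookkeeping of transverse degree}: one must be certain that every correction term generated by the curvature commutators $[\nabla_n,\nabla^a]$, by derivatives landing on $n$, $n^2$, $\rho$, $\sigma$, or on the Schouten/Weyl curvature, and by the $\mathcal O(\sigma^{d-4})$ remainder in the asymptotically Yang--Mills condition, is strictly lower in transverse degree than $\nabla_n^{\ell-1}\nabla^aF_{ab}$ (which has transverse degree $\ell+1$ in the sense of~\cite{Blitz}). This requires a uniform estimate: differentiating $\sigma$ once lowers the power of $\sigma$ by one but does \emph{not} raise transverse degree, while differentiating $F$ raises transverse degree by one; so each term in the expansion has transverse degree $=$ (number of $\nabla$'s on $F$) $+$ (degree of the curvature/density coefficients), and one checks this total is $\le\ell$ except for the single leading term. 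The inductive step must also confirm that the ${\rm LTOTs}$ produced at stage $\ell$ feed correctly into stage $\ell+1$ without the transverse degree creeping back up. I expect the formal statement and its proof to be a careful but essentially mechanical induction once the transverse-degree grading lemma (analogue of the recursion in~\cite{Will2}) is set up; no new geometric input beyond Lemmas~\ref{69}, \ref{gradF}, \ref{d666} and~\ref{sl2} should be needed.
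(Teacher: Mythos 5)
Your opening step is fine and matches the paper: applying $\nabla_n^\ell$ to $j_b=\sigma\nabla^aF_{ab}-(d-4)F_{nb}$, using $\nabla_n^\ell\circ\sigma\stackrel\Sigma=\ell\nabla_n^{\ell-1}$ modulo lower transverse order, and dropping the $\sigma\nabla_n^\ell\nabla^aF_{ab}$ term along $\Sigma$. But you go astray at the next move: re-expressing $\nabla_n^\ell F_{nb}$ \emph{via the asymptotically Yang--Mills condition} is circular. Differentiating $(d-4)F_{nb}=\sigma\nabla^aF_{ab}+\mathcal O(\sigma^{d-4})$ indeed gives $(d-4)\nabla_n^\ell F_{nb}\stackrel\Sigma=\ell\,\nabla_n^{\ell-1}\nabla^aF_{ab}+{\rm LTOTs}$ for $\ell<d-4$, and substituting this back produces exactly the cancellation you noticed: you recover $\nabla_n^\ell j_b\stackrel\Sigma={\rm LTOTs}$, which is just $\nabla_n^\ell(\sigma^{d-4}k)|_\Sigma$ vanishing to high order. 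This re-derives the hypothesis, not the proposition; the coefficient $(\ell-d+4)$ disappears, and with it the entire structural content that drives the recursion in Section~5.8.

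The missing idea cannot be found in commutators or in ``matching the $\sigma^{d-4}$ truncation'' (you misattribute the coefficient there); commutator terms merely feed LTOTs. What you need is a purely \emph{hypersurface-geometric} identity, independent of the Yang--Mills equations, namely
$$
\nabla_n^\ell F_{nb}\stackrel\Sigma=\nabla^a\nabla_n^{\ell-1}F_{ab}+{\rm LTOTs}
$$
with \emph{unit} coefficient. The paper obtains it by writing $\nabla_{\hat n}\nabla_n^{\ell-1}F_{nb}\stackrel\Sigma=\hat n^a\hat n^c\nabla_a\nabla_n^{\ell-1}F_{cb}+{\rm LTOTs}$, then decomposing $\hat n^a\hat n^c\stackrel\Sigma= g^{ac}-\bar g^{ac}$: the $g^{ac}$ piece gives the full divergence $\nabla^a\nabla_n^{\ell-1}F_{ab}$, while the $\bar g^{ac}$ piece is a tangential derivative $\nabla^\top_a\nabla_n^{\ell-1}F^a{}_b$ of strictly lower transverse degree (and the remaining commutator $[\nabla^a,\nabla_n^{\ell-1}]$ is again LTOTs). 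Feeding this into the first step yields $\nabla_n^\ell j_b\stackrel\Sigma=\ell\,\nabla_n^{\ell-1}\nabla^aF_{ab}-(d-4)\nabla_n^{\ell-1}\nabla^aF_{ab}+{\rm LTOTs}$, which is the claimed $(\ell-d+4)\nabla_n^{\ell-1}\nabla^aF_{ab}+{\rm LTOTs}$. Note also that the paper's argument is a direct computation, not an induction on $\ell$: no inductive hypothesis is needed because the leading transverse-order term is isolated in one stroke, and the asymptotically Yang--Mills condition (used only in the downstream recursion) never enters the proof of the proposition itself.
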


\begin{proof}
 We first compute
$$
\nabla_n^{\ell} j_b=
\nabla^{\ell}_n \big( \sigma \nabla^a F_{ab} -(d-4) F_{nb}\big)
\stackrel\Sigma=
{\ell} \nabla_n^{{\ell}-1} \nabla^a F_{ab}
-(d-4) \nabla_n^{\ell} F_{nb} +{\rm LTOTs}\, .
$$
Here we used that $F_{nb}$ and  $\nabla^a F_{ab}$ 
have respective transverse degrees one and two and that $\nabla_n^\ell\circ \sigma \stackrel\Sigma = \ell \nabla_n^{\ell-1}$. Also, the operator $\nabla_n$ increases the transverse degree of a tensor by one.
 Now observe that
$$
\nabla_{\hat n} \nabla_n^{{\ell}-1} F_{nb}\stackrel \Sigma = 
\hat n^a \hat n^c \nabla_a \nabla_n^{{\ell}-1} F_{cb} +{\rm LTOTs}
\stackrel \Sigma = \nabla^a  \nabla_n^{{\ell}-1} F_{ab} 
-\nabla^{ \hh \top}_a  \nabla_n^{{\ell}-1} F^a{}_{b} +{\rm LTOTs}
\, .
$$
In the above we have used that $\hat n^a \hat n^b \stackrel\Sigma= \bar g^{ab}-g^{ab}$ and the operator identity $\bar g^{ab} \nabla_b \stackrel\Sigma = \nabla^\top_b$. Noting that $\nabla^{ \hh \top}_a  \nabla_n^{{\ell}-1} F^a{}_{b} $ has lower transverse degree than $\nabla^a  \nabla_n^{{\ell}-1} F_{ab}$ as well as simple algebra, gives the result.
\end{proof}

The  seemingly innocuous result above generates a recursion, because when $\ell= d-4$, the terms denoted ``LTOTs'' in Equation~\nn{innocuous} give $(d-4)!$ times the obstruction current. However, since these are of lower transverse degree they will have been computed in a previous step. For $\ell=1,\ldots, d-5$, the left hand side of  Equation~\nn{innocuous} vanishes because~$\nabla^A$ is asymptotically Yang--Mills and $\nabla_n^\ell \circ\sigma^{d-4} \stackrel\Sigma=0$ when $\ell<d-4$. In that case, the terms LTOTs give an expression for $\nabla_n^{\ell-1} \nabla^a F_{ab}$.
This information is precisely that needed to compute the LTOTs in the following step. 

\medskip

Let us now perform  the recursion explicitly to give a non-variational proof of the $d=5,6,7$ results of Theorem~\ref{obsts}.
Although the following lemma is a special case of Proposition~\nn{obsts}, we separate it from the $d=6,7$ cases, to better illustrate the recursion.

\begin{lemma}\label{d=5O}
Let  $(M^d,\cc,\sigma)$ be  asymptotically Poincar\'e--Einstein with $d=5$ and~$\nabla^A$ an asymptotically Yang--Mills connection. Then
 the obstruction current $\bar k$
 equals the boundary Yang--Mills current~${\bj}$.
\end{lemma}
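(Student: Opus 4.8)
The plan is to instantiate the recursion of Proposition~\ref{predict} at its shortest nontrivial length. In dimension $d=5$ we have $d-4=1$, so we only need to examine $\nabla_n^\ell j_b$ for $\ell=d-4=1$; there are no lower-order recursion steps to perform first, which is exactly why this case ``better illustrates the recursion'' in isolation. First I would recall from Equation~\nn{obst} that an asymptotically Yang--Mills connection obeys $j[A]=\sigma^{d-4}k=\sigma\, k$ in $d=5$, and $\bar k = k|_\Sigma$. Since $\sigma$ vanishes on $\Sigma$ with $\nabla_n\sigma = n^2 = I^2 \stackrel\Sigma= 1$, one reads off $\bar k_b \stackrel\Sigma= (\nabla_n j_b)|_\Sigma$, or more invariantly $\bar k = \tfrac1{(d-4)!}\nabla_n^{d-4} j\big|_\Sigma$ evaluated at $d=5$.

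Next I would apply Proposition~\ref{predict} with $\ell=1$ (so $\ell = d-4$, the top of the recursion). Equation~\nn{innocuous} gives
$$
\nabla_n j_b \stackrel\Sigma= (1-d+4)\,\nabla_n^{0}\nabla^a F_{ab} + {\rm LTOTs}
\stackrel{d=5}= (0)\cdot \nabla^a F_{ab} + {\rm LTOTs} = {\rm LTOTs}\, ,
$$
so in $d=5$ the leading term $(\ell-d+4)\nabla_n^{\ell-1}\nabla^a F_{ab}$ happens to vanish identically and the obstruction current is \emph{entirely} the lower-transverse-order terms. Those ${\rm LTOTs}$ are nothing but the terms in the direct expansion $\nabla_n j_b = \nabla_n\big(\sigma\nabla^a F_{ab} - (d-4)F_{nb}\big)$ that survive restriction to $\Sigma$: explicitly $\nabla_n(\sigma\nabla^a F_{ab})\stackrel\Sigma= \nabla^a F_{ab}|_\Sigma$ (using $\nabla_n\sigma\stackrel\Sigma=1$) and $-(d-4)\nabla_n F_{nb}\stackrel\Sigma = -\nabla_n F_{nb}|_\Sigma$ for $d=5$. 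Combining, $\bar k_b \stackrel\Sigma= \nabla^a F_{ab} - \nabla_n F_{nb} = \nabla^\top_a F^a{}_b|_\Sigma = \bar\nabla^a \bar F_{ab} =: \bj_b$, where the second equality splits the divergence into tangential and normal pieces and the third uses Equation~\nn{top2bar} together with $\bar F_{\hat n b}=0$ and the umbilicity-driven identities of Lemma~\ref{thelemma}. Alternatively, and perhaps more cleanly, I would invoke Lemma~\ref{gradF}: there the dimension restriction is $d\geq 6$, so it is not directly available at $d=5$, which is precisely why a bespoke computation is needed here rather than a citation.

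The one point requiring care --- and the main (mild) obstacle --- is that in $d=5$ the coefficient $(\ell-d+4)$ degenerates to zero at exactly the step that computes $\bar k$, so one cannot simply quote Proposition~\ref{predict}'s headline formula; one must unpack the ``${\rm LTOTs}$'' bookkeeping and verify directly that $\nabla_n\big(\sigma\nabla^a F_{ab}-(d-4)F_{nb}\big)\big|_\Sigma$ reduces to $\nabla^\top_a F^a{}_b|_\Sigma$. This is a short computation: expand using the Leibniz rule, use $\nabla_n\sigma = n^2 \stackrel\Sigma=1$ and $\nabla_n n_a \stackrel\Sigma= H\hat n_a$ (Equation~\nn{nablann}), note $F_{nn}=0$ and $\hat n^a F_{ab}\stackrel\Sigma = F_{\hat n b}=0$ since the curvature of an asymptotically Yang--Mills connection has $\alpha=0$ magnetic boundary behaviour, and collect. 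One should also double-check the normalization: Equation~\nn{obst} writes $j[A]=\sigma^{d-4}k$ without a factorial, so $\bar k$ is literally $\nabla_n j|_\Sigma$ when $d-4=1$, with no $(d-4)!$ to divide out. I would present the argument in roughly this order: (i) reduce $\bar k$ to a single normal derivative via Equation~\nn{obst}; (ii) compute that normal derivative directly from the definition~\nn{YMC} of $j$; (iii) simplify the restriction using Equations~\nn{nablann}, \nn{top2bar}, $F_{\hat nb}\stackrel\Sigma=0$, and $F_{nn}=0$; (iv) identify the result as $\bj_b = \bar\nabla^a\bar F_{ab}$.
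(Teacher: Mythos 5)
Your argument is correct and matches the paper's proof essentially line by line: both reduce $\bar k$ to $(\nabla_n j)|_\Sigma$ via Equation~\nn{obst} with $\nabla_n\sigma\stackrel\Sigma=1$, expand $\nabla_n\big(\sigma\nabla^aF_{ab}-F_{nb}\big)$, use $F_{nb}\stackrel\Sigma=0$ together with $\nabla_a n_b\stackrel\Sigma=g_{ab}H$ (your Equation~\nn{nablann} is a direct consequence) to cancel the $\hat n^a\hat n^c\nabla_a F_{cb}$ contributions, and finish via Equation~\nn{top2bar} to identify $\bar g^{ac}\nabla^\top_a F_{cb}$ with $\bj_b$. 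Your framing in terms of the degenerate coefficient $(\ell-d+4)=0$ at $\ell=1$, $d=5$ is precisely the cancellation the paper flags as ``predicted by Proposition~\ref{predict},'' and your remark that Lemma~\ref{gradF} ($d\geq 6$) is unavailable here correctly explains why a direct computation is needed.
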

\begin{proof}
When $d=5$, Equation~\nn{obst} for an asymptotically Yang--Mills connection reads 
$$
\sigma \nabla^a F_{ab}-F_{nb}=\sigma k_b\, .
$$
The obstruction $\bar k$  is  $k|_\Sigma$. 
Hence, acting with $\nabla_n$ and restricting to $\Sigma$ we have
$$
\bar k_b\stackrel\Sigma= \nabla^a F_{ab} -\nabla_{\hat n} F_{nb}
 \stackrel\Sigma=
 \nabla^a F_{ab}+ (\bar g^{ac} - g^{ac})\nabla_a F_{cb}
 =\bar g^{ac} \nabla^\top_a F_{cb}\, ,
$$
where the second equality relies on  Equation~\nn{nablan} and $F_{nb}\stackrel\Sigma=0$.
The cancellation of terms proportional to $\nabla^a F_{ab}$ is precisely that predicted by Proposition~\nn{predict}. An application of Equation~\nn{top2bar} yields the result because $\bj_b=\bar \nabla^a \bar F_{ab}$.
\end{proof}

We now turn to the case $d=6$.

\begin{lemma}
Let  $(M^d,\cc,\sigma)$ be  asymptotically Poincar\'e--Einstein with $d=6$, and~$\nabla^A$ an asymptotically Yang--Mills connection. Then
 the obstruction current $\bar k$ vanishes.
 \end{lemma}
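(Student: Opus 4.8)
The plan is to run the recursion of Proposition~\ref{predict} explicitly for $d=6$, following the template already set out in Lemma~\ref{d=5O} but carrying it one transverse order further. Since $d-4=2$, the obstruction current $\bar k$ is extracted from $\nabla_n^2 j$ along $\Sigma$: from Equation~\nn{obst} with $d=6$ we have $j[A]=\sigma^2 k$, so $\nabla_n^2 j_b\stackrel\Sigma= 2 k_b = 2\bar k_b$ (using $\nabla_n\sigma=|n|^2\stackrel\Sigma=1$). Thus the whole computation reduces to evaluating $\nabla_n^2 j_b|_\Sigma$, with all terms expressed via the Poincar\'e--Einstein hypersurface identities of Section~\ref{PEM}.

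First I would apply Proposition~\ref{predict} with $\ell=1$: since $1\leq d-5=1$, the left side $\nabla_n j_b|_\Sigma$ vanishes (as $\nabla^A$ is asymptotically Yang--Mills and $\nabla_n\circ\sigma^2\stackrel\Sigma=0$), and Equation~\nn{innocuous} then expresses $\nabla^a F_{ab}|_\Sigma$ in terms of lower transverse degree data; concretely this is just Lemma~\ref{gradF}, which for $d=6$ gives $\nabla^a F_{ab}\stackrel\Sigma= 2\hh \bj_b$ and $\nabla_{\hat n} F_{nb}\stackrel\Sigma = \bj_b$. Next I would take $\ell=2=d-4$: expanding $\nabla_n^2 j_b=\nabla_n^2(\sigma\nabla^a F_{ab}-(d-4)F_{nb})$ and restricting to $\Sigma$ gives
$$
2\bar k_b = \nabla_n^2 j_b\big|_\Sigma \stackrel\Sigma= 2\nabla_n \nabla^a F_{ab} - 2\nabla_n^2 F_{nb}\ +\ (\text{terms from }\nabla_n\text{ hitting the }\sigma\text{-derivative structure and }|n|^2),
$$
all of which are lower-transverse-degree and hence already controlled. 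The key inputs are then Lemma~\ref{d666}'s formula~\nn{d6} for $\nabla_{\hat n}\nabla^a F_{ab}|_\Sigma$ and the analog of~\nn{d66} for $\nabla_n^2 F_{ab}$, both valid for $d\geq 6$ (note the dimension hypotheses of Lemmas~\ref{gradF}, \ref{d666} are $d\geq6$, $d\geq7$ respectively, so for the $d=6$ borderline I would re-derive the needed second-normal-derivative identity directly from the Bianchi identity and the asymptotically Yang--Mills condition $(d-4)F_{nb}=\sigma\nabla^a F_{ab}+{\mathcal O}(\sigma^{d-4})$, being careful that at $d=6$ the ${\mathcal O}(\sigma^{d-4})={\mathcal O}(\sigma^2)$ remainder contributes to $\nabla_n^2$). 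Assembling these, the terms proportional to $\bj_b$, $H\bar F$, $\bar P_{a}{}^c\bar F_{bc}$, $\nabla_{[a}\bj_{b]}$ etc.\ should cancel identically, leaving $\bar k_b=0$.

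The main obstacle I expect is bookkeeping at the borderline dimension $d=6$: the clean second-derivative identities of Lemma~\ref{d666} are stated for $d\geq 7$, so I must carefully recompute $\hat n^c\nabla_{\hat n}\nabla_c F_{ab}|_\Sigma$ and $\nabla_{\hat n}\nabla^a F_{ab}|_\Sigma$ keeping the $\sigma^{d-4}$ remainder term in $j[A]=\sigma^{d-4}k$, which at $d=6$ is only quadratic in $\sigma$ and therefore \emph{does} feed into $\nabla_n^2$. A cleaner and more robust route, which I would actually prefer, is to invoke the variational characterization: by Theorem~\ref{fruit=vary} the obstruction current $\bar k$ is a nonzero multiple of the functional gradient of $\overline{\operatorname{En}}[\bar A]$, and by Theorem~\ref{blabla} the $d=6$ energy vanishes identically (indeed $Q[\bm g,A]=\tfrac14 I.D\,\operatorname{Tr}F^2|_\Sigma=0$ was shown there via Lemma~\ref{gradF}); hence its gradient, and therefore $\bar k$, vanishes. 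I would present the direct recursion as the primary proof (to keep Section~\ref{obstruct} self-contained and to illustrate the algorithm at the next order after $d=5$), and remark that the vanishing also follows from the variational argument together with Theorem~\ref{blabla}.
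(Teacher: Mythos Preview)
Your proposal is correct and follows essentially the same approach as the paper: compute $\tfrac12\nabla_n^2 j_b|_\Sigma$ directly, noting (as the paper explicitly does) that neither Lemma~\ref{gradF}'s second-order companion nor Lemma~\ref{d666} can be invoked at $d=6$, so one must manipulate $\nabla_n^2 F_{nb}$ from scratch using $\hat n^a\hat n^c = g^{ac}-\bar g^{ac}$ and the hypersurface identities of Section~\ref{PEM}. One small correction to your expectations: the terms $\bar P_{a}{}^c\bar F_{bc}$ and $\bar\nabla_{[a}\bj_{b]}$ you anticipate are artifacts of the $d\geq 7$ formulas and do not actually appear in the $d=6$ calculation; the paper's computation collapses to $\bar k_b = -2(\bar\nabla^a H + P_{\hat n}{}^a)\bar F_{ab}$, which vanishes by the Mainardi identity~\nn{mainardi}. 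Your proposed variational alternative via Theorems~\ref{fruit=vary} and~\ref{blabla} is also exactly the argument the paper uses in the proof of Theorem~\ref{obsts} itself, so both routes you identify are the ones the paper takes.
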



\begin{proof}
We are now in the $\ell=2$ case of Proposition~\ref{obsts} for which
$$
\bar k_b \stackrel\Sigma=\frac12\nabla_n^2 \big(\sigma \nabla^a F_{ab}-2 F_{nb}\big) 
\stackrel\Sigma= \nabla_{\hat n} \nabla^a F_{ab} 
+\frac12(\nabla_{\hat n} n^2)  \nabla^a F_{ab} - \nabla_{n}^2 F_{nb}\, .
$$
The first equality above uses Equation~\nn{obst}, while the second relies on $\nabla_n \sigma = n^2$.
Here  neither of Lemmas~\ref{d6} nor~\ref{d66} can be fruitfully  employed. Instead we manipulate the last term displayed above directly following the method of Proposition~\ref{obsts}:
\begin{align*}
\nabla_{\hat n} \nabla_n F_{nb}&\stackrel\Sigma=\ \hat n^a \hat n^c \nabla_a \nabla_{ n} F_{cb}+2H  \hat n^c \nabla_{\hat n} F_{cb}+ (\nabla_{ n}^2 n^c) \bar F_{cb}\\
&\stackrel\Sigma=\
(\nabla_a - \nabla_a^\top+2H\hat n_a)\hh  \nabla_{n} F^a{}_{b}
-P_{\hat n}{}^a \bar F_{ab}\, .
\end{align*}
Note that the first equality above used Equation~\nn{nablann} while the second relied on Equation~\nn{nablannn}.
Thus
\begin{align*}
\bar k_b\stackrel\Sigma=\ & [\nabla_{n}, \nabla^a]F_{ab}
+\nabla^{a\top}\big(\hat n_a \bj_b
-\hat n_b \bj_a -2H\bar F_{ab})
  +P_{\hat n}{}^a \bar F_{ab}
\\
 \stackrel\Sigma=\ &
 -Ric_{\hat n}{}^a {\bar F}_{ab}
 +R_{\hat n}{}^a{}_b{}^c \bar F_{ac}\!
 -(\nabla^a n^c)\nabla_c F_{ab}+2H \bj_b \!
 -\hat n_b \bar \nabla^a \bj_a\!
 -2(\bar \nabla^a H) \bar F_{ab} \!
 + P_{\hat n}{}^a \bar F_{ab}
 \\
 \stackrel\Sigma=\ &
-2(\bar \nabla^a H+P_{\hat n}{}^a) \bar F_{ab}
=0\, .
\end{align*}
The first equality above used Equations~(\ref{nablanF},\ref{when}) 
and $\nabla_{\hat n} n^2\stackrel\Sigma = 2H$ (see Equation~\nn{nablann}). The second relied on Equation~\nn{nablan2H}.
The penultimate equality needed the standard relation between Ricci and Schouten tensors, Equation~\nn{Wn}, Equation~\nn{nablan} and the Bianchi identity $\bar \nabla^a \bj_a =0$.
The last equality used the identity $P_{\hat n a}^\top\stackrel\Sigma=-\bar\nabla_a H$, valid for Poincar\'e--Einstein embeddings; see Equation~\nn{mainardi}.
\end{proof}

The $d=7$ case is considerably more involved but provides a nice check of the machinery developed here.

\begin{lemma}
Let  $(M^d,\cc,\sigma)$ be  asymptotically Poincar\'e--Einstein with $d=7$, and~$\nabla^A$ an asymptotically Yang--Mills connection. Then
 the obstruction current is given by
 $$\bar k=\frac12\bar \nabla^a
\big(
 \bar \nabla_{[a} {\bj}_{b]} 
 - 4\bar P_{[a}{}^{c} \bar F_{b]c}
-\bar J \bar F_{ab}
\big)
+\frac14 [\bj^a,\bar F_{ab}] 
 \, .
 $$
 \end{lemma}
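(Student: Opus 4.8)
The plan is to run the recursion of Proposition~\ref{predict} explicitly in dimension $d=7$, starting from the transverse degree one data and climbing to transverse degree three, at which point the ``LTOTs'' in Equation~\nn{innocuous} (with $\ell=3$) assemble $3!$ times the obstruction current. First I would write Equation~\nn{obst} for $d=7$ as $\sigma \nabla^a F_{ab} - 3 F_{nb} = \sigma^3 k_b$, and apply $\tfrac1{3!}\nabla_n^3$ restricted to $\Sigma$. Using $\nabla_n\sigma = n^2 = 1 + {\mathcal O}(\sigma^{d-1})$ and $\nabla_n\circ\sigma$ commutation, the left side becomes $\bar k_b$, while the right side splits into a piece proportional to $\nabla_n^2 \nabla^a F_{ab}$ (transverse degree three, which Proposition~\ref{predict} predicts cancels against the $\nabla_n^3 F_{nb}$ term) plus all the lower transverse order terms.

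Second, I would feed in the already-established data: at transverse degree one, $F_{ab}\stackrel\Sigma=\bar F_{ab}$ and $F_{\hat n b}\stackrel\Sigma=0$; at transverse degree two, $\nabla_{\hat n} F_{nb}\stackrel\Sigma = \tfrac1{d-5}\bj_b$ and $\nabla_{\hat n}F_{ab}$ from Equation~\nn{nablanF}, together with $\nabla^a F_{ab}\stackrel\Sigma = \tfrac{d-4}{d-5}\bj_b$ from Equation~\nn{when}; and at transverse degree three the second-normal-derivative curvature identities of Lemma~\ref{d666}, namely Equations~\nn{d6}, \nn{d66again}, \nn{d66againagain}, specialized to $d=7$. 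One also needs $\nabla_{\hat n}\nabla^a F_{ab}$ from \nn{d6}, and the Poincar\'e--Einstein hypersurface identities of Lemmas~\ref{thelemma} and~\ref{69}: $\nabla_a n_b \stackrel\Sigma= H g_{ab}$, $\rho\stackrel\Sigma=-H$, $\nabla_{\hat n}\rho\stackrel\Sigma=P_{\hat n\hat n}$, $P_{\hat n a}^\top\stackrel\Sigma=-\bar\nabla_a H$, $\nabla_n^2 n_a$ from \nn{nablannn}, plus the commutator $[\nabla_n, \nabla^a]F_{ab}$ expanded via the Riemann tensor and the Ricci-Schouten relation \nn{Schouten}, with $W_{\hat n bcd}\stackrel\Sigma=0$ from \nn{Wn}. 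A Bianchi-identity bookkeeping step, $2\nabla^a\nabla^b F_{ab}=0$ hence $\nabla_{\hat n}\nabla^a F_{ab}$ contracted appropriately, will also be needed to reduce the transverse-degree-three piece.

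Third, after collecting terms I would organize the answer by transverse degree and match it against the claimed $\tfrac12 \bar\nabla^a\big(\bar\nabla_{[a}\bj_{b]} - 4\bar P_{[a}{}^c \bar F_{b]c} - \bar J \bar F_{ab}\big) + \tfrac14[\bj^a, \bar F_{ab}]$, using $\bar J = J - P_{\hat n\hat n} + 3H^2$ from \nn{J2Jbar} and $\bar P_{ab} = P^\top_{ab} + \tfrac12\bar g_{ab}H^2$ from \nn{Fialkow} to trade bulk for boundary curvature quantities, and $\bj_b = \bar\nabla^a\bar F_{ab}$, $\bar\nabla^a\bj_a = 0$ to simplify the boundary divergence terms. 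An independent consistency check is that $\bar\nabla^b \bar k_b = 0$, which should hold term by term after antisymmetrization, and which also follows abstractly from Theorem~\ref{fruit=vary} since $\bar k$ is a functional gradient.

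The main obstacle is the sheer bulk of the transverse-degree-three computation: assembling $\nabla_n^3 F_{nb}$ and $\nabla_n^2\nabla^a F_{ab}$ on $\Sigma$ requires commuting three normal derivatives past the connection, generating curvature terms (both Riemann of the metric and $F$ of the gauge connection), and then reducing everything via the Poincar\'e--Einstein identities; the nonlinear $[\bj^a,\bar F_{ab}]$ term in particular arises only from the $F_{ab}v^c$ piece of the gauge curvature commutator and is easy to drop or mis-sign. Carefully tracking which terms genuinely cancel (as Proposition~\ref{predict} guarantees for the top transverse degree) versus which survive, and keeping the numerical coefficients straight through the $\tfrac1{3!}$ normalization, is where the work lies; I would cross-check the final expression against the variational derivation of the $d=7$ energy in Theorem~\ref{blabla} via Theorem~\ref{fruit=vary}.
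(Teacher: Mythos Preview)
Your proposal is correct and follows essentially the same route as the paper: compute $\tfrac{1}{3!}\nabla_n^3\big(\sigma\nabla^a F_{ab} - 3F_{nb}\big)\big|_\Sigma$ using the recursion mechanism of Proposition~\ref{predict}, feed in the transverse-degree-two and -three curvature data from Lemmas~\ref{gradF} and~\ref{d666}, and reduce via the Poincar\'e--Einstein hypersurface identities of Lemmas~\ref{thelemma} and~\ref{69}. The paper's proof is precisely this lengthy bookkeeping exercise, and your identification of the nonlinear $[\bj^a,\bar F_{ab}]$ term as arising from the gauge-curvature commutator, together with the divergence-free and variational cross-checks, matches the paper's own organization.
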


\begin{proof}
Dimension $d=7$ is the  $\ell=3$ case of Proposition~\ref{obsts} so we must compute $\frac1{3!}\nabla_n^3 \big( \sigma \nabla^a F_{ab} -3 F_{nb}\big)$ along $\Sigma$, and thus begin with the first term
\begin{align*}
\nabla_n^3 \big( \sigma \nabla^a F_{ab} \big)
\stackrel\Sigma=&\ 
3\nabla_n^2 \nabla^a F_{ab}
-2\nabla_n( \rho\sigma \nabla_n \nabla^a F_{ab})
-2\nabla_n^2( \rho\sigma  \nabla^a F_{ab})\\
\stackrel\Sigma=&\ 
3\nabla_n^2 \nabla^a F_{ab}
+6H \nabla_{\hat n} \nabla^a F_{ab}
-2\big(\nabla_n^2( \rho\sigma) \big)  \nabla^a F_{ab}
\\
\stackrel\Sigma=&\ 
3\nabla_n^2 \nabla^a F_{ab}
-18H P_{\hat n}{}^a \bar F_{ab}
-12 H^2 \bj_b-6P_{\hat n \hat n}\bj_b
\, .
\end{align*}
The first line is a simple application of the Leibniz rule and $\nabla_n \sigma = n^2 = 1-2\rho \sigma + {\mathcal O}(\sigma^5)$, while the second line uses that $\rho\stackrel\Sigma=-H$.
For the third line  we used that
$\nabla_n^2(\rho\sigma)\stackrel\Sigma= \nabla_n \rho + \nabla_n(n^2 \rho)
\stackrel\Sigma=2P_{\hat n\hat n}-2H^2$ (see Equation~\nn{nablanrho}).

\smallskip
Now we must handle three normal derivatives of $F_{nb}$. Again, the key idea is that of Theorem~\ref{obsts}, namely to replace a pair of unit conormals with the difference of ambient and hypersurface metrics:
\begin{align*}
-3\nabla_n^3  F_{nb}\stackrel\Sigma=&\ 
-3\hat n^a \hat n^c \nabla_a\nabla_n^2 F_{cb}-3\big[\nabla_n^3,n^c\big] F_{cb}\\
\stackrel\Sigma=&\ 
-3 \nabla^a \nabla_n^2 F_{ab} + 3 \nabla_a^\top \nabla_n^2 F^a{}_b
-3 (\nabla_n^3 n^c) \bar F_{cb}
-9(\nabla_n^2 n^c)\nabla_{\hat n} F_{cb}
-9 \hat n^c H \nabla_n^2 F_{cb}
\\[1mm]
\stackrel\Sigma=&\ 
-3 \nabla^a \nabla_n^2 F_{ab} 
+3 \bar \nabla^a \big(
\hh\bar \nabla_{[a}\bj_{b]}
-2\bar P_{[a}{}^c \bar F_{b]c}
+(2P_{\hat n\hat n}+3H^2)\bar F_{ab}
\big)
\\&\
+15H \big(P_{\hat n}{}^c \bar F_{bc}-2H\bj_b\big)
-3\hat n_b \bar \nabla^a \big(P_{\hat n}{}^c \bar F_{ac}-2H\bj_a\big)
\\&\
+3\big(
2(\bar \nabla^a P_{\hat n\hat n}) +H P_{\hat n}{}^a\big)\bar F_{ab}
-9\Big(\tfrac12\big(H^2-2P_{\hat n\hat n})\bj_b
+\tfrac12 \hat n_b P_{\hat n}{}^a \bj_a
+2HP_{\hat n}{}^a \bar F_{ab}\Big)
\\&\
-9H\big(P_{\hat n}{}^a \bar F_{ba}-2H\bj_b\big)
\\[1mm]
\stackrel\Sigma=&\ 
-3 \nabla^a \nabla_n^2 F_{ab} 
+3 \bar \nabla^a \big(
\hh\bar \nabla_{[a}\bj_{b]}
-2\bar P_{[a}{}^c \bar F_{b]c}\big)
\\&\
+12 (\bar \nabla^a P_{\hat n\hat n}) \bar F_{ab}
+15  P_{\hat n\hat n} \bj_b
-39H P_{\hat n}{}^a \bar F_{ab}
-\frac{15}2 H^2 \bj_b
-\frac{27}2\hat n_b P_{\hat n}{}^a \bj_a  
\, .
\end{align*}
The second line above used Equation~\nn{nablann}.
To achieve the third line, we observe that Equation~\nn{d66}
may be employed under a tangential derivative $\nabla^\top$. That line also heavily relies on Lemma~\ref{69} as well as Equation~\nn{nablanF}.
The last line above uses that the divergence of the boundary curvature $\bar F$ gives the current $\bar j$ as well as Equation~\ref{mainardi}.

Now, by writing $-3 \nabla^a\nabla_n^2 F_{ab}\stackrel\Sigma=\ 
-3\nabla_n^2 \nabla^a F_{ab}
-3\big[\nabla^a, \nabla_n^2] F_{ab}$ all of  the $\nabla_n^2 \nabla^a F_{ab}$ terms of highest transverse degree cancel (again this is the general phenomenon established in Theorem~\ref{obsts}) but we must still compute the lower order commutator terms:
\begin{align*}
-3\big[\nabla^a,& \nabla_n^2] F_{ab}
\stackrel\Sigma=\ 
-\hh 3Ric_{\hat n }{}^c \nabla_{\hat n} F_{cb} 
-3R^a{}_{\hat n b}{}^c \nabla_{\hat n} F_{ac}
-3(\nabla^a n^c) \nabla_c\nabla_n F_{ab}
\\&\qquad\qquad\:\:
-3\nabla_{\hat n} \big(Ric{}_{ n }{}^c  F_{cb} 
+R^a{}_{ n b}{}^c  F_{ac}
+\big[F^a{}_{n},F_{ab}\big]+(\nabla^a n^c)\nabla_c F_{ab}\big)\\
\stackrel\Sigma=&\ 
-6Ric_{\hat n }{}^c \nabla_{\hat n} F_{cb} 
-6R^a{}_{\hat n b}{}^c \nabla_{\hat n} F_{ac}
-6H \nabla^a\nabla_n F_{ab}
\\&\
-3(\nabla_{\hat n} Ric_{n}{}^c)  \bar F_{cb}
-3 (\nabla_{\hat n} R^a{}_{ n b}{}^c)  \bar F_{ac}
-3\big[(\nabla_{\hat n} F^a{}_{n}), \bar F_{ab}\big]
\\[1mm]&\ 
-3(\nabla_{\hat n} \nabla^a n^c)\nabla_c F_{ab}
-3H [\nabla_n,\nabla^a] F_{ab}
\\[1mm]
\stackrel\Sigma=&\ 
-3( 5P_{\hat n}{}^a+J \hat n^a)\big(\hat n_a \bj_b - \hat n_b \bj_a -4H\bar F_{ab}\big)
\\&\
-3 P_{\hat n}{}^c
\big( \hat n_b \bj_c - \hat n_c \bj_b -4H\bar F_{bc}\big) 
-3P^a{}_b \hat n^c 
\big( \hat n_a \bj_c - \hat n_c \bj_a -4H\bar F_{ac}\big)
\\&\
-3 H\nabla_a^\top\big(\hat n^a \bj_b -\hat n_b \bj^a -4H\bar F^a{}_{b})
-6H\big(P_{\hat n}{}^c \bar F_{bc}-2H\bj_b\big)\\&\
-15H P_{\hat n}{}^c \bar F_{cb}
-15\hat n^a (\nabla_{\hat n} P_a{}^c) \bar F_{cb}
-3 H P_{\hat n}{}^c\bar F_{bc}
-3 \hat n^d (\nabla_{\hat n} R^a{}_{db}{}^c)\bar F_{ac}
\\&\
+\frac32 \big[ \bj^a,\bar F_{ab}\big]
\!+\!3\big(P^{ac}\!+\!g^{ac}P_{\hat n\hat n}\big)\nabla_c F_{ab}
+3H Ric_{\hat n}{}^a \bar F_{ab} 
\!-\!3H R_{\hat n}{}^a{}_b{}^c \bar F_{ac}\!
+\frac92H^2\bj_b
\\[1mm]
\stackrel\Sigma=&\ 
-3\bar J \hh \bj_b 
+3 \bar P^a{}_b \bj_a
+\frac32 \big[ \bj^a,\bar F_{ab}\big]
-\frac{21}2
P_{\hat n\hat n} \bj_b
+42
H P_{\hat n}{}^a \bar F_{ab}
+21
H^2 \bj_b
\\&\
+15\hat n_b
P_{\hat n}{}^a \bj_a
-15 \hat n^a (\nabla_{\hat n} P_a{}^c)\bar F_{cb}
-3 \hat n^d (\nabla_{\hat n} R^a{}_{db}{}^c)\bar F_{ac}
+3P^{ac} \nabla_c F_{ab}\, .
\end{align*}
The first line above used the Leibniz property of the operator $[\nabla_n,\pdot\hh]$ while the second line distributed the operator $\nabla_{\hat n}$ and used that $F|_\Sigma=\bar F$. The third equality used Equations~(\ref{first}, \ref{nablanF}, \ref{d66}, \ref{PP}), the results of which are collated in the last line.
It remains to develop  the terms above involving normal derivatives of ambient curvatures. Firstly 
$$
\hat n^a (\nabla_{\hat n} P_a{}^c)\bar F_{cb}
\stackrel\Sigma=
(\bar\nabla^c J)\bar F_{cb}
-(\nabla_a^\top P^{ac} )\bar F_{cb}
\stackrel\Sigma=
-2HP_{\hat n}{}^a \bar F_{ab}
+(\bar \nabla^a P_{\hat n\hat n}) \bar F_{ab}\, .
$$
Here again we have used the usual maneuver for handling pairs of conormals as well as Equations~\nn{Fialkow} and~\nn{mainardi}.
Along similar lines, we also compute
\begin{align*}
\hat n^d (\nabla_{\hat n} R^a{}_{db}{}^c)\bar F_{ac}
\ \stackrel\Sigma=\
&
(
\nabla^c Ric^a{}_b
) \bar F_{ac}
-
(\nabla_d^\top R^{ad}{}_{b}{}^c) \bar F_{ac}
\\
\stackrel\Sigma=\ &
\frac52 C^{ca}{}_b \bar F_{ac}+
\big(\bar \nabla^a (\bar J+P_{\hat n\hat n}-3 H^2)\big) \bar F_{ba}
-(\nabla^\top_d W^{ad}{}_b{}^c)\bar F_{ac}
\\&\
-(\nabla^\top_d P^{dc})\bar F_{bc}
-(\nabla^\top_c P^{a}{}_b)\bar F_{ac}
\\
\stackrel\Sigma=\ &-
\frac{5}2
\bar C_{acb} \bar F^{ac}
+\frac52 \hat n_b C_{ac\hat n} \bar F^{ac}
-(\bar \nabla^a \bar J)\bar F_{ab}
-(\bar \nabla^a P_{\hat n\hat n})\bar F_{ab}
-6H P_{\hat n}{}^a \bar F_{ab}
\\&\
+3\bar C_{bca} 
\bar F^{ac}
+(\bar \nabla^a \bar J) \bar F_{ab}
+8H P_{\hat n}{}^a \bar F_{ab}
-\frac12 \bar C_{cab}\bar F^{ac}
+H P_{\hat n}{}^a \bar F_{ab}
\\&\
-\hat n_b (\bar \nabla_c P_{\hat n a}^\top)\bar F^{ac}
-H P_{\hat n}{}^a \bar F_{ab}
\\
\stackrel\Sigma=\ &
-\frac12
\bar C_{acb} \bar F^{ac}
-(\bar \nabla^a P_{\hat n\hat n})\bar F_{ab}
+2H P_{\hat n}{}^a \bar F_{ab}
\, .
\end{align*}
The second line above relied on the definitions of the Schouten and Cotton tensors; see  Equations~\nn{Schouten} and~\nn{Cotton}.
We also employed Equation~\nn{Fialkow} and the decomposition of the Riemann tensor into Weyl and Schouten tensors in Equation~\nn{Weyl}.
In the third line we used Equation~\nn{cotton},
as well as Equation~\nn{W2W} in combination with the result for the divergence of the Weyl tensor in Equation~\nn{divWeyl}. For the last line we used Equation~\nn{Mainardi} as well as the vanishing of $C_{ac\hat n}$ by virtue of Equations~\nn{Wn} and~\nn{divWeyl} (see also Lemma~\ref{thelemma}).
One further computation is needed
\begin{multline*}
P^{ac}\nabla_c F_{ab}\stackrel\Sigma=
P^{ac}\nabla_c^\top \bar F_{ab}+
P_{\hat n}{}^a \nabla_{\hat n}F_{ab}
\\
\stackrel\Sigma=
P^{ac}(\bar\nabla_{c} \bar F_{ab}-H\hat n_a \bar F_{cb})+P_{\hat n}{}^a
\big(\tfrac12 \hat n_a\bj_b -\tfrac12\hat n_b \bj_a -2 H\bar F_{ab}\big)\\
\stackrel\Sigma=
\bar P^{ac}\bar\nabla_c \bar F_{ab}
-\frac12 H^2 \bj_b
-3HP_{\hat n}{}^a \bar F_{ab}
+\frac12 P_{\hat n\hat n} \bj_b-\frac12\hat n_b P_{\hat n}{}^a \bj_a\, .
\end{multline*}
%
%
%
%
%
%
%
In the  above we decomposed $\nabla\stackrel\Sigma= \nabla^\top + \hat n \nabla_{\hat n}$ and subsequently employed Equation~\nn{nablanF} as well as Equation~\nn{top2bar} to convert $\nabla^\top$ to $\bar \nabla$ plus terms involving mean curvature. 
The above calculation is completed using Equation~\nn{Fialkow}.

\medskip
Orchestrating all these computations gives
\begin{multline*}
\quad\frac13 \nabla_n^3 \big( \sigma \nabla^a F_{ab} -3 F_{nb}\big)
\stackrel\Sigma=
 \bar \nabla^a \big(
\hh\bar \nabla_{[a}\bj_{b]}
-2\bar P_{[a}{}^c \bar F_{b]c}\big)
-\bar J \hh \bj_b 
+ \bar P^a{}_b \bj_a
+\frac12 \big[ \bj^a,\bar F_{ab}\big]
\\
+\frac{1}2
\bar C_{acb} \bar F^{ac}
+\bar P^{ac}\bar\nabla_c \bar F_{ab}\, .
\end{multline*}
It is not difficult to verify that
$$
\bar P^a{}_b \bj_a
+\frac{1}2
\bar C_{acb} \bar F^{ac}
+\bar P^{ac}\bar\nabla_c \bar F_{ab}=-2 \bar \nabla^a 
(\bar P_{[a}{}^c \bar F_{b]c})-(\bar \nabla^a \bar J)F_{ab}\, .
$$
Remembering that $\bj_b=\bar \nabla^a \bar F_{ab}$, it follows that $\frac1{3!} \nabla_n^3 j|_\Sigma$ equals the result quoted for the obstruction current $\bar k$.
\end{proof}

 \section{Boundary Operators  and Dirichlet-to-Neumann~Maps
}
\label{models}

 To construct Dirichlet-to-Neumann
maps for the Yang--Mills system, we rely on a canonical expansion of the connection generalizing those given for the Maxwell
and Poincar\'e--Einstein systems   respectively in   Sections~\ref{Maxwell} and~\ref{canexp}.
This expansion also allows us to complete the proofs of Theorems~\ref{blabla} and~\ref{obsts}.

\subsection{Canonical Connection Expansion}\label{connexp}

To study the asymptotics of conformally compact Yang--Mills connections on Poincar\'e--Einstein structures $(M,\cc,\sigma)$, 
we first express the conformally compact Yang--Mills equation~\nn{YM} in the Graham--Lee coordinate system  $(x,r)$ for a collar   neighborhood of $\Sigma$. Those coordinates are 
 predicated on a choice of boundary metric $\bar h\in \cc_\Sigma$; see Section~\ref{canexp}. Since the density $\sigma = [\ext r^2+h(x,r),r]$  in the  collar,  we work in the scale  determined by the choice of compactified metric $\ext r^2+h(x,r)$. 
We also choose  frames $\Phi$ for the bundle ${\mathcal V}M$ so that the connection $\nabla^A=\ext +A^\Phi$, where~$A^\Phi$ is an endomorphism-valued one-form.

It is propitious to make a ``temporal gauge choice'', meaning frames $\Phi$ such that $$
A^\Phi_0:=\iota_{\frac{\partial}{\partial r}} A^\Phi = A^\Phi_n=0\, .
$$
To see that such a choice is possible, note that
$$
 A^{U\Phi}_0 \sim U^{-1} U' + U^{-1}A^{\Phi}_0U\, ,
$$
where $U$ is any smooth endomorphism.
Also, primes  denote derivatives with respect to~$r$. Thus we need to solve the linear ordinary differential equation
$$
U' = - A^\Phi_0 U\, ,
$$
for $U$ given $A^\Phi_0$. We take as initial data for $U$ the condition $U|_{r=0}=\operatorname{Id}$. Clearly this system always has a solution for a  sufficiently small  collar neighborhood of the boundary.
Having fixed a ``choice of gauge'', from now on we drop the label $\Phi$ on the endomorphism-valued one-form $A$ trusting that no  confusion this with the same label $A$ used to denote the connection $\nabla^A$ will arise.

We now employ an index notation $i=1,\ldots, d-1$ for the coordinates $x=(x^i)$ and recycle the abstract indices $a,b,c,\ldots$ to denote the collar coordinates $y^a=(x^i,r)$. 
Then, thanks to the above gauge choice, the curvature two-form is simply
$$
F= \ext r \wedge A' + \hat F\, ,
$$
where $A=A_i(x,r) \ext x^i$ and $\hat F = \frac12\ext x^i\wedge \ext x^j  F_{ij}(x,r)$ with 
$$
 F_{ij}=\partial_i A_j(x,r)-
\partial_j A_i(x,r)+\big[A_i(x,r),A_j(x,r)\big]\, .
$$
In this scale and coordinate system, the covector $n=\ext r$, so
\begin{equation}\label{FGE}
F_{nb } \ext y^b = A'_j\ext x^j\, .
\end{equation}
The divergence of the curvature is given by
$$
(\nabla^a F_{ab}) \ext y^b=-\hat \nabla^i A'_i\, 
\ext r
+(
A''_j+
\hat \nabla^i \hat F_{ij} +\tfrac12h'^{i}{}_i A'_j-
h'^i{}_j A'_i) \ext x^j\, .
$$
Here we have denoted by $\hat \nabla$ the Yang--Mills coupled connection along constant $r$ hypersurfaces so that, for a one-form valued endomorphism $X_j(x,r)$, one has $$\hat \nabla_i X_j=\partial_i X_j - \hat \Gamma_{ij}^k X_k + [A_i,X_j]\, .$$
We raise and lower indices $i,j,k,\ldots$ with the hypersurface metric $h_{ij}(x,r)$.
Also we have used that the Christoffel symbols of the metric $\ext r^2 + h(x,r)$ are
$
\Gamma^0_{00}=0=\Gamma^0_{0j}=\Gamma_{00}^k
$, 
$\Gamma^k_{0j}=\frac12 h'^k{}_j$,
 $
\Gamma^0_{ij}=-\frac12 h'_{ij}
$
and
$
\Gamma^k_{ij}=\hat \Gamma^k_{ij}$.
Thus the conformally compact Yang--Mills equations given in~\nn{YMC} become 
\begin{align}
j_0[A]&:=\, -r \hat \nabla^i A'_i=0\, ,
\label{Ga}
\\[2mm]
j_j[A]&:=\, \Big(r\frac{\partial}{\partial r} -d+4\Big)  A'_j+
r \hat \nabla^i \hat F_{ij} 
+\tfrac12 r h'^{i}{}_i A'_j-
r h'^i{}_j A'_i=0\, .\label{amp}
\end{align}
Let us call the first of these the Gau\ss\ law because $A'_i$ is the Yang--Mills analog of an electric field.  We dub  the second equation the Amp\`ere law, again in analogy with electromagnetism.

\medskip

We now seek a formal solution in the form
\begin{equation}\label{EXPAND}
A_j(x,r) = A^{(0)}_j(x)+
r A^{(1)}_j(x)+
r^2 A^{(2)}_j(x)+
\cdots\, .
\end{equation}
In what follows, recall that $h(x,r)$ obeys the expansion given in Equation~\nn{FG}.
The Amp\`ere law at order $r^0$, when $d>4$, implies that
\begin{equation}\label{Istartodd}
A^{(1)}=0.
\end{equation}
It is known (see for example~\cite{FG-ast,GrSrni}) that the order $r^2$ term in the asymptotic expansion for the metric $g$ is given by
$$
h_{ij}^{(2)}(X)= -\hat P_{ij}(x)\, ,
$$
where $\hat P_{ij}$ is the Schouten tensor of the boundary metric $h^{(0)}(x)$ and $\hat J$  denotes $\hat P^i{}_i$. Note that $\hat P_{ij}(x)$ is the boundary Schouten tensor $\bar P_{ij}$ in the boundary coordinate system $x^i$.
Hence at order $r^k$, 
assuming an expansion of $A_i$ of the form~\nn{EXPAND},
the Amp\`ere Law~\nn{amp} gives 
\begin{multline}\label{ampexp}
0=(k+1)(k-d+4) A_j^{(k+1)}
+\big(\bar \Delta-(k-1) \bar J\big) A_j^{(k-1)}
\\ 
- \big(\bar \nabla^i \bar \nabla_j 
-2(k-1) \bar P^i{}_j\big)A_i^{(k-1)} 
+\big[A_i^{(k-1)},\bar F^i{}_j\big]
+\mbox{LOTs}\, ,
\end{multline}
where LOTs denotes terms involving $A^{(\ell)}$ with $\ell<k-1$, and quantities  with bars are evaluated along the boundary at $ r=0$.
From the above we  find that
$$A^{\rm ev}_i(x,r) = A^{(0)}_i(x)+
r A^{(1)}_i(x)+
r^2 A^{(2)}_i(x)+
\cdots +
r^{d-4} A_i^{(d-4)}(x)\, ,
$$
solving
\begin{equation}\label{ji}
j_i[A^{\rm ev}]={\mathcal O}(r^{d-4})\, ,
\end{equation}
is such that each $A^{(\ell)}_i(x)$ for $1\leq \ell\leq d-4$ is uniquely determined in terms of $A^{(0)}_i(x)$. Moreover, since the metric expansion of $h(x,r)$ is even to order $r^{d-2}$, by inspecting Equations~\nn{Istartodd} and~\nn{ampexp}, we see that the Amp\`ere law  inductively implies  that all terms $A^{(\ell)}(x)$ with $\ell\leq d-4$ and odd must vanish, and thus~$A_i^{\rm ev}(x,r)$ is an even polynomial function of $r$.

\smallskip
We still have to impose the Gau\ss\ law. For that we note that the Bianchi identity in the form of Equation~\nn{BI} applied to $j[A^{\rm ev}]$ in tandem with Equation~\nn{ji} gives 
\begin{equation}\label{apollo}
\Big(r\frac{\partial}{\partial r} -d+3\Big)
 j_0=-r \hat \nabla^i j_{i}
 -\frac12 r h^{ij} h'_{ij} j_0=
  -\frac12 r h^{ij} h'_{ij} j_0+{\mathcal O}(r^{d-3})
 \, ,
 \end{equation}
 where $j_0=-r \hat \nabla^i A'^{\rm ev}_i$ is an even polynomial in $r$ of degree at most $d-4$ plus terms of order~$r^{d-3}$ (potentially involving logs coming from the metric expansion). 
 A simple induction again shows that $j_0[A^{\rm ev}]={\mathcal O}(r^{d-3})$.
Hence we have established that~$A^{\rm ev}$ satisfies 
\begin{equation}\label{artemis}
j[A^{\rm ev}]={\mathcal O}(r^{d-4})\kappa(x,r)+ {\mathcal O}(r^{d-3})\hh \ext r\, ,
\end{equation}
where $\kappa$ is some smooth endomorphism-valued one-form  in the kernel of $\iota_{\frac{\partial}{\partial r}}$.  
Hence $
j_i[A^{\rm ev}]= r^{d-4} k_i(x) + {\mathcal O}(r^{d-3})
$ where $k_i(x)$ is determined by the boundary connection and choice of boundary metric $\bar h\in \cc_\Sigma$.
In~\cite{GrSrni} it was shown  that upon changing the choice of boundary  metric representative $\bar h$ to $\bar \Omega^2 \bar h$, the compactified metric~$g$ changes to $\Omega(x,r)^2 g$ where~$\Omega(x,r)$ has an even expansion in $r$  and obeys $\Omega(x,0)=\bar \Omega$. Also the new canonical coordinate is~$\Omega(x,r)\hh r$. It follows that, under a change of boundary metric representative,~$k_i$ indeed transforms as a weight $3-d$ density-valued one-form and thus gives the obstruction current $\bar k$ in the boundary scale determined by $\bar h$.
This is a special case of our general result Theorem~\ref{recurse} for conformally compact structures.
In other words,~$A^{\rm ev}$  is an asymptotically Yang--Mills connection expressed in a distinguished coordinate system and we  have now reproduced Theorem~\ref{recurse}, but with the additional information that the expansion
in the canonical coordinate $r$ is even up to order $r^{d-4}$ when the background is specialized to a Poincar\'e--Einstein structure.

Note also that the terms in
Equation~\nn{apollo} of order $r^{d-3}$ together with Equation~\nn{artemis} imply
$$
0=\bar \nabla^i \bar k_i\, .
$$
This establishes that the obstruction current $\bar k$ is divergence-free along the boundary, which also follows from our earlier result that it is the variation of a boundary energy functional; see Theorem~\ref{fruit=vary}. 

\bigskip

We can now give the details behind the evenness arguments appealed to in the   proofs of Theorems~\ref{evenvarySren},~\ref{blabla}, and~\ref{obsts}.  
Theorem~\ref{evenvarySren} states that the renormalized action $S^{\rm ren}[A]$ is conformally invariant for Poincar\'e--Einstein  structures.   Its detailed  proof mirrors exactly that given for renormalized volumes in~\cite{GrSrni}. Again we rely on the evenness property
of the new canonical coordinate~$\Omega(x,r)\hh r$ corresponding to 
changing the choice of boundary  metric representative $\bar h$ to $\bar \Omega^2 \bar h$. In particular 
 the function~$e^\omega$ giving the change of scale in Theorem~\ref{evenvarySren}
equals $\Omega(x,r)$. 
It is now easy to check that both the metric and connection expansions are even to 
 sufficiently high enough order that there can be  no change in~$S^{\rm ren}[A]$ when varying the choice of boundary metric $\bar h$, which yields the result stated in Theorem~\ref{evenvarySren}.

\smallskip

Theorems~\ref{blabla}, and~\ref{obsts} give vanishing results for the energy and obstruction in even boundary dimensions.
For those we must study the boundary asymptotics  of the regulated Yang--Mills energy functional
\begin{equation}\label{evenreg}
S^\varepsilon[A^{\rm ev}]  :=-\frac14 \int_{M^\varepsilon} 
\ext {\rm Vol}(\go)\,{{\rm Tr}}\big(
\go^{ab}\go^{cd} F^{A^{\rm ev}}_{ac}F^{A^{\rm ev}}_{bd}\big)\, ,
\end{equation}
evaluated on the asymptotic solution $\nabla^{A^{\rm ev}}$. We split the region into
$$
M_{\varepsilon}= C_{\varepsilon}\cup M_{\rm int}\, ,
$$
where $C_{\varepsilon}$ is the collar $\{p\in M\, |\, \varepsilon\leq r(p) \leq r^*\}$ and $\varepsilon<r^*\in {\mathbb R}$ is some sufficiently small value such that the coordinate $r$ is defined. 
Since $M$ and hence $\Sigma=\partial M$ are compact, 
 such a value $r^*$ exists. The region $M_{\rm int} := M\backslash(C_{\varepsilon}\cup \{p\in M\, |\, r(p)<\varepsilon\})$.
To connect with the density-based approach  to the regulated action functional in Section~\ref{renorm}, note that the density $\sigma$ evaluated in the scale~$g$ is given by $\sigma\stackrel g= r$. Hence Equation~\nn{ME} then determines the same region $M_{\varepsilon}$ as that given above. Therefore the coefficients in the Laurent series of Theorem~\ref{Laurent} equal those determined by the expansion made here.

\smallskip

 In the above terms we have
$$
S^\varepsilon=\int_{\varepsilon}^{r^*}\! \frac{\ext r }{r^d}  \, {\mathcal L}(r)
-\frac14\int_{M_{\rm int}} 
\ext {\rm Vol}(\go)\,
{\rm Tr}\big(
\go^{ab}\go^{cd} F^{A^{\rm ev}}_{ac}F^{A^{\rm ev}}_{bd}\big)\, ,
$$
where ${\mathcal L}(r):=-\frac14\int_{\Sigma_r} \!
\ext {\rm Vol}(h(r))\, 
\go^{ac}\go^{bd}\hh {{\rm Tr}}(F_{ab} \hh \hh F_{cd})$ and
 $\Sigma_r$ is a constant $r$ hypersurface. 
 In even dimensions $d$,  the integrand of the hypersurface integral ${\mathcal L}(r)$ is  an even polynomial  in~$r$ plus terms of order $r^{d+1}$ because $h(r)=h^{\rm ev}+{\mathcal O}(r^{d-1})$, $A=A^{\rm ev}+{\mathcal O}(r^{d-3})$ and $\go^{ab}=r^2 g^{ab}$. Thus there is no contribution to the integrand of the above-displayed integral of order $r^{-1}$, so the coefficient of $\log\varepsilon$ in the regulated Yang--Mills functional  vanishes. Hence the energy functional $\overline{\operatorname{En}}[\bar A]={\rm En}[A^{\rm ev}[\bar A]]$ of Equation~\nn{HolFor} 
 vanishes in dimensions $d=6,8,10,\ldots$. This establishes the vanishing properties stated in the proofs of Theorems~\ref{blabla} and~\ref{obsts}.
 
It is also possible to show that the obstruction~$\bar k$ vanishes for even dimensional boundaries by directly examining the even connection expansion
$$A^{\rm ev}_i(x,r) = A^{(0)}_i(x)+
r^2 A^{(2)}_i(x)+
\cdots +
r^{d-4} A_i^{(d-4)}(x)\, ,
$$
and the   Amp\`ere equation~\nn{amp}, rather than relying on its variational property given in
Theorem~\ref{fruit=vary}. Indeed
the boundary operators constructed in Section~\ref{YMNO} below can be used to extract the above expansion coefficients. 
See in particular Equation~\nn{A2}.
We  have explicitly verified that when applied to the 
regulated Yang--Mills energy functional~\nn{evenreg}, these yield the
 energy functional~$\overline{\operatorname{ En}}[\bar A]$ for six dimensional boundaries quoted in Theorem~\ref{blabla}.  We suppress the details of those computations here.


\subsection{Yang--Mills Poincar\'e--Einstein Boundary Operators}
\label{YMNO}

We now want to construct Yang--Mills analogs of the boundary operators $\check\delta^{(2)}$ and $\check\delta^{(3)}$ in Proposition~\ref{wearenotnormal}
on Poincar\'e--Einstein structures.
While the Neumann data
of the harmonic problem for scalars is respectively accessed by second and third order boundary operators in dimensions $d=3$ and $d=4$, for the Yang--Mills system the corresponding ambient dimensions are $d=5$ and $d=6$, while the 
Neumann data 
corresponds to transverse derivatives of the normal components of the field strength; see Sections~\ref{EP} and~\ref{connexp}. Therefore we are interested in invariant 
covectors~${\sf E}^{(k)}_b\in \Gamma(T^*\Sigma\otimes {\mathcal V}\Sigma[-k])$ of the form $$
\hat n^{a_1} \cdots \hat n^{a_k}(\nabla_{a_1}\cdots \nabla_{a_{k-1}} F_{a_kb})\big|_\Sigma+\cdots\, .$$
We may view 
${\sf E}^{(k)}_b$ as the image of a non-linear map from connections $\nabla^A$ on ${\mathcal V}M$ to invariant boundary covectors taking values in sections of ${\mathcal V}\Sigma[-k]$, and hence term 
${\sf E}^{(k)}_b$ a {\it Yang--Mills boundary operator}.
The integer $k$ is called the {\it transverse order} of  ${\sf E}^{(k)}$ since it measures the maximal number of normal derivatives of the connection potential appearing in ${\sf E}^{(k)}$. 
The  ellipsis $\cdots$ denotes terms of lower transverse order necessary for~${\sf E}^{(k)}_b$ to be conformally invariant. We impose a naturalness condition on these terms, namely that they are built from (partial) contractions of connection-twisted covariant derivatives $\nabla^A$, curvatures $F^A$ and $R$, the metric $g$ and its inverse, as well as the defining density $\sigma$ of the 
Poincar\'e--Einstein structure 
$(M,\cc,\sigma)$. In Section~\ref{PEM} we explained how such quantities can be re-expressed in terms of tensors intrinsic to the boundary  as well as extrinsic curvatures; see in particular Lemma~\ref{69}.

 We are especially  interested in constructing Yang--Mills boundary operators that vanish when evaluated on conformally compact Yang--Mills connections for sufficiently high dimension~$d$ at fixed values of $k$. This means that the image of these operators are obstructions to a  connection $\nabla^A$ solving the conformally compact Yang--Mills system. 

\smallskip
Observe that  a single normal derivative of the connection is accessed by contracting the curvature $F$ with the unit normal $\hat n$, so we define
$$
{\sf E}^{(1)}_a:=F_{\hat n a}\big|_\Sigma\in \Gamma(T^*\Sigma\otimes {\mathcal V}\Sigma[-1])\, .
$$
This may be viewed as a Yang--Mills analog of the conformal-Robin operator $\delta^{(1)}$.
Clearly, for any connection $A$ that obeys the conformally compact Yang--Mills condition~\nn{CCYM}, one has ${\sf E}^{(1)}_a=0$. 

\medskip
The following result is important for the transverse order  $k=2$ case. 
\begin{lemma}
Let $(M,\cc,\sigma)$ be a Poincar\'e--Einstein structure
equipped with a smooth connection $\nabla^A$ on ${\mathcal V}M$. Then 
\begin{equation}\label{E2}
\check{\sf E}^{(2)}_{\, b}(g,A)
:=(d-5)\big[\hat n^a  (\nabla_{\hat n} F_{ab})|_\Sigma+2H^g {\sf E}^{(1)}_b\big]\, 
-{\bj}_b\, 
\end{equation}
  defines a section of $T^*\Sigma\otimes {\mathcal V}\Sigma[-2]$. 

\end{lemma}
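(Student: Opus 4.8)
The claim is that the explicitly given combination
$$
\check{\sf E}^{(2)}_{\, b}(g,A):=(d-5)\big[\hat n^a(\nabla_{\hat n}F_{ab})|_\Sigma + 2H^g {\sf E}^{(1)}_b\big]-{\bj}_b
$$
is conformally invariant, i.e.\ defines a section of $T^*\Sigma\otimes{\mathcal V}\Sigma[-2]$ rather than merely a Riemannian-embedding invariant. By the criterion of Branson quoted after Equation~\nn{deltanablas}, and its hypersurface adaptation, it suffices to show that the linearized conformal variation obeys $\delta\,\check{\sf E}^{(2)}_b \stackrel\Sigma= -2\,\bar\varpi\,\check{\sf E}^{(2)}_b$, equivalently $\delta_{-2}\check{\sf E}^{(2)}_b\stackrel\Sigma=0$. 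So the plan is a direct computation of $\delta_{-2}$ of each of the three constituent terms, using the variation formulas for the Levi--Civita connection (Equation~\nn{deltanablas}), for the unit conormal ($\delta_1\hat n=0$), for the mean curvature (Equation~\nn{varyH}, $\delta_{-1}H=\Upsilon_{\hat n}$), and for the boundary Yang--Mills current (Equation~\nn{prairie_dog1}, $\delta_{-2}\bj_a=(d-5)\bar F_{\bar\Upsilon a}$), together with the fact that the curvature two-form $F$ and its restriction $\bar F$ are conformally weightless, so that $F_{ab}$ has weight $0$ and raising indices with $\og$ or $\bm g$ introduces the appropriate density weights.

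First I would record that ${\sf E}^{(1)}_b = F_{\hat n b}|_\Sigma = \hat n^a F_{ab}|_\Sigma$ has weight $-1$, and compute $\delta_{-1}{\sf E}^{(1)}_b$: since $\delta_1\hat n^a=0$ and $F_{ab}$ is weightless with $\delta F_{ab}=0$ (the curvature of a fixed connection does not see the metric), one gets $\delta {\sf E}^{(1)}_b = \bar\varpi F_{\hat n b}\cdot 0$-type contributions only through the index $a$ being raised; being careful, $\hat n^a = g^{ac}\hat n_c$ and $\delta(g^{ac}\hat n_c) = -2\varpi\,\hat n^a + g^{ac}\delta\hat n_c$, and since $\delta_1\hat n_c = 0$ means $\delta\hat n_c = \bar\varpi\hat n_c$, we find $\delta\hat n^a = -\bar\varpi\hat n^a$, hence $\delta{\sf E}^{(1)}_b = -\bar\varpi\,{\sf E}^{(1)}_b$, i.e.\ ${\sf E}^{(1)}$ is already invariant of weight $-1$ as asserted earlier. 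Next, the genuinely second-order term: $\hat n^a(\nabla_{\hat n}F_{ab})|_\Sigma = \hat n^a\hat n^c\nabla_c F_{ab}|_\Sigma$. Varying, I would collect (i) the $\delta\hat n$ pieces, which produce two copies of $-\bar\varpi$ times the term plus $\hat n$-transverse derivatives, (ii) the $\delta(\nabla_c)$ piece acting on the $(0,2)$-tensor $F_{ab}$, which by Equation~\nn{deltanablas} for a covector (applied in each of the two lower slots) produces terms like $-\Upsilon_c F_{ab}$, $-\Upsilon_a F_{cb}$, $-\Upsilon_b F_{ca}$, $+g_{ca}\Upsilon^d F_{db}$, $+g_{cb}\Upsilon^d F_{ad}$, each then contracted with $\hat n^a\hat n^c$. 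Using $\hat n^a\hat n^c g_{ca}=1$, $\hat n^a F_{ab}|_\Sigma = {\sf E}^{(1)}_b$, antisymmetry $F_{nn}=0$, and restricting to $\Sigma$ so that $\Upsilon^d = \bar\Upsilon^d + \hat n^d\Upsilon_{\hat n}$, the surviving terms should organize into $-2\bar\varpi$ times the term plus a remainder proportional to $\Upsilon_{\hat n}{\sf E}^{(1)}_b$ and to $\bar F_{\bar\Upsilon b}$ — precisely the shape needed to be cancelled by the $2H^g{\sf E}^{(1)}_b$ variation (which contributes $2(\delta_{-1}H){\sf E}^{(1)}_b = 2\Upsilon_{\hat n}{\sf E}^{(1)}_b$ at the relevant order) and by $\delta_{-2}\bj_b=(d-5)\bar F_{\bar\Upsilon b}$.

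The main obstacle, and the step to execute most carefully, is the bookkeeping in term (ii): one must pass from ambient covariant derivatives $\hat n^a\hat n^c\nabla_c F_{ab}$ to boundary-tangential quantities using the Poincar\'e--Einstein/hypersurface identities of Lemma~\ref{thelemma} and Lemma~\ref{69} (in particular $\nabla_a n_b\stackrel\Sigma= g_{ab}H$, $W_{\hat n bcd}\stackrel\Sigma=0$, and the relations converting $\nabla^\top$ to $\bar\nabla$ via Equation~\nn{top2bar}), because the variation formula is naturally written in terms of an \emph{ambient} extension $\hat n^{\rm ext}$ whose transverse derivatives are governed by $H$ and $P$ — and tracking which contributions land tangentially versus normally, and which are killed by restricting to $\Sigma$, is where sign errors and dropped $H$-terms typically occur. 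A cleaner route I would try in parallel is to recognize $\check{\sf E}^{(2)}_b$ as (a multiple of) the transverse-order-two Laplace--Robin-type operator $I.D$ acting on a weight-appropriate quantity restricted to $\Sigma$: since $I.D$ is conformally invariant (tractor "strong invariance", cf.\ the discussion around Equation~\nn{ID}) and along $\Sigma$ reduces to $(d+2w-2)\delta^{(1)}$, expressing $\hat n^a\nabla_{\hat n}F_{ab} + 2H{\sf E}^{(1)}_b$ as the leading symbol of $\delta^{(1)}$ applied to the weight $-1$ object $F_{\hat n b}^{\rm ext}$ plus the $\bj$-correction would give invariance essentially for free, with the explicit formula then obtained by trivializing in a scale $g$. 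I expect the cleanest published proof to combine: state that ${\sf E}^{(1)}$ is invariant of weight $-1$, then exhibit $\check{\sf E}^{(2)}$ as the weight-$(-2)$ output of applying the (connection-twisted) Laplace--Robin operator to a canonical weight-$(-1)$ extension of ${\sf E}^{(1)}$ and invoking Lemma~\ref{sl2} to fix normalizations — but the fallback is always the brute-force $\delta_{-2}$ computation sketched above, which is only a page of algebra once the hypersurface identities are in hand.
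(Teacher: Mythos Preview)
Your approach is essentially the same as the paper's: verify invariance by computing the (linearized) conformal variation using Equation~\nn{deltanablas} for the connection, $\delta_1\hat n=0$, and $\delta_{-1}H=\Upsilon_{\hat n}$. The paper adds one small streamlining step you omit: it first rewrites the operator as
\[
\check{\sf E}^{(2)}_{\, c}\stackrel{\Sigma}=\Big[(d-4)\big(\hat n^a \hat n^b \nabla_a F_{bc}+3H^g F_{\hat n c}\big) - \nabla^a F_{ac}\Big]^\top\, ,
\]
replacing the boundary current $\bj_c$ by the ambient divergence $\nabla^a F_{ac}$, so that only two Levi--Civita connections and one mean curvature need to be varied and no separate variation formula for $\bj$ is required. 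Your sketch instead uses $\delta_{-2}\bj_a=(d-5)\bar F_{\bar\Upsilon a}$ directly, which is equally valid and amounts to the same computation in a different packaging. Your speculative alternative via the Laplace--Robin operator $I.D$ is not the route the paper takes and does not quite apply as stated (there is no single weight-$(-1)$ density whose $\delta^{(1)}$ gives the full expression), so you should drop it and simply carry out the direct variation. One minor correction in your bookkeeping: for a $(0,2)$-tensor the Christoffel variation acts on both lower indices, so the $\Upsilon_c F_{ab}$ term appears with coefficient $-2$, not $-1$; this does not affect the conclusion since those terms assemble into the overall weight.
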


\begin{proof}
First we may check that $\check{\sf E}^{(2)}$ can be rewritten as 
\begin{equation}\label{e2}
\check{\sf E}^{(2)}_{\, c}\stackrel{\Sigma}=\Big[
(d-4)(\hat n^a \hat n^b \nabla_a F_{bc}+3H^g F_{\hat n c}) - \nabla^a F_{ac}\Big]^\top\, .
\end{equation}
We now need to show that
$$
\check{\sf E}^{(2)}_{\, b}(\Omega^2 g,A)=\Omega^{-2} \check{\sf E}^{(2)}_{\, b}(g,A)\, .
$$
The proof is then an elementary application of the  transformation formula for Levi-Civita connections of conformally related metrics acting on covectors 
given in Equation~\nn{deltanablas}.
 Also one needs that the unit normals and mean curvatures of conformally related metrics obey
$
\hat n^{\Omega^2 g} = \Omega\hh \hat n\: \mbox{ and }\:
H^{\Omega^2 g}=\Omega^{-1}\big(H^g + g(\Upsilon,\hat n)\big)\, .
$
That computation is somewhat tedious, a quickfire proof employs the linearized conformal variations of Section~\ref{LCVs}.
That requires only application of Equation~\nn{deltanablas} to the two connections appearing in Equation~\nn{e2},
as well as Equation~\nn{varyH} to the mean curvature.
\end{proof}

When $d\neq 5$, we may define
$$
{\sf E}^{(2)}:=\tfrac1{d-5} \hh \check{\sf E}^{(2)}_a\, .
$$
In $d=5$ dimensions, only the boundary divergence term~$\bj_c=\bar \nabla^a \bar F_{ac}$ remains in~$\check{\sf E}^{(2)}_c$,
and in
 that case~$\bj$ is the conformally invariant Yang--Mills current.  Theorem~\ref{obsts} established that $\bj$ is  the $d=5$ obstruction current.   
This is a direct analog of the case $w=(3-d)/2$ for $\check\delta^{(2)}$ acting on scalar densities in Proposition~\nn{wearenotnormal}. 


\smallskip

Analogy with  the scalar operator $\delta^{(2_3)}$ suggests that when $d=5$ and $\bj =0$, we ought consider the tensor
\begin{equation}\label{E25}
{\sf E}^{(2_5)}_{\, c}(g,A):=
\hat n^a \hat n^b (\nabla_a F_{bc})|_\Sigma+2H^g {\sf E}^{(1)}_{ c}\, .
\end{equation}
Indeed a simple computation establishes that
$$
\Omega^2\hh
{\sf E}^{(2_5)}_{\, b}(\Omega^2g,A
)
-
{\sf E}^{(2_5)}_{\, b}(g,A
)
=\Upsilon^a \bar F_{ab}\, .
$$
Hence, if the boundary connection $\bar A$
is flat, then  the obstruction current $\bj_b=\bar \nabla^a \bar F_{ab}=0$,
and moreover~${\sf E}^{(2_5)}(g,A)$ defines a section of $T^*\Sigma\otimes {\mathcal V}\Sigma[-2]$. 
 In Section~\ref{FGrescale}
 we establish that this tensor  invariantly captures the Neumann data for global solutions to the $d=5$ conformally compact Yang--Mills system whose boundary Dirichlet data is a flat connection $\bar A$. Note that since ${\sf E}^{(2_5)}$ has weight $-2$, its divergence is conformally invariant. 
In fact we have the following result.
\begin{lemma}
 Let  $(M,\cc,\sigma)$ be a dimension $d=5$ Poincar\'e--Einstein structure. If~$\nabla^A$ is a conformally compact Yang--Mills connection on ${\mathcal V}M$  for which $F^A|_\Sigma=0$, then
 $$
 \bar \nabla^a {\sf E}^{(2_5)}_a =0\, .
 $$
 \end{lemma}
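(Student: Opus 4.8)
The plan is to compute $\bar\nabla^a {\sf E}^{(2_5)}_a$ directly using the explicit formula in Equation~\nn{E25} together with the $d=5$ consequences of the conformally compact Yang--Mills condition $j[A]=0$ and the hypothesis $F^A|_\Sigma=0$. First I would record what the vanishing of the boundary curvature buys us: since $F^A|_\Sigma=0$ we have $\bar F_{ab}=0$, hence the boundary Yang--Mills current $\bj_b=\bar\nabla^a\bar F_{ab}=0$, and also ${\sf E}^{(1)}_a=F_{\hat n a}|_\Sigma=0$. Therefore along $\Sigma$ the operator reduces to ${\sf E}^{(2_5)}_c\stackrel\Sigma=\hat n^a\hat n^b(\nabla_a F_{bc})|_\Sigma$, and moreover the Gau\ss--law analysis of Lemma~\ref{fingerofgod} (or its proof) shows that ${\sf E}^{(2_5)}$ is purely tangential, i.e. $\hat n^c {\sf E}^{(2_5)}_c\stackrel\Sigma=0$. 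Indeed $F^A|_\Sigma=0$ and $j[A]=0$ with $d=5$ force $F^A=\sigma G$ with $G_{ab}|_\Sigma=\hat n_a E_b-\hat n_b E_a$ for $E=E^\Sigma$ the electric field, so ${\sf E}^{(2_5)}_c=E_c$ up to the normalization, and the Gau\ss\ law $\bar\nabla^a E_a=0$ is already $\bar\nabla^a{\sf E}^{(2_5)}_a=0$.

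The cleanest route is thus the second one: invoke Lemmas~\ref{handofgod} and~\ref{fingerofgod}. With $d=5$, $j[A]=0$, and $F^A|_\Sigma=0$, Lemma~\ref{handofgod} (with $-\alpha=d-4=1$) gives $F^A=\sigma G$ where $G|_\Sigma$ satisfies $\hat n_{[a}G_{bc]}\stackrel\Sigma=0$, and then the computation in the proof of Lemma~\ref{fingerofgod} — which only needs $j[A]={\mathcal O}(\sigma^\infty)$, here automatic since $j[A]=0$ — produces $G^\Sigma_{ab}=\hat n_a E_b-\hat n_b E_a$ with $E\in\Gamma(T^*\Sigma\otimes\End{\mathcal V}\Sigma[3-d])$ obeying the Gau\ss\ law $\bar\nabla^a_{\bar A} E_a=0$. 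It then remains only to identify $E$ with ${\sf E}^{(2_5)}$ up to a nonzero constant: writing $F_{ab}=\sigma G_{ab}$ and applying $\nabla_a$ in a chosen scale, one has $\hat n^a\hat n^b\nabla_a F_{bc}\stackrel\Sigma=\hat n^a\hat n^b(\nabla_a\sigma)G_{bc}+{\mathcal O}(\sigma)\cdot(\ldots)\stackrel\Sigma=\hat n^b G^\Sigma_{bc}=-E_c$ using $\nabla_a\sigma=n_a$ and $|n|_g\stackrel\Sigma=1$ for a Poincar\'e--Einstein structure (Equation~\nn{I2}); the $2H^g{\sf E}^{(1)}_c$ term vanishes since ${\sf E}^{(1)}=0$. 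So ${\sf E}^{(2_5)}_c\stackrel\Sigma=-E_c$ (or the appropriate sign/normalization), and the Gau\ss\ law is precisely the claimed identity $\bar\nabla^a{\sf E}^{(2_5)}_a=0$.

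The one point requiring a little care — and the main obstacle — is matching the boundary covariant derivative $\bar\nabla^{\bar A}$ appearing in the Gau\ss\ law of Lemma~\ref{fingerofgod} with the $\bar\nabla^a$ in the statement: these are the same operator, since $\bar A$ is by definition the pullback boundary connection of $\nabla^A$, but one should note explicitly that the tangential-index manipulation $\hat n^b(\nabla^a-\hat n^a\nabla_{\hat n})(\hat n_a E_b-\hat n_b E_a)\stackrel\Sigma=-\bar\nabla_a E^a$ (as performed at the end of the proof of Lemma~\ref{fingerofgod}) uses $\nabla^\top=\bar\nabla$ on tangential tensors together with $\hat n^b(\nabla_a-\hat n_a\nabla_{\hat n})\hat n_b\stackrel\Sigma=0$ and the umbilicity/Equation~\nn{nablann2H} for the Poincar\'e--Einstein embedding. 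A brief alternative verification, if one prefers a self-contained argument, is to differentiate ${\sf E}^{(2_5)}$ directly: $\bar\nabla^c{\sf E}^{(2_5)}_c\stackrel\Sigma$ can be expanded using $F_{ab}=\sigma G_{ab}$, the Bianchi identity $\nabla_{[a}F_{bc]}=0$, the Noether identity $\nabla^a\nabla^b F_{ab}=0$, and Lemma~\ref{69}, and all terms collapse to $\bar\nabla^a E_a=0$. Either way the computation is short once $F^A|_\Sigma=0$ has been used to kill the curvature-of-$\bar A$ contributions.
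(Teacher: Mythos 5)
Your proposal is correct, and it takes a genuinely different route from the paper's proof. The paper's argument is a direct computation: it first observes that the divergence of the conformally compact Yang--Mills condition combined with the Noether identity $\nabla^a\nabla^b F_{ab}=0$ and $\nabla_{[a}n_{b]}=0$ gives $(d-3)\, n^b\nabla^a F_{ab}=0$ on $M$ (not merely along $\Sigma$), whence $n_c\nabla_b F^{bc}=0$; it then expands $\bar\nabla^a{\sf E}^{(2_5)}_a$ using $\hat n^a\hat n^b\stackrel\Sigma=g^{ab}-\bar g^{ab}$, the vanishing $F|_\Sigma=0$, and Equation~\nn{nablann}, and all terms collapse. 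Your argument instead goes through the electric boundary problem: $F^A|_\Sigma=0$ together with the Hadamard lemma (not quite ``Lemma~\ref{handofgod} gives'' as you wrote, since $F^A=\sigma G$ is a hypothesis rather than a conclusion of that lemma) gives $F^A=\sigma G$ for smooth $G$; Lemma~\ref{handofgod} with $\alpha=4-d=-1$ then gives $\hat n_{[a}G_{bc]}\stackrel\Sigma=0$, and Lemma~\ref{fingerofgod} gives $G^\Sigma_{ab}=\hat n_aE_b-\hat n_bE_a$ with the Gau\ss\ law $\bar\nabla^a_{\bar A}E_a=0$; plugging $F=\sigma G$ into the definition of ${\sf E}^{(2_5)}$ and using $|n|_g\stackrel\Sigma=1$ gives ${\sf E}^{(2_5)}_c=E_c$ (you wrote $-E_c$, a harmless sign slip since $\hat n^bG^\Sigma_{bc}=\hat n^b\hat n_bE_c-\hat n_c\hat n^bE_b=E_c$). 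Your approach is conceptually cleaner and exposes the real content --- ${\sf E}^{(2_5)}$ \emph{is} the electric field, and the divergence identity is exactly the Gau\ss\ law --- at the cost of invoking the machinery of Section~\ref{EP}. The paper's computation is more self-contained, avoiding the explicit factorization $F=\sigma G$ and the structural lemmas, and is closer to the style of the other divergence-free lemmas in Section~\ref{YMNO}. Both are correct.
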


\begin{proof}
Firstly observe that, by virtue of the Noether identity $\nabla^a \nabla^b F_{ab}=0$ and the symmetry property $\nabla_a n_b = \nabla_b n _a$,  the divergence of the conformally compact Yang--Mills condition  $\sigma \nabla^a F_{ab} =(d-4) F_{nb}$ gives
$$
n^b  \nabla^a F_{ab} \stackrel\Sigma=(d-4) n^a \nabla^b F_{ab}\, .
$$
Thus, since $d\neq 3$, we have 
$$
n_c\nabla_b F^{bc}\stackrel\Sigma=0\, .
$$
In turn
\begin{multline*}
\bar \nabla^a {\sf E}^{(2_5)}_a \stackrel\Sigma=
\nabla_c^\top \big(\hat n^a \hat n^b (\nabla_a F_{b}{}^c)+2H^g F_{\hat n}{}^c\big)
\stackrel\Sigma=\nabla_c^\top (\nabla_b F^{bc}-\nabla^\top_b F^{bc})
\\
\stackrel\Sigma=\hh
\hat n_c \nabla_{\hat n} \nabla_b F^{bc}
\stackrel\Sigma=
(\nabla_{\hat n}-H) (n_c \nabla_b F^{bc})=0\, .
\:\:
\end{multline*}
In the above we again used the Noether identity, the vanishing condition $F^A|_\Sigma=0$, as well as Equation~\nn{nablann}.
\end{proof}

Finally for the $k=2$ case, in dimensions $d>5$, as promised we have the  following.
\begin{lemma}\label{warmerShowarma}
Let  $(M,\cc,\sigma)$ be a dimension $d\geq6$ Poincar\'e--Einstein structure. If~$\nabla^A$ is a conformally compact Yang--Mills connection on ${\mathcal V}M$, then
 $$
  {\sf E}^{(2)}_a =0\, .
 $$
\end{lemma}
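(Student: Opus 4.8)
The plan is to show that ${\sf E}^{(2)}_a=\tfrac1{d-5}\check{\sf E}^{(2)}_a$ vanishes identically on conformally compact Yang--Mills connections by combining the two alternative expressions already available for $\check{\sf E}^{(2)}$ --- namely Equation~\nn{E2} and its boundary-index form Equation~\nn{e2} --- with the normal-derivative identities for the curvature of an asymptotically Yang--Mills connection collected in Lemma~\ref{gradF}. Since a conformally compact Yang--Mills connection satisfies $j[A]=0$ exactly (hence in particular it is asymptotically Yang--Mills with $\bar k=0$), all of the relations~\nn{first},~\nn{nablanF} and~\nn{when} of Lemma~\ref{gradF} apply along $\Sigma$, and moreover $F^A_{\hat n a}\stackrel\Sigma=0$ because $j[A]=0$ forces the $\alpha=0$, $G_{\hat n b}\stackrel\Sigma=0$ magnetic behaviour of~\nn{magnetic}. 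So in fact ${\sf E}^{(1)}_a=F_{\hat n a}|_\Sigma=0$ already, and the $2H^g{\sf E}^{(1)}_b$ term in~\nn{E2} drops out.

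The key steps, in order, are as follows. First, use $F_{\hat n b}\stackrel\Sigma=0$ to reduce $\check{\sf E}^{(2)}_b$ in the form~\nn{E2} to $(d-5)\,\hat n^a(\nabla_{\hat n}F_{ab})|_\Sigma-\bj_b$. Second, apply Equation~\nn{first} of Lemma~\ref{gradF}, which says precisely $\hat n^a\nabla_{\hat n}F_{ab}=\nabla_{\hat n}F_{nb}\stackrel\Sigma=\tfrac1{d-5}\bj_b$; substituting this gives $(d-5)\cdot\tfrac1{d-5}\bj_b-\bj_b=0$, so $\check{\sf E}^{(2)}_b\stackrel\Sigma=0$ and hence ${\sf E}^{(2)}_b=0$. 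The only subtlety is the range of dimensions: Lemma~\ref{gradF} is stated for $d\geq 6$, which is exactly the hypothesis of the present lemma, so there is no gap. One should also double-check consistency by running the computation through the second expression~\nn{e2}: there $\check{\sf E}^{(2)}_c=\big[(d-4)\hat n^a\hat n^b\nabla_a F_{bc}-\nabla^aF_{ac}\big]^\top$ (after discarding the $3H^gF_{\hat n c}$ term), and then Equation~\nn{when} gives $\nabla^aF_{ac}\stackrel\Sigma=\tfrac{d-4}{d-5}\bj_c$ while~\nn{first} gives $\hat n^a\hat n^b\nabla_aF_{bc}=\hat n^b\nabla_{\hat n}F_{nc}\stackrel\Sigma$... here one must be careful that $\hat n^a\hat n^b\nabla_a F_{bc}$ is not literally $\hat n^b\nabla_{\hat n}F_{nc}$ but differs by a Christoffel-type term $\hat n^a(\nabla_a\hat n^b)F_{bc}$, which by~\nn{nablan} equals $H^g F_{\hat n c}=0$; so $(d-4)\cdot\tfrac1{d-5}\bj_c-\tfrac{d-4}{d-5}\bj_c=0$ again, confirming the result.

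The main obstacle is essentially bookkeeping rather than a genuine difficulty: one must be scrupulous about the distinction between $\hat n^a\hat n^b\nabla_aF_{bc}$ (two normal-direction derivatives with indices contracted after differentiation) and $\nabla_{\hat n}(F_{\hat n c})$ or $\nabla_{\hat n}F_{nc}$ (where $n$ is the extension $\nabla\sigma$), since the extensions of $\hat n$ and the projector $\top$ introduce second-fundamental-form terms via~\nn{nablan} and~\nn{nablann}. For a Poincar\'e--Einstein structure these correction terms are all proportional to $H^g$ contracted into $F_{\hat n\cdot}$, which vanishes on the boundary by ${\sf E}^{(1)}=0$, so they are harmless; but the proof must note this explicitly rather than silently equating the two. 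Given that, the proof is a two-line substitution once the right identity from Lemma~\ref{gradF} is invoked, and the natural write-up is: rewrite $\check{\sf E}^{(2)}$ via~\nn{E2}, note $F_{\hat n b}\stackrel\Sigma=0$ since $A$ is conformally compact Yang--Mills, then apply Equation~\nn{first}.
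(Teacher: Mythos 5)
Your proof is correct and follows essentially the same route as the paper's: use ${\sf E}^{(1)}_a=F_{\hat n a}|_\Sigma=0$ (which follows directly from $j[A]|_\Sigma=-(d-4)F_{\hat n b}|_\Sigma=0$, rather than needing the full characteristic-exponent analysis) to reduce ${\sf E}^{(2)}_b$ to $\hat n^a(\nabla_{\hat n}F_{ab})|_\Sigma-\tfrac1{d-5}\bj_b$, and then apply Equation~\eqref{first} of Lemma~\ref{gradF}. Your extra cross-check via the alternative form~\eqref{e2} and your careful tracking of the $H\hat n^aF_{ab}|_\Sigma=0$ correction terms are sound but not part of the paper's two-line argument.
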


\begin{proof}
Since the conformally compact Yang--Mills condition implies~${\sf E}^{(1)}$~$=$~$0$,  we have
$$
{\sf E}^{(2)}_b=\hat n^a  \nabla_{\hat n} F_{ab}|_\Sigma-\tfrac1{d-5}\hh {\bj}_b\, .
$$
The result now follows from direct application of Equation~\nn{first}.
\end{proof}

\subsubsection{Higher Order Boundary Operators}
We are now ready to construct ${\sf E}^{(3)}$ and ${\sf E}^{(4)}$.
Starting with the former we seek an operator that takes three transverse derivatives of the connection potential. This operator probes Neumann data
in dimension $d=6$ and is required to vanish on conformally compact Yang--Mills connections in dimensions $d\geq 7$.

\begin{proposition}\label{third}
 Let $(M,\cc,\sigma)$ be a dimension  Poincar\'e--Einstein structure
equipped with a smooth connection $\nabla^A$ on ${\mathcal V}M$. Then, when $d\neq 7$,
 \begin{multline*}
{\sf E}^{(3)}_c(g,A):= 
\hat n^a \hat n^b ([\nabla_{\hat n}+5H] \nabla_a F_{bc})|_\Sigma
+\tfrac2{d-1}\bar \nabla_c \bar \nabla^a {\sf E}_a^{(1)}
+2 \bar P^a{}_c {\sf E}_a^{(1)} 
-\tfrac32 \bar J {\sf E}_c^{(1)}
\\[1mm]
+
(5H^2-2 P_{\hat n\hat n}) 
{\sf E}_c^{(1)}
-(\bar\nabla^a H)\bar F_{ac}
-\tfrac{3}{d-7} 
\boxast {\sf E}_c^{(1)}
\end{multline*}
defines a section of $T^*\Sigma\otimes {\mathcal V}\Sigma[-3]$. 
\medskip
When $d=7$ and $\nabla^A$ is subject to ${\sf E}^{(1)}=0$, 
$$
{\sf E}^{(3_7)}_c(g,A):= 
\hat n^a \hat n^b (\nabla_{\hat n} \nabla_a F_{bc})|_\Sigma
+\tfrac52H(\check {\sf E}_c^{(2)}
+
\bj_c)
-(\bar\nabla^a H)\bar F_{ac}
$$
defines a section of $T^*\Sigma\otimes {\mathcal V}\Sigma[-3]$.

 \end{proposition}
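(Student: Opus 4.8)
The plan is to establish conformal invariance of the two displayed operators by computing their linearized conformal variations and showing these vanish (for ${\sf E}^{(3)}$ at weight $-3$ in dimension $d\neq 7$) or vanish modulo the constraint ${\sf E}^{(1)}=0$ (for ${\sf E}^{(3_7)}$ in $d=7$). This is exactly the strategy used for ${\sf E}^{(2)}$ and ${\sf E}^{(2_5)}$ above, and the ingredients are all already assembled: the connection variations~\nn{deltanablas}, the mean curvature variations~\nn{varyH}, \nn{varyH1}, the Schouten-type variations~\nn{Pnnvary}--\nn{varyJdot}, the current variation~\nn{prairie_dog1}, the variation properties of the $\boxast$ operators~\nn{varybox7}, and the Poincar\'e--Einstein identities of Lemmas~\ref{thelemma}, \ref{69}, \ref{gradF}, \ref{d666}. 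The key point is that since ${\sf E}^{(3)}$ and ${\sf E}^{(3_7)}$ are built naturally from $\nabla^A$, $F^A$, $R$, $g$ and $\sigma$ as stipulated, Branson's criterion (cited earlier) guarantees that verifying $\delta_{-3}{\sf E}^{(3)}=0$ suffices to conclude it is a genuine weight $-3$ density-valued section of $T^*\Sigma\otimes{\mathcal V}\Sigma$.

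The first step is to rewrite the leading term $\hat n^a\hat n^b([\nabla_{\hat n}+5H]\nabla_aF_{bc})|_\Sigma$ in a form whose variation is tractable: using $\hat n^a\hat n^b\stackrel\Sigma=g^{ab}-\bar g^{ab}$ to trade the normal projectors for differences of ambient and boundary metrics (as done throughout Section~\ref{obstruct}), and commuting derivatives past curvatures via the Ricci identity, one expresses this leading piece in terms of $\nabla^a\nabla^bF_{ab}$ (which vanishes by the Noether identity modulo curvature terms), $\bar\Delta$ acting on ${\sf E}^{(1)}$ and $\bar F$, the boundary current $\bj$, and lower-transverse-order curvature contractions. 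Next I would compute $\delta$ of each term. The variation of ${\sf E}^{(1)}_a=F_{\hat n a}|_\Sigma$ is governed by $\delta\hat n=\bar\varpi\hat n$ together with the fact that $F^A$ is conformally weightless, so $\delta_{-1}{\sf E}^{(1)}_a$ picks up only the term from varying $\hat n$ — yielding $\delta_{-1}{\sf E}^{(1)}_a=0$ is false in general; rather ${\sf E}^{(1)}$ transforms with an inhomogeneity proportional to $\bar F$ contracted with $\bar\Upsilon$. The terms $\tfrac2{d-1}\bar\nabla_c\bar\nabla^a{\sf E}^{(1)}_a$, $2\bar P^a{}_c{\sf E}^{(1)}_a$, $-\tfrac32\bar J{\sf E}^{(1)}_c$, $-\tfrac3{d-7}\boxast{\sf E}^{(1)}_c$ are precisely the combination appearing in Proposition~\ref{walnut} and its variant $\boxast_7$ in~\nn{varybox7}: the whole point of including $\boxast$ is that its conformal variation is a multiple of $(d-7)(\bar\nabla_{\bar\Upsilon}V_c-\tfrac23\bar\Upsilon^a\bar\nabla_cV_a)$, so the factor $\tfrac3{d-7}$ is engineered to cancel the inhomogeneity coming from $\delta$ of the derivative terms acting on ${\sf E}^{(1)}$. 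The remaining curvature corrections $(5H^2-2P_{\hat n\hat n}){\sf E}^{(1)}_c$ and $-(\bar\nabla^aH)\bar F_{ac}$ are then tuned, using~\nn{varyH}, \nn{Pnnvary}, \nn{prairie_dog1}, and the $d=6$-type identities of Lemma~\ref{gradF}, to kill the residual terms involving $\Upsilon_{\hat n}$, $\hat n^a\nabla_{\hat n}\Upsilon_a$, and $\bar\nabla_a\Upsilon_{\hat n}$ produced by varying the leading $\nabla_{\hat n}\nabla_aF_{bc}$ piece and the mean-curvature coefficients $5H$. For ${\sf E}^{(3_7)}$ in $d=7$, the $\boxast$ term drops out (its would-be coefficient is singular, and it is absent because its variation is $(d-7)\times(\cdots)$ which vanishes at $d=7$), and one instead uses the constraint ${\sf E}^{(1)}=0$ — equivalently $F_{\hat n a}\stackrel\Sigma=0$ and, via Lemma~\ref{first}, control of $\nabla_{\hat n}F_{nb}$ — to discard precisely those inhomogeneous terms that cannot otherwise be absorbed; the surviving combination $\tfrac52H(\check{\sf E}^{(2)}_c+\bj_c)-(\bar\nabla^aH)\bar F_{ac}$ is then shown to have vanishing $\delta_{-3}$ by the same bookkeeping, now using~\nn{prairie_dog1} for $\delta_{-2}\bj_a$ and the invariance of $\check{\sf E}^{(2)}$ already established.

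The main obstacle will be the sheer volume of the variational bookkeeping: there are many inhomogeneous terms — those proportional to $\Upsilon_{\hat n}$, $\bar\Upsilon_a$, $\bar\nabla_a\Upsilon_{\hat n}$, $\hat n^a\hat n^b\nabla_a\Upsilon_b$, $\hat n^a\hat n^b\hat n^c\nabla_a\nabla_b\Upsilon_c$, and their derivatives — and each must be tracked carefully through the rewriting of $\hat n^a\hat n^b\nabla_{\hat n}\nabla_aF_{bc}$ in boundary-intrinsic terms. Organizing this so that the cancellations are transparent (grouping by the jet-order of $\Upsilon$, exploiting that $\delta$ commutes with $\bar\nabla$ up to explicit $\bar\Upsilon$ corrections, and using that all PE identities have been pre-packaged in Lemmas~\ref{thelemma} and~\ref{69}) is the delicate part; the conceptual content — that the coefficients are forced by demanding the $\boxast$-type variation cancels the derivative-of-${\sf E}^{(1)}$ variation, with the remaining curvature terms fixed order by order — is straightforward. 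Since the excerpt elsewhere suppresses such computations (\emph{``that computation is somewhat tedious''}), I would present the structural argument in full and relegate the explicit term-matching to a verification, checking it against the known scalar analog $\check\delta^{(3)}$ of Proposition~\ref{wearenotnormal} and against the $d=7$ obstruction current of Theorem~\ref{obsts} as consistency tests.
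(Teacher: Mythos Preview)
Your overall strategy---verify conformal invariance by showing $\delta_{-3}$ vanishes term by term, invoking Branson's criterion---is exactly the paper's. But there are two concrete errors in your plan that would derail the bookkeeping.

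First, you assert that $\delta_{-1}{\sf E}^{(1)}_a=0$ ``is false in general'' and that ${\sf E}^{(1)}$ picks up an inhomogeneity proportional to $\bar F_{\bar\Upsilon a}$. This is wrong: $F_{ab}$ has conformal weight zero and $\hat n^b$ has weight $-1$, so ${\sf E}^{(1)}_a=\hat n^bF_{ba}|_\Sigma$ is genuinely a weight $-1$ density and $\delta_{-1}{\sf E}^{(1)}_a=0$ exactly. This matters because the variation formula~\nn{varybox7} for $\boxast_7$ is stated under the hypothesis $\delta_{-1}V=0$; with $V={\sf E}^{(1)}$ that hypothesis holds, and the coefficient $\tfrac{3}{d-7}$ is tuned to cancel the honest $(d-7)\big(\bar\nabla_{\bar\Upsilon}{\sf E}^{(1)}_c-\tfrac23\bar\Upsilon^a\bar\nabla_c{\sf E}^{(1)}_a\big)$ produced by $\boxast_7{\sf E}^{(1)}$---not a spurious $\bar F$-term. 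Second, for ${\sf E}^{(3_7)}$ you invoke Equation~\nn{first} (that is, Lemma~\ref{gradF}) to ``control $\nabla_{\hat n}F_{nb}$'' from ${\sf E}^{(1)}=0$. But Lemma~\ref{gradF} assumes $\nabla^A$ is asymptotically Yang--Mills, which Proposition~\ref{third} does \emph{not}; only ${\sf E}^{(1)}=0$ is imposed in $d=7$, and the invariance must hold for arbitrary smooth connections under that constraint alone.

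The paper also proceeds more directly than your proposed rewriting. Rather than first expressing $\hat n^a\hat n^b(\nabla_{\hat n}\nabla_aF_{bc})|_\Sigma$ in boundary-intrinsic form via $\hat n^a\hat n^b\stackrel\Sigma=g^{ab}-\bar g^{ab}$, it computes $\delta_{-3}$ of this leading term directly using only~\nn{deltanablas} and $\delta_1\hat n=0$, obtaining an explicit seven-term expression; each piece is then matched against the variations of the remaining terms via~\nn{varyH}, \nn{varyH1}, \nn{Pnnvary}, \nn{prairie_dog1} and~\nn{varybox7}. Your detour is unnecessary and only lengthens the computation you already flag as the main obstacle.
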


\begin{remark}
In dimensions $d\neq 5,7$, the operator~${\sf E}^{(3)}$ has the simpler expression
  \begin{multline}\label{E3}
{\sf E}^{(3)}_c(g,A)= 
\hat n^a \hat n^b (\nabla_{\hat n} \nabla_a F_{bc})|_\Sigma
+5H
(  {\sf E}_c^{(2)}+ \tfrac1{d-5}\hh\bj_c)
\\[1mm]
-
(5H^2+2 P_{\hat n\hat n}) 
{\sf E}_c^{(1)}
-(\bar\nabla^a H)\bar F_{ac}
-\tfrac{3}{d-7} 
\boxast_{7}{\sf E}_c^{(1)}
\, .
\end{multline}
\end{remark}
 
 \begin{proof}[Proof of Proposition~\ref{third}]
The above operators were constructed in the first instance by teaching the symbolic manipulation software FORM~\cite{FORM} both the conformal transformation properties and identities obeyed by Yang--Mills coupled hypersurface invariants, and thereafter fitting coefficients for the most general expression whose leading term is a non-zero multiple of $\hat n^a \hat n^b (\nabla_{\hat n} \nabla_a F_{bc})|_\Sigma$ and 
obeys 
$$
{\sf E}^{(3)}_{\, a}(\Omega^2 g,A)=\Omega^{-3} {\sf E}^{(3)}_{\, a}(g,A)\, .
$$
Once one has the given operators at hand, it is easy to prove the proposition statement by considering their linearized conformal variations. 
(We used the full, rather than linearized, conformal transformations in the symbolic software computation as a check of that work, which by itself also constitutes a proof of the claimed result.)

A short computation establishes that the linearized conformal variation of the leading order term in ${\sf E}^{(3)}$,  when $d\neq 5$,
%
%
is given by
\begin{align*}
\delta_{-3} \big(\hat n^a \hat n^b (\nabla_{\hat n} \nabla_a F_{bc})|_\Sigma\big)
&=
(\bar \nabla^a\Upsilon_{\hat n})\bar F_{ac}-6H \bar F_{\bar \Upsilon c}
-2 \bar \Upsilon^a\bar \nabla_c^{\phantom{(1)}\!\!\!\!}{\sf E}^{(1)}_a
+3 \bar \nabla_{\bar \Upsilon} {\sf E}^{(1)}_c
\\&\hspace{1.7cm}
-2 \hat n^a \hat n^b (\nabla_a \Upsilon_b) {\sf E}^{(1)}_c
+10 H \Upsilon_{\hat n} {\sf E}^{(1)}_c
-\tfrac{5}{d-5}\Upsilon_{\hat n}
\big(
\check {\sf E}^{(2)}_c
+\bj_c \big)\, .
\end{align*}
For  ${\sf E}^{(3_7)}$  one can strike off the third through sixth terms on the right hand side above.

\smallskip

The stated invariance properties now follow almost by inspection if one employs the  linearized conformal variations given in Section~\ref{LCVs}.  For instance, when $d\neq 5$, 
\begin{multline*}
\delta_{-3}\Big(\tfrac{5}{d-5} H \bj_c -(\bar \nabla^a H) \bar F_{ac}\Big) = 
\tfrac5{d-5}\big(
(\delta_{-1} H)
\bj_c + H \delta_{-2} \bj_c \big)-
( \bar\nabla^a \delta_{-1}H-\bar \Upsilon^a H) \bar F_{ac}
\\
=
\tfrac5{d-5} \Upsilon_{\hat n} \bj_c + 6H \bar F_{\bar \Upsilon c} -(\bar \nabla^a\Upsilon_{\hat  n}) \bar F_{ac}\, ,
\hspace{3.45cm}
\end{multline*}
which  yields terms canceling the first, second and last terms on the right hand side of the preceding display.
The remaining cancellations are not more difficult to establish than these ones. We leave the special $d=5,7$ cases to the reader.
 \end{proof}


%
%

%

\begin{remark}
Theorem~\ref{recurse} and Corollary~\ref{forgotten-soap}
established that smoothness of  conformally compact Yang--Mills connections is obstructed at order $d-4$ by the  obstruction current~$\bar k$. This is the underlying reason why we introduced the boundary operator $\check {\sf E}^{(2)}$ with a factor $(d-5)$ multiplying the leading transverse derivative  term $\hat n^a  (\nabla_{\hat n} F_{ab})|_\Sigma$. One might have expected the same behavior for the third order operator ${\sf E}^{(3)}$ involving a factor of~$(d-6)$. However, 
 for the special case of Poincar\'e--Einstein structures, the $d=6$  obstruction current $\bar k$ vanishes; see Theorem~\ref{obsts}. Instead 
we encounter a $d=7$ pole in~${\sf E}^{(3)}$ whose residue $-3\boxast {\sf E}^{(1)}$ is conformally invariant.
In $d=7$ dimensions,  invariance of the boundary operator ${\sf E}^{(3_7)}$ is
conditioned 
on vanishing of ${\sf E}^{(1)}$. The operator ${\sf E}^{(3_7)}$  then probes the term in the connection  of one lower order than that of the obstruction.
 This
phenomenon mirrors exactly that observed in Remark~\ref{3} for the scalar operator $\check \delta^{(3)}$ and 
 has a direct analog in the theory of conformal fundamental forms developed in~\cite{Blitz} for the study of obstructions to conformally compact structures being 
Poincar\'e--Einstein.~$\blacklozenge$
\end{remark}

In $d=6$ dimensions, the operator ${\sf E}^{(3)}$ accesses the Neumann data of a conformally compact Yang--Mills connection. Necessarily---see Section~\ref{FGrescale}---this must be encoded by a divergence-free boundary covector.
\begin{lemma}
 Let  $(M,\cc,\sigma)$ be a dimension $d=6$ Poincar\'e--Einstein structure. Then, if~$\nabla^A$ is a
 conformally compact Yang--Mills connection on ${\mathcal V}M$,
 we have  $$
 \bar \nabla^a {\sf E}^{(3)}_a =0\, .
 $$
 \end{lemma}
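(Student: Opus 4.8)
\textbf{Proof strategy for $\bar\nabla^a {\sf E}^{(3)}_a = 0$ in $d=6$.}
The plan is to exploit the fact that in $d=6$ the operator ${\sf E}^{(3)}$ accesses the Neumann-order coefficient of a conformally compact Yang--Mills connection, and that this coefficient must be divergence-free because it is the leading sub-obstruction datum in a setting where the obstruction current $\bar k$ itself vanishes (Theorem~\ref{obsts}). Concretely, since $\nabla^A$ is conformally compact Yang--Mills we have ${\sf E}^{(1)}={\sf E}^{(2)}=0$ (the latter by Lemma~\ref{warmerShowarma}), so that in $d=6$ the operator collapses to
$$
{\sf E}^{(3)}_c \stackrel\Sigma= \hat n^a\hat n^b (\nabla_{\hat n}\nabla_a F_{bc})|_\Sigma
+5H\big({\sf E}^{(2)}_c+\tfrac1{d-5}\bj_c\big)
-(5H^2+2P_{\hat n\hat n}){\sf E}^{(1)}_c
-(\bar\nabla^a H)\bar F_{ac}
-3\boxast_7 {\sf E}^{(1)}_c\, ,
$$
which at $d=6$, with ${\sf E}^{(1)}={\sf E}^{(2)}=0$, reduces to
$$
{\sf E}^{(3)}_c \stackrel\Sigma= \hat n^a\hat n^b (\nabla_{\hat n}\nabla_a F_{bc})|_\Sigma
+5H\,\bj_c
-(\bar\nabla^a H)\bar F_{ac}\, .
$$

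\textbf{Main computation.}
I would then compute $\bar\nabla^c$ of this expression directly, paralleling the proof of the $d=5$ lemma for $\bar\nabla^a {\sf E}^{(2_5)}_a$. First I use the Noether identity $\nabla^a\nabla^b F_{ab}=0$ together with the symmetry $\nabla_a n_b=\nabla_b n_a$ applied to the conformally compact Yang--Mills condition $\sigma\nabla^a F_{ab}=(d-4)F_{nb}$ to obtain $n_c\nabla_b F^{bc}\stackrel\Sigma=0$, and more generally $\hat n^c\,\hat n^b\nabla_{\hat n}\nabla_b F_{bc}$-type quantities can be reduced. The key rewriting is to convert $\hat n^a\hat n^b\nabla_{\hat n}\nabla_a F_{bc}$ into a divergence-type term plus lower transverse-order pieces via the identity $\hat n^a\hat n^b\stackrel\Sigma=\bar g^{ab}-g^{ab}$, exactly the manoeuvre used throughout Sections~\ref{PEYM} and~\ref{obstruct}. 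Acting with $\bar\nabla^c=\nabla^{\top c}$ and commuting it past $\nabla_{\hat n}$ (picking up curvature terms handled by Equations~\eqref{Wn}, \eqref{nablan}, \eqref{PP}, \eqref{PPP}) should produce a combination of $\bar\nabla^c\bar\nabla^a F_{ac}$ (which vanishes by antisymmetry), commutator terms $[\bar F^{ac},\bar F_{ac}]$ type (vanishing), and geometric terms in $H$, $P_{\hat n\hat n}$, $\bar P$ that must cancel among themselves. At $d=6$ the explicit value $\rho\stackrel\Sigma=-H$, $\nabla_{\hat n}\rho\stackrel\Sigma=P_{\hat n\hat n}$ from Lemma~\ref{69}, together with the Poincar\'e--Einstein hypersurface identities of Lemma~\ref{thelemma} (notably $\bar\nabla_a H\stackrel\Sigma=-P^\top_{\hat n a}$ and $\bar\nabla^a\bj_a=0$), should force the remaining terms to cancel identically, giving $\bar\nabla^a {\sf E}^{(3)}_a=0$.

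\textbf{Alternative, cleaner route.}
Rather than grinding through the commutators, the slicker argument is the holographic one: in $d=6$, by the evenness results of Section~\ref{connexp}, the energy $\overline{\operatorname{En}}[\bar A]$ vanishes and the obstruction current $\bar k=0$, and a conformally compact Yang--Mills connection solves $j[A]={\mathcal O}(\sigma^{d-2})$ after incorporating Neumann data. The quantity ${\sf E}^{(3)}_c$ then equals (up to normalization) the coefficient $k^{(2)}_c$ at order $\sigma^{d-2}$ appearing in $j[A]=\sigma^{d-2}k^{(2)}+\cdots$, and the Bianchi/Noether identity $\sigma\nabla^a j_a\stackrel g=(d-3)j_n$ of Equation~\eqref{BI}, expanded to this order, yields $\bar\nabla^a k^{(2)}_a\stackrel\Sigma=0$ — exactly the phenomenon that produced $\bar\nabla^i\bar k_i=0$ in Section~\ref{connexp}. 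I would present this as the primary proof and mention the direct computation as a consistency check.

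\textbf{Expected main obstacle.}
The hard part will be pinning down precisely which normalization of ${\sf E}^{(3)}_c$ coincides with the order-$\sigma^{d-2}$ coefficient of $j[A]$, and verifying that the lower-transverse-order terms in ${\sf E}^{(3)}$ (the $5H\bj_c$ and $-(\bar\nabla^a H)\bar F_{ac}$ pieces) are exactly the ones dictated by the recursion of Proposition~\ref{predict} at $\ell=d-4=2$ when specialized to a conformally compact Yang--Mills connection. Once that identification is secure, divergence-freeness is immediate from Equation~\eqref{BI}; absent it, one is forced into the full commutator computation, which is lengthy but mechanical given Lemmas~\ref{69}, \ref{thelemma}, \ref{gradF} and~\ref{d666}.
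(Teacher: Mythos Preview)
Your main computation sketch is on the right track and is essentially what the paper does, though the paper organizes it from the other end: it first extracts the consequence $\nabla^a F_{an}=0$ of the exact Yang--Mills condition (contract $\sigma\nabla^a F_{ab}=2F_{nb}$ with $n^b$ and divide by $\sigma$), then applies $\nabla_n^2$ to this identity and expands every term using Lemmas~\ref{thelemma}, \ref{69}, \ref{gradF}, \ref{d666}. After a lengthy term-by-term development this yields $0=\nabla^\top_c(\hat n_a\hat n_b\nabla_{\hat n}\nabla^a F^{bc})+6(\bar\nabla_c H)\bj^c$, which is precisely $\bar\nabla^a{\sf E}^{(3)}_a$ once you check that $\bar\nabla^c\big(5H\bj_c-(\bar\nabla^a H)\bar F_{ac}\big)=6(\bar\nabla^c H)\bj_c$. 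Starting from a known zero makes the bookkeeping slightly cleaner than starting from $\bar\nabla^c{\sf E}^{(3)}_c$, but the two directions are the same calculation.

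Your alternative holographic route, as stated, has a genuine conceptual gap. For a conformally compact Yang--Mills connection the current $j[A]$ vanishes \emph{identically}, so there is no coefficient $k^{(2)}$ at order $\sigma^{d-2}$ to read off; your proposed identification ${\sf E}^{(3)}_c\propto k^{(2)}_c$ would force ${\sf E}^{(3)}=0$, which is false since ${\sf E}^{(3)}$ carries the undetermined Neumann data. What ${\sf E}^{(3)}$ actually extracts on a solution is the connection coefficient $A^{(d-3)}=A^{(3)}$ in the canonical expansion (this is Proposition~\ref{boundops}), not a coefficient in the expansion of $j$. There \emph{is} a valid holographic shortcut in the spirit you are after: in the Fefferman--Graham gauge ${\sf E}^{(3)}=3!\,A^{(3)}$, and the Gau\ss\ law~\eqref{Ga} at order $r^{d-3}$ forces $\bar\nabla^i A^{(3)}_i=0$ (Equation~\eqref{FGdiv}); since ${\sf E}^{(3)}$ has weight $3-d=-3$ in $d=6$, the divergence $\bar\nabla^a{\sf E}^{(3)}_a$ is conformally invariant and the vanishing transfers to every scale. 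But note the mechanism is the Gau\ss\ constraint on the Neumann datum, not the Bianchi identity applied to a nonzero tail of $j$, and this route depends on results (Section~\ref{connexp}, Proposition~\ref{boundops}) that in the paper come \emph{after} the lemma; the paper's direct proof is logically prior.
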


\begin{proof}
Firstly observe that the asymptotically conformally compact Yang--Mills condition
\begin{equation}\label{falafel}
\sigma \nabla^a F_{ab} - 2 F_{nb}=
0\, ,
\end{equation}
here implies that 
 $${\sf E}^{(1)}=0= {\sf E}^{(2)}\, .$$
The first of the above equalities is a  direct consequence of the definition $ {\sf E}^{(1)}_b:=\hat n^a F_{ab}|_\Sigma$, while the second
was established in Lemma~\ref{warmerShowarma}.

\smallskip
Next we note that contracting~\nn{falafel}
with $n^b$ gives
$$
\sigma n^b \nabla^a F_{ab} =
0\, ,
$$
and thus, away from $\Sigma$---and in turn by smoothness of $A$ everywhere in $M$---we have
\begin{equation}\label{thediv}
 n^b \nabla^a F_{ab} =\nabla^a F_{an}=
 0\, .
\end{equation}
The proof strategy is now  to successively apply powers of $\nabla_n $ to the above display. Indeed
\begin{multline*}
0=\nabla_n^2 \nabla^a F_{an} = \nabla^a ( \nabla_n^2  F_{an})-[\nabla^a ,\nabla_n^2] F_{an}\\
=\nabla^a\big( (n^d \nabla_n^2  +[\nabla_n^2, n^d ]) F_{ad}\big)
-(\nabla^a n^b) \nabla_b \nabla_n F_{an}
\\
 -[F^a{}_n,\nabla_n F_{an} ]- R^a{}_{na}{}^d \nabla_n F_{dn}
 - \nabla_n ([\nabla^a ,\nabla_n] F_{an})\, .
\end{multline*}
We  proceed computing along $\Sigma$. For that we may repeatedly use $0={\sf E}^{(1)}_a = F_{na}|_\Sigma$ as well as Equation~\ref{first} to give $\nabla_n F_{na}|_\Sigma= \bj_a$.
Hence
\begin{multline*}
0\stackrel\Sigma = \nabla^a\big( n^c n^d \nabla_n\nabla_c F_{ad} + n^d (\nabla_n n^c) \nabla_c F_{ad}\big)
+\nabla^a\big((\nabla_n n^d) \nabla_n F_{ad}+ \nabla_{n}((\nabla_n n^d) F_{ad})\big)
\\
-H \nabla^a \nabla_n F_{an}
- 4(\bar \nabla^d H) \hh \bj _d
-\nabla_{\hat n}\big(
R^a{}_{na}{}^d F_{dn}+
(\nabla^a n^b)\nabla_b F_{an}\big)\, .
\end{multline*}
In the above we also used Equation~\nn{mainardi}.
Computing further along the same lines, but now using Equations
~\nn{nablan},~\nn{nablann},~\nn{nablanF} and~\nn{when},  gives
\begin{multline*}
0\stackrel\Sigma=
 \nabla_a^\top( \hat n^c \hat n^d \nabla_{\hat n} \nabla_c F^a{}_d)
 +\hat n^a \nabla_{\hat n} (n^c n^d \nabla_n \nabla_c F_{ad})
 + \hat n^d\big(\tfrac12 (\nabla^a \nabla^c n^2) \nabla_c F_{ad} + H \hat n^c \nabla^a \nabla_c F_{ad}  \big)\\
+2 (\nabla^a \nabla_n n^d) (\hat n_a \bj_d - \hat n_d \bj_a -2H \bar F_{ad})
+2H\hat n^d \nabla^a \nabla_n F_{ad}
\hspace{2.3cm}
\\
+ (\nabla^a\nabla_{\hat n} \nabla_n n^d ) \bar  F_{ad} 
+2(\nabla_n^2 n^d)\, \bj_d 
+H\bar \nabla^a\bj _a 
-H \hat n^a \nabla_{\hat n} \nabla_n F_{an}
\\
- 8(\bar \nabla^d H) \hh \bj _d
-(\nabla_{\hat n}\nabla^a n^b) \nabla_b F_{an}
-H\nabla_{\hat n} \nabla^a F_{an}\, .\hspace{2cm}
\end{multline*}
We now develop each non-trivial term above 
(the key identities underlying the following list of results can be found in Section~\ref{PEM}, and in particular Subsection~\ref{PEYM})
\begin{eqnarray*}
\hat n^a \nabla_{\hat n} (n^c n^d \nabla_n \nabla_c F_{ad})
\!\!&\stackrel\Sigma =&
\hat n^a \hat n^c (\nabla_{\hat n}  n^d) \nabla_n \nabla_c F_{ad}
\stackrel\Sigma =
H\hat n^a \hat n^c \hat n^d\nabla_n \nabla_c F_{ad}=0\, ,
\\[2mm]
\tfrac12 \hat n^d (\nabla^a \nabla^c n^2) \nabla_c F_{ad} 
\!\!&  \stackrel\Sigma=&
 -\hat n^d (\nabla^a \nabla^c (\rho
 \sigma) )\nabla_c F_{ad} 
  \stackrel\Sigma=
  H\hat n^d  (\nabla^a n^c) \nabla_c F_{ad} 
  -\hat n^d (\nabla^a \rho) \nabla_{\hat n} F_{ad}\\
\!\! & \stackrel\Sigma=&
 2H^2 \hat n^d \bj_d+\hat n^d(\bar \nabla^a H)  \nabla_{\hat n} F_{ad}
  \stackrel\Sigma=-(\bar \nabla^a H)\bj_a\,,
  \\[2mm]
H  \hat n^d  \hat n^c \nabla^a \nabla_c F_{ad}
& \stackrel\Sigma=&
H  \hat n^d  \hat n^c \nabla_a^\top \nabla_c F^a{}_{d} 
 \stackrel\Sigma=
 H  \hat n^d  \hat n^c \nabla_a^\top \nabla^\top_c \bar F^a{}_d 
 \!+\! 
 H  \hat n^d  \hat n^c \nabla_a^\top (\hat n_c \nabla_{\hat n} F^a{}_d )\\
\!\!& \stackrel\Sigma=&
 -H^2 \hat n^d \nabla_a^\top  \bar F^a{}_d 
 +H \hat n_d \nabla_a^\top \nabla_{\hat n} F^{ad}
 =-H \bar \nabla_a \bj^a=0\, ,
\\[2mm]
2 (\nabla_{\hat n}\nabla_n n^d) \bj_d \!\!&\stackrel\Sigma=&2 (\bar \nabla^d H)\bj_d
\, ,
\\[2mm]
 -2 \bj^a \hat n_d \nabla_a^\top \nabla_n n^d& \stackrel\Sigma=&- 2 (\bar \nabla^d H)\bj_d\, ,
 \\[2mm]
  2H\hat n^d \nabla^a \nabla_n F_{ad}
\!\!& \stackrel\Sigma=&
 2H\hat n_d \nabla_a^\top \nabla_n F^{ad}
 \stackrel\Sigma=-2H\bar \nabla^a \bj_a=0\, ,
 \\[2mm]
 -4 H \bar F_{ad}  \nabla^a \nabla_n n^d \!\!&\stackrel\Sigma=&
 0\, ,
 \\[2mm]
\bar F^{ad} \nabla_a \nabla_{\hat n} \nabla_n n_d 
\!\!&\stackrel\Sigma=&
0\, ,
\\[2mm]
2\bar j^d \nabla_n^2 n_d &\stackrel\Sigma=
&
2(\bar \nabla^d H)\bj_d\, ,
\\[2mm]
H \bar \nabla^a \bj_a 
\!\!&\stackrel\Sigma=& 0\, ,
\end{eqnarray*}
\begin{eqnarray*}
-H \hat n^a \nabla_{\hat n} \nabla_n F_{an}&\stackrel\Sigma=&
-H[n^a ,\nabla_{\hat n} \nabla_n] F_{an}
\stackrel\Sigma=H ^2 \hat n^a \nabla_{\hat n } F_{an}
+ H \nabla_{\hat n} \big(\nabla^a(-\rho\sigma) F_{an}\big)\!\stackrel\Sigma=\!0\, ,
\\[2mm]
-(\nabla_{\hat n} \nabla^a n^b)\nabla_b F_{an}
\!\!&\stackrel\Sigma=&
(P^{ab}+g^{ab} P_{\hat n\hat n})\nabla_b F_{an}
\stackrel\Sigma=
(P^{ab}+g^{ab} P_{\hat n\hat n}) \hat n_b \nabla_{\hat n} F_{an}
\stackrel\Sigma=(\bar \nabla^d H)\bj_d\, ,
\\[2mm]
-H \nabla_{\hat n} \nabla^a F_{an}
\!\!&\stackrel\Sigma=&-\frac12 H\nabla_{\hat n} \nabla^a(\sigma \nabla^b F_{ba})  
\stackrel\Sigma=
-\frac12 H \nabla_{\hat n} (n^a \nabla^b F_{ba})
\stackrel\Sigma=
0\, .
\end{eqnarray*}
Orchestrating gives
$$
0=
\nabla^\top_c(\hat n_a \hat n_b \nabla_{\hat n} \nabla^a F^{bc})
+6(\bar \nabla_cH)\bj^c
=
\bar \nabla^a {\sf E}^{(3)}_a \, .
$$
The second equality above used Equation~\nn{E3} with 
${\sf E}^{(1)}$ and ${\sf E}^{(2)}$ set to zero, as well as the (boundary) Noether identity.
\end{proof}

We also stipulated at the beginning of this section that the boundary  operators ${\sf E}^{(3)}$ and ${\sf E}^{(3_7)}$ obey vanishing properties when evaluated on conformally compact Yang--Mills connections in dimensions $d\geq 7$. Such is given below.
\begin{lemma}\label{CFF}
 Let  $(M,\cc,\sigma)$ be a dimension $d\geq7$ Poincar\'e--Einstein structure. If~$\nabla^A$ is a
 conformally compact Yang--Mills connection on ${\mathcal V}M$  then  $$
  \left\{
  \begin{array}{l}{\sf E}^{(3_7)}_a =0\,,\qquad d=7 \, ,
  \\
\,   {\sf E}^{(3)}_a \, =0\, ,\qquad\,    d\geq8 \, .
  \end{array}
  \right.
 $$
 \end{lemma}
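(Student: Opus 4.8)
The plan is to establish the two vanishing statements by applying successive normal derivatives to the divergence form of the conformally compact Yang--Mills equation, exactly in the spirit of the proof of Lemma~\ref{warmerShowarma} and of the $\bar\nabla^a{\sf E}^{(3)}_a=0$ result in $d=6$, but now tracking the full covector rather than only its divergence. First I would record that on a conformally compact Yang--Mills connection one has ${\sf E}^{(1)}=0$ (by definition of $j$ and Equation~\nn{CCYM}) and hence, by Lemma~\ref{warmerShowarma}, also ${\sf E}^{(2)}=0$ in dimensions $d\geq 6$; moreover Equation~\nn{first} of Lemma~\ref{gradF} gives $\nabla_{\hat n}F_{na}\stackrel\Sigma=\tfrac1{d-5}\bj_a$ and Lemma~\ref{d6} controls $\nabla_{\hat n}\nabla^a F_{ab}$. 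These are the input identities. The core computation is then to evaluate the leading term $\hat n^a\hat n^b(\nabla_{\hat n}\nabla_a F_{bc})|_\Sigma$ appearing in ${\sf E}^{(3)}_c$ (respectively ${\sf E}^{(3_7)}_c$) by converting the pair of unit conormals into $g^{ab}-\bar g^{ab}$, producing a term $\nabla_{\hat n}\nabla^a F_{ac}$ plus tangential-derivative and curvature-commutator corrections, and then using that $j[A]=0$ forces $\sigma\nabla^a F_{ac}=(d-4)F_{nc}$ identically on $M$, so that successive $\nabla_n$-derivatives of this relation (together with the Bianchi identity in the form $\nabla^a\nabla^b F_{ab}=0$ and the asymptotically Poincar\'e--Einstein identities of Lemma~\ref{69}) express $\nabla_{\hat n}\nabla^a F_{ac}$ in terms of $\bj$, $\bar F$, $P_{\hat n\hat n}$, $H$, and $\bar\nabla\bj$.

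Concretely, I would take $\nabla_n$ of $\sigma\nabla^a F_{ac}=(d-4)F_{nc}$ twice to get, along $\Sigma$, an expression for $\nabla_{\hat n}^2 F_{nc}$ and $\nabla_{\hat n}\nabla^a F_{ac}$ in terms of lower transverse-degree data; Lemma~\ref{d6} supplies precisely $\nabla_{\hat n}\nabla^a F_{ab}\stackrel\Sigma=-(d-4)\big(P_{\hat n}{}^c\bar F_{cb}+\tfrac2{d-5}H\bj_b\big)$, and $\nabla_n^2 F_{ab}$ is controlled by Equation~\nn{d66} (valid in $d\geq 7$, which is exactly where the Lemma is claimed). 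Substituting all of these into the defining formula for ${\sf E}^{(3)}_c$ --- including the curvature corrections from the commutator $[\nabla_{\hat n},\nabla^a]$ acting on $F$, which bring in $\Ric_{\hat n}{}^a$, $R_{\hat n}{}^a{}_b{}^c$ and hence (via Equations~\nn{Schouten}, \nn{Weyl}, \nn{Wn}) the Schouten tensor along $\Sigma$ --- one should see the mean-curvature terms $5H(\ldots)$, the $(5H^2+2P_{\hat n\hat n}){\sf E}^{(1)}$ term (which vanishes since ${\sf E}^{(1)}=0$), the $-(\bar\nabla^a H)\bar F_{ac}$ term, and the $-\tfrac3{d-7}\boxast_7{\sf E}^{(1)}$ term (also vanishing) all conspire to cancel against the contributions coming from $\nabla_{\hat n}\nabla^a F_{ac}$ and the various $\bj$-terms. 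For $d=7$ the $\tfrac3{d-7}$ pole disappears precisely because ${\sf E}^{(1)}=0$ is imposed, and the remaining terms in ${\sf E}^{(3_7)}_c$ are exactly those that survive; the computation is the same but one must be careful that Equation~\nn{d66} and Lemma~\ref{gradF} were stated for $d\geq 7$, so $d=7$ is the marginal allowed case.

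I would organise the write-up as two cases: $d\geq 8$ for ${\sf E}^{(3)}$, and $d=7$ for ${\sf E}^{(3_7)}$, noting that the manipulation of conormal pairs and the use of Lemmas~\ref{69}, \ref{gradF}, \ref{d666} is uniform. The main obstacle I anticipate is purely bookkeeping: assembling the $\nabla_{\hat n}^2$-of-curvature identity correctly with all the $H$, $P_{\hat n\hat n}$, $\bar P_{ab}$ and $\hat n_{[a}\bj_{b]}$ terms, and verifying that every non-$\bj$, non-$\bar F$-times-curvature term cancels so that the result is literally zero rather than some non-trivial conformal invariant. A useful consistency check, which I would mention, is that the divergence $\bar\nabla^a{\sf E}^{(3)}_a$ was already shown to vanish on solutions in $d=6$; the same divergence identity in higher $d$ should be recoverable as a corollary of ${\sf E}^{(3)}_a=0$, and conversely any algebra error would likely spoil that. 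I would also remark that, alternatively, one can deduce the vanishing from the canonical even connection expansion of Section~\ref{connexp}: on a Poincar\'e--Einstein background $A^{\rm ev}$ solves $j[A^{\rm ev}]={\mathcal O}(r^{d-4})$ with $A^{(1)}=0$, so the transverse-order-three data $\hat n^a\hat n^b\nabla_{\hat n}\nabla_a F_{bc}$ is, after adding the lower-order invariant completions, proportional to $j_c$ at order $r$, which vanishes for $d\geq 7$; this gives a quick second proof and I would include it as a remark.
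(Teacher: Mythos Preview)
Your approach is correct and follows the same overall strategy as the paper: use ${\sf E}^{(1)}={\sf E}^{(2)}=0$ from Lemma~\ref{warmerShowarma} to simplify the defining expressions, then evaluate the remaining leading term $\hat n^a\hat n^b(\nabla_{\hat n}\nabla_a F_{bc})|_\Sigma$ using the Poincar\'e--Einstein hypersurface identities.

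However, you are significantly overcomplicating the execution. The paper's proof is essentially a one-line application of an identity already established in Lemma~\ref{d666}: Equation~\nn{d66againagain} reads
\[
\hat n^a\hat n^b\,\nabla_{\hat n}\nabla_a F_{bc}\ \stackrel\Sigma=\ P_{\hat n}{}^a\bar F_{ca}-\tfrac{5}{d-5}\,H\,\bj_c\,,
\]
which is \emph{exactly} the leading term you propose to compute from scratch. Once ${\sf E}^{(1)}={\sf E}^{(2)}=0$ are inserted into Equation~\nn{E3} (or the definition of ${\sf E}^{(3_7)}$), only three terms survive; substituting the display above and then using $P_{\hat n a}^\top\stackrel\Sigma=-\bar\nabla_a H$ from Equation~\nn{mainardi} makes everything cancel immediately. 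So there is no need to re-run the ``convert conormal pairs to $g^{ab}-\bar g^{ab}$'' maneuver, take two $\nabla_n$-derivatives of the Yang--Mills equation, or track Ricci/Weyl commutator corrections: all of that labour has already been absorbed into Lemma~\ref{d666}. Your proposed consistency check via the $d=6$ divergence identity and the alternative argument via the even connection expansion are reasonable remarks, but for the proof itself you should simply cite~\nn{d66againagain} and~\nn{mainardi}.
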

 \begin{proof}
 From Lemma~\ref{warmerShowarma} we have
 $
 {\sf E}^{(2)}=0= {\sf E}^{(1)}
 $ so (for $d\geq 8$)
 $$
  {\sf E}^{(3)}_a
 = 
\hat n^a \hat n^b (\nabla_{\hat n} \nabla_a F_{bc})|_\Sigma
+\tfrac5{d-5}\big(H
\bj_c
+(1-\tfrac d5)(\bar\nabla^a H)\bar F_{ac}\big)
\, .
 $$
 The same expression applies for ${\sf E}^{(3_7)}_a$ when $d=7$.
 The result now follows by applying Equation~\nn{d66again} (remembering that $P_{\hat n a}^\top\stackrel\Sigma= -\bar \nabla_a H$; see Equation~\nn{mainardi}).
 \end{proof}

\bigskip
The final case we consider is the transverse order four boundary operator. 
As it takes four transverse derivatives of the potential, this operator will probe Neumann data
when  $d=7$ and is required to vanish on conformally compact Yang--Mills connections in dimensions~$d\geq8$. Because the critical dimension $d=7$ is odd, we expect it to exhibit special behavior relative to its ${\sf E}^{(3)}$ counterpart, but closer to its ${\sf E}^{(2)}$ companion.

\begin{proposition}\label{quartic}
 Let $(M,\cc,\sigma)$ be a dimension $d\neq 4,5,7,9$ Poincar\'e--Einstein structure
equipped with a smooth connection $\nabla^A$ on ${\mathcal V}M$. Then  
\begin{align}
\nonumber
\check{\sf E}^{(4)}_d(g,A)&:=
(d-7)\Big[
\hat n^a \hat n^b \hat n^c (\nabla_{\hat n} \nabla_a\nabla_b F_{cd})|_\Sigma
\color{black}+9H \big({\sf E}^{(3)}_d+\tfrac{3}{d-7}  \boxast_7 {\sf E}_d^{(1)}\big)\color{black}
\\
&\qquad\qquad
\color{black}
-(7P_{\hat n\hat n}
+\tfrac{45}2 H^2) \big({\sf E}^{(2)}_d+\tfrac1{d-5} \bj_d\big)
+(\bar\nabla^a H)\big(3\bar \nabla_d  {\sf E}^{(1)}_a -4\bar \nabla_a  {\sf E}^{(1)}_d\big)
\color{black}
\nonumber\\[1mm]
&\qquad\qquad
+\big((d+\color{black}14\color{black}) H^3 +2(d+6) H P_{\hat n\hat n} H
-2(\bar\Delta H)-2H\bar J-2\nabla_{\hat n}J
\big){\sf E}^{(1)}_d
\nonumber\\[1mm]
&\qquad\qquad
\color{black}
+\big((\bar \nabla^a P_{\hat n\hat n})
+9H (\bar \nabla^a H)
\big)
\bar 
F_{ad}
\color{black}
\label{E4}\\
&\qquad\qquad
\color{black}
+\tfrac{8}{d-4} \big[{\sf E}^{(1)}_d,
\bar \nabla^a{\sf E}^{(1)}_a\big]
+\tfrac{12}{(d-5)(d-9)}
[ \bar F_{ad}, \bj^a]
\color{black}
\nonumber\\
&\qquad\qquad
-\tfrac{6}{d-9}\Big(
\color{black}
 \boxast_9\! \big({\sf E}^{(2)}_d
  +\tfrac1{d-5}\bj_d\big)
+ \bar P^{ab} \bar \nabla_a \bar F_{bd}
+\tfrac12(\bar \nabla^a \bar J) \bar F_{ad}
+\tfrac12 \bar C_{abd} \bar F^{ab}
\Big) \Big]
\nonumber\\[2mm]\nonumber
& 
\color{black}
\phantom{:=}
+\tfrac{12}{d-9} \hat k_d \, ,
\end{align}
where
\begin{eqnarray*}
 \hat k_b&:=&\quad
 \tfrac12\bar \nabla^a
\Big(
 \bar \nabla_{[a} {\bj}_{b]} 
 - 4\bar P_{[a}{}^{c} \bar F_{b]c}
-\bar J \bar F_{ab}
\Big)
+\tfrac14 [\bj^a,\bar F_{ab}] \, ,
\end{eqnarray*}
defines a section of $T^*\Sigma\otimes {\mathcal V}\Sigma[-4]$.
\end{proposition}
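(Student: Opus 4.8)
The plan is to prove the conformal invariance of $\check{\sf E}^{(4)}_d$ exactly as the analogous statements for $\check{\sf E}^{(2)}$ and ${\sf E}^{(3)}$ were handled, namely by computing its linearized conformal variation $\delta_{-4}\check{\sf E}^{(4)}_d$ and showing it vanishes identically, which by the criterion of Branson (adapted to boundary embeddings, as recalled after Equation~\nn{deltanablas}) is equivalent to $\check{\sf E}^{(4)}_d$ transforming as a weight $-4$ density along $\Sigma$. First I would establish the leading-term variation: a direct but lengthy computation, using Equation~\nn{deltanablas} for the two covariant derivatives, Equations~\nn{varyH}--\nn{varyJdot}, \nn{Pnnvary}--\nn{gradPnnvary} for the extrinsic data, and the curvature variation $\delta C_{abc}=-W_{abc\Upsilon}$, yields $\delta_{-4}\big(\hat n^a \hat n^b \hat n^c (\nabla_{\hat n}\nabla_a\nabla_b F_{cd})|_\Sigma\big)$ as a sum of terms each linear in $\Upsilon$ and its derivatives, multiplied by ${\sf E}^{(1)}$, ${\sf E}^{(2)}$, ${\sf E}^{(3)}$, $\bj$, $\bar F$, or the obstruction-current combination $\hat k$. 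The lower-transverse-order counterterms in Equation~\nn{E4} are then organized precisely so that their variations---computed term by term using the same identities, plus Equations~\nn{prairie_dog1}, \nn{varybox7}, and the variations of $\boxast_7,\boxast_9$ in Proposition~\ref{walnut} and its corollary---cancel everything. The role of the $(d-7)$, $(d-5)$, $(d-9)$ poles is to make the residues conformally invariant on their own; as in the ${\sf E}^{(3)}$ case, one checks the residue at $d=9$ is a multiple of $\boxast_9$ applied to an invariant covector, hence consistently invariant, while the genuine obstruction correction $\tfrac{12}{d-9}\hat k_d$ (with $\hat k$ the $d=7$ obstruction current of Theorem~\ref{obsts}, which is itself conformally invariant of weight $3-d=-4$ along a $7$-dimensional boundary) absorbs the non-invariance carried by the $\tfrac1{d-9}$ terms at the boundary dimension where it would otherwise fail.

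In practice I would mirror the authors' stated methodology: first verify the identity with the symbolic software \textsc{FORM} by feeding it the full (non-linearized) conformal transformation laws and the Yang--Mills-coupled hypersurface identities collected in Lemma~\ref{thelemma}, Lemma~\ref{69}, Lemma~\ref{gradF} and Lemma~\ref{d666}, and then extract a human-checkable proof from the linearized variations. The linearized route is far shorter because at weight $-4$ one only needs $\delta_{-4}$ of each building block, and most blocks (e.g. $\hat n$, $\bar g$, $\bar C_{abc}$ modulo $W$-terms that vanish on Poincar\'e--Einstein by Equation~\nn{Wn}) are either invariant or have elementary variations. The Poincar\'e--Einstein restriction is used throughout to replace ambient normal derivatives of $n$, $\rho$, $P$ and $R$ by boundary data (Lemma~\ref{69}), and in particular to kill the Weyl-tensor contributions $W_{\hat n bcd}\stackrel\Sigma=0$ that would otherwise obstruct invariance.

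The main obstacle will be the sheer bookkeeping in the leading-term variation $\delta_{-4}\big(\hat n^a \hat n^b \hat n^c \nabla_{\hat n}\nabla_a\nabla_b F_{cd}|_\Sigma\big)$: four derivatives acting through the connection variation produce a proliferation of terms involving $\nabla\nabla\Upsilon$, $\nabla\Upsilon\cdot\nabla F$, $\Upsilon\cdot\nabla\nabla F$ and curvature$\times\Upsilon$ pieces, and disentangling which of these recombine into $\bar \nabla_{\bar\Upsilon}$-type expressions (the invariant remainders permitted by the $\boxast$ operators) versus which must be cancelled by counterterms is delicate. A secondary subtlety is confirming that the coefficient $\tfrac{12}{d-9}$ in front of $\hat k_d$ is the unique choice making the residue at $d=9$ consistent \emph{and} repairing invariance at $d=7$; this requires knowing the exact normalization of $\hat k$ from Theorem~\ref{obsts}, and cross-checking against the $d=7$ special case where, analogously to ${\sf E}^{(3_7)}$, one expects invariance of a modified operator ${\sf E}^{(4_7)}$ conditioned on ${\sf E}^{(1)}={\sf E}^{(2)}=0$. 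Once the leading variation is in hand, the remaining cancellations are, as the authors note for ${\sf E}^{(3)}$, "not more difficult" than the ${\sf E}^{(3)}$ ones, just more numerous.
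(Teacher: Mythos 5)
Your proposal is correct and follows essentially the same approach as the paper: compute $\delta_{-4}$ of the leading term $\hat n^a\hat n^b\hat n^c(\nabla_{\hat n}\nabla_a\nabla_b F_{cd})|_\Sigma$ using the linearized variations of Section~\ref{LCVs} and the Poincar\'e--Einstein identities of Lemmas~\ref{thelemma}, \ref{69}, \ref{gradF}, \ref{d666}, organize the result into the building blocks ${\sf E}^{(1)},{\sf E}^{(2)},{\sf E}^{(3)},\bj,\bar F,\hat k$, and verify term-by-term cancellation against the counterterm variations (including those of $\boxast_7,\boxast_9$ from Equation~\nn{varybox7}, of $\bj$ from Equation~\nn{prairie_dog1}, and of $\hat k$ from Equation~\nn{bark}), with the symbolic check via \textsc{FORM} used to generate the ansatz. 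One small imprecision worth noting: for generic $d$ the current $\hat k$ is not itself invariant — its variation is $\propto (d-7)$ (Equation~\nn{bark}) — and it is precisely the combination $\tfrac{12}{d-9}\hat k$ whose variation cancels the residual non-invariance of the $\tfrac1{d-9}$ terms in the bracket, rather than $\hat k$ being a fixed invariant correction.
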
 

\begin{proof}
Once again, the expression~\ref{E4} was generated by the   symbolic manipulation software FORM so that the same remarks as made at the beginning of the proof of Proposition~\ref{third} apply here, and
 we shall rely on the linearized conformal variations developed in Section~\nn{LCVs}
to show that
$$
\check{\sf E}^{(4)}_{\, a}(\Omega^2 g,A)=\Omega^{-4} \check{\sf E}^{(4)}_{\, a}(g,A)\, .
$$
The most difficult of those variations is that of the first term in Equation~\nn{E4}. This is given by
\begin{align}
\delta_{-4} \big(\hat n^a  &\hat n^b \hat n^c (\nabla_{\hat n} \nabla_a\nabla_b F_{cd})|_\Sigma\big)
\:=\:
\color{black}\!-9\Upsilon_{\hat n}
\big(
 {\sf E}^{(3)}_d
+\tfrac{3}{d-7}\boxast_7 {\sf E}^{(1)}_d \big)
\!+\!21H \bar \Upsilon^a\bar 
\nabla_d^{\phantom{(1)}\!\!\!\!}{\sf E}^{(1)}_a
\!-31H \bar \nabla_{\bar \Upsilon} {\sf E}^{(1)}_d\color{black}
\nonumber\\&
\color{black}
-\big(7\hat n^a \hat n^b (\nabla_a \Upsilon_b)
-45 H\Upsilon_{\hat n}\big)
\big( {\sf E}^{(2)}_d +\tfrac1{d-5} \bj_d\big)
\!+\!9(P_{\hat n\hat n}+\tfrac{7}2H^2) \bar F_{\bar \Upsilon d}
-9H(\bar \nabla^a\Upsilon_{\hat n})\bar F_{ad}
\color{black}
\nonumber\\&
\color{black}+\big(\bar \nabla^a (\hat n^b  \nabla_{\hat n}\Upsilon_{b})\big)\bar F_{ad}
-9\Upsilon_{\hat n}
(\bar\nabla^a H) \bar F_{ad}
-(\bar \nabla^a \Upsilon_{\hat n}) \big(3\bar \nabla_d {\sf E}^{(1)}_a
-4   \bar \nabla_a {\sf E}^{(1)}_d\big)
\color{black}
\nonumber
\\&\nonumber
\color{black}
+(\bar \nabla_d H) {\sf E}^{(1)}_{\bar \Upsilon}
+2\bar \Upsilon_d(\bar \nabla^a H) {\sf E}^{(1)}_{a}
\color{black}
\\[-3mm]&
\label{leading-term}
\\[-3mm]&
-\big(
\color{black}
2\hat n^a \hat n^b \hat n^c (\nabla_a \nabla_b\Upsilon_c) 
-14H \hat n^a \hat n^b (\nabla_a \Upsilon_b)
\color{black}+9(\bar \nabla_{\bar \Upsilon } H)
+18\Upsilon_{\hat n} P_{\hat n\hat n}
\color{black}
+45\Upsilon_{\hat n} H^2
\big)  {\sf E}^{(1)}_d
\nonumber\\&
\color{black}-8\big[{\sf E}^{(1)}_d,{\sf E}^{(1)}_{\bar \Upsilon}\big]
\color{black}+6\Big( \bar \nabla_{\bar \Upsilon} \big({\sf E}^{(2)}_d+\tfrac1{d-5} \bj_d\big)
-\tfrac 12\bar \Upsilon^a\bar \nabla_d^{\phantom{(1)}\!\!\!\!}\big({\sf E}^{(2)}_a+\tfrac1{d-5} \bj_a\big)\Big)
\color{black}
\nonumber\\&
\color{black}+3\bar P_d{}^a \bar F_{\bar \Upsilon a}
+6\bar P_{\bar \Upsilon}{}^a \bar F_{a d}
\color{black}
\nonumber
\, .
\end{align}
While the above is tedious to compute, all that is needed is  $\delta F_{cd}=0$, the result in Equation~\nn{deltanablas} for the variation of connections, and $\delta_{1}\hat n=0$. Thereafter one can employ the identities in Section~\ref{PEM} to decompose the result into intrinsic and extrinsic tensor quantities, as well as the expressions above for ${\sf E}^{(1)}$, ${\sf E}^{(2)}$, ${\sf E}^{(3)}$, and that  for the operator~$\boxast_7$. 

To see that  the above-displayed variation cancels that of the remaining terms in Equation~\nn{E4}, we begin by noting that the first three lines of the above display
exactly cancel nearly all the variations  all the remaining terms in 
the first two lines of~\nn{E4}
as well as that of the   first term on the fourth line; this is easily established using Equations~\nn{varyH},~\nn{varyH1},~\nn{varybox7},~\nn{Pnnvary},~\nn{prairie_dog1}, and~\nn{gradPnnvary}.
The unaccounted-for variations on the first, second and fourth lines in Equation~\nn{E4} come from
$$
\delta_{-1}\big(3\bar \nabla_d  {\sf E}^{(1)}_a -4\bar \nabla_a  {\sf E}^{(1)}_d\big)=
-2\bar \Upsilon_d  {\sf E}^{(1)}_a
+5\bar \Upsilon_a  {\sf E}^{(1)}_d
-\bar g_{ad}   {\sf E}^{(1)}_{\bar \Upsilon}\, .
$$
The first and last terms on the right hand side 
are as required to cancel the two terms on the fourth line of the variation of $\hat n^a  \hat n^b \hat n^c (\nabla_{\hat n} \nabla_a\nabla_b F_{cd})|_\Sigma$ given above.


\bigskip

There are three further variations unaccounted for in the second and  fourth lines of~\nn{E4} coming from varying~$\bj$ according to~\nn{prairie_dog1}, the first term in the variation of the gradient of mean curvature coming from Equation~\nn{varyH1}, and the following variation
$$
\delta_{-1}\big(3\bar \nabla_d  {\sf E}^{(1)}_a -4\bar \nabla_a  {\sf E}^{(1)}_d\big)=
5\bar \Upsilon_a  {\sf E}^{(1)}_d
-2\bar \Upsilon_d  {\sf E}^{(1)}_a
-\bar g_{ad}   {\sf E}^{(1)}_{\bar \Upsilon}\, .
$$
At this point we can cancel all remaining variations proportional to one power of ${\sf E}^{(1)}$. For this one needs Equations~\nn{varyH},~\nn{varyH1},~\nn{Pnnvary},~\nn{varyJdot} and  the consequence $\delta_{-2} \bar J
=-\bar \nabla^a \bar\Upsilon_a$
of Equation~\nn{shouting}. 

It remains to cancel variations
of the last three lines in~\nn{E4} with the 
the last two  lines of~\nn{leading-term}.
For that we employ the variations displayed in Equation~\nn{resrem} as well as the variation of the last term proportional to $\hat k$ is given in~\nn{bark}. The remaining required variations are given below:
\begin{align*}
\delta_{-4}\boxast_9\! \big({\sf E}^{(2)}_d+\tfrac1{d-5}\bj_d\big)&=
\color{black}(d-9)\big(
 \bar \nabla_{\bar \Upsilon} \big({\sf E}^{(2)}_d+\tfrac1{d-5} \bj_d\big)
  -\tfrac12
  \bar \Upsilon^a\bar \nabla_d^{\phantom{(1)}\!\!\!\!}\big({\sf E}^{(2)}_a+\tfrac1{d-5} \bj_a\big)
 \big)
  \\[1mm]
 &
 \color{black}
 +\bar\nabla_{\bar\Upsilon} \bj_d -\tfrac12 \bar\Upsilon^a \bar \nabla_d \bj_a
 + (\bar\nabla^a \bar \nabla^b \bar \Upsilon_b) 
\bar F_{ad} 
-2 (\bar \nabla^a \bar \Upsilon^b) \bar \nabla_a \bar F_{db}
 \color{black}
 \\
 &
 \color{black}+
\tfrac{(d-5)}2 \bar P_d{}^a \bar F_{\bar\Upsilon a}
+2(d-4)\bar P_{\bar \Upsilon}{}^a \bar F_{ad}
+2\bar W_{da\bar\Upsilon b}\bar F^{ab}
+\tfrac{1}2 (\bar\nabla_a \bar \Upsilon_d) \bj^a
\color{black}
\\&
\color{black}
+2 [\bar F_{ad},\bar F_{\bar \Upsilon}{}^a]\, ,
\color{black}
\end{align*}
\begin{multline*}
\delta_{-4}\big( \bar P^{ab} \bar \nabla_a \bar F_{bd}
+\tfrac12(\bar \nabla^a \bar J) \bar F_{ad}
+\tfrac12 \bar C_{abd} \bar F^{ab}\big)
=
\color{black}
(\bar\nabla^a \bar\Upsilon^b) \bar\nabla_a \bar F_{db}
-3\bar P_{\bar \Upsilon}{}^{b} \bar F_{bd}
\color{black}\\
\color{black}-\bar P_d{}^b \bar F_{\bar \Upsilon b}
-\tfrac12 (\bar \nabla^a \bar \nabla^b \bar \Upsilon_b)\bar F_{ad}
-\bar W_{da\bar\Upsilon b} \bar F^{ab}\
\color{black}\, .
\end{multline*}
It is not difficult to check that the coefficients are exactly those required for cancellation.
\end{proof}

 \begin{remark}
 When $d\neq 4,5,7,9$, we may define
$$
{\sf E}^{(4)}:=\tfrac1{d-7} \hh \check{\sf E}^{(4)}\, .
$$
Also,  based on  invariance alone, it is possible to add the tensor 
$$ 
 \bar W_{dabc}\bar \nabla^c \bar F^{ab}
+\frac 32\bar C_{abd} \bar F^{ab}
$$
to $\check {\sf E}^{(4)}$ since the above defines a section  of
$T^*\Sigma\otimes {\mathcal V}\Sigma[-4])$. In particular, when $d=7$, this tensor equals the manifestly conformally invariant divergence $\bar \nabla^c( \bar W_{dabc}\bar F^{ab})$. In Lemma~\ref{lastcoeff} we establish that the coefficient of the above-displayed tensor  must vanish upon requiring that~$\check {\sf E}^{(4)}=0$  when evaluated on conformally compact Yang--Mills connections.
\hfill$\blacklozenge$
\end{remark}

Viewing $d$ as a formal variable, Equation~\nn{E4} has poles at $d=4,5,9$ with respective  residues
\begin{align}
\nonumber
-\tfrac1{24}
\operatorname{res}_{d=4}\check {\sf E}^{(4)}_d &=
 \big[{\sf E}^{(1)}_d,
\bar \nabla^a{\sf E}^{(1)}_a\big]
\stackrel{\delta_{-4}}\mapsto
(d-4)  \big[{\sf E}^{(1)}_d,
{\sf E}^{(1)}_{\bar \Upsilon}\big]
\, ,\\\label{resrem}
\tfrac1{6}
\operatorname{res}_{d=5}\check {\sf E}^{(4)}_d &=
\:\:\:\:
[\bar F_{ad}, \bj^a]
\:\:\:\:\:\:\:\hh
\stackrel{\delta_{-4}}
\mapsto
(d-5)[\bar F_{ad}, \bar F_{\bar\Upsilon}{}^a]
\, ,\
\\\nonumber
-\tfrac1{12}\operatorname{res}_{d=9}\check {\sf E}^{(4)}_d &=\:\:\:\:\:\:
\boxast_9 {\sf E}^{(2)}_d
\:\:\:\:\:\:\,\hh\hh
\stackrel{\delta_{-4}}\mapsto
(d-9)\big( \bar \nabla_{\bar \Upsilon} {\sf E}^{(2)}_c
-\tfrac 12 \bar \Upsilon^a \bar \nabla_c  {\sf E}^{(2)}_a\big)
\, . 
\end{align}
In the above we have also listed the (dimension $d$) linearized conformal variations of the corresponding tensor structures. These vanish in dimensions $4,5,9$ respectively, and therefore these three tensors define conformal invariants in those dimensions. 
In turn, this suggests we define the following three boundary operators by subtracting the relevant pole in $\check{\sf E}^{(4)}$  and then setting $d$ to the corresponding value.
\begin{align}
\nonumber
\hh
{\sf E}^{(4_4)}_d(g,&A):=
\hat n^a \hat n^b \hat n^c (\nabla_{\hat n} \nabla_a\nabla_b F_{cd})|_\Sigma
+9H \big({\sf E}^{(3)}_d-  \boxast_7 {\sf E}_d^{(1)}\big)
-(7P_{\hat n\hat n}
+\tfrac{45}2 H^2) \big({\sf E}^{(2)}_d- \bj_d\big)
\\[1.3mm]
\nonumber
&
+(\bar\nabla^a H)\big(3\bar \nabla_d  {\sf E}^{(1)}_a -4\bar \nabla_a  {\sf E}^{(1)}_d\big)
+2\big(9 H^3\! +\!10 P_{\hat n\hat n} H
-(\bar\Delta H)-H\bar J-\nabla_{\hat n}J
\big){\sf E}^{(1)}_d
\\[-2mm]
\label{E44}
\\[-8mm]
\nonumber
&
+\big((\bar \nabla^a \!P_{\hat n\hat n})
\!+\!9H\hh \!(\bar \nabla^a \!H)
\big)
\bar 
F_{ad}\!
+\!\tfrac{6}{5}\Big(\!\! \boxast_9\!\! \big({\sf E}^{(2)}_d\!\!-\!\bj_d\big)
\!\!+\! \!\bar P^{ac} \bar \nabla_a \bar F_{cd}
\!+\!\tfrac12(\bar \nabla^a \!\bar J) \bar F_{ad}
\!+\!\tfrac12 \bar C_{acd} \bar F^{ac}\!
\Big) 
\\[1mm]
\nonumber
& 
+\tfrac{4}5 \hat k_b
+\tfrac{12}{5}
[ \bar F_{ad}, \bj^a]
 \, ,
\end{align}
\begin{align}
\nonumber
{\sf E}^{(4_5)}_d(g,&A):=
\hat n^a \hat n^b \hat n^c (\nabla_{\hat n} \nabla_a\nabla_b F_{cd})|_\Sigma
+9H \big({\sf E}^{(3)}_d-\tfrac{3}{2}  \boxast_7 {\sf E}_d^{(1)}\big)
-(7P_{\hat n\hat n}
+\tfrac{45}2 H^2) {\sf E}^{(2_5)}_d
\\
\nonumber
&
+(\bar\nabla^a H)\big(3\bar \nabla_d  {\sf E}^{(1)}_a -4\bar \nabla_a  {\sf E}^{(1)}_d\big)
+\big(19 H^3 +22 P_{\hat n\hat n} H
-2(\bar\Delta H)-2H\bar J-2\nabla_{\hat n}J
\big){\sf E}^{(1)}_d
\\[-3mm]
\label{E45}
\\[-7mm]
\nonumber
&
+\big((\bar \nabla^a P_{\hat n\hat n})
+9H (\bar \nabla^a H)
\big)
\bar 
F_{ad}
+8 \big[{\sf E}^{(1)}_d,
\bar \nabla^a{\sf E}^{(1)}_a\big]
\\
\nonumber
&
+\tfrac32\Big( \boxast_9\! {\sf E}^{(2_5)}_d
+ \bar P^{ac} \bar \nabla_a \bar F_{cd}
+\tfrac12(\bar \nabla^a \bar J) \bar F_{ad}
+\tfrac12 \bar C_{acd} \bar F^{ac}
\Big) +\tfrac32 \hat k_d \, ,
\end{align}
and
\begin{align}
\nonumber
{\sf E}^{(4_9)}_d(g,&A):=
\hat n^a \hat n^b \hat n^c (\nabla_{\hat n} \nabla_a\nabla_b F_{cd})|_\Sigma
+9H \big({\sf E}^{(3)}_d+\tfrac{3}{2}  \boxast_7 {\sf E}_d^{(1)}\big)
-\tfrac14(7P_{\hat n\hat n}
+\tfrac{45}2 H^2)\bj_d
\\&
+(\bar\nabla^a H)\big(3\bar \nabla_d  {\sf E}^{(1)}_a -4\bar \nabla_a  {\sf E}^{(1)}_d\big)
+\big(23 H^3 +30 P_{\hat n\hat n} H
-2(\bar\Delta H)-2H\bar J-2\nabla_{\hat n}J
\big){\sf E}^{(1)}_d
\nonumber\\[-3mm]
\label{E49}\\[-8mm]
\nonumber
&
+\big((\bar \nabla^a P_{\hat n\hat n})
+9H (\bar \nabla^a H)
\big)
\bar 
F_{ad}
+\tfrac{8}{5} \big[{\sf E}^{(1)}_d,
\bar \nabla^a{\sf E}^{(1)}_a\big]
\nonumber\\
\nonumber
& 
 -\tfrac34 \big(
  \bar P_{da}\bj^a
  +2 \bar P^{ab} \bar \nabla_a \bar F_{bd}
  +(\bar \nabla^a \bar J) \bar F_{ad}
  + \bar C_{abd} \bar F^{ab}
\big)
 -\tfrac32 \hat k_d\, . 
\end{align}
In these terms we have the following result.
\begin{proposition}
\label{E44E45E49}
 Let $(M,\cc,\sigma)$ be a dimension $d$ Poincar\'e--Einstein structure
equipped with a smooth connection $\nabla^A$ on ${\mathcal V}M$. 
When  $\nabla^A$ is respectively subject to 
\begin{align*}
[{\sf E}^{(1)}_a,{\sf E}^{(1)}_b]&=0\, ,
 & d=4\, ,\\
[\bar F_{ad}, \bar F^{ac}]\, &=0\, ,  &d=5\, ,\\
{\sf E}^{(2)}_{a}\:\:\:\:&=0\, , & d=9\, ,
\end{align*}
then
${\sf E}^{(4_j)}_{\, b}$  defines  a section of $T^*\Sigma\otimes {\mathcal V}\Sigma[-4]$ when $d=j=4,5,9$. 
\end{proposition}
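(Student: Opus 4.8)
The plan is to deduce Proposition~\ref{E44E45E49} from Proposition~\ref{quartic} by a limiting argument in the formal variable $d$. The key observation is that $\check{\sf E}^{(4)}_d$ in Equation~\nn{E4} is manifestly a rational function of $d$ whose only poles are the simple poles at $d=4,5,9$ identified in Equation~\nn{resrem}. The operators ${\sf E}^{(4_4)}$, ${\sf E}^{(4_5)}$ and ${\sf E}^{(4_9)}$ in Equations~\nn{E44}, \nn{E45} and \nn{E49} are precisely obtained by subtracting the singular part $\tfrac{c_j}{d-j}T^{(j)}_d$ of $\check{\sf E}^{(4)}_d$ at the offending value $d=j$ (where $T^{(j)}_d$ is, respectively, $[{\sf E}^{(1)}_d,\bar\nabla^a{\sf E}^{(1)}_a]$, $[\bar F_{ad},\bj^a]$, and $\boxast_9({\sf E}^{(2)}_d+\tfrac1{d-5}\bj_d)$, with an appropriate absorption of the $\tfrac1{d-5}\bj_d$ shift), and then setting $d=j$. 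Since each $T^{(j)}_d$ is a smooth (polynomial-in-$d$) tensorial quantity and the leading term $\hat n^a\hat n^b\hat n^c(\nabla_{\hat n}\nabla_a\nabla_b F_{cd})|_\Sigma$ together with the remaining structures have no pole at $d=j$, these ``residue-subtracted'' operators are well-defined finite tensors at $d=j$.

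First I would make the rational-in-$d$ structure precise: regard every geometric tensor appearing in Equation~\nn{E4} (the extrinsic curvatures, $\bj$, $\bar F$, the boxast operators $\boxast_7,\boxast_9$, and $\hat k$) as a fixed expression, and read off that $\check{\sf E}^{(4)}_d$ is $P(d) + \sum_{j\in\{4,5,9\}}\tfrac{c_j}{d-j}T^{(j)}_d$ with $P$ polynomial in $d$ and the $T^{(j)}_d$ polynomial in $d$. Then ${\sf E}^{(4_j)}_d := \check{\sf E}^{(4)}_d - \tfrac{c_j}{d-j}T^{(j)}_d$ is regular at $d=j$ (the other two poles are harmless there), and ${\sf E}^{(4_j)}$ is its value at $d=j$; one checks this matches Equations~\nn{E44}, \nn{E45}, \nn{E49} by inspection, with care about how the $\bj_d$-shifts inside $\check{\sf E}^{(2)}$ and $\boxast_9$ are bookkept. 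This identification is bookkeeping, not computation.

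Next, conformal covariance. By Proposition~\ref{quartic}, $\check{\sf E}^{(4)}_d(\Omega^2 g,A)=\Omega^{-4}\check{\sf E}^{(4)}_d(g,A)$ for all $d\neq 4,5,7,9$; equivalently, the linearized conformal variation obeys $\delta_{-4}\check{\sf E}^{(4)}_d=0$ for generic $d$. By rationality in $d$, this identity of rational functions forces $\delta_{-4}$ of the regular part ${\sf E}^{(4_j)}_d=\check{\sf E}^{(4)}_d-\tfrac{c_j}{d-j}T^{(j)}_d$ to equal $-\delta_{-4}\big(\tfrac{c_j}{d-j}T^{(j)}_d\big) = \tfrac{c_j}{d-j}\big(-\delta_{-4}T^{(j)}_d\big)$. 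But the third column of Equation~\nn{resrem} records exactly that $\delta_{-4}T^{(j)}_d=(d-j)(\cdots)$; hence $\tfrac{c_j}{d-j}\delta_{-4}T^{(j)}_d$ is regular at $d=j$ and, when specialized to $d=j$, equals $c_j$ times the displayed tensor $(\cdots)$ at $d=j$. That residual tensor is the obstruction: it is $[{\sf E}^{(1)}_d,{\sf E}^{(1)}_{\bar\Upsilon}]$ for $d=4$, $[\bar F_{ad},\bar F_{\bar\Upsilon}{}^a]$ for $d=5$, and $\bar\nabla_{\bar\Upsilon}{\sf E}^{(2)}_c-\tfrac12\bar\Upsilon^a\bar\nabla_c{\sf E}^{(2)}_a$ for $d=9$. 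Under the respective hypotheses $[{\sf E}^{(1)}_a,{\sf E}^{(1)}_b]=0$, $[\bar F_{ad},\bar F^{ac}]=0$, ${\sf E}^{(2)}_a=0$, each of these vanishes identically, so $\delta_{-4}{\sf E}^{(4_j)}_b\stackrel\Sigma=0$ on the relevant locus; by the Branson-type argument of~\cite{Branson} recalled in Section~\ref{LCVs}, this is equivalent to ${\sf E}^{(4_j)}_b$ being a section of $T^*\Sigma\otimes{\mathcal V}\Sigma[-4]$.

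The main obstacle I anticipate is purely organizational rather than conceptual: matching the hand-written Equations~\nn{E44}--\nn{E49} against the mechanically subtracted residue parts of Equation~\nn{E4}, in particular tracking how the $\tfrac1{d-5}\bj_d$ additive shifts inside $\check{\sf E}^{(2)}$ and inside $\boxast_9(\cdots)$ interact with the poles at $d=5$ and $d=9$ — one must verify that after subtraction the coefficients $\tfrac{12}{(d-5)(d-9)}$, $-\tfrac{6}{d-9}$, $\tfrac{8}{d-4}$ etc. collapse to the constants $\tfrac{12}{5}$, $\tfrac32$, $-\tfrac34$, $\tfrac45$, $\tfrac85$ written in Equations~\nn{E44}--\nn{E49}. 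A secondary subtlety is that at $d=5$ the object ${\sf E}^{(2)}_d$ itself is only defined up to the $\bj$ ambiguity, so for ${\sf E}^{(4_5)}$ one works instead with ${\sf E}^{(2_5)}$; one must check that this replacement is consistent with taking the $d\to5$ limit, using the $d=5$ variation $\Omega^2{\sf E}^{(2_5)}_b(\Omega^2g,A)-{\sf E}^{(2_5)}_b(g,A)=\Upsilon^a\bar F_{ab}$ and that the hypothesis $[\bar F_{ad},\bar F^{ac}]=0$ kills the surviving commutator term. Modulo these bookkeeping checks, the argument is complete.
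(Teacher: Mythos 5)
Your proof is correct and follows essentially the same route as the paper: treat $d$ as a formal variable, observe that $\check{\sf E}^{(4)}_d$ has simple poles at $d=4,5,9$, define ${\sf E}^{(4_j)}$ by subtracting the pole and evaluating at $d=j$, use the generic-$d$ invariance of $\check{\sf E}^{(4)}_d$ established in Proposition~\ref{quartic}, and note that the $\delta_{-4}$-variation of the subtracted residue is $(d-j)$ times a tensor (as recorded in Equation~\nn{resrem}) that vanishes under the stated hypothesis on $\nabla^A$. One small inaccuracy that does not affect the argument: the residues $T^{(j)}_d$ need not be polynomial in $d$ (for instance $\boxast_9{\sf E}^{(2)}_d$ still carries a $(d-5)^{-1}$ coming from ${\sf E}^{(2)}_d$), but all that is required for your localization argument is that each $T^{(j)}_d$ be regular in a neighborhood of $d=j$, which is true. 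The anticipated bookkeeping burden — reconciling the explicit constants in Equations~\nn{E44}--\nn{E49} with the subtracted singular parts, and tracking the $\tfrac1{d-5}\bj_d$ shift through $\check{\sf E}^{(2)}$ and $\boxast_9$ — is also acknowledged (without being written out) in the paper's own proof, which simply asserts ``It can be checked that the resulting expressions are not singular at $d=4,5,9$.''
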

\begin{proof}
As explained above,
 the formul\ae\ for ${\sf E}^{(4_4)}$, ${\sf E}^{(4_5)}$,  ${\sf E}^{(4_9)}$ in Equations~\nn{E44},~\nn{E45} and~\nn{E49}
 are obtained by removing, respectively, the poles  
$
-\tfrac{8(d-7)}{24}
 \big[{\sf E}^{(1)}_d$, $\bar \nabla^a{\sf E}^{(1)}_a\big]$,
$
\frac{12(d-7)}{(d-5)(d-9)}
[\bar F_{ad}, \bj^a]$,
and
$-\frac{6(d-7)}{d-9}
\boxast_9 {\sf E}^{(2)}_d
$
from the expression for $\check {\sf E}^{(4)}$. It can be checked that the resulting expressions are not singular  at $d=4,5,9$, respectively.

The proof now follows that for Proposition~\ref{quartic}
quite closely. By construction, any poles in the formula for the variation of  the leading term $\hat n^a  \hat n^b \hat n^c (\nabla_{\hat n} \nabla_a\nabla_b F_{cd})|_\Sigma$ are removable. We now  examine the variations listed in Equation~\nn{resrem} of the residues of the poles that were subtracted to construct ${\sf E}^{(4_4)}$, ${\sf E}^{(4_5)}$ and ${\sf E}^{(4_9)}$. Then we
observe that the  respective conditions placed upon $\nabla^A$ in the proposition statement, imply vanishing of those variations. Cancellation of all remaining variations proceeds exactly along the lines presented in the proof of   Proposition~\ref{quartic}.

\end{proof}

The case of $d=7$ is of particular interest for $\check {\sf E}^{(4)}$. In particular note from Equation~\ref{E3} that the $d=7$ pole in the pair of terms
$\big({\sf E}^{(3)}_d+\tfrac{3}{d-7}  \boxast_7 {\sf E}_d^{(1)}\big)$ appearing in the first line of Equation~\nn{E4} is removable. Hence, in $d=7$ dimensions,  Equation~\nn{E4} determines an invariant expression for $\check {\sf E}^{(4)}$ 
 where it returns the 
tensor $-6\hat k$. In dimension~$d$, 
the linearized conformal variation of $\hat k$ is given by
\begin{multline}\label{bark}
\delta_{-4}\hat k_b = \tfrac{d-7}2\Big(
\color{black}
\bar \nabla_{\bar \Upsilon} \bj_b
-\tfrac12 \bar \Upsilon^a\bar \nabla_b\bj_a 
+\tfrac12 (\bar \nabla^a\bar \nabla^c \bar\Upsilon_c) \bar F_{ab}
+
\tfrac12 (\bar\nabla^a \bar \Upsilon_b) \bj_a
\color{black}
\color{black}
- (\bar\nabla^a \bar \Upsilon^c)\bar\nabla_a \bar F_{bc}
\color{black}
\\
\color{black}+ \bar P_b{}^a \bar F_{\bar\Upsilon a} 
+(d-2) 
\bar P_{\bar\Upsilon}{}^a \bar F_{ab} 
+\bar W_{ba\bar\Upsilon c}\bar F^{ac}
\color{black}
\Big)\, .
\end{multline}
This establishes that $\hat k$ is invariant in $d=7$ dimensions;  Theorem~\ref{obsts}  shows that it equals the $d=7$ obstruction current $\bar k$. 
Just as in the case of $\check {\sf E}^{(2)}$ when $d=5$, we  consider the conformally compact Yang--Mills system with a flat boundary connection, in which case both $\hat k$ and its conformal variation vanish.  We may then construct an operator that extracts the transverse order four Neumann data. This operator is given below, and discussed further in Section~\ref{FGrescale}.
\begin{proposition}
  Let $(M,\cc,\sigma)$ be a dimension $d=7$ Poincar\'e--Einstein structure
equipped with a smooth connection $\nabla^A$ on ${\mathcal V}M$.
 When the boundary curvature induced by $\nabla^A$ is subject to $$\bar F_{ab}=0\, ,$$
then
\begin{align}
\nonumber
{\sf E}^{(4_7)}_d(g,A)&:=
\hat n^a \hat n^b \hat n^c (\nabla_{\hat n} \nabla_a\nabla_b F_{cd})|_\Sigma
+9H {\sf E}^{(3_7)}_d
-(7P_{\hat n\hat n}
+\tfrac{45}2 H^2) {\sf E}^{(2)}_d
\\
 &
+(\bar\nabla^a H)\big(3\bar \nabla_d  {\sf E}^{(1)}_a -4\bar \nabla_a  {\sf E}^{(1)}_d\big)
+\tfrac{8}{3} \big[{\sf E}^{(1)}_d,
\bar \nabla^a{\sf E}^{(1)}_a\big]
\nonumber\\[-3mm]
\label{E47}
&
\\[-2mm]
\nonumber&
+\big(21 H^3 +26 P_{\hat n\hat n} H
-2\bar\Delta H-2H\bar J-2\nabla_{\hat n}J
\big){\sf E}^{(1)}_d
+3 \boxast_9\! {\sf E}^{(2)}_d
 \, ,
\end{align} 
defines a section of $T^*\Sigma\otimes {\mathcal V}\Sigma[-4]$. 
\end{proposition}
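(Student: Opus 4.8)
The plan is to obtain $\mathsf{E}^{(4_7)}$ directly from $\check{\mathsf{E}}^{(4)}$ of Proposition~\ref{quartic} by specializing to $d=7$, so the main task is bookkeeping rather than a fresh construction. First I would observe that, by Equation~\nn{E3}, the pair of terms $\big(\mathsf{E}^{(3)}_d+\tfrac{3}{d-7}\boxast_7\mathsf{E}^{(1)}_d\big)$ appearing in the first line of~\nn{E4} is regular at $d=7$; indeed its value there is precisely $\mathsf{E}^{(3_7)}_d$ by the definition given in Proposition~\ref{third}. Likewise, setting $\bar F_{ab}=0$ annihilates every term in~\nn{E4} that is explicitly proportional to $\bar F$, to $\bj=\bar\nabla^a\bar F_{a\cdot}$, to $\bar C_{abd}\bar F^{ab}$, to $\hat k$ (which is built entirely from $\bar F$ and $\bj$; see the displayed formula for $\hat k_b$), and to the commutator $[\bar F_{ad},\bj^a]$; it also kills the $\boxast_9\bj_d$ piece hidden inside $\boxast_9\big(\mathsf{E}^{(2)}_d+\tfrac1{d-5}\bj_d\big)$ and the $\bar P^{ab}\bar\nabla_a\bar F_{bd}$ term. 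After these two reductions the surviving terms are exactly those displayed in~\nn{E47} once one sets $d=7$ in the remaining numerical coefficients and uses $\tfrac{8}{d-4}\big|_{d=7}=\tfrac83$ and $-\tfrac{6}{d-9}\big|_{d=7}=3$ for the $\boxast_9\mathsf{E}^{(2)}_d$ coefficient. The hypothesis $\bar F_{ab}=0$ also forces $\mathsf{E}^{(1)}_b=F_{\hat n b}|_\Sigma$ to be unconstrained but still present, explaining why $\mathsf{E}^{(1)}$-dependent terms remain, while all the problematic $\operatorname{res}_{d=9}$-type subtraction is unnecessary since at $d=7$ that pole is absent.

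With the algebraic identification in hand, conformal invariance follows from Proposition~\ref{quartic} together with an argument that the residual obstructions to invariance at $d=7$ vanish under the stated hypothesis. Concretely, I would run the linearized-variation computation of the proof of Proposition~\ref{quartic}, now at $d=7$. The only places where that proof could fail are where a pole in $d$ was divided out or where a tensor structure whose $\delta$-variation is nonzero in general dimension was retained; here the $d=7$ pole in $\big(\mathsf{E}^{(3)}_d+\tfrac3{d-7}\boxast_7\mathsf{E}^{(1)}_d\big)$ is removable by~\nn{E3}, and every residue listed in~\nn{resrem} (at $d=4,5,9$) is simply absent at $d=7$. The one genuinely new ingredient is the term $\tfrac{12}{d-9}\hat k_d$, which at $d=7$ reads $-6\hat k_d$; its linearized variation is given by Equation~\nn{bark}, and one checks that the prefactor $\tfrac{d-7}2$ there makes $\delta_{-4}\hat k_b$ vanish identically when $d=7$. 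Hence the $\hat k$ term is separately conformally invariant in $d=7$, and setting $\bar F=0$ (which in particular makes $\hat k=0$) leaves the remaining combination, whose $\delta$-variation cancels by the same chain of identities (Equations~\nn{varyH},~\nn{varyH1},~\nn{varybox7},~\nn{Pnnvary},~\nn{gradPnnvary},~\nn{varyJdot},~\nn{prairie_dog1} and the consequence $\delta_{-2}\bar J=-\bar\nabla^a\bar\Upsilon_a$ of~\nn{shouting}) already invoked in Proposition~\ref{quartic}.

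The step I expect to be the main obstacle is verifying that, after imposing $\bar F_{ab}=0$, \emph{every} leftover variation of the $\mathsf{E}^{(1)}$-linear and $\mathsf{E}^{(1)}$-quadratic terms really does cancel, rather than merely most of them. The variation~\nn{leading-term} of the leading term $\hat n^a\hat n^b\hat n^c(\nabla_{\hat n}\nabla_a\nabla_b F_{cd})|_\Sigma$ is long, and at $d=7$ several coefficients there collapse or combine differently than in generic dimension; one must confirm the arithmetic matches the coefficients $21H^3$, $26 P_{\hat n\hat n}H$, $-2\bar\Delta H$, $-2H\bar J$, $-2\nabla_{\hat n}J$, the $\tfrac83[\mathsf{E}^{(1)}_d,\bar\nabla^a\mathsf{E}^{(1)}_a]$ coupling, and the $3\boxast_9\mathsf{E}^{(2)}_d$ term in~\nn{E47}. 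I would organize this by grouping variations according to their tensor type ($\Upsilon_{\hat n}$-times-$\mathsf{E}^{(k)}$, $\bar\Upsilon^a\bar\nabla$-type, third-transverse-derivative-of-$\Upsilon$ type, and pure $[\mathsf{E}^{(1)},\mathsf{E}^{(1)}_{\bar\Upsilon}]$ type) and matching coefficients type by type, exactly as in the general-dimension proof, with the simplification that all $\bar F$-carrying channels are now vacuous. As a consistency check one may also verify, using~\nn{E3} and the $d=7$ form of $\mathsf{E}^{(3_7)}$, that the coefficient $9H$ of $\mathsf{E}^{(3_7)}_d$ in~\nn{E47} is the specialization of the coefficient $9H$ of $\big(\mathsf{E}^{(3)}_d+\tfrac3{d-7}\boxast_7\mathsf{E}^{(1)}_d\big)$ in~\nn{E4}, so no additional $\boxast_7\mathsf{E}^{(1)}$ term survives; this is the analogue, one transverse order up, of how $\check{\mathsf{E}}^{(2)}$ in $d=5$ and $\mathsf{E}^{(3_7)}$ in $d=7$ degenerate, and it confirms that the $d=7$ case of $\check{\mathsf{E}}^{(4)}$ is \emph{non}-degenerate precisely because the $d=7$ obstruction current $\hat k$ is nonzero.
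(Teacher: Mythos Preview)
Your proposal is correct and follows essentially the same approach as the paper, which simply states that the proof follows that of Proposition~\ref{E44E45E49} \emph{mutatis mutandis}: specialize $\check{\sf E}^{(4)}$ to $d=7$ using that the $d=7$ pole in $\big({\sf E}^{(3)}_d+\tfrac3{d-7}\boxast_7{\sf E}^{(1)}_d\big)$ is removable, impose $\bar F_{ab}=0$ to kill the offending terms (in particular $\hat k$ and all the $\bar F$- and $\bj$-dependent pieces), and then verify invariance by re-running the linearized-variation bookkeeping of Proposition~\ref{quartic}. Your observation that Equation~\nn{bark} carries the prefactor $\tfrac{d-7}{2}$, so that $\delta_{-4}\hat k_b$ vanishes identically at $d=7$, is exactly the mechanism that makes the subtraction work and is implicit in the paper's ``mutatis mutandis''.
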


\begin{proof}
The proof follows that of Proposition~\ref{E44E45E49} {\it mutatis mutandis}.
\end{proof}

As we shall see in Section~\ref{FGrescale}, for the boundary operator   ${\sf E}^{(4_7)}$ to correspond to Neumann data, it must satisfy a divergence-free property.
  \begin{lemma}
 Let  $(M,\cc,\sigma)$ be a dimension $d=7$ Poincar\'e--Einstein structure. 
 Then if~$\nabla^A$ is a conformally compact Yang--Mills connection on ${\mathcal V}M$  for which $\bar F_{ab}=F_{ab}|_\Sigma=0$, 
 then  $$
 \bar \nabla^a {\sf E}^{(4_7)}_a =0\, .
 $$
 \end{lemma}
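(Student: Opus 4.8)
The strategy mirrors the divergence-free proof for ${\sf E}^{(3)}$ in $d=6$: starting from the conformally compact Yang--Mills condition $\sigma\nabla^aF_{ab}-(d-4)F_{nb}=0$ (here $d=7$), contract with $n^b$, use the Noether identity $\nabla^a\nabla^bF_{ab}=0$ and symmetry $\nabla_an_b=\nabla_bn_a$ to obtain $n^b\nabla^aF_{ab}=(d-4)n^a\nabla^bF_{ab}$, hence $\nabla^aF_{an}\stackrel\Sigma=0$ (using $d\neq 3$). Under the hypothesis $F_{ab}|_\Sigma=0$ we also have ${\sf E}^{(1)}=0$, and Equation~\nn{first} gives $\nabla_{\hat n}F_{na}|_\Sigma=\bj_a$. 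The plan is then to apply $\nabla_n$ three times to $\nabla^aF_{an}=0$ (valid in a neighbourhood of $\Sigma$ by smoothness), commuting $\nabla_n$ past $\nabla^a$ each time and past the conormal $n^c$, collecting curvature commutator terms, and then restricting to $\Sigma$. This should produce $\bar\nabla^a\big({\sf E}^{(4_7)}_a\big)$ up to terms that vanish because $\bar F=0$, ${\sf E}^{(1)}=0$, ${\sf E}^{(2)}=0$ (by Lemma~\ref{warmerShowarma}), and because $\nabla^aF_{an}=0$.

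The execution proceeds as follows. First I would record the needed boundary identities: $\nabla_an_b\stackrel\Sigma=Hg_{ab}$, $\nabla_nn_a\stackrel\Sigma=H\hat n_a$, and the higher normal derivatives of $n$ and $\rho$ from Lemma~\ref{69}, specialized to $d=7$; also $\nabla_{\hat n}F_{na}|_\Sigma=\bj_a$ and the fact that since $\bar F=0$ the boundary current $\bj_a=\bar\nabla^b\bar F_{ba}$ vanishes as well, which will kill many terms. Second, I would compute $\nabla_n^3\big(\nabla^aF_{an}\big)$ symbolically, writing $F_{an}=n^cF_{ac}$ and $\nabla^a\to$ (decomposition into $\nabla^{\top a}+\hat n^a\nabla_{\hat n}$), tracking that $\hat n^a\hat n^b$ can be traded for $\bar g^{ab}-g^{ab}$, exactly the manoeuvre used throughout Section~\ref{renorm} and in the proof of the $d=6$ case. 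Third, I would identify the leading term $\hat n^a\hat n^b\hat n^c(\nabla_{\hat n}\nabla_a\nabla_bF_{cd})|_\Sigma$ inside $\bar\nabla^a{\sf E}^{(4_7)}_a$ (using Equation~\nn{E47}) and match the remaining terms against the commutator/curvature debris generated by the computation, using the decomposition of the Riemann tensor via $W$ and $P$ (Equation~\nn{Weyl}), $W_{\hat n bcd}\stackrel\Sigma=0$ (Equation~\nn{Wn}), and the Ricci-to-Schouten relation.

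The bulk of the work is bookkeeping: ${\sf E}^{(4_7)}$ as written in Equation~\nn{E47} is long, so the cancellations are numerous, but nearly every term involving $\bar F$, $\bj$, or ${\sf E}^{(2)}$ drops out under the flatness hypothesis $\bar F_{ab}=0$, which collapses ${\sf E}^{(4_7)}_d$ to roughly $\hat n^a\hat n^b\hat n^c(\nabla_{\hat n}\nabla_a\nabla_bF_{cd})|_\Sigma+9H{\sf E}^{(3_7)}_d-(7P_{\hat n\hat n}+\tfrac{45}{2}H^2){\sf E}^{(2)}_d+(\bar\nabla^aH)(3\bar\nabla_d{\sf E}^{(1)}_a-4\bar\nabla_a{\sf E}^{(1)}_d)+\tfrac83[{\sf E}^{(1)}_d,\bar\nabla^a{\sf E}^{(1)}_a]+(\dots){\sf E}^{(1)}_d+3\boxast_9{\sf E}^{(2)}_d$, and in fact ${\sf E}^{(1)}=0$, ${\sf E}^{(2)}=0$ as well (from $\bar F=0$ together with Lemma~\ref{warmerShowarma}), so only the leading term and $9H{\sf E}^{(3_7)}_d$ survive — and ${\sf E}^{(3_7)}_d$ itself reduces to $\hat n^a\hat n^b(\nabla_{\hat n}\nabla_aF_{bc})|_\Sigma-(\bar\nabla^aH)\bar F_{ac}$ which is $\hat n^a\hat n^b(\nabla_{\hat n}\nabla_aF_{bc})|_\Sigma$ when $\bar F=0$. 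So the real content is showing $\bar\nabla^d\big[\hat n^a\hat n^b\hat n^c(\nabla_{\hat n}\nabla_a\nabla_bF_{cd})+9H\hat n^a\hat n^b(\nabla_{\hat n}\nabla_aF_{bd})\big]|_\Sigma=0$, which follows by feeding $\nabla_n^3\nabla^aF_{an}=0$ through the conormal-swapping identities of Lemma~\ref{69}.

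\textbf{The main obstacle.} The hard part will be organizing the three-fold application of $\nabla_n$ so that the highest-transverse-order pieces $\nabla_n^3\nabla^aF_{an}$ match up cleanly with the leading term of $\bar\nabla^d{\sf E}^{(4_7)}_d$ after the $\hat n^a\hat n^b\to\bar g^{ab}-g^{ab}$ substitution, while ensuring that every lower-order commutator term (the $[\nabla_n,\nabla^a]$ curvature debris, the $(\nabla_n^kn)$ factors, and the $[F,\cdot]$ brackets) is correctly accounted for against the $9H$-term and the fact that $\nabla^aF_{an}\equiv 0$ near $\Sigma$ can itself be differentiated. Since the analogous $d=6$ computation already occupies a full page, I expect this to require a similarly careful but wholly routine calculation; conceptually there is no new ingredient, and the divergence-free property is also forced a posteriori by the holographic/variational interpretation (cf.\ the remark following Theorem~\ref{obsts} and Section~\ref{FGrescale}), which provides an independent consistency check.
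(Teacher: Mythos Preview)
Your approach is essentially the same as the paper's: derive $\nabla^a F_{an}=0$ from the Noether identity, differentiate it three times in the normal direction, and show the commutator debris vanishes by virtue of $\bar F=0$ and the lower ${\sf E}^{(k)}$ vanishing. One simplification you miss is that Lemma~\ref{CFF} already gives ${\sf E}^{(3_7)}=0$ on solutions in $d=7$, so ${\sf E}^{(4_7)}_d$ collapses to the single leading term $\hat n^a\hat n^b\hat n^c(\nabla_{\hat n}\nabla_a\nabla_b F_{cd})|_\Sigma$ and there is no $9H$-term to track; the paper then organizes the lower-order debris by observing that Equation~\eqref{EEEF0} forces $(\nabla^\top)^\ell F_{ab}|_\Sigma$, $(\nabla^\top)^\ell F_{nb}|_\Sigma$, $\hat n^a(\nabla^\top)^\ell\nabla_a F_{nc}|_\Sigma$, and $\hat n^a\hat n^b(\nabla^\top)^\ell\nabla_a\nabla_b F_{nd}|_\Sigma$ all to vanish, then uses the Bianchi identity $n^c\nabla_c F_{ab}=-2\nabla_{[a}F_{b]n}+(\nabla_{[a}n^c)F_{b]c}$ to reduce every commutator term to this vanishing set rather than computing each explicitly.
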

 
 \begin{proof}
 Using necessarily
(see Lemmas~\ref{warmerShowarma} and~\ref{CFF})
that
 \begin{equation}\label{EEEF0}
0={\sf E}^{(3_7)}={\sf E}^{(2)}= {\sf E}^{(1)}=\bar F\, ,
 \end{equation}
 we have 
 \begin{align*} 
 {\sf E}^{(4_7)}&=
\hat n^a \hat n^b \hat n^c (\nabla_{\hat n} \nabla_a\nabla_b F_{cd})|_\Sigma
\, .
\end{align*}
Returning to Equation~\nn{thediv}, we obtain
\begin{multline*}
0= n^b n^c \nabla_n\nabla_b \nabla_c\nabla^d F_{nd}
\stackrel\Sigma=
\hat n^a \hat n^b \hat n^c\nabla_d ^\top\nabla_a \nabla_b \nabla_c F_{n}{}^d
\\
+
\hat n^a \hat n^b \hat n^c\hat n^d[\nabla_{\hat n}\nabla_a \nabla_b \nabla_c ,n^e]F_{ed}
+
n^a n^b n^c [\nabla_a \nabla_b \nabla_c,\nabla^d] F_{nd}
\, .
\end{multline*}
Thus, combining the above two displays gives
\begin{eqnarray*}
0&\stackrel\Sigma=&
\bar \nabla^d{\sf E}^{(4_7)}_d
-[\nabla_d^\top,n^a n^b n^c]
\nabla_a\nabla_c F_n{}^d+
\hat n^a \hat n^b \hat n^c\hat n^d\nabla_{\hat n}[\nabla_a \nabla_b \nabla_c ,n^e]F_{ed}
\\&&
+n^b n^c R_n{}^d{}{}^\sharp \nabla_b \nabla_c F_{nd}
+n^b n^c \nabla_n [ \nabla_b \nabla_c,\nabla^d] F_{nd}\, .
\end{eqnarray*}
We must now show  that the last four terms on the right hand side
above vanish by virtue of Equation~\nn{EEEF0}.
First observe that this equation implies that
\begin{multline*}
0=(\nabla^\top)^\ell F_{ab}|_\Sigma=\hat n^a (\nabla^\top)^\ell F_{ab}|_\Sigma
=\hat n^a\hat n^b (\nabla^\top)^\ell\nabla_a F_{bc}|_\Sigma
\\
=\hat n^a\hat n^b (\nabla^\top)^\ell\nabla_a F_{bc}|_\Sigma
=\hat n^a\hat n^b \hat n^c  (\nabla^\top)^\ell\nabla_a \nabla_b F_{cd}|_\Sigma\, ,
\end{multline*}
for any positive integer $\ell$, and in turn
\begin{multline*}
0=(\nabla^\top)^\ell F_{ab}|_\Sigma=(\nabla^\top)^\ell F_{nb}|_\Sigma
=\hat n^a(\nabla^\top)^\ell\nabla_a F_{nc}|_\Sigma
\\
=\hat n^a (\nabla^\top)^\ell\nabla_a F_{nc}|_\Sigma
=\hat n^a\hat n^b   (\nabla^\top)^\ell\nabla_a \nabla_b F_{nd}|_\Sigma\, .
\end{multline*}
After developing  commutators,  we see that none of these four above-mentioned  terms involves more than three derivatives of the curvature $F$, so it remains to establish that they can be expressed in terms of the above vanishing set. 
This is easily achieved, once one realizes that the Bianchi identity can be used to write
$$
n^c \nabla_c F_{ab} = -2 \nabla_{[a} F_{b]n} +(\nabla_{[a} n^c) F_{b]c}\, .
$$
The last term above is lower order in derivatives of $F$ while  two or fewer derivatives of the first summand 
can readily be expressed in terms of the vanishing set.
 \end{proof}

 It remains to establish the vanishing property of $ {\sf E}^{(4)}$ on solutions.
 \begin{lemma}\label{lastcoeff}
 Let  $(M,\cc,\sigma)$ be a dimension $d\geq 8$ Poincar\'e--Einstein structure. 
 If~$\nabla^A$ is a conformally compact Yang--Mills connection on ${\mathcal V}M$, then
  $$
  \begin{array}{cl}
 {\sf E}^{(4)}_a =0\, ,&
d=8,10,11,12,\ldots\, ,\\[2mm]
{\sf E}^{(4_9)}_a =0\, , &
d=9
\, .
\end{array}
 $$
 \end{lemma}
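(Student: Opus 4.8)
The plan is to proceed exactly as in the proof of the divergence-free property of ${\sf E}^{(4_7)}$ and the vanishing lemmas for ${\sf E}^{(3)}$, but now carrying the full (non-flat) boundary curvature. The starting point is again the identity $\nabla^a F_{an}=n^b\nabla^a F_{ab}/\sigma\stackrel\Sigma=0$ obtained from the conformally compact Yang--Mills condition $\sigma\nabla^a F_{ab}=(d-4)F_{nb}$ together with the Noether identity $\nabla^a\nabla^b F_{ab}=0$ and $\nabla_{[a}n_{b]}=0$ (this was already derived in the proof of the ${\sf E}^{(2_5)}$ divergence lemma, valid for all $d\neq 3$). I would then apply three powers of $\nabla_n$ to this relation, systematically commuting $\nabla_n$ past $\nabla^a$ and past the vector $n^a$, and at each stage trading a pair of conormals $\hat n^a\hat n^b$ for $\bar g^{ab}-g^{ab}$ so that the highest-transverse-degree piece $\hat n^a\hat n^b\hat n^c(\nabla_{\hat n}\nabla_a\nabla_b F_{cd})|_\Sigma$ gets isolated. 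All the lower-transverse-degree debris should then be rewritten in terms of boundary quantities using Lemmas~\ref{69},~\ref{thelemma},~\ref{gradF} and~\ref{d666}; by Lemma~\ref{warmerShowarma} and Lemma~\ref{CFF} one already knows ${\sf E}^{(1)}=0={\sf E}^{(2)}$ and (for $d\geq 8$) ${\sf E}^{(3)}=0$, so a large fraction of the debris vanishes. What is left after the dust settles must, by the uniqueness of the leading-order normalization, reproduce a multiple of ${\sf E}^{(4)}_d$ (respectively ${\sf E}^{(4_9)}_d$ when $d=9$), hence it equals zero.

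Concretely, the first step is to write $\check{\sf E}^{(4)}_d$ using Equation~\nn{E4} with ${\sf E}^{(1)},{\sf E}^{(2)},{\sf E}^{(3)}$ (or, for $d=9$, ${\sf E}^{(2)}$) set to zero, which collapses the formula to $\hat n^a\hat n^b\hat n^c(\nabla_{\hat n}\nabla_a\nabla_b F_{cd})|_\Sigma$ plus the terms built purely from $\bj$, $\bar F$, $\bar P$, $\bar C$, $H$, $P_{\hat n\hat n}$, $J$ and the curvature-Laplace operators $\boxast_9$, together with the $d=7$ obstruction combination $\hat k_d$. The second step is the three-fold normal-derivative computation on $\nabla^a F_{an}=0$: schematically
$$
0=\nabla_n^3\nabla^a F_{an}
=\nabla^a\nabla_n^3 F_{an}-\big[\nabla^a,\nabla_n^3\big]F_{an},
$$
then expand $\nabla_n^3 F_{an}=\nabla_n^3(n^c F_{ac})=n^c\nabla_n^3 F_{ac}+[\nabla_n^3,n^c]F_{ac}$ and restrict to $\Sigma$, repeatedly substituting $\nabla_n F_{na}|_\Sigma=\bj_a$ (Equation~\nn{first}), $\nabla_n^2 F_{ab}|_\Sigma$ from Equation~\nn{d66}, and the Lemma~\ref{69} identities for $\nabla_n^k n_a$ and $\nabla_n^k\rho$. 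The third step is to collect everything, use the Ricci and Cotton identities of Lemma~\ref{thelemma} (in particular Equations~\nn{Wn},~\nn{cotton},~\nn{divWeyl},~\nn{Fialkow},~\nn{mainardi}), and match against Equation~\nn{E4}; the residual $\hat k_d$ term, which equals the obstruction current by Theorem~\ref{obsts}, is handled either by noting the recursion of Proposition~\ref{predict} already kills it at the relevant transverse order, or directly via the $d=7$ computation reorganized.

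The main obstacle will be the bookkeeping in the third normal derivative: the commutators $[\nabla^a,\nabla_n^3]F_{an}$ and $[\nabla_n^3,n^c]F_{ac}$ generate a proliferation of terms involving $\nabla_n^k$ of the Riemann/Schouten tensors and of $n^c$, and these must be reduced — using Lemma~\ref{69} (Equations~\nn{PP},~\nn{PPP},~\nn{P}) and the Poincar\'e--Einstein umbilicity identities — to the short list of boundary tensors appearing in Equation~\nn{E4}. This is exactly the kind of calculation that was ``left to the reader'' in several of the $d=5,6,7$ obstruction lemmas and in the ${\sf E}^{(4_7)}$ divergence lemma; the new feature here is that one must also verify that the coefficient of the invariant tensor $\bar W_{dabc}\bar\nabla^c\bar F^{ab}+\tfrac32\bar C_{abd}\bar F^{ab}$ (which could have been added to $\check{\sf E}^{(4)}$ on invariance grounds alone) is forced to zero — this is the content of ``Lemma~\ref{lastcoeff}'' pinning down the last ambiguous coefficient, and it follows once the Weyl-divergence identity~\nn{divWeyl} is used to rewrite $\bar\nabla^c(\bar W_{dabc}\bar F^{ab})$ and the resulting expression is compared with the ${\sf E}^{(4)}=0$ output. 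Modulo that reduction, which is lengthy but mechanical, the argument is an entirely routine extension of the $d=7$ and $d\geq 6$ cases already established.
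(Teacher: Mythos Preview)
Your proposal has a genuine conceptual gap in the choice of starting identity. The relation $\nabla^a F_{an}=0$ is a \emph{scalar}; applying $\nabla_n^3$ to it yields a scalar identity, and the conormal-trading trick $\hat n^a\hat n^b=g^{ab}-\bar g^{ab}$ then produces terms of the form $\bar\nabla^d(\hat n^a\hat n^b\hat n^c\nabla_a\nabla_b F_{cd})$, i.e.\ \emph{divergences}. That is exactly what is needed for a divergence-free lemma such as the ${\sf E}^{(4_7)}$ result you cite, but it cannot isolate the full covector $\hat n^a\hat n^b\hat n^c(\nabla_{\hat n}\nabla_a\nabla_b F_{cd})|_\Sigma$ with its free index $d$. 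Showing ${\sf E}^{(4)}_d=0$ requires a covector identity from the outset.

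The paper's route is instead to work directly on the covector leading term. One writes
\[
\hat n^a\hat n^b\hat n^c\nabla_{\hat n}\nabla_a\nabla_b F_{cd}
\stackrel\Sigma=\hat n^a\hat n^b\nabla_{\hat n}\nabla_a\nabla_b F_{nd}-\hat n^a\hat n^b[\nabla_n\nabla_a\nabla_b,n^c]F_{cd},
\]
and then substitutes the Yang--Mills condition in the form $F_{nd}=\tfrac{1}{d-4}\,\sigma\nabla^c F_{cd}$ \emph{inside} the derivatives. Expanding the $\sigma$ via Leibniz produces $\tfrac{3}{d-4}\hat n^b\nabla_{\hat n}\nabla_b\nabla^c F_{cd}$ plus lower-order pieces; trading $\hat n^a\hat n^b$ on this term gives a further copy of the same leading structure with coefficient $\tfrac{3}{d-4}$, yielding the overall factor $(d-7)$ that makes the identity close. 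The remaining five commutator/lower-order structures are then reduced using Lemmas~\ref{69}, \ref{thelemma}, \ref{gradF}, \ref{d666}, and the key identity for $\boxast_9\bj_d$ is used to match the simplified $\check{\sf E}^{(4)}_d$. The correct starting point for your argument is therefore the covector equation $(d-4)F_{nd}=\sigma\nabla^c F_{cd}$, not the scalar $\nabla^a F_{an}=0$.
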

 
 \begin{proof}
 We begin with the cases where $d\neq 9$. From Lemmas~\ref{warmerShowarma}
 and~\ref{CFF} we have
 $$
 0={\sf E}^{(1)}= {\sf E}^{(2)}
 = {\sf E}^{(3)}\, ,
 $$
 so that
  \begin{eqnarray*}
\check{\sf E}^{(4)}_d
 &=&
(d-7)\Big[
\hat n^a \hat n^b \hat n^c (\nabla_{\hat n} \nabla_a\nabla_b F_{cd})|_\Sigma
-\tfrac1{d-5}( 7P_{\hat n\hat n}+\tfrac{45}2 H^2)\bj_d
\\&&
\qquad\qquad
+\big((\bar \nabla^a P_{\hat n\hat n})
+9H (\bar \nabla^a H)
\big)\bar F_{ad}
\\&&
\qquad\qquad
-\tfrac{6}{d-9}\Big(\tfrac1{d-5}\boxast_9 
\bj_d
+\bar P^{ab} \bar \nabla_a \bar F_{bd}
+\tfrac12(\bar \nabla^a \bar J) \bar F_{ad}
+\tfrac12 \bar C_{abd} \bar F^{ab}\Big)\\
&&
\qquad\qquad
+\tfrac{12}{(d-5)(d-9)}
[ \bar F_{ad}, \bj^a]
\Big]
+\tfrac{12}{d-9} \, \hat k_d 
\, .
 \end{eqnarray*}
  Now we develop the first term of the above display.
 \begin{eqnarray*}
 \hat n^a \hat n^b \hat n^c \nabla_{\hat n}&\!\!\!\! \! \nabla_a\nabla_b\!\! \!\!\!&F_{cd}
 \stackrel\Sigma = 
 \hat n^a \hat n^b  \nabla_{\hat n} \nabla_a\nabla_b F_{nd} 
 -
  \hat n^a \hat n^b [ \nabla_{ n} \nabla_a\nabla_b,n^c] F_{cd} \\[1mm]
&  \stackrel\Sigma=&
  \tfrac1{d-4} \hat n^a \hat n^b  \nabla_{\hat n} \nabla_a\nabla_b (\sigma \nabla^c F_{cd}) 
 -
  \hat n^a \hat n^b [ \nabla_{ n} \nabla_a\nabla_b,n^c] F_{cd}
  \\
&   \stackrel\Sigma=&
    \tfrac1{d-4}\big(  \hat n^b  \nabla_{\hat n}\nabla_b \nabla^c F_{cd}
 +
  \hat n^a \hat n^b  \nabla_{\hat n} (n_a\nabla_b  \nabla^c F_{cd}) 
 +
  \hat n^a \hat n^b  \nabla_{\hat n} \nabla_a(n_b \nabla^c F_{cd}) 
  \big)
  \\&&
 -
  \hat n^a \hat n^b [ \nabla_{ n} \nabla_a\nabla_b,n^c] F_{cd}
  \\
&  \stackrel\Sigma=&
   \tfrac1{d-4}\big( 3 \hat n^b  \nabla_{\hat n}\nabla_b \nabla^c F_{cd}
 +
 H   \nabla_{\hat n}  \nabla^c F_{cd}
 +
  \hat n^a \hat n^b  [\nabla_{n} \nabla_a,n_b ]\nabla^c F_{cd} 
  \big)
  \\&&
 -
  \hat n^a \hat n^b [ \nabla_{n} \nabla_a\nabla_b,n^c] F_{cd}
    \\
 & \stackrel\Sigma=&
   \tfrac1{d-7}\big( 3\hat n^a \hat n^b  
   \nabla_c^\top\nabla_a\nabla_b F^c{}_{d}
 +3 \hat n^a\hat n^b  [\nabla_a\nabla_b, \nabla^c ]F_{cd}
 +
 H   \nabla_{\hat n}  \nabla^c F_{cd}
 \\&&\qquad
 +
  \hat n^a \hat n^b  [\nabla_{n} \nabla_a,n_b ]\nabla^c F_{cd} 
 -(d-4)
  \hat n^a \hat n^b [ \nabla_{n} \nabla_a\nabla_b,n^c] F_{cd}\big)\, .
 \end{eqnarray*}
 Working term by term, the first two tensor structures above are handled as follows (again we rely heavily on  the identities developed in  Section~\ref{PEM}). In the following a $\top$ above an equals sign denotes projection in hypersurface cotangent bundle directions.
 \begin{eqnarray*}
 \hat n^a \hat n^b  \!\!
 &\!\!\!\!\!\!\!\!
   \nabla_c^\top\nabla_a\nabla_b
 \!\! \!\! \!\!\!\!&\!\! F^c{}_{d}
  \stackrel\Sigma= 
     \nabla_c^\top(\hat n^b\nabla_{\hat n}\nabla_b F^c{}_{d})
-H \hat n^b \nabla^\top_c \nabla_b F^c{}_{d}
  -H\bar g^{bc}
   \nabla_{\hat n}\nabla_b F_{cd}\\[1mm]
   &  \stackrel{\Sigma,\top}=\!\!\!\!\! \!\!\!&  
 \tfrac2{d-5} 
\hh\bar \nabla^c\bar \nabla_{[c}\bj_{d]}
-2\bar \nabla^c(\bar P_{[c}{}^b \bar F_{d]b})
+\big(2(\bar \nabla^c P_{\hat n\hat n})+10H(\bar \nabla^c H)\big)\bar F_{cd}
\\&&
+(2P_{\hat n\hat n}+5H^2)\bj_d
+(d-2)H\Big(
P_{\hat n}{}^c\bar F_{dc}
-\tfrac{5}{d-5}\hh H
\bj_{d}
\Big)
\\&&
-2H\nabla^\top_c\big( \tfrac{1}{d-5}(\hat n^c \bj_d-\hat n_d \bj^c)
    -2H\bar F^c{}_d\big)
+H^2 \bj_d 
-H \bar g^{bc} [\nabla_n,\nabla_b] F_{cd}
\\
   &  \stackrel{\Sigma,\top}=
   \!\!\!\!\! \!\!\!
    &  
 \tfrac2{d-5} 
\hh\bar \nabla^c\bar \nabla_{[c}\bj_{d]}
-2\bar \nabla^c(\bar P_{[c}{}^b \bar F_{d]b})
+\big(2(\bar \nabla^c P_{\hat n\hat n})-(d+12)HP_{\hat n}{}^c\big)\bar F_{cd}
\\&&
+(2P_{\hat n\hat n}+\tfrac{4d-41}{d-5}H^2)\bj_d
-H R_{\hat n b}{}^{be}\bar F_{ed}
-H R_{\hat n bd}{}^{e}\bar F^b{}_{e}
\\
  &  \stackrel{\Sigma,\top}= 
  \!\!\!\!\! \!\!\!
  &  
   \tfrac2{d-5} 
\hh\bar \nabla^c\bar \nabla_{[c}\bj_{d]}
-2\bar \nabla^c(\bar P_{[c}{}^b \bar F_{d]b})
+\big(2(\bar \nabla^c P_{\hat n\hat n})-15HP_{\hat n}{}^c\big)\bar F_{cd}
\\&&
+\big(2P_{\hat n\hat n}
+\tfrac{4d-41}{d-5}H^2\big)
 \bj_d \, ,
   \\[2mm]
\\[2mm]
 \hat n^a\hat n^b  &
 \!\!\!\!
 [\nabla_a\nabla_b, \nabla^c ]
 \!\!\!\!\!\!
 &F_{cd}\stackrel\Sigma=
 R_{\hat n}{}^c{}_{\hat n}{}^e \nabla_e^\top \bar F_{cd}
 + R_{\hat n}{}^c{}_{c}{}^e \nabla_{\hat n} F_{ed}
  +R_{\hat n}{}^c{}_{d}{}^e \nabla_{\hat n} F_{ce}\\
  &&+  \hat n^b \nabla_{\hat n}\big(
  [F_b{}^c,F_{cd}]
  +R_b{}^c{}_{c}{}^e F_{ed}
    +R_b{}^c{}_{d}{}^e F_{ce}
  \big)
  \\&\stackrel{\Sigma,\top}=&
  P^{ce} \nabla^\top_e \bar F_{cd}
  -P_{\hat n}{}^e \hat n^c \nabla^\top_e \bar F_{cd}
 +P_{\hat n\hat n}\nabla^\top_e \bar F^e{}_{d}
 \\&&
-2(d-3) P_{\hat n}{}^e \nabla_{\hat n} F_{ed}
-2J\hat n^e \nabla_{\hat n} F_{ed}
-2P^c{}_d\hat n^e \nabla_{\hat n} F_{ce}
\\&&
+\hat n^b [(\nabla_{\hat n} F_b{}^c),\bar F_{cd}]
-\hat n^b (\nabla_{\hat n}Ric_b{}^e)\bar F_{ed}
+\hat n^b (\nabla_{\hat n} R_b{}^c{}_d{}^e)\bar F_{ce}
\\&\stackrel{\Sigma,\top}=&
P^{ce}\bar \nabla_e  \bar F_{cd}-HP_{\hat n}{}^e \bar F_{ed}
+HP_{\hat n}{}^e\bar F_{ed} 
+P_{\hat n\hat n} \bj_d
\\&&
-2(d-3) 
\big(\tfrac1{d-5}P_{\hat n\hat n}\bj_d - 2HP_{\hat n}{}^e \bar F_{ed}\big)
-\tfrac{2}{d-5}J \bj_d
+\tfrac{2}{d-5}P_{cd}^\top\hh \bj^c
\\&&
+\tfrac1{d-5} [\bj^c,\bar F_{cd}]
\!-\!(d-2)\hat n^b (\nabla_{\hat n} P_b{}^e)\bar F_{ed}
+\hat n^b (\nabla_{\hat n} W_b{}^c {}_d{}^e) \bar F_{ce}
\!-\!\hat n^b (\nabla_{\hat n} P_b{}^e)\bar F_{de}
\\&\stackrel{\Sigma,\top}=&
\bar P^{ce} \bar\nabla_e \bar F_{cd}
+\tfrac{d+1}{2(d-5)}H^2 \bj_d
-\tfrac{d+1}{d-5}P_{\hat n\hat n} \bj_d
 +4(d-3) H P_{\hat n}{}^e\bar F_{ed}
\\&&
-\tfrac{2}{d-5} \bar J \bj_d
+\tfrac{2}{d-5} \bar P^c{}_d \bj_c
+\tfrac1{d-5} [\bj^c,\bar F_{cd}]
-(d-3)\hat n^b (\nabla_{\hat n}P_b{}^e)\bar F_{ed}\\&&
+\hat n^b (\nabla_{\hat n} W_b{}^c{}_d{}^e)\bar F_{ce}
\\&\stackrel{\Sigma,\top}=& 
\bar P^{ce} \bar\nabla_e \bar F_{cd}
+\tfrac{d+1}{2(d-5)}H^2 \bj_d
-\tfrac{d+1}{d-5}P_{\hat n\hat n} \bj_d
+6(d-3) H P_{\hat n}{}^e\bar F_{ed}
\\&&
-\tfrac{2}{d-5} \bar J \bj_d
+\tfrac{2}{d-5} \bar P^c{}_d \bj_c
+\tfrac1{d-5} [\bj^c,\bar F_{cd}]
-(d-3) C_{\hat n}{}^b{}_{\hat n} \bar F_{bd}
\\&&
-(d-3)  (\bar\nabla_e P_{\hat n\hat n})
 \bar F^e{}_{d}
+ \bar C_{d}{}^{ec}\bar F_{ce}\, .
 \end{eqnarray*}
The third  tensor structure is precisely that appearing in Equation~\nn{d6} multiplied by~$H$.
$$
   H 
    \nabla_{\hat n}  \nabla^c 
       F_{cd}
\stackrel\Sigma=
 -(d-4)HP_{\hat n}{}^c \bar F_{cd}
-\tfrac
{2(d-4)}{d-5}\, H^2 \bj_d
\, .
$$
The remaining two structures  are then computed as below.
\begin{eqnarray*}
      \hat n^a \hat n^b  [\nabla_{n} \nabla_a,n_b ]\nabla^c F_{cd} \!\!
   &\stackrel\Sigma=&  \! \!
   2H \nabla_{\hat n} \nabla^c F_{cd}-\hat n^a \hat n^b 
   (P_{ab}+g_{ab}P_{\hat n\hat n}) \nabla^c F_{cd}
  \\
  &\stackrel\Sigma=&\!\!
    -2(d-4)HP_{\hat n}{}^c \bar F_{cd}
-\tfrac
{4(d-4)}{d-5}\, H^2 \bj_d
-\tfrac{2(d-4)}{d-5}P_{\hat n\hat n}\hh \bj_d\, ,
\end{eqnarray*}
\begin{eqnarray*}
 \hat n^a \hat n^b [ \nabla_{n} \nabla_a\nabla_b,n^c] F_{cd}\!\!
   &\stackrel\Sigma=&  \!\!
H    \hat n^b\hat n^c\nabla_{\hat n}\nabla_b F_{cd}
+\hat n^a \hat n^b \nabla_{\hat n}\big((-\sigma P_a{}^c -\delta_a^c \rho)\nabla_b F_{cd}\big)
\\&&
+\hat n^a \hat n^b \nabla_{\hat n}\nabla_a \big((-\sigma P_b{}^c -\delta_b^c \rho)F_{cd}\big)
\\
 &\stackrel\Sigma=&\!\!
3H    \hat n^b\hat n^c\nabla_{\hat n}\nabla_b F_{cd}
-2 P_{\hat n}{}^c \nabla_{\hat n} F_{cd}
-3 P_{\hat n\hat n} \hat n^a \nabla_{\hat n} F_{ad}
\\&&
- \hat n^b (\nabla_{\hat n} P_b{}^c) F_{cd}
-\hat n^a \hat n^b \nabla_{\hat n}(n_a P_b{}^c F_{cd})
\\
 &\stackrel\Sigma=&\!\!
3H    \hat n^b\hat n^c\nabla_{\hat n}\nabla_b F_{cd}
-3 P_{\hat n}{}^c \nabla_{\hat n} F_{cd}
-3 P_{\hat n\hat n} \hat n^a \nabla_{\hat n} F_{ad}
\\&&
- 2\hat n^b (\nabla_{\hat n} P_b{}^c) \bar F_{cd}
-H P_{\hat n}{}^c \bar F_{cd}
\\
&\stackrel{\Sigma,\top}=&\!\!
3H P_{\hat n}{}^a \bar F_{da}
-\tfrac{15}{d-5}\hh H^2 \bj_d
-\tfrac{6}{d-5} P_{\hat n\hat n} \bj_d
+5 H P_{\hat n}{}^e \bar F_{ed}
\\&&
-2C_{\hat n}{}^c{}_{\hat n} \bar F_{cd}
-2 \hat n^b \hat n^a (\nabla^c P_{ba})\bar F_{cd}
\\
&\stackrel{\Sigma,\top}=&\!
\!6H P_{\hat n}{}^a \bar F_{ad}
\!-\!\tfrac{15}{d-5}\hh H^2 \bj_d
\!-\!\tfrac{6}{d-5} P_{\hat n\hat n} \bj_d
\!-\!2(\bar \nabla^a \!P_{\hat n\hat n})\bar F_{ad}
\!-\!2C_{\hat n}{}^c{}_{\hat n} \bar F_{cd} \, .
 \end{eqnarray*}
 Hence
\begin{eqnarray*}(d-7)&&\hspace{-1cm}
 \hat n^a \hat n^b \hat n^c \nabla_{\hat n} \nabla_a\nabla_b F_{cd}
 \stackrel\Sigma = 
    \tfrac6{d-5}\bar \nabla^a \bar \nabla_{[a}\bj_{d]}
   -6\bar \nabla^a(\bar P_{[a}{}^c \bar F_{d]c}) 
   -(d-7)(\bar \nabla^aP_{\hat n\hat n})
    \bar F_{ad}
    \\&&
   +9(d-7)HP_{\hat n}{}^a \bar F_{ad}
     +\tfrac{7(d-7)}{d-5}P_{\hat n\hat n} \bj_d
  +\tfrac{45(d-7)}{2(d-5)}
 H^2\bj_d
+\tfrac{3}{d-5} \bar P^{ce} \bar\nabla_e \bar F_{cd}
\!-\!\tfrac6{d-5}\bar J \bj_d
\\&&
+\tfrac{6}{d-5}\bar P^c{}_d \bj_c
+\tfrac{3}{d-5} [\bj^c,\bar F_{cd}]
-(d-1) C_{\hat n}{}^b{}_{\hat n} \bar F_{bd}
+ 3\bar C_{d}{}^{ec}\bar F_{ce}\\[1mm]
& \stackrel\Sigma = &
\tfrac{12}{d-5} \hat k_d 
-\tfrac{6(d-9)}{d-5} \bar \nabla^a(\bar P_{[a}{}^c \bar F_{d]c})
+\tfrac{6}{d-5}(\bar \nabla^a \bar J) \bar F_{ad} 
  -(d-7)(\bar \nabla^aP_{\hat n\hat n})
    \bar F_{ad}
    \\&&
   +9(d-7)HP_{\hat n}{}^a \bar F_{ad}
     +\tfrac{7(d-7)}{d-5}P_{\hat n\hat n} \bj_d
  +\tfrac{45(d-7)}{2(d-5)}
 H^2\bj_d
+3 \bar P^{ce} \bar\nabla_e \bar F_{cd}
\\&&
+\tfrac{6}{d-5}\bar P^c{}_d \bj_c
+ 3\bar C_{d}{}^{ec}\bar F_{ce}\, .\\[1mm]
& \stackrel\Sigma = &
\tfrac{12}{d-5} \hat k_d 
+(d-7)\Big[\tfrac{3}{d-5}(\bar \nabla^a \bar J) \bar F_{ad} 
+\tfrac{6}{d-5} \bar P^{ab} \bar\nabla_a \bar F_{bd}
+\tfrac{3}{d-5}\bar P^c{}_d \bj_c
+\tfrac{3}{d-5}\bar C_{abd}\bar F^{ab}
\\&&
\qquad\qquad\qquad\qquad\quad
  +\tfrac{1}{d-5}(7P_{\hat n\hat n} 
  +\tfrac{45}{2}
 H^2)\bj_d
  -\big((\bar \nabla^aP_{\hat n\hat n})
  +9H(\bar \nabla^a H)\big) \bar F_{ad}
   \Big]
\, .\\[1mm]
\end{eqnarray*}
The proof is completed by employing the identity
\begin{equation}\label{boxj}
\boxast_9 \bj_d =4 \hat k_d -4 \bar P^{ab} \bar \nabla_a \bar F_{bd}
-2(\bar \nabla^a \bar J)\bar F_{ad}
-2\bar C_{abd}\bar F^{ab}
+\tfrac{d-9}2 \bar P^c{}_d \bj_c
+2[ \bar F_{ad}, \bj^a]\, ,
\end{equation}
and remembering that ${\sf E}^{(4)}=\frac1{d-7}\check {\sf E}^{(4)}$ in dimensions $8,10,11,12,\ldots$.
The special~$d$~$=$~$9$ case follows the same arguments as above with minor adjustments  to handle  the  removable $d=9$ singularities.
 \end{proof}
 
 The vanishing properties of boundary operators ${\sf E}^{(\ell)}$ on solutions can be employed to compute expansion coefficients for the formal solution in Equation~\nn{EXPAND}. Suppose, that~$d\geq 8$ and~$\nabla^A$ solves the conformally compact Yang--Mills equation (a similar analysis to that below can be applied when $d=5,6,7$, but in these cases fewer   expansion coefficients $A^{(\ell)}$ can be fixed). Then from Lemmas~\ref{warmerShowarma},~\ref{CFF}, and~\ref{lastcoeff}
we have \begin{equation}\label{E0}
0={\sf E}^{(1)}={\sf E}^{(2)}=
{\sf E}^{(3)}={\sf E}^{(4)}\, .
\end{equation}
(When $d=9$, we must replace ${\sf E}^{(4)}$ by ${\sf E}^{(4_9)}$ in the above).
Then, using Equation~\nn{FGE}, we have that 
$$
F_{nb } \ext y^b = 
\big( A^{(1)}_j(x)+
2rA^{(2)}_j(x)+
3r^2 A^{(3)}_j(x)+
4r^3 A^{(4)}_j(x)
\big)\ext x^j +{\mathcal O}(r^4)\, .
$$
Now ${\sf E}^{(1)}_b = F_{nb}|_\Sigma$, which implies
$$
A^{(1)}_j(x)=0\, .
$$
Turning to Equation~\nn{E2} we learn $\hat n^a  (\nabla_{\hat n} F_{ab})|_\Sigma=\frac1{d-5}\hh{\bj}_b$. But  using ${\sf E}^{(1)}=0$ and $\hat n_a \ext y^a =\ext r$, we have
$
\ext y^b\hat n^a  (\nabla_{\hat n} F_{ab})|_\Sigma=\ext y^b(\nabla_{\hat n}F_{nb})|_\Sigma=2A^{(2)}_j \ext x^j$,
whence 
\begin{equation}\label{A2}
A^{(2)}_j(x)=\tfrac{1}{2(d-5)}\hh  \bar \nabla^i \bar F_{ij}\, .
\end{equation}
Observe that the residue of the $d=5$ pole recovers the $d=5$ obstruction current.

A similar line of argument, now based on Equation~\nn{E3} and the fact that $H=0$ for the bulk metric representative ~\nn{block}, gives
$$
A^{(3)}_j(x)=0\, .
$$

Note that the vanishing of $A^{(1)}$ and $A^{(3)}$ also follows from our earlier discussion of the  evenness property of $A^{\rm ev}$. The  above result for $A^{(2)}$ can also be quickly obtained by a direct Fefferman--Graham type order by order computation. 

The fourth order coefficient~$A^{(4)}$ is significantly more involved. For that we turn to 
the fourth boundary operator of  Equation~\nn{E4}.
First, we note that in the 
canonical compactified
metric $g$, we have that $n=\ext r$ obeys $\nabla_n n=0$ and $n^2=1$. In turn Lemma~\ref{69} 
implies that $\rho=0$ and thus $P_{\hat n\hat n}\stackrel\Sigma=0$.
Hence, turning to Equation~\nn{E4} in dimensions $d\geq 10$, or Equation~\nn{E49} in dimension $d=9$, and using Equation~\nn{E0} as well as Lemma~\ref{lastcoeff}, allows us to extract~$A^{(4)}$. Upon applying the identity~\ref{boxj}, we find $$
A^{(4)}_k(x)=
 \tfrac{1}{2(d-5)(d-7)}\Big( \hat k_k + \tfrac14(d-7) \big(
  \bar P_{ji}\bj^{\hh i}
  +2 \bar P^{ij} \bar \nabla_i \bar F_{jk}
  +(\bar \nabla^i \bar J) \bar F_{ik}
  + \bar C_{ijk} \bar F^{ij}\big)\Big)\, .
$$

\subsection{The Dirichlet-to-Neumann Map}\label{FGrescale}

We now study solutions of the
 conformally compact Yang--Mills equation for Poincar\'e--Einstein structures  beyond their leading asym\-p\-totics.
Firstly in  the case $d$ is even, there is no obstruction and we can apply Theorem~\ref{all-orders-magnetic} to find a 
higher order asymptotic solution. 
To analyze the Neumann data in that case, specializing to Poincar\'e--Einstein backgrounds, we
consider adding an order $r^{d-3}$ term to $A^{\rm ev}$. The Amp\`ere law~\nn{amp}  and vanishing of the obstruction for this dimension parity gives
$$
j_i\big[A^{\rm ev}(x,r) + r^{d-3}A^{(d-3)}(x)\big]={\mathcal O}(r^{d-3})\, ,
$$
 for {\it any} $A^{(d-3)}(x)$.
 However, the Gau\ss\ law~\nn{Ga} at order $r^{d-3}$ does give a condition on~$A^{(d-3)}(x)$, namely
\begin{equation}\label{FGdiv}
\bar \nabla^i A^{(d-3)}_i=0\, .
\end{equation}
An evenness argument shows that
the Neumann data $A^{(d-3)}$ defines a conformal invariant:
When the  formal solution $A_{(d-3)}:=A^{\rm ev}(x,r) + r^{d-3}A^{(d-3)}(x)$ is re-expressed in terms of a conformally related boundary metric $\bar \Omega^2 \bar h$, we have
$A_{(d-3)}=A^{\rm ev}\big(x,\tilde r/\Omega(x,r)\big)+\tilde r^{d-3} \Omega (x,r)^{3-d} A^{(d-3)}(x) 
+{\mathcal O}(r^{d-2})$. Because the first term in this expression involves only even powers of $r$ and  $\tilde r=\Omega(x,r)\hh r$
where $\Omega(x,r)$ has a suitably even expansion in $r$, 
it follows that
the coefficient of the $(d-3)^{\rm rd}$ power of the defining function determines a section of $
T^*\Sigma[3-d]\otimes {\operatorname{End}\mathcal V}\Sigma$. 
Notice also  that the divergence in Equation~\nn{FGdiv} is an invariant operation 
when acting on (endomorphism-valued) one-forms of precisely this weight. 

\smallskip

Now let  ${\mathscr A}$ denote the subset of the space of conformally compact Yang--Mills connections that obey $j[A]={\mathcal O}(\sigma^{d-3})$.
Then we get a canonical {\it Dirichlet-to-Neumann 
operator}
$$
{\mathscr E}^{(d-3)}:
{\mathscr A}
\to
 T^*\Sigma\otimes\End{\mathcal V}\Sigma[3-d]\, ,
$$
whose output is the section $A^{(d-3)}$ in the kernel of the conformally invariant coupled divergence operator (meaning $\bar \nabla^a {\mathscr E}_a^{(d-3)}=0$) constructed above. 
 Here  $(M, \cc,\sigma)$ is Poincar\'e--Einstein, but a similar construction is available on general conformally compact manifolds. 
In the situations where $A$ is the unique  global solution to $j[A]=0$, this operator gives a Dirichlet-to-Neumann map from the boundary Dirichlet data $A^{(0)}(x)$ to the non-locally determined Neumann data.

The above coordinate-based construction, while canonical, is somewhat indirect. It would be propitious to construct a boundary operator that directly extracts Neumann data from solutions.  
Constructing this operator was a main aim of Section~\ref{YMNO}, where candidate operators where constructed for even dimensions $d=4,6$. 
\begin{proposition}\label{boundops}
Let $d=4$ or $6$ and $(M^d,\cc,\sigma,A,\bar A)$ be a Poincar\'e--Einstein structure
equipped with a smooth connection $\nabla^A$
with boundary condition $\bar \nabla^{\bar A}$ such that
$$
j[A]={\mathcal O}(\sigma^{d-3})
 \, .
$$
Then the Dirichlet-to-Neumann operator is given by
$$
{\mathscr E}^{(d-3)}=\frac1{(d-3)!}{\sf E}^{(d-3)}
\, .
$$
\end{proposition}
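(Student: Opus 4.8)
The plan is to identify the coordinate-based Dirichlet-to-Neumann operator $\mathscr E^{(d-3)}$, which extracts the coefficient $A^{(d-3)}$ in the canonical Graham--Lee expansion, with the boundary operator $\tfrac{1}{(d-3)!}\mathsf E^{(d-3)}$ of Section~\ref{YMNO}. First I would recall that for a Poincar\'e--Einstein structure the operator $\mathsf E^{(d-3)}$ (that is, $\mathsf E^{(3)}$ when $d=6$, and $\mathsf E^{(1)}$ when $d=4$) is, by Propositions~\ref{third} and the definition of $\mathsf E^{(1)}$, a conformally invariant boundary operator taking values in $\Gamma(T^*\Sigma\otimes\End{\mathcal V}\Sigma[-(d-3)])$, whose leading term is a fixed nonzero multiple of $\hat n^{a_1}\cdots\hat n^{a_{d-3}}(\nabla_{a_1}\cdots\nabla_{a_{d-4}}F_{a_{d-3}b})|_\Sigma$. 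The key point is that in the canonical compactified metric $g=\ext r^2 + h(x,r)$ of Equation~\nn{block}, the gauge choice $A_0^\Phi=0$ is in force, the unit conormal is $\hat n = \ext r$, one has $\nabla_n n=0$, $n^2=1$, and hence (by Lemma~\ref{69}) $\rho=0$ and $P_{\hat n\hat n}\stackrel\Sigma=0$, while the embedding $\Sigma\hookrightarrow(M,g)$ is totally geodesic so $H=0$ and all its tangential derivatives vanish. In this scale and coordinate system, acting on the curvature two-form, $\nabla_{\hat n}=\tfrac{\partial}{\partial r}$ up to Christoffel corrections that, by the list of $\Gamma$'s in Section~\ref{connexp} together with the evenness of $h(x,r)$, contribute only to lower transverse order.

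The core computation, then, is to evaluate the leading term of $\mathsf E^{(d-3)}$ on a connection satisfying $j[A]={\mathcal O}(\sigma^{d-3})$ in the Fefferman--Graham coordinate system. For such a connection we have, by Lemmas~\ref{warmerShowarma}, \ref{CFF} and (for the top order) the analog of Lemma~\ref{lastcoeff}, that $\mathsf E^{(1)}=\mathsf E^{(2)}=\cdots=\mathsf E^{(d-4)}=0$, so all curvature-correction and lower-order terms in $\mathsf E^{(d-3)}$ drop out, leaving simply $\mathsf E^{(d-3)}_b\stackrel\Sigma=\hat n^{a_1}\cdots\hat n^{a_{d-4}}(\nabla_{\hat n}^{\,d-4}F_{a_1\cdots}{}_b)|_\Sigma$ plus the normalizing constant. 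Using $F_{nb}\ext y^b = A'_j\ext x^j$ from Equation~\nn{FGE} together with the expansion~\nn{EXPAND}, repeated $r$-differentiation gives $\nabla_{\hat n}^{\,d-4}F_{nb}\ext y^b|_{r=0}=(d-3)!\,A^{(d-3)}_j\ext x^j$ (the odd-lower-order coefficients $A^{(1)},A^{(3)},\dots$ having already been shown to vanish by the evenness argument, so no intermediate terms survive). Extracting the tangential part and dividing by $(d-3)!$ then yields exactly $A^{(d-3)}$, i.e. the image of $\mathscr E^{(d-3)}$. Finally, the Gau\ss\ law~\nn{Ga} at order $r^{d-3}$ gives $\bar\nabla^iA^{(d-3)}_i=0$, which matches the divergence-free property $\bar\nabla^a\mathsf E^{(d-3)}_a=0$ established for $\mathsf E^{(3)}$ in the $d=6$ lemma (and trivially $\bar\nabla^a\mathsf E^{(1)}_a=\bar\nabla^a\bar F_{\hat n a}$ vanishes when $d=4$, where ${\sf E}^{(1)}_a=F_{\hat na}|_\Sigma$ and the $d=4$ evenness forces $F^A|_\Sigma$ into the relevant form). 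Since both $\mathscr E^{(d-3)}$ and $\tfrac1{(d-3)!}\mathsf E^{(d-3)}$ are conformally invariant, natural operators with the same leading symbol that agree on this distinguished coordinate representative, and since the asymptotic solution is determined up to gauge (Theorem~\ref{gaugeeq}, under which both sides transform covariantly with $U|_\Sigma=\operatorname{Id}$), they agree as operators $\mathscr A\to\Gamma(T^*\Sigma\otimes\End{\mathcal V}\Sigma[3-d])$.

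I expect the main obstacle to be the careful bookkeeping of the Christoffel and lower-transverse-order terms in passing from the covariant expression $\hat n^{a_1}\cdots\hat n^{a_{d-4}}\nabla_{a_1}\cdots\nabla_{a_{d-4}}F_{a_{d-3}b}$ to the bare coordinate derivative $\partial_r^{\,d-4}A_j$: one must verify that every correction term either involves a factor that vanishes on a totally geodesic boundary in the canonical scale ($H$, $P_{\hat n\hat n}$, $\rho$, or $\bar\nabla H$) or else involves $\mathsf E^{(k)}$ with $k<d-3$, which vanishes on solutions. For $d=4,6$ this is explicit and short (it is essentially Equation~\nn{A2} for $d=6$, where $A^{(3)}$ appears, and trivial for $d=4$), so I would carry out precisely those two cases, invoke the evenness discussion of Section~\ref{connexp} to handle the vanishing of odd intermediate coefficients, and cite Propositions~\ref{third} and the $d=6$ divergence-free lemma for the remaining structural inputs, rather than attempt a dimension-general argument.
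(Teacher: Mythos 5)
Your proposal is correct and essentially reproduces the paper's own argument: in both you use Lemma~\ref{warmerShowarma} (and, for $d=6$, the vanishing of ${\sf E}^{(1)},{\sf E}^{(2)}$ on asymptotic solutions) to strip ${\sf E}^{(d-3)}$ down to its leading normal-derivative term, then exploit the canonical Graham--Lee scale (where $H=0$, $\rho=0$, $P_{\hat n\hat n}|_\Sigma=0$, and the temporal gauge is in force) together with Equation~\nn{FGE} and the expansion~\nn{EXPAND} to identify that term with $(d-3)!\,A^{(d-3)}$. The paper simply writes out the two cases $d=4,6$ directly without the preliminary general framing, but the substance is identical.
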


\begin{proof}
When $d=4$, we only need examine the boundary operator ${\sf E}^{(1)}_a$
and verify that it extracts the expansion coefficient $A^{(1)}(x)$. But ${\sf E}^{(1)}_a=\hat n^b F_{ba}$ so the result follows directly from~\nn{FGE}.

For the $d=6$ case we need to examine ${\sf E}^{(3)}_a$ as given by Equation~\nn{E3}. Because the connection $\nabla^A$ is  asymptotically Yang--Mills, we have that ${\sf E}^{(2)}=0={\sf E}^{(1)}$. Hence
$$
{\sf E}_a^{(3)}=
\hat n^a \hat n^b (\nabla_{\hat n} \nabla_a F_{bc})|_\Sigma
+5H
 \hh\bj_c
-(\bar\nabla^a H)\bar F_{ac}
\, .
$$
In the Fefferman--Graham expansion for a Poincar\'e--Einstein metric, the mean curvature as measured by the compactified metric vanishes, so  only the first term above remains. It not difficult to check that this then equals
$3! A^{(3)}(x)$.
\end{proof}

Theorem~\ref{evenvarySren} established conformal invariance of the renormalized Yang--Mills action~$S^{\rm ren}[A,\tau]$ evaluated on a conformally compact Yang--Mills solution in even dimensions $d$. One can also consider variations along  such solutions determined by a path of boundary connections. By construction, the result of this variation must be conformally invariant, divergence-free and probe asymptotics of the interior connection at order $d-3$. Indeed,  the critical boundary operators ${\sf E}^{(d-3)}$ could be defined this way. 
Uniqueness of the expression found for the $d=4,6$ cases ${\sf E}^{(1)}$ and ${\sf E}^{(3)}$, respectively,  establishes that these must be  functional gradients of the renormalized Yang--Mills action.
We leave the problem of determining/characterizing such variations in higher dimensions to a comprehensive study of the renormalized Yang--Mills action functional.

\smallskip

As mentioned above, when given  data $(M^d,\cc,\sigma,\bar A,{ E})$ for $d=4,6,8,\ldots$, and a conformally compact structure that is Poincar\'e--Einstein, then  Theorem~\nn{all-orders-magnetic} ensures a canonical  ${\mathcal O}(\sigma^\infty)$ solution to Problem~\ref{yeswehavethem}.

\medskip

It now remains to consider  $d$ odd.
In that case the above evenness argument no longer applies and instead Proposition~\ref{logsol} implies that when  
the obstruction current is non-vanishing, 
a polyhomogeneous expansion (involving both powers of $r$ and $\log r$) is required for the higher order  solution asymptotics. However, once  the obstruction current~$\bar k$ vanishes, one might hope for a result along the lines of Proposition~\ref{boundops}. Here there is an essential distinction to the case of $d$ even. While vanishing of the obstruction current~$\bar k$ does guarantee the absence of 
log terms in the expansion of $A_i(x)$, this
alone does not suffice to ensure that the coefficient $A_i^{(d-3)}(x)$ capturing the Neumann data defines a conformal invariant. This is because a new choice of $\bar g\in \cc_\Sigma$ changes the  higher even order  coefficients in the connection expansion which now include $A^{(d-3)}_i(x)$. 

Proposition~\ref{logsol} suggests, when $d$ is odd   and  the obstruction current $k_i(x)\neq 0$, that
$$
j_i\big[A^{\rm ev}(x,r) + r^{d-3}\big(\tfrac1{d-3}\hh k(x) \log r+A^{(d-3)}(x)\big)\big]={\mathcal O}(r^{d-3}\log r)+{\mathcal O}(r^{d-3})\, .
$$
Indeed the above is easily verified. Also the Gau\ss\ law at order $r^{d-3}$ still implies that $\bar \nabla^i A^{(d-3)}_i=0$ (recall that the obstruction obeys $\bar \nabla^i k_i=0$).
Thus, given the data 
%
 of any solution $A$ to the conformally compact Yang--Mills system to order $j[A]={\mathcal O}(r^{d-3})$ (with~$d$ odd   and $(M,\cc,\sigma)$ Poincar\'e--Einstein), given $\bar g\in \cc_\Sigma $ we get a canonical {\it Dirichlet-to-Neumann operator}
\begin{equation}
\label{RDN}
{\mathscr E}^{(d-3)}_{\bar g}:
\mathscr A
\to
 T^*\Sigma\otimes\End{\mathcal V}\Sigma\, ,
\end{equation}
whose output, given the choice of boundary metric representative~$\bar g$, is the section $A^{(d-3)}$ (in the kernel of the  coupled divergence operator) constructed above. 
In the above ${\mathscr A}$ again denotes the subset of the space of conformally compact Yang--Mills connections that obey $j[A]={\mathcal O}(\sigma^{d-3})$.
The situation simplifies  when the boundary connection $\bar A$ is flat.

\begin{proposition}\label{boundopsodd}
Let $d\geq 5$ be odd and $(M^d,\cc,\sigma,A,\bar A)$ be a Poincar\'e--Einstein structure
equipped with a smooth connection $\nabla^A$
and boundary condition $\bar \nabla^{\bar A}$ such that
$$
j[A]={\mathcal O}(\sigma^{d-3})\, .
$$
Then, when the curvature of the boundary connection obeys
$$
\bar F=0\, ,
$$
 it follows that ${\mathscr E}^{(d-3)}_{\bar g}$ defines a section of $\operatorname{End}{\mathcal V}\otimes T^*\Sigma[3-d]$ given for $d=5,7$ by
$$
{\mathscr E}^{(d-3)}=\tfrac1{(d-3)!}\, {\sf E}^{\left((d-3)_d\right)}
\, .
$$

\end{proposition}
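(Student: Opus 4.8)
The plan is to reduce the statement to two verifications: first, that the Dirichlet-to-Neumann operator ${\mathscr E}^{(d-3)}_{\bar g}$ really is a section of the claimed weight-$(3-d)$ bundle under the flatness hypothesis $\bar F=0$; and second, that in the canonical Fefferman--Graham gauge the boundary operators ${\sf E}^{(2_5)}$ (for $d=5$) and ${\sf E}^{(4_7)}$ (for $d=7$) extract exactly $(d-3)!$ times the coefficient $A^{(d-3)}(x)$. The first part is already essentially in hand: the general (odd $d$, non-flat) discussion preceding the proposition shows that $A^{(d-3)}$ lies in the kernel of $\bar\nabla^a$ and transforms correctly up to the ambiguity coming from the change in the higher even-order expansion coefficients under a change $\bar g\mapsto\bar\Omega^2\bar g$. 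I would point out that when $\bar F=0$ the obstruction current $\bar k$ vanishes (it is built from $\bar F$ and its boundary derivatives; for $d=5$, $\bar k=\bj=\bar\nabla^a\bar F_{ab}=0$, and for $d=7$ the formula in Theorem~\ref{obsts} is manifestly zero), so no log term appears, and moreover the evenness argument of Section~\ref{connexp} combined with $\bar F=0$ kills the problematic mixing: since $A^{\rm ev}$ is forced (inductively via the Amp\`ere law~\nn{ampexp} with $\bar F=0$) to be trivial below order $r^{d-3}$, there are no lower even coefficients for the rescaling $\Omega(x,r)$ to contaminate $A^{(d-3)}$. Hence $A^{(d-3)}$ is a genuine conformal density-valued one-form of weight $3-d$, divergence-free, which is precisely the assertion that ${\mathscr E}^{(d-3)}_{\bar g}$ is well-defined with the stated target.

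For the identification with ${\sf E}^{(2_5)}$ and ${\sf E}^{(4_7)}$, the strategy mirrors the proof of Proposition~\ref{boundops}: work in the Graham--Lee compactified metric $g=\ext r^2+h(x,r)$ of Section~\ref{connexp}, for which $n=\ext r$, $\nabla_n n=0$, $n^2=1$, the embedding $\Sigma\hookrightarrow(M,g)$ is minimal so $H=0$, and consequently (via Lemma~\ref{69}) $\rho=0$ and $P_{\hat n\hat n}|_\Sigma=0$. Under the flatness hypothesis one has the chain $0={\sf E}^{(1)}={\sf E}^{(2)}={\sf E}^{(3_7)}=\bar F$ along $\Sigma$ (Lemma~\ref{warmerShowarma}, Lemma~\ref{CFF}, and the vanishing of the current $\bj=\bar\nabla^a\bar F_{ab}$). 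For $d=5$: substituting these vanishings and $H=0$ into the definition~\nn{E25} leaves ${\sf E}^{(2_5)}_c=\hat n^a\hat n^b(\nabla_a F_{bc})|_\Sigma$; using~\nn{FGE}, i.e. $F_{nb}\ext y^b=A'_j\ext x^j$ with $A=A^{(0)}+r^2A^{(2)}+\cdots+r^{d-4}A^{(d-4)}+r^{d-3}A^{(d-3)}+\cdots$ and $A^{(1)}=A^{(3)}=0$, one differentiates once more in $r$ to read off ${\sf E}^{(2_5)}_j=2!\,A^{(2_5)}$—but here $d-3=2$, so this is exactly $2!\,A^{(2)}=2!\,A^{(d-3)}$ up to the usual caveat that below-critical coefficients vanish when $\bar F=0$. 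For $d=7$: substituting~\nn{E0}-type vanishings and $H=0$, $P_{\hat n\hat n}=0$ into~\nn{E47} collapses it to ${\sf E}^{(4_7)}_d=\hat n^a\hat n^b\hat n^c(\nabla_{\hat n}\nabla_a\nabla_b F_{cd})|_\Sigma$, and three $r$-differentiations of~\nn{FGE} give $4!\,A^{(4)}=4!\,A^{(d-3)}$. I would carry out the $\nabla_n$-to-$\partial_r$ bookkeeping explicitly but briefly, citing that on scalars/forms with the stated gauge $\nabla_n$ acting along $\ext y^b$-components is $\partial_r$ modulo Christoffel terms which all vanish to the relevant order because $h=h^{\rm ev}+{\mathcal O}(r^{d-1})$ and $H=0$.

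The main obstacle is the careful accounting in the $d=7$ case: ${\sf E}^{(4_7)}$ as written in~\nn{E47} contains many curvature and mean-curvature terms, and one must be certain that \emph{every} one of them vanishes along $\Sigma$ in the Fefferman--Graham gauge under the flatness assumption — in particular the $\boxast_9{\sf E}^{(2)}_d$ term, the $(\bar\nabla^a P_{\hat n\hat n})\bar F_{ad}$ and $(\bar\nabla^a H)(\cdots)$ terms, and the commutator $[{\sf E}^{(1)}_d,\bar\nabla^a{\sf E}^{(1)}_a]$. Each is quadratic or higher in the vanishing set $\{{\sf E}^{(1)},{\sf E}^{(2)},{\sf E}^{(3_7)},\bar F,H,P_{\hat n\hat n}\}$ or carries an explicit $\bar F$ or $H$ factor, so the collapse to the single leading term is forced; but writing this out cleanly — and confirming the combinatorial factor is exactly $4!=(d-3)!$ rather than some other multiple absorbed into the normalization $\tfrac1{(d-3)!}$ — requires matching conventions between~\nn{FGE}, the Leibniz expansion of $\nabla_n^3$ on $F_{nb}$, and the sign/ordering conventions for $\hat n^a\hat n^b\hat n^c\nabla_{\hat n}\nabla_a\nabla_b$. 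Once those factors are pinned down, the proof reduces, as the paper says, to following Proposition~\ref{E44E45E49} and Proposition~\ref{boundops} \emph{mutatis mutandis}, so I would state the two term-collapse computations, the two $r$-differentiation identities, and invoke Proposition~\ref{logsol} together with the evenness argument of Section~\ref{connexp} to conclude well-definedness and the displayed formula.
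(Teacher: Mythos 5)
Your proof takes essentially the same route as the paper's: you first use $\bar F=0$ together with Equation~\nn{A2} and the recursion~\nn{ampexp} to force $A^{\rm ev}$ to be $r$-independent (so $A^{(d-3)}$ is a genuine weight-$(3-d)$ density), then collapse ${\sf E}^{(2_5)}$ and ${\sf E}^{(4_7)}$ to their leading terms via the vanishing set~\nn{EEEF0} and differentiate in the Fefferman--Graham gauge to read off $(d-3)!\,A^{(d-3)}$. The only minor slips are cosmetic: the term $(\bar\nabla^aP_{\hat n\hat n})\bar F_{ad}$ you flag as an obstacle does not actually appear in~\nn{E47}, and the collapse of ${\sf E}^{(2_5)}$ and ${\sf E}^{(4_7)}$ needs only ${\sf E}^{(1)}={\sf E}^{(2)}={\sf E}^{(3_7)}=\bar F=0$, not the additional $H=0$, $P_{\hat n\hat n}=0$ supplied by the FG gauge.
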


\begin{proof}
To establish that 
Equation~\nn{RDN} defines a conformally invariant map we first note that by Equation~\nn{A2} and flatness of the boundary connection, we have $A^{(2)}=0$. The recursion determined by Equation~\nn{ampexp} then implies that $A^{(4)}=A^{(6)}=\cdots =A^{(d-5)}=0$.
Hence $A^{\rm ev}$ is independent of $r$ so $A^{(d-3)}$ cannot depend on the choice of boundary metric representative $\bar g$.

It remains to study the special cases $d=5,7$.
When $d=5$, we must examine
 ${\sf E}^{(2_5)}$ as given in Equation~\nn{E25}. Since ${\sf E}^{(1)}=0$ (because $\nabla^A$ is asymptotically Yang--Mills) we obtain
 $$
 {\sf E}^{(2_5)}_{\, c}(g,A)=
\hat n^a \hat n^b (\nabla_a F_{bc})|_\Sigma\, .$$
It is easy to check that the above equals $2A^{(2)}$ when evaluated in the 
canonical connection expansion.

For $d=7$, we consider Equation~\nn{E47}
and use Equation~\nn{EEEF0}, which gives
 \begin{align*} 
 {\sf E}^{(4_7)}&=
\hat n^a \hat n^b \hat n^c (\nabla_{\hat n} \nabla_a\nabla_b F_{cd})|_\Sigma
\, .
\end{align*}
Again it is not difficult to check that the above equals $4!A^{(4)}$ when evaluated in the canonical connection expansion.
\end{proof}

Once again, but now for  $d$  odd,
in situations where
 $A$ is the unique  global solution to~$j[A]=0$,~the operator of~\nn{RDN}, 
 and its conformally invariant refinement for Yang--Mills flat boundaries of the above proposition, gives a Dirichlet-to-Neumann map from the boundary Dirichlet data~$A^{(0)}(x)$ to the non-locally determined Neumann data.

\medskip

When the obstruction current $\bar k=0$, then
given  data $(M^d,\cc,\sigma,\bar A,\bar g,
{E}^{}_{\bar g}
)$ for $d=5,6,9,\ldots$, where the conformally compact structure $(M,\cc,\sigma)$  is Poincar\'e--Einstein, $\bar \nabla^{\bar A}$~is a boundary connection, $\bar g\in \cc_\Sigma$ and $E_{\bar g}$ is a divergence-free section of  $\operatorname{End}{\mathcal V}\Sigma\otimes T^*\Sigma$,
 Theorem~\ref{all-orders-magnetic} ensures a canonical  ${\mathcal O}(\sigma^\infty)$ solution to Problem~\ref{yeswehavethem}. 
In the special case where the curvature $\bar F$ of 
$\bar A$ is zero, we may refine the data 
$
{ E}^{}_{\bar g}$ to the conformal invariant~${\ E}\in \Gamma(\operatorname{End}{\mathcal V}
\Sigma\otimes T^*\Sigma[3-d])$, now subject to a conformally invariant divergence condition $\bar \nabla^a  { E}^{}_a=0$.
This statement is easily established by observing that flatness of $\bar \nabla^{\bar A}$ in fact implies $A^{\rm ev}(x,r)=0$. In that case one is studying the natural global analog of the formal electric problem of Section~\ref{EP}.
Of course, vanishing of $\bar F$ along $\Sigma$ need not imply that~$\bar \nabla^{\bar A}$ is trivial.

\section*{Acknowledgements}
A.R.G. and A.W. acknowledge support from the Royal Society of New Zealand via Marsden Grant 19-UOA-008.  A.W. was also supported by Simons Foundation Collaboration Grant for Mathematicians ID 686131. This article is based upon work from supported by the European Cooperation in
Science and Technology  Action 21109 CaLISTA, {\tt www.cost.eu}, HORIZON-MSCA-2022-SE-01-01 CaLIGOLA, MSCA-DN CaLiForNIA - 101119552. Y. Z. acknowledges support from
National Key Research and Development Project SQ2020YFA070080 and NSFC 12071450.

\end{document}